\documentclass[a4paper,11pt]{article}

\usepackage{amsmath,amsthm}
\usepackage{amssymb}
\usepackage{breqn}
\usepackage{enumerate}
\usepackage{graphicx}
\usepackage{bm}
\usepackage{bbm}
\usepackage[affil-it]{authblk}
\usepackage{tabu}
\usepackage{bold-extra}
\usepackage[hang,flushmargin]{footmisc}
\usepackage[driverfallback=dvipdfm]{hyperref}
\usepackage{mathtools}
\usepackage{relsize}
\usepackage{scalerel}
\usepackage{xcolor}
\usepackage{pagecolor}
\usepackage{titlesec}
\usepackage{apptools}
\usepackage{appendix}
\usepackage[
    backref=true,
    isbn=false,
]{biblatex}
\renewbibmacro{in:}{}
\renewbibmacro*{doi+eprint+url}{%
  \printfield{doi}%
  \newunit\newblock%
  \iftoggle{bbx:eprint}{%
    \usebibmacro{eprint}%
  }{}%
  \newunit\newblock%
  \iffieldundef{doi}{%
    \iffieldequalstr{eprinttype}{arXiv}
      {}
      {\usebibmacro{url+urldate}}%
  }{}%
}
\AtEveryBibitem{\clearlist{language}}
\DefineBibliographyStrings{english}{
    backrefpage = {page},
    backrefpages = {pages}
}

\usepackage{xpatch}
\DeclareFieldFormat{backrefparens}{\addperiod\raisebox{4pt}{\scriptsize{#1}}}
\xpatchbibmacro{pageref}{parens}{backrefparens}{}{}

\newtheorem{theorem}{Theorem}
\newtheorem{corollary}[theorem]{Corollary}
\newtheorem{lemma}[theorem]{Lemma}
\newtheorem{proposition}[theorem]{Proposition}

\newtheorem{claim}[theorem]{Claim}
\newtheorem*{claim*}{Claim}

\theoremstyle{definition}
\newtheorem{defin}[theorem]{Definition}

\newtheorem*{rem*}{Remark}

\titleformat{\section}[hang]{\scshape\large\bfseries\filcenter}{\S\thesection}{4pt}{}
\titleformat{\subsection}[hang]{\scshape\bfseries}{\thesubsection.}{4pt}{}
\titleformat{\subsubsection}[hang]{\scshape\bfseries}{\thesubsubsection.}{4pt}{}

\allowdisplaybreaks

\newcommand\id{\mathbbm{1}}	
\newcommand{\tss}[1]{\textsuperscript{#1}}
\newcommand{\on}[1]{
	\operatorname{#1}
}

\def \ls#1#2 {^{#1}\!#2}

\newcommand{\tdt}{\times\cdots\times}

\newcommand{\tightoverset}[2]{
  \mathop{#2}\limits^{\vbox to -.5ex{\kern-1.15ex\hbox{$#1$}\vss}}}

\newcommand\blfootnote[1]{%
  \begingroup
  \renewcommand\thefootnote{}\footnote{#1}%
  \addtocounter{footnote}{-1}%
  \endgroup
}

\renewenvironment{thebibliography}[1]
{
  \begin{oldthebibliography}{#1}
    \setlength{\itemsep}{0em  plus 0.3ex}
    \setlength{\parskip}{0em}
}
{
  \end{oldthebibliography}
}

\newcommand\eplog[1]{\exp(-\log^{O(1)} (2#1^{-1}))}

\newcommand\ssk[1]{
	\substack{#1}
}

\newcommand\ex{\mathop{\mathbb{E}}}

\newcommand{\exx}{
  \mathop{
    \mathchoice{\vcenter{\hbox{\larger[4]$\mathbb{E}$}}}
               {\kern0pt\mathbb{E}}
               {\kern0pt\mathbb{E}}
               {\kern0pt\mathbb{E}}
  }\displaylimits
}

\makeatletter
\newcommand*\bcdot{\mathpalette\bigcdot@{0.5}}
\newcommand*\bigcdot@[2]{\mathbin{\vcenter{\hbox{\scalebox{#2}{$\m@th#1\bullet$}}}}}
\makeatother

\makeatletter
\def\blfootnote{\gdef\@thefnmark{}\@footnotetext}
\makeatother

\newcommand{\blc}{\bm{\mathsf{C}}}

\newcommand{\bsc}{\bm{\mathsf{c}}}

\newcommand{\upd}[1]{\overset{\bcdot}{#1}}

\begin{document}

\begin{center}\Large\noindent{\bfseries{\scshape A quasipolynomial inverse theorem for the $\mathsf{U}^k(\mathbb{F}_p^n)$ norm in the high characteristic}}\\[24pt]\normalsize\noindent{\scshape Luka Mili\'cevi\'c\tss{\dag}}
\end{center}
\blfootnote{\noindent\dag\ Mathematical Institute of the Serbian Academy of Sciences and Arts\\\phantom{\dag\ }Email: luka.milicevic@turing.mi.sanu.ac.rs}

\footnotesize
\begin{changemargin}{1in}{1in}
\centerline{\sc{\textbf{Abstract}}}
\phantom{a}\hspace{12pt}~We prove an inverse theorem for the Gowers uniformity norm $\mathsf{U}^k(\mathbb{F}_p^n)$ with quasipolynomial bounds in the case when $p \geq k$. The inverse theorem follows from a quasipolynomial structure theorem for Freiman multihomomorphisms, which are a natural generalization of Freiman homomorphisms to maps of several variables.\\
\phantom{a}\hspace{12pt}~The proof of the structure theorem for Freiman multihomomorphisms is the central result of the paper and rests on three main ingredients:\ algebraic regularity method, abstract Balog-Szemer\'edi-Gowers theorem and the theory of multilinear maps defined on multilinear varieties. The last ingredient originates from an earlier work of Gowers and the author, and is significantly expanded in this paper. In particular, once the theory of such maps is in place, the proof of the structure theorem for Freiman multihomomorphisms is relatively short, especially compared to the previous quantitative results in the inverse theory of Gowers norms.
\end{changemargin}
\normalsize

\section{Introduction}

We begin by recalling the definition of the Gowers uniformity norms.

\begin{defin}
    Let $f : G \to \mathbb{C}$ be a function on a finite abelian group $G$. The \textit{discrete multiplicative derivative} with \textit{shift} $a$ is the operator that maps $f$ to the function $\partial_a f$, given by the formula $\partial_a f(x) = f(x + a)\overline{f(x)}$. With this notation, the uniformity norm $\|f\|_{\mathsf{U}^k(G)}$ is defined as 
    \[\Big(|G|^{-k - 1}\sum_{x, a_1, \dots, a_k \in G} \partial_{a_1} \dots \partial_{a_k} f(x)\Big)^{2^{-k}}.\]
\end{defin}

Although it is not obvious from the definition, $\|\cdot\|_{\mathsf{U}^k(G)}$ is a norm for $k \geq 2$.

In the present paper, we prove the quasipolynomial inverse theorem for uniformity norms $\|\cdot\|_{\mathsf{U}^k(G)}$ in the case when the ambient group $G$ is a finite-dimensional vector space over a prime field $\mathbb{F}_p$, in the case of the high characteristic, meaning that $p > k$. In the rest of the paper, we write $\|\cdot\|_{\mathsf{U}^k}$ for the norm without the group in the subscript.

\begin{theorem}\label{inverseUniformityMain} Let $p\geq k$ and let $f \colon \mathbb{F}_p^n \to \mathbb{D}$ be a function such that $\|f\|_{\mathsf{U}^k} \geq c > 0$ (where $\mathbb{D}$ is the unit disc in $\mathbb C$). Then there is a polynomial $g \colon \mathbb{F}_p^n \to \mathbb{F}_p$ of degree at most $k-1$ such that 
    \[\Big|\exx_{x \in \mathbb{F}_p^n} f(x) \exp\Big(\frac{2 \pi i}{p} g(x)\Big)\Big| \geq \exp(-\log^{O(1)}(2c^{-1})).\]
    \end{theorem}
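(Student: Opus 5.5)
We would prove Theorem~\ref{inverseUniformityMain} by induction on $k$, following the Green--Tao--Ziegler/Gowers--Milićević strategy, with every quantitative loss kept quasipolynomial. The base case $k=2$ is the Fourier-analytic inverse theorem for $\mathsf{U}^2$, which holds with polynomial bounds.

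\textbf{Step 1: extract a multihomomorphism.} For $k \geq 3$ write $\|f\|_{\mathsf{U}^k}^{2^k} = \exx_{a_1,\dots,a_{k-2}} \|\partial_{a_1}\cdots\partial_{a_{k-2}} f\|_{\mathsf{U}^2}^4$. Hence for a $c^{O(1)}$-dense set of $(a_1,\dots,a_{k-2})$ the derivative has a large Fourier coefficient at some $\phi(a_1,\dots,a_{k-2}) \in \mathbb{F}_p^n$. The standard Cauchy--Schwarz argument then shows that $\phi$ respects many additive quadruples in each coordinate direction. After one application of an abstract Balog--Szemer\'edi--Gowers theorem in each direction, we obtain a Freiman multihomomorphism: a map defined on a dense subset $A \subseteq (\mathbb{F}_p^n)^{k-2}$ that is a Freiman homomorphism in each variable separately, with density and correlation parameters $c^{O(1)}$.

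\textbf{Step 2: structure theorem (the main obstacle).} The key step is a quasipolynomial structure theorem for Freiman multihomomorphisms. It should say that $\phi$ agrees, on a set of density $\exp(-\log^{O(1)}(2c^{-1}))$, with a multiaffine map defined on a multiaffine variety of codimension $\log^{O(1)}(2c^{-1})$, and then with a global multiaffine map. We would prove it by induction on the number of variables. Fixing the last variable, the lower-dimensional result gives structure on slices. Algebraic regularity then refines the varieties so that they behave quasirandomly. The theory of multilinear maps on multilinear varieties lets us glue the slicewise structure and extend it to a global multilinear form. The difficulty is avoiding tower-type losses: each refinement step must cost only a polynomial factor in codimension, and the extension results for maps on varieties must lose only quasipolynomially. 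This is where the high characteristic $p \geq k$ would be used, namely to symmetrize.

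\textbf{Step 3: integrate.} Once $\phi$ agrees with a multilinear $\beta(a_1,\dots,a_{k-2},x)$ on a dense set, we symmetrize it using $p \geq k$, dividing by $(k-1)!$. This gives a polynomial $g_0$ of degree $k-1$ with $\partial_{a_1}\cdots\partial_{a_{k-2}}$ of $f\,\omega^{-g_0}$ correlating with linear phases only in a lower-order way. Then $\|f\,\omega^{-g_0}\|_{\mathsf{U}^{k-1}}$ is at least $\exp(-\log^{O(1)}(2c^{-1}))$. Applying the induction hypothesis yields a polynomial $g_1$ of degree $k-2$, and $g = g_0 + g_1$ works. Composing quasipolynomial bounds a bounded number of times (depending on $k$) keeps the bound of the form $\exp(-\log^{O(1)}(2c^{-1}))$.
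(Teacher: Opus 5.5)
Your architecture matches the paper's. Theorem~\ref{inverseUniformityMain} is obtained from the structure theorem for Freiman multihomomorphisms (Theorem~\ref{strFmult}) by the standard deduction of~\cite{FreimanMultihom}, and your Steps~1 and~3 are that deduction. But Step~2 is not an argument: it is Theorem~\ref{strFmult} itself, whose proof is the rest of the paper. You should cite it rather than sketch it, because the sketch misidentifies where the difficulty lies. The slice maps $\phi_x$ from the inductive hypothesis cannot simply be glued by extension theory. They are related only by a $1\%$-type condition: for many additive quadruples in $x$, the map $\phi_{x_1}-\phi_{x_2}+\phi_{x_3}-\phi_{x_4}$ has a dense zero set. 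The real work is upgrading this to an affine dependence on $x$. The paper does so with the abstract Balog--Szemer\'edi--Gowers theorem (twice), algebraic dependent random choice to remove error maps, robust Bogolyubov--Ruzsa, a change of category to maps on varieties with variety-respectedness, a multilinear Bogolyubov argument making the domains vary linearly in $x$, and weak locality and gluing for linear systems of varieties. Also, $p\geq k$ plays no role there, since Theorem~\ref{strFmult} holds in every characteristic.

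The genuine gap in the deduction is in Step~3, which omits the symmetry argument. The form $\beta(a_1,\dots,a_{k-2},x)$ coming from the multiaffine map need not be symmetric, and dividing $\beta(x,\dots,x)$ by $(k-1)!$ integrates only its symmetric part. Already for $k=3$, take $\phi(a)=Ma+b$ and $g_0(x)=\tfrac12 x\cdot Mx$. Then $\partial_a g_0(x)=\tfrac12((M+M^{T})a)\cdot x+\tfrac12 a\cdot Ma$. So $\partial_a(f\,\omega(-g_0))$ still correlates with $\omega(\tfrac12((M-M^{T})a)\cdot x+b\cdot x)$, which is a genuinely bilinear phase unless $M-M^{T}$ has low rank. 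No choice of $g_0$ repairs this, because the top-degree part of iterated derivatives of a polynomial is always symmetric. The fix has three parts. First, use the symmetry of $\partial_{a_1}\cdots\partial_{a_{k-1}}f$ in $a_1,\dots,a_{k-1}$ and the Gowers--Cauchy--Schwarz inequality to show that $\beta-\beta\circ\sigma$ is biased for every transposition $\sigma$ of the $k-1$ variables. Second, apply Theorem~\ref{arankprank}, so that $\beta$ is a symmetric form plus a bounded-partition-rank error, which contributes only lower-order terms. Only then does the reduction to a lower bound on $\|f\,\omega(-g_0)\|_{\mathsf{U}^{k-1}}$ go through with quasipolynomial losses.

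A smaller point concerns Step~1. Applying BSG direction by direction works only because the Cauchy--Schwarz argument that produces respected quadruples applies to \emph{any} dense subset on which the Fourier coefficient at $\phi(a)$ is large. You must therefore re-derive respectedness on the current subset before treating each new direction. Passing to a subset in one direction does not by itself preserve respectedness in the others.
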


The central result of the paper is the quasipolynomial structure theorem for Freiman multihomomorphisms, introduced in~\cite{FreimanMultihom}, whose definition we now recall. Recall first that for abelian groups $G$ and $H$ and a subset $A \subset G$, a map $\phi \colon A \to H$ is a \textit{Freiman homomorphism} if whenever $a_1, a_2, a_3, a_4 \in A$ satisfy $a_1 + a_2 = a_3 + a_4$, then $\phi(a_1) + \phi(a_2) = \phi(a_3) + \phi(a_4)$. When this equality holds for a particular additive quadruple $(a_1, a_2, a_3, a_4)$, we say that $\phi$ \textit{respects} $(a_1, a_2, a_3, a_4)$. Informally, Freiman multihomomorphisms are multivariate maps that respect additive quadruples in all principal directions.

\begin{defin} Let $G_1, \dots, G_k$ and $H$ be finite-dimensional vector spaces over $\mathbb{F}_p$, and let $A$ be a subset of $G_1 \tdt G_k$. A function $\phi\colon A \to H$ is a \emph{Freiman multihomomorphism} if for every $d\in\{1,2,\dots,k\}$ and every $(a_1,\dots,a_{d-1},a_{d+1},\dots,a_k)\in G_1\tdt G_{d-1}\times G_{d+1}\tdt G_k$, the map from $\{x_d \in G_d \colon (a_1,\dots,a_{d-1},x_d, a_{d+1}, \dots, a_k) \in A\}$ to $H$ defined by the formula $x_d\mapsto\phi(a_1,\dots,a_{d-1},x_d,a_{d+1},\dots,a_k)$ is a Freiman homomorphism.\end{defin}

When the domain is the full product $G_1 \tdt G_k$, the notion of  a Freiman multihomomorphism coincides with that of a multiaffine map, which are maps that are affine in each variable. We may now state the central result of this paper, saying that multiaffine maps are essentially the only source of Freiman multihomomorphisms.

\begin{theorem}[Quasipolynomial structure theorem for Freiman multihomomorphisms]\label{strFmult}
    Let $G_1, \dots, G_k, H$ be finite-dimensional vector spaces over $\mathbb{F}_p$. Let $A \subseteq G_1 \tdt G_k$ be a set of density $c > 0$ and let $\phi : A \to H$ be a Freiman multihomomorphism. Then there exists a global multiaffine map $\Phi : G_1\tdt G_k \to H$ such that $\phi(x_1, \dots, x_k) = \Phi(x_1, \dots, x_k)$ holds for at least $\eplog{c} |G_1 \tdt G_k|$ points $(x_1, \dots, x_k) \in A$.
\end{theorem}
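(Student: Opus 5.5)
We argue by induction on $k$. The base case $k=1$ is the quasipolynomial Freiman homomorphism theorem in $\mathbb{F}_p^n$: a Freiman homomorphism on a set of density $c$ agrees with a global affine map on a subset of density $\exp(-\log^{O(1)}(2c^{-1}))$. This follows from the abstract Balog--Szemer\'edi--Gowers theorem applied to the graph of $\phi$, combined with Sanders' quasipolynomial Bogolyubov--Ruzsa lemma.

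For the inductive step we single out the last coordinate. For each $y_{[k-1]}$, the slice $A_{y_{[k-1]}}=\{x_k : (y_{[k-1]},x_k)\in A\}$ carries a Freiman homomorphism $x_k\mapsto \phi(y_{[k-1]},x_k)$. By averaging, a set $Y$ of $y_{[k-1]}$ of density $\gg c$ has slices of density $\gg c$. On each such slice the base case gives an affine map $x_k \mapsto \alpha(y_{[k-1]})\cdot x_k + \beta(y_{[k-1]})$, with $\alpha(y_{[k-1]})\in \on{Hom}(G_k,H)$, that agrees with $\phi$ on a quasipolynomially dense part of the slice.

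The key step is to show that $y_{[k-1]}\mapsto(\alpha,\beta)$ is, on a dense set, itself a Freiman multihomomorphism in the first $k-1$ variables, after which induction applies. In direction $d<k$, take an additive quadruple $y_d^1+y_d^2=y_d^3+y_d^4$ with the other coordinates fixed. The Freiman property of $\phi$ in direction $d$ gives $\sum \pm\phi(\dots,y_d^i,\dots,x_k)=0$ for all $x_k$ in the intersection of the four slices. If this intersection contains a large set on which all four affine approximations are valid, then the combination $\sum\pm(\alpha,\beta)$ vanishes on a set of density $\gg$ quasipolynomial in $c$. An affine map vanishing on such a set vanishes on its affine span, but it need not vanish globally.

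This is the main obstacle. Agreement only on subspaces of bounded codimension does not give a genuine multihomomorphism. To deal with it I would use the algebraic regularity method. First pass to a dense subset of $A$ that is a multilinear variety of bounded (quasipolynomial-log) codimension, where slice densities are almost uniform, so that the four slice-pieces intersect in controlled, structured sets. Then use the theory of multilinear maps defined on multilinear varieties to show that a map respecting quadruples only on such a variety extends to a global multiaffine map, up to modifying the variety by lower-order terms.

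Concretely, the plan is as follows. (i) Apply a dependent random choice / BSG-type argument to pass to $y_{[k-1]}$ and quadruples where the relations hold on $1-o(1)$ of the structured intersections. (ii) Conclude that $(\alpha,\beta)$ is a Freiman multihomomorphism on a dense subset of $Y$, modulo a low-codimension subspace. (iii) Apply the induction hypothesis to get a multiaffine $\Psi$ agreeing with $(\alpha,\beta)$ there. (iv) Assemble $\Phi(x_{[k]})=\Psi_1(x_{[k-1]})x_k+\Psi_2(x_{[k-1]})$. (v) Use the variety-extension results to absorb the discrepancy into a global multiaffine map.

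Each step costs a quasipolynomial loss, and there are $O_k(1)$ steps, so the final bound remains $\exp(-\log^{O(1)}(2c^{-1}))$.
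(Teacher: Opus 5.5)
Your proposal correctly identifies the obstacle but does not overcome it, so the induction never closes. After applying the $k=1$ case on the $x_k$-lines, what you can actually prove is weaker than what step (ii) needs. You only get a \emph{dense collection} of directional additive quadruples, and for each of them $\sum\pm(\alpha,\beta)$ vanishes only on a subspace (or coset) of $G_k$ of codimension $\log^{O(1)}(2c^{-1})$ that \emph{depends on the quadruple}. There is no common subspace modulo which $(\alpha,\beta)$ becomes a Freiman multihomomorphism. The induction hypothesis concerns exact Freiman multihomomorphisms, so it cannot be applied to $y_{[k-1]}\mapsto(\alpha,\beta)$.

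Upgrading this $1\%$, low-rank-error respectedness to exact structure is the entire content of the theorem. Already for $k=2$ it is the matrix-system problem~\eqref{matrixcondition} behind the $\mathsf{U}^4$ inverse theorem. Your route would need a strictly stronger inductive statement, for maps respecting quadruples only up to such errors, which you neither formulate nor prove. Steps (i), (ii) and (v) are therefore placeholders rather than arguments.

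Two further claims fail. First, passing to ``a dense subset of $A$ that is a multilinear variety'' is impossible in general: a random $A$ of density $1/2$ contains no multilinear variety of bounded codimension. Second, a multilinear map on a variety of codimension $r$ can vanish on all but a $p^{-\Omega(r)}$ proportion of its domain without being zero. So relations holding on ``$1-o(1)$ of the structured intersections'' do not give exact respectedness either.

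The paper orients the induction the other way round. It applies the $k$-variable hypothesis to the slices $\phi(x,\cdot)$, $x\in G_0$. This gives a system $(\phi_x\colon G_{[k]}\to H)_{x\in X}$ indexed by a single space, in which many quadruples have $\phi_{x_1}-\phi_{x_2}+\phi_{x_3}-\phi_{x_4}$ with a dense zero set. The argument then proceeds in these stages:
\begin{enumerate}
\item It completes this to a system indexed by a subspace with all quadruples respected. This uses abstract Balog--Szemer\'edi--Gowers with $\mathcal{Q}_i$ defined by zero density $c^i$, algebraic dependent random choice to remove the error maps, and robust Bogolyubov--Ruzsa.
\item Via the partition-vs-analytic-rank theorem, it passes to maps $\phi_x\colon V_x\to H$ on varieties, where respectedness means exact vanishing on $\bigcap V_{x_i}$.
\item A multilinear Bogolyubov argument, built on simultaneous-extension theory, makes $V_x$ depend linearly on $x$. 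This is what replaces your non-existent common subspace.
\item It completes again, using the weak locality and gluing properties of linear systems of varieties and the vanishing dichotomy for MM-$r$-maps.
\item The resulting map on a variety in $G_{[0,k]}$ is extended to a global multiaffine map by the Gowers--Mili\'cevi\'c theorem.
\end{enumerate}
Your sketch mentions some of these tools, but it gives no mechanism that produces an exact structure to which induction or extension could be applied.
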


\begin{rem*} Theorem~\ref{strFmult} was first proved by Gowers and the author in~\cite{FreimanMultihom}, with a significantly more complicated argument and worse bounds, involving iterated exponential of height greater than $k!$.\end{rem*}

From Theorem~\ref{strFmult} one may reasonably straightforwardly deduce the quasipolynomial inverse theorem for the uniformity norms, i.e. Theorem~\ref{inverseUniformityMain}. The proof is standard and the full details can be found in Subsection 12.3 of~\cite{FreimanMultihom}.

In the rest of introduction, we briefly discuss the history of the uniformity norms and their inverse theory, before moving on to a discussion of the proof of Theorem~\ref{strFmult}. The next subsection is similar to the introduction of~\cite{QPU4}, except for the novel results in the inverse theory that appeared in the meantime.

\subsection{History of the problem}

In his pioneering  work~\cite{GowU4, GowerskAP}, in which he proved an effective version of Szemer\'edi's theorem on arithmetic progressions, Gowers introduced the uniformity norms as means to quantifying algebraic structure in functions on an abelian group $G$. As observed by Gowers, if $A \subseteq G$ is a set of density $\delta = \frac{|A|}{|G|}$ and if the norm $\|\id_A - \delta\|_{\mathsf{U}^{k}}$ is small enough, then $A$ must contain essentially the same number of arithmetic progressions of length $k + 1$ as a randomly chosen subset of $G$ of the same density $\delta$. To prove Szemer\'edi's theorem from this observation, we must understand the structure of functions $f : G \to \mathbb{D} = \{z \in \mathbb{C} : |z| \leq 1\}$ with large uniformity norm. This is the \textit{inverse problem for uniformity norms}, and has been a key question of additive combinatorics giving rise to the field of higher order Fourier analysis.

In his work, Gowers obtained a partial answer to the inverse problem by showing a \text{local} inverse theorem. For any function $f : \mathbb{Z}/N\mathbb{Z} \to \mathbb{D}$ with $\|f\|_{\mathsf{U}^k} \geq c$, he showed that the group $\mathbb{Z}/N\mathbb{Z}$ can be partitioned into arithmetic progressions $P_1, \dots, P_m$ of lengths at least $N^{\delta_{k,c}}$, where $\delta_{k,c} \in (0,1)$ is a constant depending on $k$ and $c$ only, and that there exits polynomials $p_1, \dots, p_m$ of degree $k - 1$, such that $f$ locally correlates with phases of these polynomials, namely
\[\sum_{i \in [m]}\Big|\sum_{x \in P_i} f(x) \on{e}(p_i(x))\Big| \geq \Omega_c(N).\]
From the perspective of the inverse problem, this result is incomplete as not all such piece-wise phase polynomials have large uniformity norm.

As a solution to the inverse problem, we want that $f$ correlates \textit{globally} with a function with rich algebraic structure. In other words, we look for a function $g : G \to \mathbb{D}$, typically with a polynomial-like behaviour such that $|\ex_{x \in G} f(x) \overline{g(x)}| \geq c'$, where  $c'$ is a parameter that depends on $c = \|f\|_{\mathsf{U}^k}$, which we call the \textit{correlation bound} and $\ex_{x \in G}$ is the standard shorthand for the average $\frac{1}{|G|}\sum_{x \in G}$. Thus, solution to the inverse problem includes a definition of a family of functions from which we may pick $g$, whose members we call \textit{obstructions to uniformity}. Additionally, the definition of obstruction function family must be simple enough, so that the polynomial-like behaviour can be efficiently used. The obstruction functions and thus the answer to the inverse question typically depend on the ambient group. We now briefly mention some previous results.

\noindent\textbf{Previous results.} The starting point of the inverse theory is the case of the $\mathsf{U}^3(G)$ norm. Inverse theorems for this norm were obtained by Green and Tao~\cite{GreenTaoU3}, when $G$ is of odd order, and by Samorodnitsky~\cite{SamorU3}, when $G= \mathbb{F}_2^{n}$. Jamneshan and Tao~\cite{JamTao} obtained a proof for all finite abelian groups, giving a unified theory for the $\mathsf{U}^3(G)$ norm.

\indent For $k \geq 4$, the difficulty of the inverse problem for uniformity norms $\mathsf{U}^k$ increases significantly. The inverse theorem for $\mathsf{U}^k(\mathbb{F}_p^n)$ norm, in the high characteristic case, $k \leq p$, where the obstruction functions can be taken to be polynomial phases, is a remarkable result of Bergelson, Tao and Ziegler~\cite{BTZ, TaoZieglerCorr}, later extended by Tao and Ziegler~\cite{TaoZiegler} to include the low characteristic case ($k > p$). Low-characteristic case also requires a more general family of obstructions, known as the \textit{non-classical polynomials}. On the other hand, when the group is cyclic, Green, Tao and Ziegler~\cite{GTZU4, GTZ} proved the inverse theorem for the $\mathsf{U}^k(\mathbb{Z}/N\mathbb{Z})$ norm. This breakthrough is a key part of the Green--Tao programme for obtaining asymptotic estimates for the counts of linear configurations in primes~\cite{GTprimes1, GTprimes2}. For cyclic groups, the family of obstructions can be taken to be nilsequences.

\indent Another approach to the inverse theory for uniformity norms was developped by Szegedy~\cite{Szeg} and Camarena and Szegedy~\cite{CamSzeg}, known as the nilspace theory. Since the foundational papers, there have been many developmments in nilspace theory, in particular, Candela, Gonz\'alez-S\'anchez and Szegedy~\cite{nilspacesCharp} obtained a new proof the Tao-Ziegler inverse theorem, and made contributions to the inverse problem in cases of the groups of bounded torsion~\cite{nilspacesBoundedTorsion} and the general abelian groups~\cite{nilspacesGeneralAbelian}.

\noindent\textbf{Quantitative inverse theorems.} Given the scope of their applications, it is highly desireable to obtain inverse theorems with good bounds on the corelation with the obstruction functions. The results  mentioned so far in the case $k \geq 4$ only provided qualitative bounds. For example, the proof of Bergelson, Tao and Ziegler, in which the ambient group is a finite vector space, relied on ergodic theory and the nilspace theory is infinitary in its nature. In fact, the question of bounds is one of the key questions of additive combinatorics (see the surveys of Wolf~\cite{WolfSurvey} and of Peluse~\cite{PeluseSurvey}).

\indent The situation with the inverse theorems for $\mathsf{U}^3(G)$ norm is again significantly better. The first results of Green and Tao, and of Samorodnitsky, already gave bounds involving a single exponential (more precisely, $c' \geq \exp(-c^{-O(1)})$). Sanders~\cite{Sanders}, relying on almost-periodicity result of Croot and Sisask~\cite{CrootSisaskPaper}, obtained a quasipolynomial bound, namely of the shape $c' \geq \exp(-\log^{O(1)}(2c^{-1}))$. Finally, Gowers, Green, Manners and Tao~\cite{Marton1, Marton2} proved the polynomial bounds ($c' \geq c^{O(1)}$) in the case of the finite vector spaces.

\indent When it comes to higher norms, when $G = \mathbb{F}_p^n$, quantitative bounds were first obtained by Gowers and the author~\cite{U4paper} for the norm $\mathsf{U}^4$ for $p \geq 5$. Those bounds were roughly doubly exponential (more precisely, $c' \geq \exp^{(3)}(\log^{O(1)}(2c^{-1}))$). Kim, Li and Tidor~\cite{KimLiTidor}, and independently Lovett, improved the bounds by a single exponential by improving a step of the argument which relied on an inefficent use of Inclusion-Exclusion principle. A quantitative version of the inverse theorem for $\mathsf{U}^k$ norms in the high characteristic was proved by Gowers and the author~\cite{FreimanMultihom}, with a bound involving a bounded number of exponentials, depending on $k$ only. There is also progress in the low characteristic~\cite{LukaU56, Tidor}.

\indent Finally, the author found a new proof~\cite{QPU4} of the inverse theorem for the $\mathsf{U}^4$ norm with a quasipolynomial corelation bound. That proof is the starting point for the present paper, and will be discussed in detail later.

\indent When the ambient group is cyclic, Manners proved doubly exponential bounds in the inverse theorem~\cite{MannersUk} for all $k$, giving the first quantitative proof in that setting. Furthermore,  quasipolynomial bounds in the cyclic groups case were obtained by Leng~\cite{LengNil2} for the $\mathsf{U}^4$ norm and by Leng, Sah and Sawhney~\cite{LengSahSawhney} for the higher norms, building on the work of Green, Tao and Ziegler, and relying on Leng's improved equidistribution theory for nilsequences~\cite{LengNil1}.

\indent Furthermore, it is worth noting that, more recently, some works went beyond the cases of cyclic groups and finite vector spaces. Using ergodic theory, Jamneshan, Shalom and Tao~\cite{JamShaTao} remarkably proved an inverse theorem for all bounded torsion groups and the author developed~\cite{LukaGenU4} a quantitative general inverse theory for the $\|\cdot\|_{\mathsf{U}^4}$ norm.

\subsection{New $\mathsf{U}^4(\mathbb{F}_p^n)$ proof}

As mentioned above, in~\cite{QPU4}, the author obtained quasipolynomial bounds for the inverse theorem for the $\mathsf{U}^4(\mathbb{F}_p^n)$ norm. Let us briefly outline the differences between that proof and the other quantitative proofs; more details can be found in the introduction of~\cite{QPU4} which contains a detailed comparison with previous results.

Several proofs of quantitative inverse theorems proceed by studying approximate variants of polynomials~\cite{U4paper},~\cite{FreimanMultihom},~\cite{MannersUk}; see also a very recent work of Peluse~\cite{PeluseU4} following a similar strategy to that in~\cite{U4paper}. Broadly speaking, the strategy in those works is to begin with a map $\phi$ that satisfying $1\%$ of certain cocycle identities in the given group and to first strengthen its structure to a map that satisfies $99\%$ of related cocycle identities, which is then related to a genuine polynomial via a different argument. In the finite vector spaces, the first phase requires approximation theorems for generalizations of convolutions, which have an exponential cost that cannot be avoided. Additionally, the pass from a function that respects $99\%$ of cocycles to a genuine polynomial is surprisingly difficult in the higher order case. In the case of cyclic groups, the work of Manners depends on the low rank and no small subgroups assumptions.

On the other hand, the strategy of Green, Tao and Ziegler~\cite{GTZ} begins by applying inductive hypothesis and obtaining an approximately linear system of obstructions in the following sense. If $\|f\|_{\mathsf{U}^k} \geq c$ holds for a function $f : G= \mathbb{Z}/N\mathbb{Z} \to \mathbb{D}$, then there are degree-$(k-2)$ nilsequences $\phi_a$, indexed by $\Omega_c(N)$ elements $a \in G$, such that
\begin{equation}\label{gtzcondition}\text{for at least }\Omega_c(N^3)\text{ additive quadruples }a_1 + a_4 = a_2 + a_3,\,\overline{\phi_{a_1}}T_{a_1 - a_4} \phi_{a_2} \phi_{a_3} \overline{T_{a_1 - a_4}\phi_{a_4}}\text{ is biased,}\end{equation}
where $T$ is the translation operator. In their case, the low rank structure of the cyclic group is essential. It implies that nilsequnces have bounded dimension, which gives a finite list of parameters associated to a nilsequence, playing the role of its coefficients. Ultimately, after important regularization step called the Sunflower Lemma, condition~\eqref{gtzcondition} is used to obtain a Freiman homomorphism.

\indent Similar application of inductive hypothesis is used in~\cite{QPU4} and an analogue of~\eqref{gtzcondition} becomes having a collection of linear maps $\phi_a : G \to G$ such that 
\begin{equation}\label{matrixcondition}\text{for at least }\Omega_c(|G|^3)\text{ additive quadruples }a_1 + a_4 = a_2 + a_3,\,\,\phi_{a_1} - \phi_{a_2}  - \phi_{a_3} + \phi_{a_4}\text{ has rank }O_c(1).\end{equation}

We note that a similar system of linear maps was studied by Kazhdan and Ziegler in~\cite{KazhZiegApproximateCohom}. However, they used the inverse theorem for the $\mathsf{U}^4$ norm to relate the system to a bilinear map from $G \times G$ to $G$ and they noted that a different argument could lead to a new proof of the inverse theorem.

\indent Since the group $G = \mathbb{F}_p^n$ has unbounded rank and thus linear maps have unbounded number of coefficients describing them, the question of understanding such a system of linear maps needs to be treated `abstractly' in the sense that we may only use abstract properties of linear maps, rather then their internal structure. In fact, owing to this abstract feature of the proof, it was possible to give a general $\mathsf{U}^4$ inverse theory in~\cite{QPU4genab}.

The distinction in behaviour between conditions~\eqref{gtzcondition} and~\eqref{matrixcondition}, and thus between cyclic groups and finite vector spaces, can also be explained using entropy of inverse theorems, defined by Tao in a blog post~\cite{TaoBlogEntropy}. In particular, the inverse theorems in the finite vector spaces are in entropic sense weaker than those in cyclic groups, implying that the inductive step requires additional work. Furthermore, in her survey~\cite{WolfSurvey}, Wolf notes that the inverse theory for $\|\|_{\mathsf{U}^k}$ norms for $k \geq 4$, has additional challenges in the finite vector space case, not present in the cyclic group case. For example, Manners~\cite{MannersUk} uses the specific structure of groups to avoid genuine “cohomological” obstructions. Such obstructions are one of the main sources of difficulty in the present work.

\indent Throughout the paper, by a system of maps, we mean a collection of maps $(\phi_a)_{a \in A}$, indexed by a dense subset $A \subseteq G$ for a finite vector space $G$. A recurrent theme in~\cite{QPU4} and this work are various notions of \emph{respectedness} for a system of $H$-valued maps $(\phi_a)_{a \in A}$. In~\cite{QPU4}, we have two primary such notions. Firstly, we say that an additive quadruple $a_1 + a_4 = a_2 + a_3$ is \textit{$r$-respected} if $\phi_{a_1} - \phi_{a_2}  - \phi_{a_3} + \phi_{a_4}$ has rank at most $r$. Thus, the starting assumption~\eqref{matrixcondition} says that a dense collection of additive quadruples is $r$-respected.

\indent The way the condition~\eqref{matrixcondition} is utilized in~\cite{QPU4} is by changing the category of maps and the notion of respectedness. Namely, instead of considering linear maps defined on the whole space $G$, we move to considering partially defined linear maps, meaning that we have $\phi_a : U_a \to H$, each equipped with its own domain $U_a$ of low codimension in $G$. While this change is detrimental, as the maps are no longer defined on $G$ and the situation is seemingly more complicated, we are able to use a stronger notion of respectedness. The second primary notion of respectedness is \emph{subspace-respectedness}, meaning that an additive quadruple has the linear combination $\phi_{a_1} - \phi_{a_2}  - \phi_{a_3} + \phi_{a_4}$ vanishing at all points where it is defined, which is the subspace $U_{a_1} \cap U_{a_2} \cap U_{a_3} \cap U_{a_4}$. Thus, the proof can be thought of as having four main phases:
\begin{itemize}
\item[\textbf{Phase 1.}] Starting with a system of linear maps $(\phi_a : G \to H)_{a \in A}$ for a dense set $A \subseteq G$ such that a dense collection of additive quadruples are $r$-respected, we relate it to another system of linear maps $(\phi'_a : G \to H)_{a \in U}$, defined on the whole of a subspace $U \leq G$ of low-codimension, in which all additive quadruples are $r'$-respected, for a reasonably small $r'$. The key ingredient introduced here is the abstract Balog-Szemer\'edi-Gowers theorem, which we shall describe in the next subsection.
\item[\textbf{Phase 2.}] We make the crucial change of category of maps and the notion of respectedness, passing to a system of partially defined linear maps $(\psi_a : V_a \to H)_{a \in U}$, in which almost all additive quadruples are subspace-respected.
\item[\textbf{Phase 3.}] We develop a variant of the bilinear Bogolyubov argument, to ensure that the subspaces $V_a$ depend linearly on $a$, i.e. $V_a = \langle \Phi_1(a), \dots, \Phi_s(a) \rangle^\perp$, for some linear maps $\Phi_1, \dots, \Phi_s : U \to G$. The price we pay is that the system again has maps defined only for a dense collection of indexing points in $U$, with only a dense collection of additive quadruples subspace-respected.
\item[\textbf{Phase 4.}] Finally, like in the first phase, we obtain another system of linear maps $(\psi'_a : V_a \to H)_{a \in U'}$, defined on the whole of a subspace $U' \leq G$ of low-codimension, in which all additive quadruples are subspace-respected. However, despite the superficial similarity and another use of the abstract Balog-Szemer\'edi-Gowers theorem, the details in this phase are significantly different than in the first phase, even when it comes to the application of the abstract Balog-Szemer\'edi-Gowers theorem, and require much more care and the use of algebraic regularity method, that we shall also describe in the next subsection.
\end{itemize}
\indent In the proof of Theorem~\ref{strFmult}, we follow similar broad strategy, however, the details are significantly different as we study systems of multilinear maps instead of systems of linear maps, which are much harder objects to understand. In particular, the algebraic regularity method is much subtler. In the next subsection, we discuss some of the main tools used in the proof, before giving the step-by-step outline, which appears in Section~\ref{proofoutlinesection}.

\subsection{Main tools} 

\noindent\textbf{Algebraic regularity method.} The algebraic regularity method originated in~\cite{U4paper}, where the observation that bilinear maps of high rank have good quasirandomness properties was used in the graph-theoretic fashion. More precisely, given a bilinear form $\beta : G \times G \to \mathbb{F}_p$ we may consider the bipartite graph whose vertex classes are copies of $G$ and $xy$ is an edge if $\beta(x,y) = 0$. Applying Szemer\'edi's regularity lemma directly would partition the vertices of this graph into subsets, most of whose pairs behave quasirandomly. However, the bounds stemming from that regularity lemma would eventually lead to bad bounds in the inverse theorem for $\mathsf{U}^4$ norm. Fortunately, the algebraic setting allows a stronger and more efficient regularity lemma, which we now state. 

\begin{theorem}[Algebraic regularity lemma, Corollary 5.2 in~\cite{U4paper} and Theorem 10 in~\cite{QPU4}] \label{arl}
    Suppose that $\beta \colon G \times G \to \mathbb{F}_p^r$ is a bilinear map and let $s > 0$. Then there exists a bilinear map $\gamma \colon G \times G \to \mathbb{F}_p^{r'}$, where $r' \leq r$, and a subspace $V$ of codimension at most $2rs$ such that 
    \begin{itemize}
        \item[\textbf{(i)}] for each $\lambda \not= 0$, the rank of $\lambda \cdot \gamma$ on $V \times V$ is at least $s$ (in this case we say that $\gamma$ has rank at least $s$),
        \item[\textbf{(ii)}] for all cosets $a + V$ and $b + V$ such that $\{\beta = 0\} \cap (a + V) \times (b + V) \not=\emptyset$ we have
        \begin{equation}\{\beta = 0\} \cap (a + V) \times (b + V) = \{\gamma = 0\} \cap (a + V) \times (b + V).\label{allcosetseq}\end{equation}
    \end{itemize}
\end{theorem}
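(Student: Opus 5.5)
We prove the algebraic regularity lemma by an iterative rank-reduction argument, with $r$ as the potential. We start with $\gamma_0 = \beta$, $r_0 = r$, $V_0 = G$, and maintain the invariant that $\gamma_i\colon G\times G \to \mathbb{F}_p^{r_i}$ satisfies
\[\{\beta = 0\} \cap (a + V_i) \times (b + V_i) = \{\gamma_i = 0\} \cap (a + V_i) \times (b + V_i)\]
for every pair of cosets on which the left-hand side is nonempty. We also keep $\operatorname{codim} V_i \le 2s(r - r_i)$.

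\textbf{The iteration step.} If $\gamma_i$ has rank at least $s$ on $V_i\times V_i$, we stop. Otherwise there is $\lambda\ne 0$ with $\operatorname{rank}(\lambda\cdot\gamma_i|_{V_i\times V_i}) < s$. We then write $\lambda\cdot\gamma_i(x,y) = \sum_{j< s} \alpha_j(x)\beta_j(y)$ on $V_i\times V_i$ with linear forms $\alpha_j,\beta_j$. Let $V_{i+1} = V_i \cap \bigcap_j(\ker\alpha_j \cap \ker\beta_j)$, which costs codimension at most $2s$. Choose a coordinate $t$ with $\lambda_t\ne 0$, and let $\gamma_{i+1}$ consist of the $r_i - 1$ forms $\gamma_i^{(u)}$ for $u\neq t$. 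Then $r_{i+1}=r_i-1$, so the process stops after at most $r$ steps. This gives $r'\le r$ and codimension at most $2rs$. If all $r$ coordinates are removed, $\gamma$ is the zero map into $\mathbb{F}_p^0$ and (i) holds vacuously.

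\textbf{The key point: preserving (ii).} Fix cosets $a+V_{i+1}$ and $b+V_{i+1}$ on which $\{\beta=0\}$ is nonempty. These lie inside cosets of $V_i$, so by the invariant we may replace $\beta$ by $\gamma_i$ there. For $x=a+v$ and $y=b+w$ with $v,w\in V_{i+1}$, bilinearity gives
\[\lambda\cdot\gamma_i(a+v,b+w) = \lambda\cdot\gamma_i(a,b)+\lambda\cdot\gamma_i(a,w)+\lambda\cdot\gamma_i(v,b)+\lambda\cdot\gamma_i(v,w).\]
The last term vanishes, since $v\in\ker\alpha_j$ for all $j$. The middle terms are linear in $w$ and $v$ respectively, so this is not yet constant on the coset pair. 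This is the main obstacle.

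\textbf{Resolving the obstacle.} The fix is to intersect further with the kernels of the linear forms $w\mapsto\lambda\cdot\gamma_i(a,w)$ and $v\mapsto\lambda\cdot\gamma_i(v,b)$. These depend on the cosets, so we cannot do this naively. Instead we use the low-rank structure more carefully. We choose the decomposition so that $\lambda\cdot\gamma_i(x,y)=\sum_j \alpha_j(x)\beta_j(y)$ holds for all $x\in G$, $y\in V_i$ (and symmetrically). This is possible because rank is measured on $V_i\times V_i$, so we can extend the forms $\alpha_j,\beta_j$ to $G$ after fixing complements.

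A cleaner route is to work with the quotient: identify cosets with representatives in a fixed complement $W$ of $V_i$. Then $\lambda\cdot\gamma_i$ restricted to $G\times V_i$ is a linear map $G\to V_i^*$, and $\lambda\cdot\gamma_i$ restricted to $V_i\times G$ is a linear map $G\to V_i^*$. We pass to the subspace $V_{i+1}\subseteq V_i$ on which $\lambda\cdot\gamma_i(v,w)=0$ for all $v,w\in V_{i+1}$, and on which the cross terms $\lambda\cdot\gamma_i(a,w)$ become functions of the coset of $a$ only through finitely many linear forms. Those are the $\beta_j$, because $\lambda\cdot\gamma_i(a,w)$ with $w\in V_i$ lies in the span of the $\beta_j$ only if we use the $G\times V_i$ rank. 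To guarantee this, I would redefine the rank notion in the step as the rank of $\lambda\cdot\gamma_i$ on $G\times V_i$ plus on $V_i\times G$, at the cost of doubling, which is consistent with the $2rs$ budget.

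With this in place, on $V_{i+1}\subseteq\ker\beta_j$ the cross terms vanish. Hence $\lambda\cdot\gamma_i$ is constant on $(a+V_{i+1})\times(b+V_{i+1})$, and on a nonempty zero set that constant must be $0$. Therefore $\gamma_i^{(t)}$ is a fixed linear combination of the other coordinates there, and $\{\gamma_i=0\}=\{\gamma_{i+1}=0\}$ on the coset pair. This preserves the invariant, and checking that the rank bookkeeping stays within codimension $2rs$ is the delicate part I expect to require care.
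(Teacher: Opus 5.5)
The paper only quotes this result (it is neither proved nor used later), so I am judging your argument on its own terms. There is a genuine gap exactly at the cross terms you flag, and neither of your fixes closes it.

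\textbf{The decomposition on $G\times V_i$ need not exist.} A decomposition $\lambda\cdot\gamma_i(x,y)=\sum_{j<s}\alpha_j(x)\beta_j(y)$ valid for all $x\in G$, $y\in V_i$ is generally unavailable. The forms $w\mapsto\lambda\cdot\gamma_i(a,w)|_{V_i}$, for $a$ in a complement of $V_i$, are already fixed by $\gamma_i$ and need not lie in the span of the $\beta_j$. Extending $\alpha_j,\beta_j$ to $G$ does not change this. In general the rank of $\lambda\cdot\gamma_i$ on $G\times V_i$ is only bounded by $(s-1)+\operatorname{codim}V_i$.

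\textbf{Redefining the rank does not help.} If the stopping rule uses the $G\times V_i$ and $V_i\times G$ ranks, termination no longer gives (i), since a form can vanish identically on $V\times V$ while having large cross terms. If instead you keep the $V_i\times V_i$ rank as the trigger (which (i) requires) and then kill the cross terms, a step costs up to $2(s-1+\operatorname{codim}V_i)$. That version is a correct proof, but it only gives $\operatorname{codim}V_{i+1}\le 3\operatorname{codim}V_i+2s-2$, i.e.\ codimension up to $(s-1)(3^r-1)$.

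\textbf{The bound $2rs$ is unreachable for (ii) as stated.} This is not a bookkeeping issue: condition (ii) on the pairs $(a+V)\times V$, which always contain the zero $(a,0)$, genuinely constrains $V$ relative to all of $G$. Take $r=2$, $s=4$, $G=\mathbb{F}_p^{18}$,
\[\beta_1=x_1y_3+x_2y_4,\qquad \beta_2=\sum_{j\le 4}(x_jy_{4+j}+x_{8+j}y_j)+\sum_{j\le 3}x_{12+j}y_{15+j}.\]
Comparing slices of $\{\beta=0\}$ and $\{\gamma=0\}$ on $V\times V$ gives $\operatorname{span}\{\beta_i(x,\cdot)|_V\}=\operatorname{span}\{\gamma_j(x,\cdot)|_V\}$ for every $x\in V$. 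From this, each value of $r'$ fails:
\begin{itemize}
\item $r'=2$ is impossible. The left kernel of $\beta_1$ in $V$ has codimension at most $2$, and it would have to be covered by the $p+1$ left kernels of the forms $\mu\cdot\gamma$, each meeting it in codimension at least $2$.
\item $r'=1$ fails. It forces $\beta_1=0$ on $V\times V$, since otherwise $\operatorname{rank}\gamma|_{V\times V}\le\operatorname{rank}\beta_1|_{V\times V}\le 2$. Then, because $\{\gamma=0\}$ meets $(a+V)\times\{w\}$ for all $w$ outside a proper subspace of $V$, we get $\beta_1(G,V)=\beta_1(V,G)=0$. So $V\subseteq\{x_1=\dots=x_4=0\}$, where $\beta_2$ has rank $3<s$, again contradicting (i).
\item $r'=0$ forces $V$ into the left and right kernels over $G$ of $\beta_1$ and $\beta_2$. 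These intersect in $\{0\}$, of codimension $18>2rs=16$.
\end{itemize}
So for (ii) over all coset pairs you cannot do better than the exponential bound above. What your $2s$-per-step iteration does prove is the weaker statement in which (ii) is required only on $V\times V$ itself.
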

Apart from good bounds, this lemma is stronger than Szemer\'edi's regularity lemma in two additional ways: the subsets in the partition of vertex classes are cosets of the same subspace, and all pairs of subsets induce quasirandom bipartite graphs.

\indent However, when trying to extend this approach directly to the higher order case, translating hypergraph regularity theory into the algebraic setting quickly runs into problems and leads to a similar growth of bounds in Grzegorczyk hierarchy. The only difference in the bounds between those two contexts is that in algebraic setting the bounds reside one level lower in the hierarchy, so they are only good in the bipartite case. A key realization in~\cite{FreimanMultihom} is that effective equidistribution theorems for the obstruction functions can be used in place of the higher order regularity lemmas and avoid the terrible bounds that would otherwise arise. The key inverse theorem is a solution to the partition vs. analytic rank problem.

\begin{theorem}[Partition vs. analytic rank problem]\label{arankprankIntro}
    Let $\phi: G_1 \tdt G_k \to \mathbb{F}_p$ be a multilinear form. Suppose that $\on{bias} \phi \geq c$. Then $\on{prank} \phi \leq (\log 2c^{-1})^{O(1)}$.
\end{theorem}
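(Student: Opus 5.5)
The plan is an induction on $k$. The case $k=2$ is exact: $\on{bias}\phi=p^{-\on{rank}\phi}$ for a bilinear form. For $k\ge3$ the starting point is the identity
\[\on{bias}\phi=\Pr_{x_1,\dots,x_{k-1}}\big[\phi(x_1,\dots,x_{k-1},\cdot)\equiv0\big],\]
obtained by averaging over the last variable. Write $\psi(x_{[k-1]})=\phi(x_{[k-1]},\cdot)\in G_k^*$, a $(k-1)$-linear map. Its zero set $Z=\{\psi=0\}$ then has density at least $c$ in $G_1\tdt G_{k-1}$. The goal is to show that a $(k-1)$-linear map with a dense zero set is, after removing few low-order pieces, identically zero. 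Since every piece carries $x_k$ linearly, this is what turns into a partition rank bound for $\phi$.

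\textbf{Step 1 (regularization).} Choose a basis $e_1,\dots,e_m$ of $G_k$ and regard $\psi$ as the collection of $(k-1)$-linear forms $\psi_i=\phi(\cdot,e_i)$. Fix a threshold $s=(\log 2c^{-1})^{C}$. Repeatedly, if some nontrivial linear combination $\lambda\cdot\psi$ restricted to the current variety has partition rank below $s$, record its low-rank decomposition and pass to the variety cut out by the lower-order factors. Otherwise stop.

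To control the number of iterations, use the inductive hypothesis in the form ``high partition rank $\Rightarrow$ bias at most $p^{-\Omega(s^{1/C})}$''. With this, a family $\lambda\cdot\psi$ that is still of high rank forces $\Pr[\psi=0]\le p^{-\dim}$ up to a small error, where $\dim$ is the dimension of the surviving span. Since the density of $Z$ is at least $c$, the surviving span has dimension $O(\log 2c^{-1})$.

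\textbf{Step 2 (putting the pieces back together).} On the resulting multilinear variety, every remaining combination has high rank, so all but $O(\log 2c^{-1})$ directions $\lambda$ give forms vanishing identically there. The variety is defined by $O(s\log 2c^{-1})$ forms of lower order. A Nullstellensatz-type statement for high-rank multilinear varieties then says that a multilinear form vanishing on such a variety lies in the ideal generated by its defining forms, so it has partition rank bounded by their number. Applying this to the surviving directions and recombining with $x_k$, we can write $\phi$ as a sum of $(\log 2c^{-1})^{O(1)}$ products of a form in a proper subset of the variables with a form in the complementary variables. This gives the claimed bound on $\on{prank}\phi$.

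\textbf{Main obstacle.} The difficulty is keeping Step 1 polynomial. Naive regularization refines the variety repeatedly and compounds the rank thresholds, which produces tower-type losses. To avoid this I would bootstrap: use the inductive hypothesis with its polylogarithmic bound when bounding the size of the regularizing family, and use the $(k-1)$-linear equidistribution on varieties, which follows from Theorem~\ref{arankprankIntro} for smaller $k$, so that each refinement costs only polynomially in $s$. The delicate point is proving the Nullstellensatz step with only polynomial dependence on the rank. I would approach it by averaging the identity $\phi\equiv0$ on the variety over random restrictions of one variable at a time.
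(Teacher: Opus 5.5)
The paper does not prove this statement. It imports it from Janzer~\cite{Janzer2}, Mili\'cevi\'c~\cite{LukaRank} and Moshkovitz--Zhu~\cite{MoshZhuRank} and uses it as a black box. So your proposal has to stand on its own, and it has a genuine gap in Step~1.

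\textbf{Step 1 does not bound the number of rounds.} Each round makes the chosen form $\lambda\cdot\psi$ vanish on the new variety. It therefore enlarges by at least one the subspace $K$ of directions $\lambda$ with $\lambda\cdot\psi\equiv 0$ on the current variety. You stop only when every $\lambda\notin K$ gives a high-rank form. Your equidistribution argument bounds the codimension of $K$ at termination (the dimension of the ``surviving span'') by $O(\log 2c^{-1})$. That is not the number of rounds. The number of rounds can be as large as $\dim K$, i.e.\ of order $\dim G_k$, and each round adds up to $2s$ defining forms. Nothing in the procedure forces a single cut to kill many directions.

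For example, take $k=4$ and $\phi(x,y,w,z)=a(x)\sum_i y_iw_iz_i$, so $\on{bias}\phi\ge 1/p$. The identity $e_i\cdot\psi=(a(x)y_i)\cdot w_i$ is a legitimate partition-rank-one decomposition. Cutting by its factors kills only the direction $e_i$. Hence your procedure may run for $\dim G_4$ rounds and end with a variety of codimension $2\dim G_4$. The claim in Step~2 that the variety is cut out by $O(s\log 2c^{-1})$ forms is therefore unsupported.

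\textbf{The fixed threshold $s$ is also inconsistent.} To pass from high rank to equidistribution on the current variety $V$ via the inductive hypothesis, you expand $\id_V$ into characters and remove lower-order forms by Gowers--Cauchy--Schwarz. This costs a factor $p^{-O(\on{codim}V)}$. So $s$ must exceed a polynomial in $\on{codim}V$, and $\on{codim}V$ is already of order $s$ after one round.

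A bound independent of $\dim G_k$ is exactly what the theorem asserts, and your proposal contains no mechanism producing one. You would first need to extract a bounded amount of structure from $Z$. Examples would be a bounded-codimension multilinear variety inside $Z$, or boundedly many factors shared by all the $\lambda\cdot\psi$. Only then can you regularize a bounded list of forms, level by level over down-sets, so that each threshold depends only on codimensions already fixed.

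\textbf{Step 2 is also not in place.} The variety coming out of Step~1 is cut out by arbitrary lower-order factors of individual decompositions, with no regularity imposed on them. A multilinear Nullstellensatz would say that vanishing on $\{\beta=0\}$ implies $f=\sum_j\beta_j\gamma_j$ with multilinear cofactors. Such a statement can only be expected when the defining system has high rank. So those defining forms must themselves be regularized, which is a further iteration you have not accounted for. The Nullstellensatz with polynomial dependence is itself a substantial claim that you only say you would approach.

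Your recombination is fine once such ideal membership is available. In a basis of $G_k$ adapted to $K$, the vanishing directions contribute $\sum_j\beta_j(x_{I_j})\Gamma_j(x_{[k-1]\setminus I_j},x_k)$. This gives $\on{prank}\phi\le\on{codim}K$ plus the number of $\beta_j$. But the two ingredients it rests on, a bounded number of defining forms and ideal membership, are exactly the ones missing. Results like Lemma~\ref{connectedlayerslemma}, which says layers of quasirandom varieties are non-empty and of bounded diameter, are the kind of tool that propagates vanishing across a quasirandom variety with polynomial losses. They too presuppose that regularity has already been arranged for a bounded system.
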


The necessary definitions can be found in Subsection~\ref{multalgsubsec}. The above result was initially proved for polynomials rather than multilinear forms in a work of Green and Tao~\cite{GreenTaoPolys}. Their approach was refined by Kaufman and Lovett~\cite{KaufmanLovett} and finally Bhowmick and Lovett~\cite{BhowLov} adapted it to the multilinear setting. In those works, the bounds on the partition rank in terms of $c$ were Ackermannian-type. First polynomial bounds for this problem were obtained by Janzer~\cite{Janzer2} and by the author~\cite{LukaRank}, answering a question of Kazhdan and Ziegler~\cite{KazhZiegQn}. Moshkovitz and Zhu~\cite{MoshZhuRank} obtained almost linear bounds and there are even linear bounds in some closely related results~\cite{LukaLowCodim, MobiusOptimal}.

\indent Essentially, Theorem~\ref{arankprankIntro} says that if a multilinear form is not quasirandom, which means that it has an uneven distribution of values, then the form has a very strong algebraic structure, essentially it is built using a very small number of lower order multilinear forms. In~\cite{FreimanMultihom}, this theorem was used in several different ways, in particular as a way of partitioning a given variety into regular subvarieties. The present paper contains a significantly more direct proof of Theorem~\ref{strFmult}, and Theorem~\ref{arankprankIntro} is applied directly frequently, but the geometric perspective on the multilinear varieties and their quasirandom properties is key throughout the paper.

\noindent\textbf{MM-$r$-maps theory.} Recall that we begin with a system of multilinar maps from $G_1 \tdt G_k$ to $H$ and that an important step in the proof of the $\|\cdot\|_{\mathsf{U}^4}$ inverse theorem is passing from global linear maps to partially defined linear maps. Natural analogue in the higher dimensional case are multilinear maps defined on multilinear varieties of bounded codimension. We refer to these as the \textit{MM-$r$-maps}, where $r$ is the bound on the codimension. The difficulty in studying such maps stems from the fact that MM-$r$-maps are not merely restrictions of global multilinear maps. In full generality, such maps where first studied by Gowers and the author in~\cite{MultilinearExtensions}, where it was shown that, despite the mentioned difficulty, they still coincide with global multilinear maps, after possibly passing to a further small-codimensional variety. Special case of bilinear maps on bilinear varieties was treated in~\cite{U4paper}, and the problems of similar flavour were considered for the case of high-rank varieties by Kazhdan and Ziegler in~\cite{KazhZiegExtnQuad, KazhZiegExtnQuadHigh, KazhZiegQn}. However, finding a high-rank subvariety inside the given variety is inefficient and leads to significantly worse bounds on the codimension than those in~\cite{MultilinearExtensions}.

\indent The proof in~\cite{MultilinearExtensions} relies on the algebraic regularity method and a significant part of this paper is devoted to developing that theory further. For example, the multilinear Bogolyubov argument in the third phase of the proof requires us to understand when two MM-$r$-maps $\phi_1 :U_1 \to H$ and $\phi_2 : U_2 \to H$, agreeing on the intersection of domains, have a common multilinear extension. For linear maps, such a question is an easy exercise in linear algebra. In the multilinear setting, such a question is surprisingly deep.

\indent Furthermore the fourth phase of the proof requires us to consider, for a multilinear variety $V \subseteq G_0 \times G_1 \tdt G_k$, systems of multilinear maps whose domains are slices $V_{x_0} = \{(y_1, \dots, y_k) : (x_0, y_1, \dots, y_k) \in V\}$. In this paper, we show that we may think of multilinear varieties as \textit{weak sheaves}, in the sense that, after intersection with a lower order variety, we have a form of locality and gluing property.

\indent Finally, we need to understand MM-$r$-maps which map most of their domain to 0. In the linear setting, it is an easy exercise to show that linear maps with $99\%$ of zeros are actually zero maps. That is no longer the case in the multilinear setting, and we employ the extension theory developed in~\cite{MultilinearExtensions} to get a useful structural result on such MM-$r$-maps. 

\noindent\textbf{Abstract Balog-Szemer\'edi-Gowers theorem.} As it is well-known the Balog-Szemer\'edi-Gowers theorem was proved with polynomial bounds by Gowers in~\cite{GowU4}. The place where Balog-Szemer\'edi-Gowers theorem is used in that paper is the proof of the structure theorem for approximate homomorphisms. Concretely, the principal role is to move from a function $f$ that respects a dense collection of additive quadruples in the sense that $f(a_1) - f(a_2) - f(a_3) + f(a_4) = 0$, to a function that respects all additive quadruples in its domain. However, for the purposes of studying systems with property~\eqref{matrixcondition}, that theorem is insufficient. In~\cite{QPU4}, an abstract version was developed, which was significantly more general and flexible, appearing as Theorem~\ref{absg} in this paper. Its flexibility allows us to control much less robust notions of respectedness that we are forced to consider.

\noindent\textbf{Paper organization.} In the next section, we give a detailed proof overview. In Section~\ref{prelimSection}, we gather useful auxiliary results. The theory of MM-$r$-maps is developed in Section~\ref{extensionSection}. After that, there are four sections, each corresponding to a phase of the proof. Finally, the last section is devoted to putting all ingredients together and completing the proof of Theorem~\ref{strFmult}. 

\noindent\textbf{Acknowledgements.} This research was supported by the Ministry of Science, Technological Development and Innovation of the Republic of Serbia through the Mathematical Institute of the Serbian Academy of Sciences and Arts, and by the Science Fund of the Republic of Serbia, Grant No.\ 11143, \textit{Approximate Algebraic Structures of Higher Order: Theory, Quantitative Aspects and Applications} - A-PLUS.

\noindent\textbf{Declaration of AI use.} No AI was used in the idea generation, nor document creation.

\section{Proof overview}\label{proofoutlinesection}

We prove Theorem~\ref{strFmult} by induction on $k$. Hence, we consider a Freiman multihomomorphism $\varphi : A \to H$, where $A \subseteq G_0 \times G_1 \tdt G_k$, with $G_0$ having a special role, and assume the structure result for $k$ variables. We write $G_{[k]}$ as a shorthand for $G_1 \tdt G_k$.

As indicated in the introduction, the recurring theme in the paper is that we study systems of maps $(\phi_x)_{x \in X}$, indexed by a subset $X$ of $G_0$, that have various properties that serve as approximate linearity, which all have the form of respecting additive quadruples, namely, when $a_1 - a_2 + a_3 - a_4 = 0$, we have
\[\phi_{a_1} - \phi_{a_2} + \phi_{a_3} - \phi_{a_4}\]
vanishing entirely, or on a large set, depending on the context. The reason that we say that this is a recurring theme is that almost every aspect will change over time.
\begin{itemize}
    \item The category of maps will change. We begin with multilinear maps defined on $G_{[k]}$, then consider multilinear maps $\phi_x : V_x \to H$ whose domains $V_x$ are multilinear varieties, but at first $V_x$ do not have an additional special structure in terms of $x$, and finally the varieties $V_x$ themselves depend linearly on $x$.
    \item Respectedness condition will change. Firstly it will mean that $\phi_{a_1} - \phi_{a_2} + \phi_{a_3} - \phi_{a_4}$ has many zeros, then that $\phi_{a_1} - \phi_{a_2} + \phi_{a_3} - \phi_{a_4}$ is closely related to one of the members of a short list of multilinear maps, then that $\phi_{a_1} - \phi_{a_2} + \phi_{a_3} - \phi_{a_4}$ vanishes on a suitable variety.
    \item Sometimes the indexing set will be merely dense, and sometimes the full vector space $G_0$.
    \item Sometimes all additive quadruples in a set will be respected and sometimes only a dense collection.
\end{itemize}

The main changes in the notion of respectedness and the types of arguments in the proof will be marked as phases of the proof. Each phase might have a few steps.

\indent Write $Z(\phi)$ for the set of zeros of a $H$-valued map $\phi$. For an additive quadruple $(x_1, x_2, x_3, x_4)$ and multilinear maps $\phi_{x_1}, \dots, \phi_{x_4} : G_{[k]} \to H$, we say that $(x_1, x_2, x_3, x_4)$ is \textit{$c$-respected} if $|Z(\phi_{x_1} - \phi_{x_2} + \phi_{x_3} - \phi_{x_4})| \geq c|G_{[k]}|$. If $\phi_{x_i} : V_{x_i} \to H$ are multilinear maps on multilinear varieties, we say that the quadruple is \textit{variety-respected} if $\phi_{x_1} - \phi_{x_2} + \phi_{x_3} - \phi_{x_4} = 0$ on $V_{x_1} \cap \dots\cap V_{x_4}$.

\noindent\textbf{Preliminary step.} \textit{Obtaining a system of multilinear maps.} By using the inductive hypothesis, we get a dense collection of $x \in G_0$, for which $\varphi(x, \bcdot)$ coincides with a multiaffine map $\phi_x : G_{[k]} \to H$ on a dense set. We may use directional convolutions to ensure that $\phi_x$ are multilinear maps and use the assumption on Freiman homomorphism in direction $G_0$ to conclude that for many additive quadruples $(x_1, x_2, x_3, x_4)$ the linear combination $\phi_{x_1} - \phi_{x_2} + \phi_{x_3} - \phi_{x_4}$ vanishes frequently.

\noindent\textbf{Phase 1.} The first phase is \textit{completion} of a system of global multilinear maps, in the sense that, starting with a system index by a dense set, in which a dense collection of additive quadruples is $c$-respected, we obtain a system indexed by \textit{all points in a subspace}, in which \textit{all additive quadruples} are $O_c(1)$-respected.

\begin{itemize}
    \item[\textbf{Step 1.1.}] \textit{Abstract BSG step 1.} We define sets of additive quadruples $\mathcal{Q}_i$ which satisfy the zero density of $\phi_{x_1} - \phi_{x_2} + \phi_{x_3} - \phi_{x_4}$ being at least $c^i$ for some $c$. By positive correlation of multilinear varieties, these collections of additive quadruples satisfy conditions of the abstract Balog-Szemer\'edi-Gowers theorem. Hence, we end up with a dense set $X$ and a reasonably short list of multilinear maps $\psi_1, \dots, \psi_m : G_{[k]} \to H$ such that every additive quadruple in $X$ satisfies $\phi_{x_1} - \phi_{x_2} + \phi_{x_3} - \phi_{x_4} = \psi_i$ on a dense set. In other words, all additive quadruples are $c^{O(1)}$-respected up to an \textit{error function} $\psi_i$.
    
    \item[\textbf{Step 1.2.}]\textit{Ensuring all additive quadruples are zeroes-respected.} We need to remove those $\psi_i$ with very few zeroes. This step is similar in spirit to analogous one in the proof of the $\mathsf{U}^4$ inverse theorem, based on the algebraic dependent random choice. Eventually, we choose products of bounded dimension subspaces $P_1, \dots, P_\ell \subseteq G_1 \tdt G_k$, projections $\pi_1, \dots, \pi_\ell : H \to \mathbb{F}_p^s$ and vectors $\mu_1, \dots, \mu_\ell \in \mathbb{F}_p^s$ suitably at random and define $X' = \{x \in X : (\forall i \in [\ell]) \pi_j \circ \phi_x |_{P_i} = \mu_i\}$ as the desired subset of indexing elements. The key point here is that if $P_1, \dots, P_\ell$ and $\pi_1, \dots, \pi_\ell$ are chosen in the right way, then all additive quadruples in $X'$ are zeroes-respected for any choice of $\mu_1, \dots, \mu_\ell$, which we may then simply choose to maximize the size of $X'$.

 \item[\textbf{Step 1.3.}]\textit{Obtaining a full system of linear maps.} This step is an application of the robust Bogolyubov-Ruzsa theorem and relies on the fact that the property of being $c$-respected behaves well with respect to addition. As a result, we have $x$ ranging over a whole subspace, and all additive quadruples being $O_c(1)$-respected, completing the linear system.
\end{itemize}

\noindent\textbf{Phase 2.} In this phase, our main goal is to get a stronger notion of respectedness. To achieve this, we make a \textit{change of category}. 

\textbf{Step 2.1.} \textit{Changing the category.} We pass to multilinear maps defined on multilinear varieties of bounded codimension instead of the globally defined multilinear maps, with the notion of respectedness strengthened, so that of most of additive quadruples now are variety-respected. This step is subtler than its $\mathsf{U}^4$ counterpart, where we used a simple random choice argument, and the fact that a linear map with $1-o(1)$ proportion of zeros is necessarily a zero map. Unfortunately, such a fact on zero density does not hold for multilinear maps on multilinear varieties. Instead, we need to carefully keep track of low partition rank decompositions of linear combinations $\sum_{i \in [\ell]} (-1)^i\phi_{x_i}$, for bounded $\ell$, and rely on the algebraic regularity method.

\noindent\textbf{Phase 3.} In this phase, our main goal is to obtain a \textit{linear system of varieties}, i.e. to ensure that domains $V_a$ depend linearly on $a$. In other words, we want to ensure that $V_a = \{x_{[k]} \in G_{[k]} : (a, x_{[k]}) \in W\}$ for a multilinear variety $W \subseteq G_{[0,k]}$ of bounded codimension.

\textbf{Step 3.1.} \textit{Multilinear Bogolyubov argument.} In $\mathsf{U}^4$ proof, we employed a variant of the bilinear Bogolyubov argument~\cite{bilbog1, bilbog2, HosseiniLovett}, based on the following observation. If $\phi_{x+a} : U_{x+a} \to H,$ $\phi_{x} : U_x \to H$, $\phi_{x} : U_x \to H$ and $\phi_{x} : U_x \to H$ are linear maps forming a subspace-respected additive quadruple, then $\phi_{x+a} - \phi_a$ and $\phi_{y+a} - \phi_y$ are linear maps defined on $U_{x+a} \cap U_x$ and $U_{y+a} \cap U_y$ respectively, agreeing on the intersection of their domains $U_{x+a} \cap U_x \cap U_{y+a} \cap U_y$. Elementary linear algebra gives us the existence of a common extension $\psi$, defined on the subspace $(U_{x+a} \cap U_x) + (U_{y+a} \cap U_y)$ and bilinear Bogolyubov argument allows us to find subspaces $V_a$ depending linearly on $a$ in such sums. These particular sums of subspaces were studied by Hosseini and Lovett~\cite{HosseiniLovett} and their quasipolynomial argument is important for this step.

\indent In the higher order case, due to the complicated nature of MM-$r$-maps, finding such extensions is not always possible, and is a much more difficult task, which requires development of the simultaneous extension theory in Section~\ref{extensionSection}.

\noindent\textbf{Phase 4.} In the fourth phase we obtain \textit{completion} of a system of MM-$r$-maps defined on a linear system of varieties.

\begin{itemize}
    \item[\textbf{Step 4.1.}] \textit{Abstract BSG step 2.} In the system of maps $(\phi_x)_{x \in X}$, the map $\phi_x$ now has domain $U_x \cap C$, for a linearly varying variety $U_x$ and a common variety $C$, meaning that it is independent of $x$. Allowing variation in $C$ is crucial to obtain the weak-transitivity for the abstract Balog-Szemer\'edi-Gowers theorem. We obtain a bounded codimension variety $W$ and a reasonably short list of multilinear maps $\psi_1, \dots, \psi_m : W \to H$, playing the role of error functions, that coincide on the intersection of domains with linear combinations of $\phi_x$. It is important that all these maps are defined on the same variety $W$.

\item[\textbf{Step 4.2.}] \textit{Ensuring that all quadruples in a subspace are variety-respected up to error functions.} Compared to first phase, we now make a twist, and apply robust Bogolyubov-Ruzsa theorem first to ensure that we have a full subspace as an indexing set, with all additive quadruples variety-respected up to error function in a finite list. This is a departure from the $\mathsf{U}^4$ proof, which is ultimately required due to difficulties in working with MM-$r$-maps, which are not present in the case of partially-defined linear maps.

\item[\textbf{Step 4.3.}] \textit{Getting a dense set in which all additive quadruples are variety-respected.} Like in the \textbf{Step 1.2}, we need to remove the error maps in the list originating in the \textbf{Step 4.1}. In order to apply an algebraic dependent random choice as in the \textbf{Step 1.2}, we must first remove those maps $\psi_i : W \to H$ that vanish on $1-c_k$ proportion of points in the variety $W$, for some constant $c_k$ depending on $k$ only. In the case of $\mathsf{U}^4$ theorem, we had linear maps, and they vanish automatically on the whole subspace $W$ in such a case. In the case of MM-$r$-maps, such a condition is very hard to work with and we need to use the full power of the extension theory of MM-$r$-maps to deduce the following key dichotomy: we may find a variety $W' \subseteq W$ of small codimension and a short list of points $x^{(1)}_{[k]}, \dots ,x^{(\ell)}_{[k]}$ such that, every $\psi_i$ with $(1-c_k)$-dense zero set either vanishes on the whole $W'$ or $\psi_i(x^{(j)}_{[k]}) \not=0$ for some $j \in [\ell]$. Once we obtain the dichotomy, the algebraic dependent random choice argument can once again be used.

\item[\textbf{Step 4.4.}] \textit{Getting a full subspace in which all additive quadruples are variety-respected.} This is another application of the robust Bogolyubov-Ruzsa theorem; in fact the argument of the \textbf{Step 4.2} can be applied here directly.
\end{itemize}

\noindent\textbf{Final step.} \textit{Completing the proof.} Once we get a multilinear map on a multilinear variety, using the known extension theory, we may pass to a global multilinear map and conclude the proof.

\section{Preliminaries}\label{prelimSection}

Throughout the paper, prime $p$ is fixed, $G_0, \dots, G_k$ and $H$ are finite-dimensional vector spaces over the prime field $\mathbb{F}_p$. We choose non-trivial character $\omega : \mathbb{F}_p \to \mathbb{C}$ given by $\omega(\lambda) = \exp(2 \pi i \lambda)$. As is standard, we use averaging notation $\ex_{x \in X}$ as a shorthand for $\frac{1}{|X|} \sum_{x \in X}$ for any finite set $X$. When $X$ is understood from the context, we write $\ex_x$.

We treat $p$ and $k$ as fixed constant and suppress them from the asymptotic notation.

\noindent\textbf{Sequence notation.} Given a sequence of elements $(a_i)_{i \in I}$, we frequently abbreviate it to $a_I$. We also write $G_I$ as a shorthand for the product $\prod_{i \in I} G_i$. Thus, points in the product $G_I$ will typically be denoted by $x_I$. In particular, we have $G_{[k]} = G_1 \tdt G_k$ and its elements are typically $x_{[k]}$.

\noindent\textbf{Main definitions.} For a subset $S \subseteq G_I$, we say that a function $\phi : S \to H$ is \textit{multilinear} if equality $\phi(x_I) + \phi(y_I) = \phi(z_I)$ holds for all triples of points $x_I, y_I, z_I$ in the set $S$ which differ in a single coordinate $i \in I$ and satisfy $x_i + y_i = z_i$. When $H = \mathbb{F}_p$, we say that $\phi$ is a multilinear form. Frequently, $S = G_I$, in which case we sometimes say that $\phi$ is a \textit{global} multilinear map/form in order to stress the particular structure of the domain, though this will be clear from context.

\indent We say that a map $\beta : G_{[k]} \to \mathbb{F}_p^r$ is \textit{mixed-linear} if each of its component $\beta_i$ is multilinear on a subset of coordinate $I_i$, i.e. $\beta_i : G_{I_i} \to \mathbb{F}_p$ is a global multilinear form. If $\mathcal{P}$ is a collection of subsets of $[k]$, we say that $\beta$ is \textit{$\mathcal{P}$-dependent} if $I_i \in \mathcal{P}$ for all $i \in [r]$. A \textit{multilinear variety of codimension at most $r$} is a set $V$ of the shape $V = \{\beta = 0\}$ for a mixed-linear map $\beta : G_{[k]} \to \mathbb{F}_p^r$, where $\{\beta = 0\}$ is a shorthand for $\{x_{[k]} \in G_{[k]} : \beta(x_{[k]}) = 0\}$. Sometimes, instead of $\{\phi = 0\}$, we write $Z(\phi)$ and call it the \textit{vanishing set} of $\phi$.

\indent Finally, a central role in this paper will be played by multilinear maps $\phi : V \to H$ when $V$ is a multilinear variety of codimension at most $r$. In that case, we say that $\phi$ is a MM-$r$-map. Similarly to above, we say that $\phi$ is \textit{MM-$r$-form} if the codomain is $\mathbb{F}_p$.

For a finite-dimensional vector space $G$ over $\mathbb{F}_p$, a dot product $\cdot : G \times G \to \mathbb{F}_p$ is a symmetric, non-degenerate bilinear form. For the canonical vector space $\mathbb{F}_p^r$, for vectors $\lambda, \mu \in \mathbb{F}_p^r$ we write $\lambda \cdot \mu = \sum_{i \in [r]} \lambda_i\mu_i$. We also misuse the notation and for a vector $\lambda \in \mathbb{F}_p$ and a sequence of elements $a_1, \dots, a_r \in G$, we write $\lambda \cdot a = \sum_{i \in [r]} \lambda_i a_i$.

We recall a few important measures of structure of global multilinear forms. The \textit{bias} of a multilinear form $\alpha : G_{[k]} \to \mathbb{F}_p$ is defined by
\[\on{bias}\alpha = \exx_{x_{[k]} \in G_{[k]}} \omega(\alpha(x_{[k]})).\]
\textit{Partition rank} of $\alpha$ is the least integer $r$ such that $\alpha$ can be written as
\[\alpha(x_{[k]}) = \sum_{i \in [r]} \beta_i(x_{I_i})\gamma_i(x_{[k] \setminus I_i})\]
for some multilinear forms $\beta_i : G_{I_i} \to \mathbb{F}_p$ and $\gamma_i : G_{[k]\setminus I_i} \to \mathbb{F}_p$.

We say that a multilinear form $\alpha : G_I \to \mathbb{F}_p$ is \textit{$\eta$-quasirandom} with respect to a mixed-linear map $\beta : G_{[k]} \to \mathbb{F}_p^r$ if for all $\lambda \in \mathbb{F}_p^r$ we have
\[\Big|\exx_{x_{[k]} \in G_{[k]}} \omega\Big(\alpha(x_{I}) + \sum_{i \in [r]} \lambda_i \beta_i(x_{[k]})\Big)\Big| < \eta.\]
More generally, if  $\alpha : G_I \to \mathbb{F}_p^s$ is a multilinear map, we say that it is \textit{$\eta$-quasirandom} with respect to a mixed-linear map $\beta : G_{[k]} \to \mathbb{F}_p^r$ if all non-zero linear combinations of components of $\alpha$ are forms with that property, namely if for all $\mu \in \mathbb{F}_p^s\setminus\{0\}$ the form $\mu_1 \alpha_1 + \dots + \mu_s \alpha_s$ is $\eta$-quasirandom with respect to $\beta$.

\noindent\textbf{Additional notation for varieties and maps.} When $V$ is a multilinear variety, coming from a mixed-linear map $\beta$, we write $V^{(I)}$ for the multilinear variety in $G_{I}$ defined by $\beta_i$ with $I_i \subseteq I$. Furthermore, we denote \textit{slice} $V_{x_I} = \{y_{I^c} \in G_{I^c} : (x_I, y_{I^c}) \in V\}$. We also use the slice restriction notation for maps, if $\phi : X \to H$ is a map on a set $X \subseteq G_{[k]}$, we write $\phi_{x_I} : y_{I^c} \mapsto \phi(x_I, y_{I^c})$.

\noindent\textbf{Asymptotic notation.} As in~\cite{FreimanMultihom}, we use notation $\blc$ and $\bsc$ to simplify the mathematical statements that would otherwise read as `There exists a positive constant $C$ such that the following holds...'. This notation hides positive constants whose particular values are not important. There is no logical difference between $\blc$ and $\bsc$, but we use them to suggest to the reader that we think of some constants as large enough, so we use $\blc$, and of some as small enough, when we use $\bsc$. For example,
\[(\forall x, y > 1) \hspace{3pt} \Big(x \geq \blc y^{\blc}\hspace{3pt}\implies\hspace{3pt}x \geq 100 y^2 \log y\Big)\]
is a shorthand for 
\[(\exists C_1, C_2 > 0)(\forall x, y > 1) \hspace{3pt} \Big(x \geq C_1 y^{C_2}\implies x \geq 100 y^2 \log y\Big).\]
Another example of use of this notation is the following statemnt: if a linear map $\phi: G\to H$ has at least $(1 - \bsc)|G|$ zeros then it is a zero map.

\indent Formally, let $x, y_1, \dots, y_m$ be variables, let $p_1, \dots, p_n$ let parameters, let $X, Y_1, \dots, Y_m \subset \mathbb{R}$ be sets, let $f \colon \mathbb{R}^m \times \mathbb{R}^n \to \mathbb{R}$ be a function and let $P(x, y_1, \dots, y_m)$ be a proposition whose truth value depends on $x, y_1, \dots, y_m$. Then we define 
\[(\forall x \in X)(\forall y \in Y_1)\dots (\forall y \in Y_m)\ \Big( x \geq f(y_1, \dots, y_m; \blc, \dots, \blc) \implies P(x,y_1,\dots,y_m)\Big)\] 
to be a shorthand for
\begin{equation}(\exists C_1 > 0) \dots (\exists C_n > 0)(\forall x \in X)(\forall y \in Y_1)\dots (\forall y \in Y_m) \hspace{3pt} \Big(x \geq f(y_1, \dots, y_m; C_1, \dots, C_n) \implies P(x,y_1,\dots,y_m)\Big).\label{fullConEqn}\end{equation} 

If there is additional dependency on some parameter $q$, we write $\blc_q$ and $\bsc_q$. For more examples, see Preliminaries section in~\cite{FreimanMultihom}.

The notation will be also useful in proofs in the situation when we need to assume that one quantity is sufficiently small in terms of another in the obvious way. For example, in the context of multilinear varieties of codimension $r$, frequently we shall need that some small parameter $\eta$ is smaller than, for example, $p^{-100^kr}$. The notation above simplifies the condition to $\eta \leq p^{-\blc r}$.

\subsection{Additive combinatorics}

The following lemma, is a variant of the graph-theoretic heart of Gowers's proof of Balog-Szemer\'edi-Gowers theorem,  appearing implicitly~\cite{GowU4}. 

\begin{lemma}[Lemma 6 in~\cite{QPU4}]\label{gowerspathssingle}Let $G$ be a graph on $n$ vertices with at least $cn^2$ edges. Then there exists a  subset of vertices $X$ of size at least $2^{-5} c n$ with the property that there are at least $2^{-35} c^9 n^5$ paths of length 6 between any two vertices in $X$.\end{lemma}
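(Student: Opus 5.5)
We follow Gowers's dependent random choice argument from the proof of the Balog--Szemer\'edi--Gowers theorem, adapted to paths of length 6. Write $N(v)$ for the neighbourhood of $v$ and $d(v) = |N(v)|$, so $\sum_v d(v) \geq 2cn^2$. A path of length 6 between $x$ and $y$ is a walk $x=v_0,v_1,\dots,v_6=y$. We will produce, for every pair $x,y\in X$, many choices of $(v_1,\dots,v_5)$. Degenerate walks with repeated vertices number at most $O(n^4)$, so we first prove the count for walks and absorb degenerate ones under the harmless assumption that $n$ is large compared to $c^{-9}$. Note that the statement is trivial when $c^9 n^5$ is smaller than a constant.

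\textbf{Step 1: a good set of neighbours.} Call a pair $(u,w)$ \emph{bad} if $|N(u)\cap N(w)| < \epsilon n$, with $\epsilon = \bsc c^2$. Pick a random vertex $z$ and let $Y = N(z)$. Then $\ex|Y| \geq 2cn$, while the expected number of bad pairs inside $Y$ is at most $\epsilon n \cdot n$. Hence we can fix $z$ with
\[|Y|^2 - \tfrac{|Y|\,n}{\dots}\;\text{(suitably weighted)}\; \geq \dots,\]
that is, we choose $z$ to maximise $|Y|^2 - K\cdot\#\{\text{bad pairs in }Y\}$ for a suitable $K \approx c/\epsilon$. This gives $|Y|\geq cn$ and at most a $c^{O(1)}$-small fraction of bad pairs in $Y$.

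\textbf{Step 2: a core inside $Y$.} Let $Y_0 \subseteq Y$ consist of those $u\in Y$ forming bad pairs with at most a small fraction $\delta |Y|$ of $Y$, and with degree into $Y$ controlled. Markov's inequality gives $|Y_0|\geq |Y|/2$. Standard BSG then says that any $u,w\in Y_0$ are joined by many paths $u - a - b - w$ of length 3: there are at least $(1-2\delta)|Y|$ vertices $t\in Y$ forming good pairs with both $u$ and $w$. For such $t$, $u$ and $t$ have at least $\epsilon n$ common neighbours, and $t$ and $w$ have at least $\epsilon n$ common neighbours. This yields at least $\Omega(c\cdot\epsilon^2) n^3$ paths $u-a-t-b-w$ of length 4. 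This is not yet the desired length, so we adjust. Take $X$ to be the vertices adjacent to many vertices of $Y_0$ (or simply $X=Y_0$), and use length $6 = 1 + 4 + 1$: every $x\in X$ has at least $\Omega(c)n$ neighbours $u\in Y_0$. Concatenating $x - u$, a length-4 path from $u$ to $w$, and $w - y$ gives at least $(cn)^2\cdot c^{O(1)}\epsilon^2 n^3$ walks.

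\textbf{Step 3: tracking exponents.} With $\epsilon \sim c^2$ we get $c\cdot c\cdot c\cdot c^4 \cdot c^{O(1)}$. The constants must be tuned so the total reaches exactly $c^9$ with a $2^{-35}$ factor, and so that $|X| \geq 2^{-5}cn$. To guarantee $x\in X$ has $\Omega(c)n$ neighbours in $Y_0$, we define $X := \{x : |N(x)\cap Y_0| \geq c|Y_0|/4\}$ and count edges between $V$ and $Y_0$. Each $u\in Y_0\subseteq N(z)$ has no a priori large degree, however, so we will instead require $Y_0$ to consist of high-degree vertices by choosing $z$ weighted by degree.

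\textbf{Main obstacle.} The main difficulty is precisely this bookkeeping: arranging the dependent random choice so that $X$ has density $2^{-5}c$ (linear in $c$) while each pair of its vertices has $c^9n^5$ connecting paths. We would first try restricting to vertices of degree at least $cn$, which carry at least half the edges, and then run the argument above.
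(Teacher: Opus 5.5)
\textbf{There is a genuine gap.} Your sketch leaves open the step that carries the content of the lemma, namely $|X|\ge 2^{-5}cn$. With $Y=N(z)$ as in Step~1, the averaging you indicate does not give it.

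To make $X=\{x:|N(x)\cap Y_0|\ge c|Y_0|/4\}$ large, you need $\sum_{u\in Y_0}d(u)\gtrsim cn|Y_0|$. But a vertex $u\in Y_0\subseteq N(z)$ is only guaranteed $d(u)\ge\epsilon n\approx c^2n$, because it has a good partner. With your threshold, the averaging then gives no lower bound on $|X|$ at all. Lowering the threshold to about $\epsilon|Y_0|$ gives only $|X|\gtrsim c^2n$ and roughly $c^{11}n^5$ walks. The option $X=Y_0$ does not fit your $1+4+1$ concatenation either, since a vertex of $Y_0$ need not have any neighbour in $Y_0$.

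Some of the surrounding bookkeeping is also wrong:
\begin{itemize}
\item With $K\approx c/\epsilon$ you only get $\mathbb{E}\big(|Y|^2-K\cdot\#\text{bad}\big)\ge 4c^2n^2-cn^2$, which is negative for $c<1/4$. You need $K=O(1)$ and $\epsilon\asymp c^2$, i.e.\ an absolute-constant (not $c^{O(1)}$) proportion of bad pairs; otherwise the exponent exceeds $9$.
\item A vertex of $X$ is only guaranteed $\Omega(c^2)n$ neighbours in $Y_0$, not $\Omega(c)n$. This is harmless: the count becomes $c^2\cdot c^2\cdot c\cdot c^4=c^9$ exactly.
\end{itemize}
So the exponent is not the obstacle; the size of $X$ is.

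\textbf{How to close it.} Your suggested repair works once the degree restriction is imposed on $Y$ rather than on the graph.
\begin{itemize}
\item Let $H=\{v:d(v)\ge cn\}$. Vertices outside $H$ contribute less than $cn^2$ to the degree sum, so $\sum_{v\in H}d(v)\ge cn^2$.
\item Take $Y=N(z)\cap H$, with neighbourhoods still taken in $G$. Then $\mathbb{E}|Y|\ge cn$, and the expected number of bad pairs in $Y$ is at most $\epsilon n^2$.
\item With $\epsilon=c^2/16$, fix $z$ with $|Y|^2-8\cdot\#\text{bad}\ge c^2n^2/2$.
\item Let $Y_0$ be the vertices of $Y$ with at most $|Y|/4$ bad partners in $Y$, so $|Y_0|\ge|Y|/2$. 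Put $X=\{x:|N(x)\cap Y_0|\ge c|Y_0|/2\}$.
\item Since $\sum_x|N(x)\cap Y_0|=\sum_{u\in Y_0}d(u)\ge cn|Y_0|$, you get $|X|\ge cn/2$.
\item The walks $x,u,a,t,b,w,y$ number at least $(c|Y_0|/2)^2(|Y|/2)(\epsilon n)^2\ge 2^{-15}c^9n^5$.
\end{itemize}
Alternatively, keep $Y=N(z)$ and take $X=Y_0$ with a stricter core. Route walks as $x,a,t,b,s,e,y$ with $t,s\in Y_0$ and the three pairs $(x,t),(t,s),(s,y)$ good. This gives $\Omega(c^8)n^5$ walks with no degree restriction.

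Finally, simply count walks: this is how the lemma is used later, where only the $5$-tuple of intermediate vertices matters. Your remark that the statement is trivial when $c^9n^5$ is small is not correct, since a positive lower bound below $1$ still demands one path.
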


We use the robust version of Sanders's Bogolyubov-Ruzsa theorem, due to Schoen and Sisask~\cite{SchSis}.

\begin{theorem} [Robust Bogolyubov-Ruzsa lemma~\cite{SchSis}]\label{rbrlemma}
    Let $A \subseteq G$ be a set of density $\alpha$. Then there exists a subspace $U \leq G$ of codimension $O(\log^{O(1)} \alpha^{-1})$ such that for each $u \in U$, there are at least $\alpha^{O(1)}|G|^3$ quadruples $(a_1, a_2, a_3, a_4) \in A^4$ such that $a_1 + a_2 - a_3 - a_4 = u$.
\end{theorem}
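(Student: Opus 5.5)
The plan is to deduce the statement from Sanders's quasipolynomial Bogolyubov--Ruzsa argument, run in its robust form via the Croot--Sisask almost-periodicity lemma, following Schoen and Sisask. Write $r(u) = |\{(a_1,a_2,a_3,a_4)\in A^4 : a_1+a_2-a_3-a_4 = u\}|$, so that $r(u)/|G|^3 = \ex_x (1_A*1_A)(x)\,(1_A*1_A)(x-u)$ after normalising convolutions as averages. This is an inner product of the function $f = 1_A * 1_A$ with its translate by $u$. We must find a subspace $U$ of codimension $\log^{O(1)}\alpha^{-1}$ on which this inner product is at least $\alpha^{O(1)}$ \emph{uniformly} in $u$, and not merely on average.

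\textbf{Step 1: almost-periodicity.} Apply the Croot--Sisask lemma in $L^p$ with $p \approx \log \alpha^{-1}$ to the function $1_A * 1_A$, or more conveniently to $1_A * 1_{A}*1_{-A}$ against a further copy of $1_A$. It gives a set $T$ of density at least $\exp(-O(\epsilon^{-2} p \log \alpha^{-1}))$ such that $\|f(\cdot + t) - f\|_{p} \le \epsilon \|f\|_{p}$ (up to the usual normalisations) for every $t\in T$. Iterating over sums gives the same bound with $2^j\epsilon$ for all $t \in jT$.

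\textbf{Step 2: from $T$ to a subspace.} Use Chang's lemma, or Sanders's iterated form, on the large spectrum of $T$. This yields a subspace $U$ of codimension $O(\epsilon^{-2}\log^{O(1)}\alpha^{-1})$ such that every $u \in U$ is well approximated by the $\ell$-fold averages $\mu_T^{*\ell}$ with $\ell = O(\log\alpha^{-1})$. Equivalently, $\|f(\cdot+u)-f\|_p \le \epsilon' \|f\|_p$ for all $u\in U$. This is the standard Sanders step, and it is where the quasipolynomial codimension comes from.

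\textbf{Step 3: robust lower bound.} For $u\in U$, write
\[\ex_x f(x) f(x-u) \geq \ex_x f(x)^2 - \|f\|_{p'}\|f(\cdot-u)-f\|_{p}.\]
We have $\ex_x f^2 = \|1_A\|_{\mathsf{U}^2}^4 \geq \alpha^4$ by Cauchy--Schwarz. Hölder together with $\|f\|_\infty \le \alpha$ and $\ex f = \alpha^2$ bounds the error term by $\epsilon' \alpha^{O(1)}$. Choosing $\epsilon' = \alpha^{C}$ for suitable $C$ would make the codimension polynomial in $\alpha^{-1}$, which is too much. So the choice must instead be a small constant relative to the correct normalisation.

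The main obstacle is exactly this balance between the $L^p$ error and the main term. The Schoen--Sisask trick handles it by using almost-periodicity of $1_A*1_A$ in $L^p$ measured relative to the level $\alpha^2$. With $p \sim \log\alpha^{-1}$, the $L^p$ and $L^\infty$ norms of a function bounded by $\alpha$ agree up to a constant factor on the relevant set. One can therefore take $\epsilon$ to be a small absolute constant times a power of $\alpha$ absorbed by the $p$-th root, which keeps the codimension $\log^{O(1)}\alpha^{-1}$ while retaining a pointwise lower bound $r(u)\ge \alpha^{O(1)}|G|^3$ for every $u\in U$. I would cite~\cite{SchSis} for this step rather than reprove it in full.
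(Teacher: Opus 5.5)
Citing Schoen--Sisask is all the paper does here; it gives no proof of this statement. The self-contained sketch you build around the citation, however, has a genuine gap, and it sits exactly at the step you defer.

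Write $q$ for the Lebesgue exponent (since $p$ is the characteristic) and $\mu_A=\alpha^{-1}1_A$. Croot--Sisask applied to $f=1_A*1_A=\alpha\,(\mu_A*1_A)$ does not give $\|f(\cdot+t)-f\|_q\le\epsilon\|f\|_q$. It gives only $\|f(\cdot+t)-f\|_q\le\epsilon\alpha\|1_A\|_q=\epsilon\alpha^{1+1/q}$, on a set of density $\exp(-O(\epsilon^{-2}q\log(2/\alpha)))$. For random-like $A$ one has $\|f\|_q\approx\alpha^2$, so this is a relative error of $\epsilon/\alpha$.

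In Step 3 the main term $\mathbb{E}f^2$ can be as small as $\alpha^4$, while $\|f\|_{q'}\approx\alpha^2$. So the H\"older error is of order $\epsilon\alpha^3$ whatever $q$ is, and you need $\epsilon\lesssim\alpha$, which makes the codimension polynomial in $\alpha^{-1}$. Choosing $q\approx\log(2/\alpha)$ only neutralises factors $\alpha^{\pm O(1/q)}$. It cannot absorb a full factor of $\alpha$, so the remedy in your last paragraph does not close the gap.

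The variant with $1_A*1_A*1_{-A}$ tested against $1_A$ has the same mismatch: in normalised units the main term is only $\gtrsim\alpha$ against an error of about $\epsilon$. Also, a sum of $j$ almost-periods costs $j\epsilon$ by the triangle inequality, not $2^j\epsilon$. With $j=\ell\sim\log(2/\alpha)$, the bound $2^j\epsilon$ would itself lose a power of $\alpha$.

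The missing idea is to choose the almost-periodic function and the test measure so that the main term is an absolute constant, and to get robustness from a popularity threshold.

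Let $P=\{x:(1_A*1_A)(x)\ge\alpha^2/2\}$. Since $\mathbb{E}\,(1_A*1_A)1_{G\setminus P}\le\alpha^2/2$, we get $\Pr_{a,a'\in A}(a+a'\in P)\ge 1/2$. Put $F=\mu_{-A}*1_P$, so that $F(y)=\Pr_{a\in A}(y+a\in P)$ and $\langle F,\mu_A\rangle\ge 1/2$.

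Apply Croot--Sisask with $q,\ell\approx\log(2/\alpha)$ and $\epsilon\approx 1/\ell$. This gives $T$ of density $\tau$ with $\log(1/\tau)=O(\log^4(2/\alpha))$ and $\|F*\mu_T^{(\ell)}-F\|_q\le\ell\epsilon\|1_P\|_q$. Now H\"older against $\mu_A$ costs only
\[
\bigl|\langle F*\mu_T^{(\ell)}-F,\mu_A\rangle\bigr|\le \ell\epsilon\,\|1_P\|_q\,\|\mu_A\|_{q'}\le \ell\epsilon\,\alpha^{-1/q}=O(\ell\epsilon),
\]
which is small compared with $1/2$. This is precisely the balance your Step 3 lacks.

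Chang's lemma for $\mathrm{Spec}_{1/2}(\mu_T)$ gives a subspace $U$ of codimension $O(\log(2/\tau))$ such that, for all $u\in U$,
\[
\|F*\mu_T^{(\ell)}(\cdot+u)-F*\mu_T^{(\ell)}\|_\infty\le 2^{1-\ell}\|\mu_A\|_2\|1_P\|_2\le 2^{1-\ell}\alpha^{-1/2}.
\]
Undoing the smoothing (the $L^q$ bound is translation invariant) gives $\Pr_{a,a'\in A}(a+a'+u\in P)\ge 1/8$ for every $u\in U$.

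Each such pair $(a,a')$ yields at least $\frac{\alpha^2}{2}|G|$ pairs $(b,b')\in A^2$ with $b+b'-a-a'=u$. Hence every $u\in U$ has at least $\frac{\alpha^4}{16}|G|^3$ representations. This is Sanders's argument with the popular-sum set $P$ in place of $A+A$.
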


We need a structure theorem which relates approximate homomorphisms to exact affine maps, which essentially stems from work of Gowers~\cite{GowU4}. Resolution of Marton's conjecture~\cite{Marton1, Marton2} due to Gowers, Green, Manners and Tao, provides us with polynomial bounds. For the purposes of this paper, an earlier version with quasipolynomial bounds due to Sanders~\cite{Sanders} would also suffice. 

\begin{theorem}[Structure theorem for approximate homomorphisms]\label{invhomm}
    Let $G$ and $H$ be finite-dimensional vector spaces over $\mathbb{F}_p$ and let $A \subset G$. Suppose that $\phi : A \to H$ is a map such that 
    \[
        \phi(x + a) - \phi(x) = \phi(y + a) - \phi(y)
    \]
    holds for at least $c|G|^3$ choices of $x,y, a \in G$ such that $x, y, x+a, y+a \in A$. Then there exists a global affine map $\Phi : G \to H$ such that $\phi(x) = \Phi(x)$ holds for at least $c^{O(1)}|G|$ elements $x \in G$.
\end{theorem}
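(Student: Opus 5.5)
This is the structure theorem for approximate homomorphisms, deduced from Balog--Szemer\'edi--Gowers and the polynomial Freiman--Ruzsa theorem of Gowers, Green, Manners and Tao. The plan is to pass to the graph of $\phi$ and carry out three steps.

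\textbf{Step 1: energy of the graph.} Let
\[\Gamma = \{(x,\phi(x)) : x \in A\} \subseteq G \times H.\]
Each of the at least $c|G|^3$ triples $(x,y,a)$ gives an additive quadruple
\[(x+a,\phi(x+a)) - (x,\phi(x)) = (y+a,\phi(y+a)) - (y,\phi(y))\]
in $\Gamma$. Hence the additive energy satisfies $E(\Gamma) \geq c|G|^3$. Since $|\Gamma| = |A| \leq |G|$, this gives $E(\Gamma) \geq c|\Gamma|^3$. For the same reason $|A| \geq c^{1/3}|G|$, since $E(\Gamma)\le |\Gamma|^3$.

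\textbf{Step 2: a subset of small doubling.} Apply Balog--Szemer\'edi--Gowers to $\Gamma$. The graph-theoretic input is Lemma~\ref{gowerspathssingle}, applied to the graph on $\Gamma$ whose edges are pairs with many representations of their difference. This yields $\Gamma' \subseteq \Gamma$ with $|\Gamma'| \geq c^{O(1)}|\Gamma|$ and $|\Gamma' - \Gamma'| \leq c^{-O(1)}|\Gamma'|$. Being a subset of a graph, $\Gamma'$ is itself the graph of $\phi$ restricted to some $A' \subseteq A$ with $|A'| \geq c^{O(1)}|G|$.

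\textbf{Step 3: polynomial Freiman--Ruzsa.} Apply the polynomial Freiman--Ruzsa theorem~\cite{Marton1, Marton2} in $\mathbb{F}_p^n$-type groups to $\Gamma'$. This gives a subspace $W \leq G \times H$ with $|W| \leq c^{-O(1)}|\Gamma'|$ and $\Gamma'$ covered by $c^{-O(1)}$ cosets of $W$. By pigeonhole, some coset $w_0 + W$ contains at least $c^{O(1)}|\Gamma'|$ points of $\Gamma'$.

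It remains to convert this coset into an affine map; I expect this to be the main obstacle. The subspace $W$ need not be a graph: its fibre $W_0 = W \cap (\{0\}\times H)$ over zero may be nontrivial.

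\emph{Bounding the fibre.} Let $\pi : W \to G$ be the projection. The points of $\Gamma'$ in $w_0+W$ have distinct $G$-coordinates, so
\[|\pi(W)| \geq c^{O(1)}|\Gamma'| \quad\text{and hence}\quad |W_0| = |W|/|\pi(W)| \leq c^{-O(1)}.\]

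\emph{Choosing a graph inside $W$.} Pick a linear section, that is, a complement $W_1$ of $W_0$ in $W$; it is the graph of a linear map $L : \pi(W) \to H$. Then $w_0 + W$ is the union of the $|W_0| \leq c^{-O(1)}$ translates $w_0 + u + W_1$ for $u \in W_0$. Pigeonholing again, one translate contains at least $c^{O(1)}|G|$ points of $\Gamma'$. On that translate the points of $\Gamma'$ lie on the graph of an affine map $\pi(W) + g_0 \to H$, $x \mapsto L(x - g_0) + h$.

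\emph{Extending globally.} Extend $L$ linearly from $\pi(W)$ to all of $G$. This gives a global affine $\Phi$ agreeing with $\phi$ on at least $c^{O(1)}|G|$ points.

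Alternatively, the earlier quasipolynomial Sanders route would give weaker bounds. The whole statement is exactly the standard Gowers argument from~\cite{GowU4}, with Marton's conjecture substituted in.
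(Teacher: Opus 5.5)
Your proposal is correct and is essentially the argument the paper has in mind: the paper gives no proof of its own, and attributes the result to Gowers's Balog--Szemer\'edi--Gowers-based argument, with the Gowers--Green--Manners--Tao resolution of Marton's conjecture supplying the polynomial bounds. Your final step is also sound: you bound the fibre $W_0$ using distinctness of $G$-coordinates, then pigeonhole over the at most $c^{-O(1)}$ translates of a linear section, which correctly converts the coset into a global affine map.
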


We record the Gowers-Cauchy-Schwarz inequality.

\begin{lemma}[Gowers-Cauchy-Schwarz inequality]
    Let $X_1, \dots, X_k$ be finite sets, and let $f_I : \prod_{i \in I} X_i \to \mathbb{D}$ be functions for all sets $I \subseteq [k]$. Then
    \[\Big|\exx_{x_1 \in X_1, \dots, x_k \in X_k} \prod_{I \subseteq [k]} f_I(x_I) \Big| \leq \Big|\exx_{x_1,y_1 \in X_1, \dots, x_k, y_k \in X_k} \prod_{I \subseteq [k]} \on{Conj}^{k-|I|}f_{[k]}(x_I, y_{I^c})\Big|^{2^{-k}}.\]
\end{lemma}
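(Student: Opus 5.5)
The plan is induction on $k$, applying the ordinary Cauchy--Schwarz inequality once per variable. The case $k=0$ is trivial: both sides equal $|f_\emptyset|$, since the product is a single constant of modulus at most $1$.

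For the inductive step, single out the variable $x_k$. The first move is to split the product according to whether $k\in I$:
\[\prod_{I\subseteq[k]} f_I(x_I)=\Big(\prod_{I\not\ni k} f_I(x_I)\Big)\Big(\prod_{I\ni k} f_I(x_I)\Big),\]
and write $F(x_{[k-1]})$ for the first factor. Then $F$ does not depend on $x_k$ and takes values in $\mathbb{D}$. Next, pull the average over $x_{[k-1]}$ outside and apply Cauchy--Schwarz in $x_{[k-1]}$, using $|F|\le 1$ to discard $F$. Expanding the square of the inner average over $x_k$ as a double average over $x_k,y_k$ gives
\[\Big|\exx \prod_I f_I\Big|^2\le \exx_{x_k,y_k}\,\exx_{x_{[k-1]}}\prod_{I\ni k} f_I(x_I)\,\overline{f_I(x_{I\setminus\{k\}},y_k)}.\]

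For fixed $x_k,y_k$, the right-hand side is an average over $k-1$ variables of a product of functions indexed by $J\subseteq[k-1]$, namely
\[g_J(x_J)=f_{J\cup\{k\}}(x_J,x_k)\overline{f_{J\cup\{k\}}(x_J,y_k)},\]
each bounded by $1$. Apply the induction hypothesis to this family and then Hölder's inequality in $(x_k,y_k)$ (Jensen with exponent $2^{-(k-1)}$). This bounds the quantity by
\[\Big(\exx_{x_k,y_k}\exx_{x_{[k-1]},y_{[k-1]}}\prod_{J\subseteq[k-1]} \on{Conj}^{k-1-|J|} g_{[k-1]}(x_J,y_{[k-1]\setminus J})\Big)^{2^{-(k-1)}}.\]

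Substituting $g_{[k-1]}=f_{[k]}(\cdot,x_k)\overline{f_{[k]}(\cdot,y_k)}$ recovers exactly the $2^k$-fold product over $I\subseteq[k]$, with conjugation parity $k-|I|$. Raising both sides to the power $1/2$ then gives the claim.

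The only real care needed is the bookkeeping of conjugations and absolute values. One must check that the inner expression after the Hölder step is real and nonnegative, or else keep absolute values throughout, since the inductive bound is applied pointwise in $(x_k,y_k)$. One must also check that the parity $\on{Conj}^{k-|I|}$ matches after substitution; this holds because the coordinate $k$ taken from $y$ contributes one extra conjugation.
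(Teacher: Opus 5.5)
The paper only records this inequality as a standard fact and gives no proof, so there is nothing to compare against. Your argument is the standard proof and it is correct, including the conjugation count: one Cauchy--Schwarz in $x_{[k-1]}$ to discard $F$, the inductive hypothesis applied pointwise in $(x_k,y_k)$, then the power-mean inequality.

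The one point that needs an actual argument is the one you flagged, and only your first option works. Write
\[B(x_k,y_k)=\mathbb{E}_{x_{[k-1]},y_{[k-1]}}\prod_{J\subseteq[k-1]}\on{Conj}^{k-1-|J|}g_{[k-1]}(x_J,y_{[k-1]\setminus J}).\]
The inductive hypothesis followed by Jensen gives $\mathbb{E}_{x_k,y_k}|B|^{2^{-(k-1)}}\le(\mathbb{E}_{x_k,y_k}|B|)^{2^{-(k-1)}}$. To reach the claimed right-hand side you need $|B|=B$ pointwise. If you keep absolute values you are left with $\mathbb{E}|B|$, and $\mathbb{E}|B|\ge|\mathbb{E}B|$ goes the wrong way, so that option cannot close the argument.

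For $k-1\ge 1$, pointwise nonnegativity does hold. Fix $x_{[k-2]},y_{[k-2]}$ and define
\[h(t)=\prod_{J'\subseteq[k-2]}\on{Conj}^{k-2-|J'|}g_{[k-1]}(x_{J'},y_{[k-2]\setminus J'},t).\]
The factors with $k-1\in J$ multiply to $h(x_{k-1})$, and the remaining factors multiply to $\overline{h(y_{k-1})}$. Hence $B=\mathbb{E}_{x_{[k-2]},y_{[k-2]}}\big|\mathbb{E}_{x_{k-1}}h(x_{k-1})\big|^2\ge 0$.

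For $k-1=0$ the claim is false: $B=f_{\{1\}}(x_1)\overline{f_{\{1\}}(y_1)}$ is not nonnegative. There, however, the Cauchy--Schwarz step alone already gives exactly the $k=1$ statement. So the cleanest fix is to start the induction at $k=1$ rather than $k=0$.
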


\subsection{Multilinear algebra}\label{multalgsubsec}

In this subsection, we list some key results that concern multilinear maps and varieties.

We make heavy use of an effective solution to the so-called partition vs. analytic rank problem. 

\begin{theorem}[Partition vs. analytic rank problem]\label{arankprank}
    Let $\phi: G_{[k]} \to \mathbb{F}_p$ be a multilinear form. Suppose that $\on{bias} \phi \geq c$. Then $\on{prank} \phi \leq (\log 2c^{-1})^{O(1)}$.
\end{theorem}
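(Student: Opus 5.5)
This is the well-known partition vs.\ analytic rank theorem, already stated as Theorem~\ref{arankprankIntro}. In the paper it is simply imported from the literature (Janzer~\cite{Janzer2}, the author~\cite{LukaRank}, Moshkovitz--Zhu~\cite{MoshZhuRank}). If I had to prove it here, I would follow the geometric route of~\cite{LukaRank}, by induction on $k$.

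\textbf{Step 1: reformulate bias as density.} Summing out the last variable gives
\[\on{bias}\phi = \frac{|Z|}{|G_{[k-1]}|},\qquad Z=\{x_{[k-1]} \in G_{[k-1]} : \phi(x_{[k-1]},\cdot) \equiv 0\}.\]
So the hypothesis says that the set $Z$ has density at least $c$. Moreover $Z$ is a multilinear variety, cut out by the components of the multilinear map $x_{[k-1]}\mapsto \phi(x_{[k-1]},\cdot)\in G_k^*$. Write $r=\log_p(c^{-1})$, which is the analytic rank. The goal becomes: $\on{prank}\phi \le O(r^{O(1)})$.

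\textbf{Step 2: find a structured subvariety of $Z$.} This is the main step. I would aim to show that $Z$ contains a multilinear variety
\[W=\{\beta=0\}\subseteq G_{[k-1]}\]
of codimension $r^{O(1)}$, where $\beta$ is mixed-linear. I would try a dependent-random-choice argument. Since $Z$ is closed under addition in each coordinate (on fibres), one can pick random points $a^{(1)},\dots,a^{(m)}$ with $m=O(r)$. For each $x_{[k-1]}$, the condition that all the relevant slices pass through $Z$ is an intersection of conditions, each linear in every variable separately. The lower-order forms appearing in these conditions are handled by the inductive hypothesis. Where a form is biased on a slice, the hypothesis replaces it by $r^{O(1)}$ products of lower-order forms. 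The density of $Z$ should guarantee that, for a good choice of the random points, the resulting variety $W$ lies inside $Z$ and has codimension $r^{O(1)}$.

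\textbf{Step 3: vanishing on a variety implies low partition rank.} We now know that $\phi$ vanishes on $W\times G_k$, a variety of codimension $s=r^{O(1)}$. I would show that this forces $\on{prank}\phi = O(s)$ (up to polynomial losses). For this, first regularize $\beta$ so that the forms defining $W$ are jointly quasirandom; the regularization is again driven by the inductive hypothesis, at polynomial cost. Then an inclusion–exclusion / Fourier expansion of $\id_W$ expresses $\phi$ as a combination of terms $\beta_i(x_{I_i})\gamma_i(x_{[k]\setminus I_i})$. Each such term has partition rank $1$, which gives the bound.

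I expect Step 2 to be the main obstacle. The difficulty is keeping the codimension of $W$ polynomial in $r$ rather than letting it blow up through repeated regularization. The approach I would try first is to regularize once, jointly, with respect to all forms at once, instead of regularizing iteratively.
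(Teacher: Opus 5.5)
You are right that the paper gives no proof of this statement. It is imported from \cite{Janzer2, LukaRank, MoshZhuRank}, so citing it is exactly what the paper does. Your Step~1 (the bias equals the density of $Z$) is also correct. But the self-contained argument you sketch has genuine gaps in both substantive steps.

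Step~2 contains no actual argument. You never specify which variety the random points $a^{(1)},\dots,a^{(m)}$ produce, why it lies inside $Z$, or why its codimension is polynomial in $r$; \emph{should guarantee} is doing all the work. That containment is precisely Theorem~\ref{densetolowcodim}, which the paper also imports (from \cite{LukaLowCodim}). It is a result of comparable depth to the statement itself, not a routine dependent-random-choice step.

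The mechanism proposed for Step~3 does not work. The expansion $\id_W=\ex_{\lambda}\omega(\lambda\cdot\beta)$ is an identity of functions. It cannot produce an exact algebraic identity $\phi=\sum_i\beta_i(x_{I_i})\gamma_i(x_{[k]\setminus I_i})$ with multilinear $\gamma_i$.

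The problem is visible already for $k=3$. Write $\phi(x_1,x_2,x_3)=A(x_1,x_2)\cdot x_3$. Vanishing on $W\times G_3$ only tells you the following: for each $x_1\in W^{(\{1\})}$, the linear map $A(x_1,\cdot)$ is a combination of the forms $\beta_t(x_1,\cdot)$ and the linear forms in $x_2$ defining $W$. The coefficients lie in $G_3$ and are not unique wherever these forms become linearly dependent, so they need not depend linearly on $x_1$. Turning this pointwise information into a multilinear decomposition needs a Nullstellensatz/extension statement for quasirandom systems, in the spirit of Theorem~\ref{qrExtension} and Lemma~\ref{connectedlayerslemma}. Your sketch does not supply one.

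Nor is regularizing $\beta$ ``at polynomial cost'' automatic. The quasirandomness threshold $\eta$ must beat the number of forms present at the end. Each replacement of a biased combination adds $(\log\eta^{-1})^{O(1)}$ lower-order forms, which can spoil quasirandomness already achieved. Iterating this is what produced the Ackermann-type bounds of \cite{GreenTaoPolys, BhowLov}. Regularizing ``once, jointly'' is no way around this, since a joint regularization of forms of several orders is this same iteration.

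To stay polynomial you would have to organize the argument like the down-set peeling used throughout this paper, for example in the proofs of Theorem~\ref{lin-sys-vanishing} and Proposition~\ref{nonvanishing-or-variety-dichotomy}. There, for a maximal support set, quasirandom combinations are eliminated once via an extension result and only the biased ones are replaced by lower-order forms. This keeps the number of polynomial compositions at $O_k(1)$, but each stage still needs the missing extension input.
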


An important related fact is that biased multilinear forms vanish on a variety of bounded codimension, independent of one of the coordinates. Unlike the previous theorem, this fact has optimal bounds up to implicit constant.

\begin{theorem}[Dense multilinear varieties contain low codimensional varieties, Theorem 2 in~\cite{LukaLowCodim}]\label{densetolowcodim}
    Let $\phi: G_{[k]} \to \mathbb{F}_p$ be a multilinear form. Suppose that $\on{bias} \phi \geq c$. Then there exists a multilinear variety $V \subseteq G_{[k-1]}$ of codimension $O(\log(2c^{-1}))$ such that $V \times G_k \subseteq \{\phi = 0\}$.
    
    \indent Equivalently, let $\Phi : G_{[k-1]} \to H$ be a multilinear map, such that $|Z(\Phi)| \geq c|G_{[k-1]}|$. Then there exists a multilinear variety $V \subseteq G_{[k-1]}$ of codimension $O(\log(2c^{-1}))$ such that $V \subseteq Z(\Phi)$.
\end{theorem}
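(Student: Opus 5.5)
We first prove the equivalence of the two formulations, and then prove the first formulation by a random-slicing argument combined with an induction on the number of free coordinates.

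\textbf{Reduction.} Identify $G_k$ with its dual via the dot product, and write $\phi(x_{[k-1]}, y_k) = \Phi(x_{[k-1]}) \cdot y_k$ for a multilinear map $\Phi : G_{[k-1]} \to G_k$. Averaging $\omega(\phi)$ over $y_k$ first gives
\[\on{bias}\phi = \frac{|Z(\Phi)|}{|G_{[k-1]}|}.\]
Moreover, $V \times G_k \subseteq \{\phi = 0\}$ holds exactly when $V \subseteq Z(\Phi)$. Conversely, a general $\Phi : G_{[k-1]} \to H$ becomes a form on $G_{[k-1]} \times H$ via $\Phi(x)\cdot h$. So the two formulations are equivalent, and it suffices to show the following: if $B = Z(\Phi)$ has density $c$, then $B$ contains a variety of codimension $O(\log 2c^{-1})$.

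\textbf{Random slicing.} Pick independent uniform $h^{(1)}, \dots, h^{(t)} \in H$ and set
\[V_t = \{x_{[k-1]} : \Phi(x_{[k-1]}) \cdot h^{(i)} = 0 \text{ for all } i \in [t]\}.\]
This is a multilinear variety of codimension $t$, and $B \subseteq V_t$ always. For a fixed $x \notin B$, the event $x \in V_t$ has probability $p^{-t}$. So on average $|V_t \setminus B| \leq p^{-t}|G_{[k-1]}|$. Take $t = \log_p(c^{-1}) + \blc$ and fix a good choice of the $h^{(i)}$. Then $B$ occupies a $(1-\varepsilon)$-proportion of $V_t$, where $\varepsilon$ is a small absolute constant depending only on $k$ and $p$. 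The codimension cost so far is $O(\log 2c^{-1})$.

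\textbf{From $99\%$ to $100\%$.} This is where the real difficulty lies, since $B$ must be contained exactly.

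Use linearity in the last free coordinate $x_{k-1}$. For fixed $x_{[k-2]}$, the slices $(V_t)_{x_{[k-2]}}$ and $B_{x_{[k-2]}}$ are both subspaces of $G_{k-1}$, and the second is contained in the first. So if $B_{x_{[k-2]}}$ has relative density greater than $1/p$ inside $(V_t)_{x_{[k-2]}}$, the two slices coincide. By Markov's inequality, the set $X$ of $x_{[k-2]}$ with full slices is therefore a $(1-O(\varepsilon))$-proportion of the points of $V_t$, weighted by slice size.

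The set $X$ is itself the zero set of a multilinear condition in $x_{[k-2]}$: namely $\Phi(x_{[k-2]}, y) = 0$ for all $y$ in the slice. However, the slice varies with $x_{[k-2]}$, so this is not directly a zero set of a global multilinear map. To handle this, I would prove a relative version of the theorem by induction on $k$:

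\emph{Relative statement.} If a multilinear map vanishes on a $(1-\varepsilon)$-proportion of a variety $W$ of codimension $r$, then its zero set contains a subvariety of $W$ of codimension $O(r)$.

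I would run this induction through the slices, using Theorem~\ref{arankprank} and the algebraic regularity method to make $W$ quasirandom. Quasirandomness makes the slice sizes of $W$ essentially constant, so that the weighted density above becomes an honest density. The delicate part is that this regularization must cost only $O(r)$ codimension, rather than a polynomial in $r$, so as to preserve the logarithmic bound. If regularization proves too expensive, I would instead iterate the random-slicing step inside each level of the induction, adding $O(1)$ fresh random equations per coordinate, so that the total codimension remains $O(\log 2c^{-1})$.
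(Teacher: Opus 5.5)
The paper gives no proof of this statement; it quotes it from \cite{LukaLowCodim}. Judged on its own, your proposal has a genuine gap, and it sits exactly at the step that carries the whole content of the theorem.

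Your reduction to $B = Z(\Phi)$ is correct, and so is the random slicing. Together they give a variety $V_t$ of codimension $t = \log_p c^{-1} + O(1)$ with $B \subseteq V_t$ and $|V_t \setminus B| \leq p^{-\blc}|V_t|$. But this is only a normalisation. Your ``relative statement'', for a global map, follows at once from the theorem itself: by Lemma~\ref{lowcodsize} one has $|Z(\Psi)\cap W| \geq (1-\varepsilon)p^{-(k-1)r}|G_{[k-1]}|$. Combined with your slicing step, it also implies the theorem. So it restates what you want to prove at density $1-\varepsilon$ instead of $c$; it is not a lemma towards it. If the relative statement is instead meant for maps defined only on a variety, which is what appears once you work slice by slice inside $V_t$, the paper's only result of that kind is Lemma~\ref{mmrmapszerosetslemma}. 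That lemma loses a polynomial, $(r+\log c^{-1})^{O(1)}$, and is itself deduced from Theorem~\ref{densetolowcodim}.

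None of the mechanisms you suggest can close this step with a linear bound.
\begin{itemize}
\item Theorem~\ref{arankprank} has bounds $(\log 2c^{-1})^{O(1)}$, and regularising a variety applies it repeatedly. The codimension therefore becomes polynomial in $r$, as you suspect.
\item The cheap uniqueness route fails on parameters. Applying Lemma~\ref{trivialextensionsverydense} to the zero map on $B$ would give $V_t \subseteq B$, but it needs $|V_t\setminus B| \leq p^{-\blc(t+1)}|G_{[k-1]}|$. Slicing yields only about $p^{-t}|G_{[k-1]}|$, and in general this cannot be improved, since a point with $\Phi(x)\neq 0$ survives each random equation with probability exactly $1/p$.
\item Your fallback of adding more random equations $\Phi\cdot h$ never gives exact containment. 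Writing $\pi(v) = (v\cdot h^{(i)})_{i\in[t]}$, equality $V_t = B$ forces $\ker\pi$ to avoid every nonzero value of $\Phi$.
\end{itemize}
For example, take $\Phi(x,y) = \lambda(x)\,y$ on $\mathbb{F}_p^n\times\mathbb{F}_p^n\to\mathbb{F}_p^n$, with $\lambda$ a nonzero linear form. Its zero set has density about $1/p$ and contains the codimension-one variety $\{\lambda(x)=0\}$. Yet $V_t\setminus B = \{\lambda(x)\neq 0,\ y\in\ker\pi\setminus\{0\}\}$ is nonempty, of density about $p^{-t}$, unless $t\geq n$. To succeed you must impose conditions on the earlier coordinates alone, and random slicing never does that. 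Your set $X$ is $\{x_{[k-2]} : \pi \text{ is injective on } \operatorname{Im}\Phi(x_{[k-2]},\cdot)\}$, which is a rank condition, not a variety. Finding a variety of $x_{[k-2]}$ of codimension $O(\log 2c^{-1})$ on which this condition holds identically is precisely what must be proved, and the proposal supplies no mechanism for it.
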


% The following fact was proved in~\cite{MobiusOptimal} and also has essentially optimal bounds.

% \begin{theorem}[Variety of biased restrictions]\label{varietyofbiasedmaps}
%       Let $\alpha : U_{[k]} \times V_{[\ell]} \to \mathbb{F}_p$ be a multilinear form. For any $\delta > 0$ define $S_\delta = \{x_{[k]} \in U_{[k]} : \on{bias}(\alpha_{x_{[k]}}) \geq \delta\}$.\\
%         \indent Suppose that  for some $c > 0$ we have $|S_c| \geq c|U_{[k]}|$. Then $S_{\tilde{c}}$ contains a multilinear variety of codimension at most $O_k(2 \log_p c^{-1} + 2)$, where $\tilde{c} = (c/2p)^{O_k(1)}$.
% \end{theorem}

Let us record an easy lower bound on the size of low codimensional varieties.

\begin{lemma}\label{lowcodsize}
    Let $V \subseteq G_{[k]}$ be a multilinear variety of codimension $r$. Then $|V| \geq p^{-kr}|G_{[k]}|$.
\end{lemma}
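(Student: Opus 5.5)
We prove Lemma~\ref{lowcodsize} by induction on $k$, using a fibre-counting argument along the last coordinate. Let $\beta : G_{[k]} \to \mathbb{F}_p^r$ be the mixed-linear map with $V = \{\beta = 0\}$, where each component $\beta_i$ is multilinear in the coordinates $I_i$. Split the components into two groups:
\begin{itemize}
\item those with $k \notin I_i$, which define a multilinear variety $V^{([k-1])} \subseteq G_{[k-1]}$ of codimension $r_1$;
\item those with $k \in I_i$, of which there are $r_2 = r - r_1$.
\end{itemize}

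Fix $x_{[k-1]} \in V^{([k-1])}$. Each $\beta_i$ with $k \in I_i$ is then a linear form in $x_k$. Hence the slice $V_{x_{[k-1]}} = \{x_k \in G_k : (x_{[k-1]}, x_k) \in V\}$ is the zero set of $r_2$ linear forms on $G_k$. It is therefore a subspace of codimension at most $r_2$, so
\[|V_{x_{[k-1]}}| \geq p^{-r_2}|G_k|.\]
Summing over the fibres gives
\[|V| \geq |V^{([k-1])}| \cdot p^{-r_2}|G_k|.\]

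By the inductive hypothesis applied to $V^{([k-1])}$, which has codimension $r_1$ in $G_{[k-1]}$,
\[|V^{([k-1])}| \geq p^{-(k-1)r_1}|G_{[k-1]}|.\]
Combining the two bounds,
\[|V| \geq p^{-(k-1)r_1 - r_2}|G_{[k]}| \geq p^{-kr}|G_{[k]}|.\]

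For the base case $k=1$, the variety is the zero set of at most $r$ linear forms together with constant components. Components with $I_i = \emptyset$ are constants. If such a constant is nonzero the variety would be empty, so we interpret the mixed-linear components as genuine multilinear forms, which vanish identically when $I_i = \emptyset$, and these components can simply be discarded. The base case then follows from the same subspace bound. The only care needed anywhere in the argument is this bookkeeping of empty index sets.
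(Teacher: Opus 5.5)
Your proof is correct and complete. The paper gives no proof of Lemma~\ref{lowcodsize}; it just records it as an easy bound. So there is no competing argument to compare with, and your induction that slices along $G_k$ is the natural one.

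Two small remarks.

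First, the induction loses more than it needs to. If you carry the hypothesis $|V^{([k-1])}| \geq p^{-r_1}|G_{[k-1]}|$ instead, the same fibre count gives
\[|V| \geq p^{-r_1}|G_{[k-1]}|\cdot p^{-r_2}|G_k| = p^{-r}|G_{[k]}|.\]
So the factor $k$ in the exponent of the stated bound is slack.

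Second, your caveat about components with $I_i=\emptyset$ is a genuine restriction on the statement, not mere bookkeeping. Under the paper's definition of multilinearity, the condition is vacuous on $G_\emptyset$, so such a component can be any constant. A nonzero constant component would make $V$ empty. The lemma therefore implicitly requires these components to be zero, or equivalently $I_i\neq\emptyset$, which is exactly the convention you adopt.
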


We make use of an inequality of Lovett which shows that multilinear varieties are positively-correlated.

\begin{lemma}[Lovett, Claim 1.6 in~\cite{lovettSubaddRank}]\label{poscorvars}
    Let $U, V \subseteq G_{[k]}$ be multilinear varieties defined by global multilinear maps on $G_{[k]}$. Then $|U \cap V||G_{[k]}| \geq |U||V|$.
\end{lemma}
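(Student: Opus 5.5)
The plan is to induct on $k$, comparing slice densities along the last coordinate; the $k=1$ case is exactly the base case of this induction. When $k=1$, $U$ and $V$ are subspaces of $G_1$, so $|U+V| = |U||V|/|U\cap V| \leq |G_1|$, which is the claim.

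For general $k$, write $U = \{\beta = 0\}$ and $V = \{\gamma = 0\}$, with $\beta : G_{[k]} \to \mathbb{F}_p^r$ and $\gamma : G_{[k]} \to \mathbb{F}_p^s$ multilinear in all $k$ variables. Fix $x_{[k-1]} \in G_{[k-1]}$. Each $\beta_i(x_{[k-1]}, \bcdot)$ is a linear functional on $G_k$; call it $b_i(x_{[k-1]}) \in G_k^*$. Define $c_j(x_{[k-1]})$ from $\gamma_j$ in the same way. The slice $U_{x_{[k-1]}}$ is the common kernel of the $b_i$, so $|U_{x_{[k-1]}}|/|G_k| = p^{-\dim \langle b_i(x_{[k-1]})\rangle}$. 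The same holds for $V$, and the slice of $U \cap V$ has density $p^{-\dim\langle b_i, c_j\rangle}$. Since $\dim\langle b,c\rangle \leq \dim\langle b\rangle + \dim\langle c\rangle$, this gives
\[\frac{|U\cap V|}{|G_{[k]}|} \geq \exx_{x_{[k-1]}} f(x_{[k-1]})g(x_{[k-1]}),\]
where $f = p^{-\dim\langle b\rangle}$ and $g = p^{-\dim\langle c\rangle}$. Moreover $\ex f = |U|/|G_{[k]}|$ and $\ex g = |V|/|G_{[k]}|$. So it remains to show that $f$ and $g$ are positively correlated on $G_{[k-1]}$.

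For this I would write each of $f, g$ as an average of indicators of varieties in $k-1$ variables. By counting solutions of a linear system,
\[f(x_{[k-1]}) = \Pr_{\lambda \in \mathbb{F}_p^r}\big(\lambda\cdot b(x_{[k-1]}) = 0\big) = \exx_{\lambda}\id_{U_\lambda}(x_{[k-1]}),\]
where $U_\lambda = \{x_{[k-1]} : \sum_i \lambda_i\beta_i(x_{[k-1]}, e) = 0 \text{ for all } e \text{ in a basis of } G_k\}$. This $U_\lambda$ is a variety in $G_{[k-1]}$ defined by global multilinear forms. Similarly $g = \ex_\mu \id_{V_\mu}$. Then
\[\ex fg = \exx_{\lambda,\mu} \frac{|U_\lambda\cap V_\mu|}{|G_{[k-1]}|} \geq \exx_{\lambda,\mu}\frac{|U_\lambda|}{|G_{[k-1]}|}\cdot\frac{|V_\mu|}{|G_{[k-1]}|} = \ex f \,\ex g,\]
where the inequality is the induction hypothesis. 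This completes the induction.

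The only delicate point is the representation $f = \ex_\lambda \id_{U_\lambda}$ with each $U_\lambda$ cut out by genuinely multilinear forms in all $k-1$ variables. This works because the defining forms of $U$ and $V$ involve all coordinates. It is why the statement assumes varieties defined by global multilinear maps on $G_{[k]}$, rather than mixed-linear ones.
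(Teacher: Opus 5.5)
Your induction is correct and complete. The paper does not prove this lemma itself but imports it from Lovett, and your argument is the standard slicing induction on $k$ (subspace inequality in one coordinate, inductive hypothesis in the remaining ones). One small simplification: the $\lambda$-averaging is valid but unnecessary, since already $f(x_{[k-1]})=\mathbb{E}_{x_k}\id\bigl(\beta(x_{[k-1]},x_k)=0\bigr)$, and each fibre $\{\beta(\cdot,x_k)=0\}\subseteq G_{[k-1]}$ is itself a variety cut out by a global multilinear map.
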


Frequently, we use the fact that a quasirandom form has most of its restrictions quasirandom as well.

\begin{lemma}[Quasirandom restrictions]\label{quasirandomrest}
    Let $\alpha : G_I \to \mathbb{F}_p$ be  a $\eta$-quasirandom with respect to a mixed-linear map $\beta : G_{[k]} \to \mathbb{F}_p^r$. Let $J \subseteq [k]$ be a set such that $I \cap J^c \not= \emptyset$. Then, for all but at most $\sqrt{\eta}p^r |G_{J}|$ points $y_J \in G_J$, we have that $\alpha_{y_{I \cap J}}$ is $\sqrt[4]{\eta}$-quasirandom with respect $\beta_{y_J}$.
\end{lemma}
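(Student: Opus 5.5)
The plan is a second-moment argument in the variables $y_J$. It rests on one observation: since $I\cap J^c\neq\emptyset$, there is a coordinate in which everything is affine, and averaging over that coordinate turns $|\text{bias}|^2$ into the bias of another form of the same type. Fix $\lambda\in\mathbb{F}_p^r$ and write
\[f_\lambda(x_{[k]}) = \alpha(x_I) + \sum_{i\in[r]}\lambda_i\beta_i(x_{I_i}), \qquad b_\lambda(y_J) = \exx_{x_{J^c}\in G_{J^c}} \omega\big(f_\lambda(y_J, x_{J^c})\big).\]
The goal is $|b_\lambda(y_J)| < \sqrt[4]{\eta}$ for all $\lambda$ and all $y_J$ outside an exceptional set of size at most $\sqrt{\eta}p^r|G_J|$. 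This is exactly the statement that $\alpha_{y_{I\cap J}}$ is $\sqrt[4]{\eta}$-quasirandom with respect to $\beta_{y_J}$.

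Fix $j\in I\cap J^c$ and split $x_{J^c}=(z,x_j)$ with $z=x_{J^c\setminus\{j\}}$. Since $\alpha$ and every $\beta_i$ are multilinear, for fixed $(y_J,z)$ the map $x_j\mapsto f_\lambda(y_J,z,x_j)$ is affine. Its constant part comes from the components $\beta_i$ with $j\notin I_i$. Its linear part is $L_{y_J,z}(x_j)=f_{\lambda'}(y_J,z,x_j)$, where $\lambda'$ is obtained from $\lambda$ by zeroing the coordinates $i$ with $j\notin I_i$; since $j\in I$, the form $\alpha$ lies entirely in the linear part. By Cauchy--Schwarz in $z$, and since $\exx_{x_j}\omega(\text{affine})$ has modulus $\mathbb{1}[L_{y_J,z}=0]$, we get
\[|b_\lambda(y_J)|^2 \le \exx_z \big|\exx_{x_j}\omega(f_\lambda(y_J,z,x_j))\big|^2 = \exx_z \mathbb{1}[L_{y_J,z}\equiv 0] = \exx_{z,u\in G_j}\omega\big(f_{\lambda'}(y_J,z,u)\big).\]
Averaging over $y_J$ gives
\[\exx_{y_J}|b_\lambda(y_J)|^2 \le \exx_{x_{[k]}}\omega\big(\alpha(x_I)+\lambda'\cdot\beta(x_{[k]})\big).\]
The right-hand side is a nonnegative real number (an average of probabilities) and, by the hypothesis applied to $\lambda'$, it has modulus less than $\eta$. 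Hence $\exx_{y_J}|b_\lambda(y_J)|^2<\eta$.

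By Markov's inequality, the number of $y_J$ with $|b_\lambda(y_J)|\ge\sqrt[4]{\eta}$ is at most $\eta\cdot\eta^{-1/2}|G_J|=\sqrt{\eta}|G_J|$. A union bound over the $p^r$ choices of $\lambda$ leaves at most $\sqrt{\eta}p^r|G_J|$ bad points, and every other $y_J$ has the required quasirandomness for all $\lambda$ simultaneously.

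The only delicate step is the reduction of $|b_\lambda|^2$ to a bias of the same shape, $\alpha+\lambda'\cdot\beta$. It requires choosing the averaging coordinate inside $I\cap J^c$, which is exactly where the hypothesis $I\cap J^c\neq\emptyset$ is used. It also requires checking that the components of $\beta$ not involving $x_j$ drop out harmlessly as unimodular constants. Everything else is routine.
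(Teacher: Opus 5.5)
Your proof is correct and follows essentially the same route as the paper: you apply Cauchy--Schwarz over $J^c\setminus\{j\}$ for a coordinate $j\in I\cap J^c$, which turns the squared inner average into the bias of $\alpha+\lambda'\cdot\beta$ with only the components involving $x_j$ retained (the paper's index set $\mathcal{I}$), and then you count the bad points. The only cosmetic difference is that you use Markov's inequality together with a union bound over the $p^r$ choices of $\lambda$, whereas the paper first pigeonholes a single $\lambda$ that accounts for a $p^{-r}$ fraction of the bad set; both give the same bound $\sqrt{\eta}p^r|G_J|$.
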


\begin{proof}
    Let $\beta_i$ be multilinear on $G_{I_i}$ for $i \in [r]$. Let $F \subseteq G_J$ be the collection of $y_J \in G_J$ for which the conclusion fails. By averaging over possible linear combinations of components of $\beta$, we find some $\lambda \in \mathbb{F}_p^r$ such that
    \[\Big|\exx_{x_{J^c} \in G_{[k] \setminus J}} \omega\Big(\alpha(x_{I \cap J^c}, y_{I \cap J}) + \sum_{i \in [r]} \lambda_i \beta_i(x_{I_i \cap J^c}, y_{I_i \cap J})\Big)\Big| \geq \sqrt[4]{\eta}.\]
    holds for $y_J$ in some subset $F' \subseteq F$ of size $|F'| \geq p^{-r}|F|$. Thus, we have
    \begin{align*}
        \sqrt{\eta} p^{-r} \frac{|F|}{|G_J|}\leq \exx_{y_J \in G_J} \id_{F'}(y_J) \sqrt[4]{\eta}^2 \leq \exx_{y_J \in G_J} \Big|\exx_{x_{J^c} \in G_{[k] \setminus J}} \omega\Big(\alpha(x_{I \cap J^c}, y_{I \cap J}) + \sum_{i \in [r]} \lambda_i \beta_i(x_{I_i \cap J^c}, y_{I_i \cap J})\Big)\Big|^2.
    \end{align*}

    Let $i_0 \in I \cap J^c$ be arbitrary. By the Cauchy-Schwarz inequality, we have
    \[ \sqrt{\eta} p^{-r} \frac{|F|}{|G_J|} \leq \exx_{y_J \in G_J} \exx_{x_{J^c \setminus \{i_0\}} \in G_{J^c \setminus \{i_0\}}} \Big| \exx_{x_{i_0} \in G_{i_0}} \omega\Big(\alpha(x_{I \cap J^c}, y_{I \cap J}) + \sum_{i \in [r]} \lambda_i \beta_i(x_{I_i \cap J^c}, y_{I_i \cap J})\Big)\Big|^2.\]

    Let $\mathcal{I}$ be the collection of indices $i \in [r]$ such that $i_0 \in I_i$. After expanding, we obtain

    \begin{align*} \sqrt{\eta} p^{-r} \frac{|F|}{|G_J|} \leq & \exx_{\ssk{y_J \in G_J\\x_{J^c \setminus \{i_0\}} \in G_{J^c \setminus \{i_0\}}\\x_{i_0}, x'_{i_0}\in G_{i_0}}} \omega\Big(\alpha(x_{I \cap J^c \setminus \{i_0\}}, y_{I \cap J}, x_{i_0} - x'_{i_0}) + \sum_{i \in \mathcal{I}} \lambda_i \beta_i(x_{I_i \cap J^c \setminus \{i_0\}}, y_{I_i \cap J}, x_{i_0} - x'_{i_0})\Big)\\
    = & \exx_{\ssk{y_J \in G_J\\x_{J^c \setminus \{i_0\}} \in G_{J^c \setminus \{i_0\}}\\z_{i_0}\in G_{i_0}}} \omega\Big(\alpha(x_{I \cap J^c \setminus \{i_0\}}, y_{I \cap J}, z_{i_0}) + \sum_{i \in \mathcal{I}} \lambda_i \beta_i(x_{I_i \cap J^c \setminus \{i_0\}}, y_{I_i \cap J}, z_{i_0})\Big) \leq \eta,\end{align*}
    by the fact that $\alpha$ is $\eta$-quasirandom with respect to $\beta$.
\end{proof}

% \begin{proposition}
%     Let $M \subseteq G_{[k]}$ be a multilinear set of density $c$ and let $V$ be a mixed linear variety of codimension $r$. Then $|M \cap V| \geq (p^{-s} c / 2)^{O(1)} |G_{[k]}|$. 
% \end{proposition}

% \begin{proof}
%     Let $V' \subseteq G_{[k-1]}$ be the mixed linear variety which the vanishing set of multilinear forms $\alpha_i$ defining $V$, but which are independent of $G_k$. Hence $|V_{x_{[k-1]}}| \geq p^{-r} |G_k|$ for all $x_{[k-1]} \in V'$.\\ 

%     Let $X \subseteq G_{[k-1]}$ be the set of all $x_{[k-1]} \in G_{[k-1]}$ such that $|M_{x_{[k-1]}}| \geq \frac{c}{2} |G_k|$. By averaging, $|X| \geq \frac{c}{2}|G_{[k-1]}|$. By the pigeonhole principle, there exist a value $\lambda \in \mathbb{F}_p^s$, where $s$ is the number of forms $\alpha_i$ defining $V'$, such that $X' = \{x_{[k-1]} \in X' : (\forall i \in [s])\,\alpha_i(x_{I_i}) = \lambda_i\}$ has size at least $p^{-s}\frac{c}{2}|G_{[k-1]}|$. Now convolve in directions $1, \dots, k-1$. The resulting $|M_{x_{[k-1]}}| \geq (p^{-s}c/2)^{2^{k-1}} |G_k|$ and $x_{[k-1]} \in V'$. The claim follows.
% \end{proof}

For the next lemma, we need to recall the notion of connectedness from~\cite{LukaRank}. Firstly, we define a graph $\mathcal{G}$ on the vertex set $G_{[k]}$ where points $x_{[k]}$ and $y_{[k]}$ are joined by an edge if the differ in a single coordinate. A set $S \subseteq G_{[k]}$ is \textit{connected} if the induced graph $\mathcal{G}[S]$ is connected as a graph. In other words, a set in $G_{[k]}$ is connected if we can move between any two points by a sequence of steps, where we change a single coordinate at a time and remain in $S$ throughout the walk. We say that $S$ is of \emph{diameter at most $d$} if there is a path of length at most $d$ between any two vertices in $\mathcal{G}[S]$.

\indent The next lemma shows that intersections of $\{\rho = \mu\}$ with multilinear varieties for quasirandom maps $\rho$ are always non-empty and of small diameter. It is a straightforward generalization of Proposition 16 from~\cite{LukaRank}.

\begin{lemma}\label{connectedlayerslemma}
    Let $\rho : G_{[k]} \to \mathbb{F}_p^r$ be a map which is $\eta$-quasirandom with respect to a mixed-linear map $\beta : G_{[k]} \to \mathbb{F}_p^s$. If $\eta \leq p^{-\blc(r +s + 1)}$, then the set $\{\rho = \mu\} \cap \{\beta = 0\}$ is non-empty and of diameter at most $(k+2)^2$ for every $\mu \in \mathbb{F}_p^r$.
\end{lemma}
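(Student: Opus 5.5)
Non-emptiness follows from a Fourier count, and bounded diameter from walking one coordinate at a time through slices whose fibres are again counted by quasirandomness, so they are non-empty.

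\textbf{Counting.} For $\mu \in \mathbb{F}_p^r$ write
\[\frac{|\{\rho = \mu\} \cap \{\beta = 0\}|}{|G_{[k]}|} = p^{-r-s}\sum_{\nu \in \mathbb{F}_p^r,\ \lambda \in \mathbb{F}_p^s} \exx_{x_{[k]}} \omega\big(\nu\cdot(\rho(x_{[k]}) - \mu) + \lambda \cdot \beta(x_{[k]})\big).\]
Terms with $\nu \neq 0$ are at most $\eta$ in absolute value by quasirandomness. The $\nu = 0$ terms give $p^{-s}\sum_\lambda \on{bias}(\lambda\cdot\beta) = |\{\beta=0\}|/|G_{[k]}| \geq p^{-ks}$ by Lemma~\ref{lowcodsize}. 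Hence
\[|\{\rho=\mu\}\cap\{\beta=0\}| \geq (p^{-r-ks} - p^{s}\eta)|G_{[k]}| > 0\]
when $\eta \leq p^{-\blc(r+s+1)}$. The same count, with $\mu$ replaced by an affine shift, works in slices, which is what the diameter argument needs.

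\textbf{Diameter.} Fix $x_{[k]}, y_{[k]}$ in the set and build a walk
\[x_{[k]} = w^{(0)}, w^{(1)}, \dots\]
that changes one coordinate at a time, in $k$ stages. At stage $j$ we move from points whose first $j-1$ coordinates agree with $y$ to points whose first $j$ coordinates agree with $y$.

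\textbf{Stage $j$.}
\begin{enumerate}
\item Fix the current point $(y_{[j-1]}, z_{[j,k]})$. We cannot directly replace $z_j$ by $y_j$, since that might leave $\{\rho=\mu\}\cap\{\beta=0\}$.
\item Instead, use intermediate points. First change the coordinates $z_{j+1},\dots,z_k$ one at a time to generic values $u_{j+1},\dots,u_k$.
\item Choose the $u$'s so that the one-variable slices $t \mapsto (y_{[j-1]}, t, u_{[j+1,k]})$ meet the set both at $t = z_j$ and at $t = y_j$. Along such a slice, $\rho$ and $\beta$ are affine in $t$, so this condition is a system of affine equations.
\item Each intermediate step requires a point in a fibre of the form $\{\rho_{a} = \mu'\}\cap\{\beta_{a} = 0\}$, with several coordinates fixed. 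Lemma~\ref{quasirandomrest} says most such restrictions remain $\sqrt[4]{\eta}$-quasirandom, so the counting step shows these fibres are non-empty.
\item Since $\rho$ is multilinear only in some coordinates, it can happen that every coordinate in a slice is fixed except ones $\rho$ does not depend on. Handle this by adding the differences $\rho(\cdot) - \mu$ at the two endpoints as extra constraints, and counting solutions with a doubled copy of the variables (the ``two endpoints'' trick of Proposition~16 in~\cite{LukaRank}).
\end{enumerate}
Each stage costs $O(k)$ steps, and adding the initial and final moves into generic position gives total length at most $(k+2)^2$.

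The hard step is ensuring the needed intermediate tuples exist at every stage rather than merely for typical starting points. The endpoints $x$ and $y$ are arbitrary, so their slices may be non-quasirandom. I would first fix this by moving from $x$ (and from $y$) in $O(k)$ steps to a generic point, re-randomizing one coordinate at a time. Each such step needs a slice through a fixed point to contain many points of the set. Prove this via the counting identity applied to the one-coordinate fibre, where the relevant forms restricted to the line are affine. Only those components of $\rho$ depending on that coordinate need control, and their quasirandomness comes from Lemma~\ref{quasirandomrest} after all other coordinates have been made generic. I expect checking this re-randomization, coordinate by coordinate, to be the main technical obstacle.
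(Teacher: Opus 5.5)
Your counting argument for non-emptiness is correct, and more explicit than the paper, which leaves non-emptiness implicit. The diameter part has a genuine gap: the existence of the intermediate points in your staged walk is asserted rather than proved, and the tool you cite does not give it.

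At stage $j$ the new coordinates $u_{[j+1,k]}$ must put both $(y_{[j-1]},z_j,u_{[j+1,k]})$ and $(y_{[j-1]},y_j,u_{[j+1,k]})$ in $S=\{\rho=\mu\}\cap\{\beta=0\}$. This is a \emph{two-point} condition: the fibres over two different base points must intersect. It is also not confined to the last coordinate. The components of $\beta$ not involving $x_k$ must already vanish at $(y_{[j-1]},y_j,u_{[j+1,k-1]})$, which constrains the earlier $u_i$ as well.

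Lemma~\ref{quasirandomrest} only says that a single restriction $\rho_{y_J}$ is quasirandom with respect to $\beta_{y_J}$. It gives no control over the joint system formed by the restrictions at $z_j$ and at $y_j$, so ``the counting step shows these fibres are non-empty'' does not follow.

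The missing ingredient is a joint-independence count like the paper's claim. Write $\rho_i=R_i(x_{[k-1]})\cdot x_k$ and $\beta_i=B_i\cdot x_k$. A common last coordinate for $(k-1)$-tuples $P,Q$ exists once the vectors $R_i(P),R_i(Q)$ are independent modulo $\langle B_i(P),B_i(Q)\rangle$. Bad pairs are counted by fixing one point, using quasirandomness of $\lambda\cdot\rho$, and applying Cauchy--Schwarz to remove the term contributed by the other point. Your $P,Q$ agree outside coordinate $j$, so even that count must be adapted: the other point's contribution is then a form not involving $x_j$, to be removed by Cauchy--Schwarz in $x_j$.

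Your item~5 gestures at a ``doubled copy'' count, but you motivate it by a non-issue ($\rho$ is multilinear on all of $G_{[k]}$) and never carry it out. Finally, the $u$'s chosen at stage $j$ become base points at stage $j+1$. So genericity must be propagated through a nested chain of choices, each from a coset of density only $p^{-O(r+s)}$, and you do not address this.

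The step you single out as the main obstacle is in fact trivial. For any point of $S$, the values of one coordinate that keep you in $S$ form a coset of codimension at most $r+s$ containing that point. Hence at least $p^{-k(r+s)}|G_{[k]}|$ points of $S$ lie within distance $k$ of it, with no quasirandomness needed. It therefore suffices that fewer than $p^{-2k(r+s)}|G_{[k]}|^2$ pairs fail to be joined by a short path.

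The paper gets this by induction on $k$. For all but $\sqrt{\eta}\,p^{2r+2s}|G_{[k-1]}|^2$ pairs $(x_{[k-1]},y_{[k-1]})$, the claim gives $p^{-2r-2s}|G_k|$ common values $z_k$. Most of these have a $\sqrt[4]{\eta}$-quasirandom slice by Lemma~\ref{quasirandomrest}. Inside that slice the inductive hypothesis joins \emph{any} two points in $(k+1)^2$ steps. So only one two-point condition is ever needed, and no genericity has to be tracked across stages; the total length is $(k+1)^2+2+2k\le(k+2)^2$.

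Your staged walk might be completable, but only after supplying exactly these ingredients for every stage and every coordinate.
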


\begin{proof}
    We prove the lemma by induction on $k$. Write $V = \{\beta=0\}$ and let $\beta_i$ be multilinear on $G_{I_i}$ for $i \in [s]$. Let $V' \subseteq G_{[k-1]}$ be the variety defined by $\beta_i$ where $k \notin I_i$, i.e. $V' = V^{([k-1])}$.

    \begin{claim}
        For all but at most $\sqrt{\eta} p^{2r + 2s}|G_{[k-1]}|^2$ pairs $(x_{[k-1]}, y_{[k-1]})$ in $V'$ there exist at least $p^{-2r-2s}|G_k|$ of $z_k \in G_k$ such that 
        \[(x_{[k-1]}, z_k), (y_{[k-1]}, z_k) \in \{\rho = \mu\} \cap V.\]
    \end{claim}

    \begin{proof}
        It suffices to show existence of a single such element $z_k$, as then the set of such elements is a non-empty coset of a subspace of codimension at most $2r+2s$. Let $\mathcal{I}$ be the collection of $i \in [s]$ such that $k \in I_i$. Write $\rho_i(x_{[k]}) = R_i(x_{[k-1]}) \cdot x_k$ for $i \in [r]$ and $\beta_i(x_{I_i}) = B_i(x_{I_i \setminus \{k\}}) \cdot x_k$ for $i \in \mathcal{I}$. Note that, when $R_i(x_{[k-1]})$, $R_i(y_{[k-1]})$, $i \in [r]$ are all independent and
        \[\langle R_i(x_{[k-1]}), R_i(y_{[k-1]}) : i \in [r] \rangle  \cap \langle B_i(x_{I_i \setminus \{k\}}), B_i(y_{I_i \setminus \{k\}}) : i \in \mathcal{I} \rangle = \{0\}\]
        then we certainly have desired $z_k$. Let $M$ be the number of pairs $(x_{[k-1]}, y_{[k-1]}) \in G_{[k-1]}\times G_{[k-1]}$ where that fails. Thus for some $\lambda^{(1)}, \lambda^{(2)} \in \mathbb{F}_p^r$, not both zero, we either have
        \[\sum_{i \in [r]} \lambda^{(1)}_i R_i(x_{[k-1]}) + \sum_{i \in [r]}\lambda^{(2)}_i R_i(y_{[k-1]}) = 0\]
        or there are some further $\lambda^{(3)}, \lambda^{(4)} \in \mathbb{F}_p^{\mathcal{I}}$ for which
        \[\sum_{i \in [r]} \lambda^{(1)}_i R_i(x_{[k-1]}) + \sum_{i \in [r]}\lambda^{(2)}_i R_i(y_{[k-1]})  = \sum_{ i\in \mathcal{I}} \lambda^{(3)}_i B_i(x_{[k-1]}) + \sum_{ i\in \mathcal{I}} \lambda^{(4)}_i B_i(y_{[k-1]}) \not= 0.\]
        In the former case, we may set $\lambda^{(3)} = \lambda^{(4)} = 0$, to unify these conclusions to
        \[\sum_{i \in [r]} \lambda^{(1)}_i R_i(x_{[k-1]}) + \sum_{i \in [r]}\lambda^{(2)}_i R_i(y_{[k-1]})  = \sum_{ i\in \mathcal{I}} \lambda^{(3)}_i B_i(x_{[k-1]}) + \sum_{ i\in \mathcal{I}} \lambda^{(4)}_i B_i(y_{[k-1]}) .\]
        By averaging over linear combinations $\lambda^{(1)}, \dots, \lambda^{(4)}$, we have $p^{-2r - 2s}M$ pairs where 
        \[\sum_{i \in [r]} \lambda^{(1)}_i R_i(x_{[k-1]}) + \sum_{i \in [r]}\lambda^{(2)}_i R_i(y_{[k-1]}) = \sum_{ i\in \mathcal{I}} \lambda^{(3)}_i B_i(x_{[k-1]}) + \sum_{ i\in \mathcal{I}} \lambda^{(4)}_i B_i(y_{[k-1]}).\]

        Without loss of generality, $\lambda^{(1)} \not = 0$. By averaging, there exists $y_{[k-1]} \in G_{[k-1]}$ such that the above equality holds for at least $p^{-2r - 2s}\frac{M}{|G_{[k-1]}|}$ choices of $x_{[k-1]} \in G_{[k-1]}$. Write $v = \sum_{i \in [r]}\lambda^{(2)}_i R_i(y_{[k-1]}) -  \sum_{ i\in \mathcal{I}} \lambda^{(4)}_i B_i(y_{[k-1]})$.
        
        \indent Taking expectation over $x_{[k-1]}$ and taking dot product with $x_k$, we get 
        \begin{align*}p^{-2r - 2s}\frac{M}{|G_{[k-1]}|^2} \leq &\exx_{x_{[k-1]} \in G_{[k-1]}} \id\Big(\sum_{i \in [r]} \lambda^{(1)}_i R_i(x_{[k-1]})  - \sum_{ i\in \mathcal{I}} \lambda^{(3)}_i B_i(x_{[k-1]}) - v= 0\Big) \\
        = & \exx_{x_{[k]} \in G_{[k]}} \omega\Big(\sum_{i \in [r]} \lambda^{(1)}_i R_i(x_{[k-1]}) \cdot x_k  - \sum_{ i\in \mathcal{I}} \lambda^{(3)}_i B_i(x_{[k-1]})  \cdot x_k - v \cdot x_k\Big) \\
        = & \exx_{x_{[k]} \in G_{[k]}} \omega\Big(\sum_{i \in [r]} \lambda^{(1)}_i \rho_i(x_{[k]})  - \sum_{ i\in \mathcal{I}} \lambda^{(3)}_i \beta_i(x_{[k]}) - v\cdot x_k\Big).\end{align*}

        By Cauchy-Schwarz inequality and the fact that $\sum_{i \in [r]} \lambda^{(1)}_i \rho_i$ is $\eta$-quasirandom with respect to $\beta$, we have $p^{-2r - 2s}\frac{M}{|G_{[k-1]}|^2} \leq \sqrt{\eta}$, proving the claim.
    \end{proof}

    By Lemma~\ref{quasirandomrest}, the multilinear map $\rho_{z_k}$ is $\sqrt[4]{\eta}$-quasirandom with respect to $V_{z_k}$ for all but at most $\sqrt{\eta} p^{2r + 2s}|G_k|$ elements $z_k \in G_k$. Hence, the induction hypothesis applies to the slice $V_{z_k}$ for each such $z_k$.

    % \begin{claim}
    %     For all  but at most $\sqrt{\eta} p^{2r + 2s}|G_k|$ elements $z_k \in G_k$ we have that the multilinear map $\rho_{z_k}$ is $\sqrt[8]{\eta}$-quasirandom with respect to $V_{z_k}$.
    % \end{claim}

    % \begin{proof}
    %     Let $F \subseteq G_k$ be the set of all $z_k \in G_k$ for which the multilinear map $\rho_{z_k}$ fails to be $\sqrt[8]{\eta}$-quasirandom with respect to $V_{z_k}$. By averaging, we get a non-zero linear combination $\lambda \in \mathbb{F}_p^r$ and a linear combination $\mu \in \mathbb{F}_p^s$ for which $\lambda \cdot \rho_{z_k} + \mu \cdot \beta_{z_k}$ fails to be quasirandom for at least $p^{-r -s}|F|$ elements $z_k$. Let $F'$ be the set of such elements. Then
    %     \[ \sqrt[4]{\eta} p^{-r-s} |F| \leq \exx_{x_k \in G_k} \id(z_k \in F') \sqrt[4]{\eta} \leq \exx_{x_k \in G_k} \Big|\exx_{x_{[k-1]}} \omega\Big(\lambda \cdot \rho(x_{[k-1]}, z_k) + \mu \cdot \beta(x_{[k-1]}, z_k)\Big)\Big|^2 = .\]
    %     The final bound follows from the fact that $\lambda  \cdot \rho$ is $\eta$-quasirandom with respect to $\beta$.
    % \end{proof}

    By induction hypothesis for $\{\rho_{z_k} = \mu\} \cap V_{z_k}$ for suitable $z_k$ depending on the pair $(x_{[k-1]}, y_{[k-1]})$, for all but at most $\eta^{\Omega(1)}p^{O(r +s)}|G_{[k]}|^2$ of pairs $(x_{[k]}, y_{[k]})$ we can join them by paths of length at most $(k+1)^2 + 2$. Any point in the set is at distance at most $k$ from $p^{-k(r+s)}$ points, so we are done.
\end{proof}

The next lemma shows that non-zero global multilinear maps have a significant proportion of non-zero values.

\begin{lemma}\label{nonzeromapslemma}
    Let $\phi : G_{[k]} \to H$ be a non-zero multilinear map. Then $\phi(x_{[k]}) \not= 0$ for at least $2^{-k}|G_{[k]}|$ points $x_{[k]} \in G_{[k]}$.
\end{lemma}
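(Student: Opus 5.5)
Proof by induction on $k$, exploiting linearity in the last coordinate. The claim is that a non-zero multilinear map $\phi : G_{[k]} \to H$ satisfies $|\{\phi \neq 0\}| \geq 2^{-k}|G_{[k]}|$. In fact the argument gives the stronger bound $(1-1/p)^k \geq 2^{-k}$, since $p \geq 2$.

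\textbf{Base case $k=1$.} Here $\phi$ is a non-zero linear map $G_1 \to H$. Its kernel is a proper subspace, so it has index at least $p$. Hence at least $(1-1/p)|G_1| \geq \tfrac12 |G_1|$ points are non-zeros.

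\textbf{Inductive step.}
\begin{enumerate}
\item For each $x_k \in G_k$, consider the slice $\phi_{x_k} : G_{[k-1]} \to H$, given by $y_{[k-1]} \mapsto \phi(y_{[k-1]}, x_k)$. It is multilinear.
\item The map $x_k \mapsto \phi_{x_k}$ is linear from $G_k$ to the vector space of multilinear maps $G_{[k-1]} \to H$.
\item Its kernel $K = \{x_k : \phi_{x_k} = 0\}$ is therefore a subspace. It is proper: since $\phi \neq 0$, some slice is non-zero. So $|G_k \setminus K| \geq (1-1/p)|G_k| \geq \tfrac12 |G_k|$.
\item For each $x_k \notin K$, the slice $\phi_{x_k}$ is a non-zero $(k-1)$-multilinear map. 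By the induction hypothesis it has at least $2^{-(k-1)}|G_{[k-1]}|$ non-zeros.
\item Summing over $x_k \notin K$ gives at least $\tfrac12 |G_k| \cdot 2^{-(k-1)} |G_{[k-1]}| = 2^{-k}|G_{[k]}|$ points where $\phi \neq 0$.
\end{enumerate}

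There is no genuine obstacle here. The only point needing care is step 3: the set of "bad" $x_k$ is an actual subspace, not merely a set of small density. This follows because $\phi$ is linear in $x_k$, so $\phi_{x_k + x'_k} = \phi_{x_k} + \phi_{x'_k}$ and $\phi_{\lambda x_k} = \lambda \phi_{x_k}$.
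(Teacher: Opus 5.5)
Your proof is correct and is essentially the paper's argument: both peel off one coordinate at a time using the fact that a non-zero linear map is non-zero on at least a $(1-1/p)\geq 1/2$ fraction of its domain. The difference is only organizational. The paper fixes a witness point $x_{[k]}$ with $\phi(x_{[k]})\neq 0$ and frees the coordinates one by one. You instead induct on $k$, slicing in the last coordinate and observing that the vanishing slices form a proper subspace.
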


\begin{proof}
    Take any $x_{[k]}$ where $\phi$ is non-zero. By induction on $d \in [0, k]$, we show that there are at least $2^{-d}G_{[d]}$ choices of $y_{[d]} \in G_{[d]}$ such that $\phi(y_{[d]}, x_{[d + 1, k]}) \not= 0$.
    
    The base case $d= 0$ is trivial. Suppose now that the claim holds for some $d \in [k]$. For each $y_{[d]}$ such that $ \phi(y_{[d]}, x_{[d + 1, k]}) \not= 0$, the map $z_{d + 1} \mapsto \phi(y_{[d]}, z_{d+1}, x_{[d + 2, k})$ is a non-zero linear map, so it vanishes for at most half of values of $z_{d+1}$. The claim now follows.
\end{proof}

In section~\ref{extensionSection} we shall develop extension theory for MM-$r$-maps in detail. Here we record known results.

The following result, proved in~\cite{MultilinearExtensions}, will be of key importance. It gives a useful criterion for extensions of MM-$r$-maps to varieties containing the domain.

\begin{theorem}[Theorem 3.1 in~\cite{MultilinearExtensions}]\label{qrExtension}Let $\rho \colon G_{[k]} \to \mathbb{F}_p$ and $\beta_i \colon G_{I_i} \to \mathbb{F}_p$, $i \in [m]$ be multilinear forms. Write $\mathcal{I} = \{i \in [m] \colon I_i = [k]\}$. Let $B = \{x_{[k]} \in G_{[k]} \colon (\forall i \in [m])\, \beta_i(x_{I_i}) = 0\}$ and let $B^0 = \{x_{[k]} \in B \colon \rho(x_{[k]}) = 0\}$. Let $\phi \colon B^0 \to H$ be a multilinear map.

\indent Suppose that for each $\lambda \in \mathbb{F}^{\mathcal{I}}$
\begin{equation}\label{qrConditionForExtensions}\exx_{x_{[k]}} \omega\Big(\rho(x_{[k]}) + \sum_{i \in \mathcal{I}} \lambda_i \beta_i(x_{[k]})\Big) < \frac{1}{2k^2}p^{-(2k^2 + k + 1) 2^{2k+3}(m + 1)}.\end{equation}
Then, for each $z_{[k]} \in B \setminus B^0$ and $h_0 \in H$, there is a unique multilinear map $\phi^{\on{ext}} \colon B \to H$ such that $\phi^{\on{ext}}|_{B^0} = \phi$ and $\phi^{\on{ext}}(z_{[k]}) = h_0$.\end{theorem}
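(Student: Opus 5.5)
We reduce the statement to a question about linear extensions along the last coordinate $G_k$. The statement is Theorem 3.1 of~\cite{MultilinearExtensions}, quoted as a known result, so the plan below is how one would reprove it.

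Write $\rho(x_{[k]}) = R(x_{[k-1]})\cdot x_k$ and, for $i\in\mathcal{I}$, $\beta_i(x_{[k]}) = B_i(x_{[k-1]})\cdot x_k$. For a fixed $x_{[k-1]}$ lying in the variety $B^{([k-1])}$ cut out by the forms not involving $k$, the slice $B_{x_{[k-1]}}$ is a subspace of $G_k$ (intersected with the constraints from $\beta_i$ with $k\in I_i\ne[k]$). The slice $B^0_{x_{[k-1]}}$ is the further subspace $\{R(x_{[k-1]})\cdot x_k=0\}$, which has codimension at most one in it. On each slice $\phi$ is linear in $x_k$.

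\textbf{Uniqueness.} Any multilinear $\phi^{\on{ext}}$ is determined on a slice $B_{x_{[k-1]}}$ by its restriction to $B^0_{x_{[k-1]}}$ plus one value off it. We would propagate the prescribed value $h_0$ at $z_{[k]}$ along paths in $B\setminus B^0$, changing one coordinate at a time. Lemma~\ref{connectedlayerslemma}, applied to $\rho$ relative to $\beta$, shows that the layers $\{\rho=\mu\}\cap B$ are connected of bounded diameter. The quasirandomness hypothesis~\eqref{qrConditionForExtensions} is exactly what that lemma needs. Hence all values on $B\setminus B^0$ are forced, which gives uniqueness.

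\textbf{Existence.} We would define the extension by choosing a value $\psi(x_{[k]})$ at each point of $\{\rho=1\}\cap B$ and then extending linearly on slices. The key requirement is consistency: the values must form a Freiman-type (affine) structure on $\{\rho=1\}\cap B$ in every direction, not only direction $k$. The plan is to define $\psi$ by propagation from $z_{[k]}$ along bounded-length paths inside $\{\rho=1\}\cap B$, moving in direction $d$ by the rule forced by linearity in $x_d$ on the slice together with $\phi$ on $B^0$. We then show this is well defined, i.e.\ independent of the path.

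Path-independence reduces to checking that each closed cycle of bounded length contributes zero. Such a cycle can be decomposed into elementary ``squares'', where two coordinates $d\ne e$ change. For a square, the consistency condition is an identity between values of $\phi$ at points of $B^0$ obtained from the bilinear structure in $(x_d,x_e)$, and it holds by multilinearity of $\phi$ on $B^0$. This works provided enough auxiliary points exist in $B^0$ with the right coordinates. Their existence again follows from the counting/quasirandomness estimate in the claim inside Lemma~\ref{connectedlayerslemma}, which controls all relevant linear combinations of $R$ and $B_i$ with explicit $p$-power losses. The exponent $(2k^2+k+1)2^{2k+3}(m+1)$ in~\eqref{qrConditionForExtensions} is meant to absorb these losses.

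\textbf{Main obstacle.} The hardest step is this cycle decomposition and the verification for squares. One must show that the space of closed walks in the layer graph is generated by short squares, essentially a simple-connectivity statement for quasirandom layers, and that each needed square has its witnessing points inside $B^0$. I would attempt it by induction on $k$, treating the slice in $G_k$ via the claim's pair-counting, so that the needed common $z_k$ exist for almost all pairs. The exceptional pairs would then be handled by rerouting through a third point, using the diameter bound $(k+2)^2$.
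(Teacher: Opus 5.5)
The paper does not prove this statement; it quotes it from \cite{MultilinearExtensions}. So I can only assess your plan on its own terms.

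\textbf{Uniqueness.} This part is sound in substance. The difference $\delta$ of two extensions is multilinear on $B$ and vanishes on $B^0$. Hence on each line $B_{x_{[k]\setminus\{d\}}}$ where $\rho$ is not identically zero, $\delta$ equals $\rho(x_{[k]\setminus\{d\}},\cdot)\,h$ for a single $h\in H$. So $\delta$ is constant along paths inside a layer $\{\rho=\mu\}\cap B$ and is multiplied by $\lambda$ under $x_1\mapsto\lambda x_1$. Connectivity of the nonzero layers together with $\delta(z_{[k]})=0$ then gives $\delta=0$. There are two caveats. First, Lemma~\ref{connectedlayerslemma} needs quasirandomness with respect to all the $\beta_i$, including those with $I_i\neq[k]$; you must first derive this from~\eqref{qrConditionForExtensions} via the Gowers--Cauchy--Schwarz inequality, losing a $2^{-k}$-th power. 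Second, that lemma's unspecified constant does not recover the explicit threshold in the statement.

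\textbf{Reduction of existence.} This is also correct, including a step you skipped. Suppose the propagated values $\psi$ on $L=\{\rho=1\}\cap B$ are path-independent. Then $x\mapsto\mu\,\psi(\mu^{-1}x_1,x_2,\dots,x_k)$ for $\rho(x_{[k]})=\mu\neq0$, together with $\phi$ on $B^0$, is multilinear. For this you need invariance of $\psi$ under $\sigma_\lambda\colon x_d\mapsto\lambda^{-1}x_d,\ x_e\mapsto\lambda x_e$, and it is automatic. The map $\sigma_\lambda$ preserves increments, so $\psi\circ\sigma_\lambda-\psi$ is a constant on the connected set $L$. That constant depends homomorphically on $\lambda\in\mathbb{F}_p^\times$, and there is no nonzero homomorphism from a group of order $p-1$ into $H$.

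\textbf{The gap: path-independence.} This is the whole content of existence, and you leave it as the ``main obstacle''. Squares and collinear triangles inside $L$ are not the issue. For corners $(a,b),(a',b),(a',b'),(a,b')\in L$, the points $(a'-a,b)$, $(a',b'-b)$, $(a'-a,b')$, $(a,b'-b)$, $(a'-a,b'-b)$ lie in $B^0$ automatically and the increments cancel, with no auxiliary points needed. Everything therefore rests on these 2-cells generating all closed walks in $L$. The tools you name for this (connectivity, the diameter bound, a common $z_k$ for most pairs, rerouting) provably cannot deliver it.

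\textbf{Counterexample to the proposed tools.} Take $k=2$, $m=0$, $p\ge 3$, $G_1=G_2=\mathbb{F}_p^2$ and $\rho(s,t)=s_1t_1+s_2t_2$. Every nonzero layer is connected of diameter at most $5$. Any two independent $s,s'$ have a common $t$ with $\rho(s,t)=\rho(s',t)=1$, and uniqueness holds. Now write $Js=(-s_2,s_1)$. The bilinear maps on $B^0=\{\rho=0\}$ are exactly $\phi(s,\lambda Js)=\lambda f(s)$ for $s\neq0$, with $\phi(0,t)=0$, where $f$ is arbitrary subject to $f(\mu s)=\mu^2f(s)$. This is a space of dimension $p+1\ge 4$. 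Restrictions of bilinear forms on $G_1\times G_2$ span at most $4-1=3$ dimensions, since $\rho$ restricts to $0$. So extensions do not exist in general. By your own reduction, some closed walk in $\{\rho=1\}$ has nonzero $\phi$-increment, even though every property you invoke holds.

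\textbf{What a proof would need.} It must use~\eqref{qrConditionForExtensions} in a genuinely two-dimensional way. For instance, apply Gowers--Cauchy--Schwarz over whole $2^k$-point configurations, not just pairs, to count auxiliary points $u_{[k]}$ such that all mixed points $(x_I,u_{[k]\setminus I})$ built from a bounded configuration lie in $B$ with prescribed $\rho$-values. Short cycles could then be coned off, with degenerate cycles admitting no such cone point handled separately. Nothing of this kind is in the proposal, so existence is not established.
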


The previous theorem was proved in order to show that MM-$r$-maps are closely related to global multilinear maps. The following theorem was the main result of~\cite{MultilinearExtensions}.

\begin{theorem}[Gowers and Mili\'cevi\'c, Theorem 1.4 in~\cite{MultilinearExtensions}]\label{basicextensionstheory}
     Let $V \subseteq G_{[k]}$ be a multilinear variety of codimension at most $r$ and let $\phi : V \to H$ be a multilinear map. Then $\phi$ coincides with a global multilinear map on a multilinear variety of codimension at most $(2r)^{O(1)}$.
\end{theorem}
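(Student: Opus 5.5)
The plan is to regularize the variety until Theorem~\ref{qrExtension} applies, and then to peel off the top-order defining forms one at a time. The proof is by induction on $k$. The case $k=1$ is elementary linear algebra: a linear map on a subspace extends to all of $G_1$.

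Write $V=\{\beta=0\}$ with $\beta_i\colon G_{I_i}\to\mathbb{F}_p$, $i\in[r]$, and let $\mathcal{I}=\{i : I_i=[k]\}$ index the top-order forms.

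\textbf{Step 1 (regularization).} Suppose some nontrivial combination $\sum_{i\in\mathcal{I}}\lambda_i\beta_i$ fails the quasirandomness condition \eqref{qrConditionForExtensions} with respect to the remaining forms. Then, after averaging over the lower-order forms, it has bias at least $p^{-O(r)}$. By Theorem~\ref{arankprank} it has partition rank at most $r^{O(1)}$, so it can be written as $\sum_j \gamma_j(x_{J_j})\delta_j(x_{[k]\setminus J_j})$. We pass to the subvariety where all the $\gamma_j$ vanish. This removes one top-order form (it is now implied on the subvariety) at the cost of adding $r^{O(1)}$ lower-order forms.

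Each such step strictly decreases $|\mathcal{I}|$, so there are at most $r$ iterations. To keep the final codimension at $(2r)^{O(1)}$ rather than a tower, I would pre-fix the quasirandomness threshold in terms of $r$, as $p^{-\blc r}$, using the explicit bound in \eqref{qrConditionForExtensions}. I would then note that the lower-order forms added during the iteration never need to be regularized at top level again. The outcome is a subvariety $V_1\subseteq V$ of codimension $r^{O(1)}$, defined by lower-order forms together with top-order forms $\rho_1,\dots,\rho_t$ that are jointly quasirandom.

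\textbf{Step 2 (peeling off top forms).} Let $B_j$ be the variety cut out by the lower-order forms together with $\rho_{j+1},\dots,\rho_t$. Then Theorem~\ref{qrExtension} applies with $\rho=\rho_j$ and $B^0=B_{j-1}$: the map $\phi|_{V_1}$ on $B_0=V_1$ extends to a multilinear map on $B_1$, then to one on $B_2$, and so on. At each step we choose the value at one point arbitrarily, say $0$. After $t$ steps we have a multilinear map on a variety $W$ defined only by forms $\beta_i$ with $I_i\subsetneq[k]$, which agrees with $\phi$ on $V_1$.

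\textbf{Step 3 (lower-order varieties).} First handle forms whose index set omits some coordinate, say $k$. Consider those $\beta_i$ with $k\notin I_i$; they define $W'\subseteq G_{[k-1]}$. If every form of $W$ omits $k$, then $W=W'\times G_k$, and $\phi$ can be viewed as a multilinear map $W'\to\on{Hom}(G_k,H)$ in $k-1$ variables. Applying the induction hypothesis (with codomain $\on{Hom}(G_k,H)$) then gives a global extension on a subvariety of codimension $(2r)^{O(1)}$.

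In general, every lower-order form omits some coordinate, but different forms omit different coordinates. I would therefore do the reduction coordinate by coordinate. For the forms involving $k$ (necessarily on proper subsets), fix the other variables and regularize the slices as in Step 1, treating them as forms of order less than $k$. Then use Lemma~\ref{connectedlayerslemma}, which gives connectedness and small diameter, to show that the slice map $\phi(x_{[k-1]},\cdot)$ on $W_{x_{[k-1]}}$ is the restriction of a linear map on $G_k$ that depends multilinearly on $x_{[k-1]}$.

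\textbf{Main obstacle.} The hard part is Step 3 together with the bookkeeping in Step 1: keeping the bounds polynomial in $r$ while the iterated regularization passes through forms on many different index sets $I\subsetneq[k]$. The risk is that each regularization spawns forms of lower order which in turn need regularizing. I would first try to control this with a potential function that counts forms weighted by $|I_i|$, ordered so that each round strictly lowers it. I would then check that the quasirandomness needed in Theorem~\ref{qrExtension} only ever refers to the top-order forms at the current stage, so that the number of rounds is at most $k\cdot r^{O(1)}$ and the codimension bound stays at $(2r)^{O(1)}$.
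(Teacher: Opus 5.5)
Steps~1--2 contain a flaw that destroys the polynomial bound, and Step~3 omits the key idea. The paper imports this theorem from \cite{MultilinearExtensions} rather than reproving it, but the proof of Proposition~\ref{nonvanishing-or-variety-dichotomy} runs the relevant extension procedure.

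\textbf{Steps~1--2: the order of operations.} The threshold in \eqref{qrConditionForExtensions} is $p^{-O(m+1)}$, where $m$ counts \emph{all} forms defining $B$, lower-order ones included. Passing to $\{\gamma_j=0\}$ leaves the biases in \eqref{qrConditionForExtensions} unchanged, but it raises $m$. A combination whose bias is only just above $\eta$ needs roughly $\log_p\eta^{-1}$ pieces, since partition rank is at least analytic rank. So Step~2 would need $\eta\le p^{-O(m)}$ with $m$ possibly of order $\log_p\eta^{-1}$ or more, which no choice of $\eta$ guarantees. Pre-fixing $\eta=p^{-\blc r}$ therefore does not verify the hypothesis of Theorem~\ref{qrExtension} on $V_1$. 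Honestly re-regularizing at the new level gives $m_{i+1}=m_i^{O(1)}$ over up to $r$ rounds, i.e.\ $r^{O(1)^r}$, not $(2r)^{O(1)}$.

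The cure is to swap the order. Take, once, a maximal linearly independent family $\beta$ of top-order combinations that are not $\eta$-quasirandom, with $\eta=p^{-\blc(r+1)}$, and complete it to a basis by $\rho$'s. Then \emph{keep the $\beta$'s among the defining forms} while you peel off the $\rho$'s. Now $m\le r$, and by maximality every combination with a nonzero $\rho$-coefficient is $\eta$-quasirandom, so Theorem~\ref{qrExtension} applies. Let $V^{\on{low}}$ be cut out by the lower-order forms. Only once you have a map on $V^{\on{low}}\cap\{\beta=0\}$ agreeing with $\phi$ on $V$ should you apply Theorem~\ref{arankprank} to the $\beta$'s and restrict to a lower-order $Z\subseteq\{\beta=0\}$ of codimension $\log^{O(1)}(\eta^{-1})=r^{O(1)}$. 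These new forms never enter a quasirandomness hypothesis at this stage.

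\textbf{Step~3: the missing idea.} This step is where the substance of the theorem lies, and the proposal does not supply it. Your reduction to $k-1$ variables covers only the case in which all remaining forms omit a common coordinate. In general, fixing $x_{[k-1]}$ makes the slice $W_{x_{[k-1]}}$ a subspace of $G_k$, so a linear extension of $\phi(x_{[k-1]},\cdot)$ exists trivially slice by slice. The entire difficulty is to choose these extensions multilinearly in $x_{[k-1]}$, and Lemma~\ref{connectedlayerslemma}, which concerns connectivity of layers of a quasirandom map, says nothing about that.

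What you need is to slice in the complementary direction and glue by normalising at fixed points. Induct on the down-set $\mathcal{P}$ of supports of the forms defining the current domain $D$, and let $D^{\on{low}}$ be cut out by the $(\mathcal{P}\setminus\{I\})$-forms.
\begin{enumerate}
\item For a maximal $I\in\mathcal{P}$, the $I$-forms are top-order on every slice $D_{x_{I^c}}\subseteq G_I$. Split them into $\beta$'s and $\rho$'s as above, relative to the $(\mathcal{P}\setminus\{I\})$-forms.
\item Choose $e^{(1)}_I,\dots,e^{(t')}_I$ with $\rho_i(e^{(j)}_I)=\id(i=j)$. Let $x_{I^c}$ range over the variety $U$ of those $x_{I^c}$ whose slice contains every $e^{(j)}_I$.
\item On each quasirandom slice (all but few, by Lemma~\ref{quasirandomrest}), extend by Theorem~\ref{qrExtension} with prescribed value $0$ at every $e^{(j)}_I$.
\item The uniqueness in Theorem~\ref{qrExtension}, applied on the intersection of two slices, shows that the glued map respects all but a few additive triples in the directions $I^c$.
\item Lemmas~\ref{almost-multilinear-maps} and~\ref{trivialextensionsverydense} then upgrade it to a genuinely multilinear map on the points of $D^{\on{low}}\cap(\{\beta=0\}\times G_{I^c})$ with $x_{I^c}\in U$.
\end{enumerate}
The forms created at this stage live on proper subsets of $I$, which already lie in $\mathcal{P}\setminus\{I\}$. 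Hence there are at most $2^k$ stages however many forms are created, and each takes codimension $s$ to $s^{O(1)}$. This resolves your bookkeeping worry and yields $(2r)^{O(1)}$. Your top-order step is just the stage $I=[k]$, where no gluing is needed.
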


\subsection{Abstract Balog-Szemer\'edi-Gowers theorem}

To make sure that we do not cause confusion when working with additive quadruples, we introduce the additional notation that indicates the choice of signs that we have in mind. We write a bold dot above variable to indicate that we take it with negative sign. Namely, writing $(\upd{a}, b, \overset{\bcdot}{c}, d)$ means that we have $a + c = b + d$ and $(\overset{\bcdot}{a}, b, c, \overset{\bcdot}{d})$ means that we have $a + d = b + c$.

The following theorem, called abstract Balog-Szemer\'edi-Gowers theorem, was proved implicitly in~\cite{QPU4}. The following version is a slight specialization of Theorem 4.1 in~\cite{QPU4genab}. The latter theorem applies to approximate groups, which we do not need in the present paper.

\begin{theorem}[Abstract Balog-Szemer\'edi-Gowers theorem]\label{absg}
    \indent Let $G$ be an abelian group and let $A \subseteq G$. Suppose that, for each $i \in [36]$, we have a collection $\mathcal{Q}_i$ of additive quadruples in $A$, satisfying the following properties:
    \begin{itemize}
    \item[\textbf{(i)}] (largeness) $|\mathcal{Q}_1| \geq c |G|^3$,
    \item[\textbf{(ii)}] (symmetry) for each $i\in [36]$, if $(\upd a_1, a_2, a_3, \upd a_4) \in \mathcal{Q}_i$, then 
    \begin{itemize}
        \item[\textbf{(S1)}] $(\upd a_3,a_4,$ $ a_1, \upd a_2) \in \mathcal{Q}_i$,
        \item[\textbf{(S2)}] $(\upd a_2, a_1, a_4, \upd a_3) \in \mathcal{Q}_i$, and
        \item[\textbf{(S3)}] $(\upd a_1, a_3, a_2, \upd a_4) \in \mathcal{Q}_i$,
    \end{itemize} 
    \item[\textbf{(iii)}] (weak transitivity) for all indices $i, j \in [36], i + j \leq 36$, for any additive quadruple $(\upd a_1, a_2, a_3, \upd a_4) \in A^4$, if there are at least $c' |G|$ pairs $(b, b') \in A^2$ such that $(\upd a_1, a_2, b, \upd{b'}) \in \mathcal{Q}_i$ and $(\upd b, b', a_3, \upd a_4) \in \mathcal{Q}_j$, then $(\upd a_1, a_2, a_3, \upd a_4) \in \mathcal{Q}_{i+j}$.
    \end{itemize}

    Fix an integer $k$. Then, provided $c' \leq (c/2)^{\blc}$ and $|G| \geq 8c^{-1}$, there exists a subset $A' \subseteq A$, of size $|A'| \geq (c/2)^{O(1)}|G|$, with the following property. For $\ell \in [k]$, given an $\ell$-tuple $a_{[\ell]} \in {A'}^{\ell}$, define recursively a collection $\mathcal{Z}_\ell^{(a_{[\ell]})}$ of $(3\ell)$-tuples in $A$, (depending on elements $a_{[\ell]}$) as follows. For $\ell = 1$, $\mathcal{Z}_1^{(a_1)}$ consists of all triples $(y_1, y_2, y_3) \in A^3$ such that $(\upd{a}_1, y_1, y_3, \upd{y}_2) \in \mathcal{Q}_{36}$. For $\ell \geq 2$, the collection $\mathcal{Z}_\ell^{(a_{[\ell]})}$ consists of all $3\ell$-tuples $(y_1, \dots, y_{3\ell}) \in A^{3\ell}$ such that 
    \begin{itemize}
        \item $\sum_{i \in [\ell]} (-1)^i a_i = \sum_{i \in [3\ell]} (-1)^i y_i$,
        \item $(\upd{a_{\ell}}, a_{\ell} + y_{3\ell -1} - y_{3\ell}, y_{3\ell}, \upd{y_{3\ell - 1}})\in \mathcal{Q}_{36}$,
        \item $\Big((y_{3\ell-3})^\bcdot, y_{3\ell-3} - y_{3\ell -2} + y_{3\ell -1} - y_{3\ell} + a_{\ell}, y_{3\ell - 2}, (y_{3\ell - 1} - y_{3\ell} + a_{\ell })^\bcdot\Big)\in \mathcal{Q}_{36}$, and,
        \item $(y_1, \dots, y_{3\ell - 4}, a_\ell + y_{3\ell - 3} - y_{3\ell - 2} + y_{3\ell - 1} - y_{3\ell}) \in \mathcal{Z}_{\ell - 1}^{(a_{[\ell - 1]})}$.
    \end{itemize}
    Then, for all $\ell \in [k]$, we have $|\mathcal{Z}_\ell^{(a_{[\ell]})}| \geq (c/2)^{O_\ell(1)} |G|^{3\ell - 1}$.
\end{theorem}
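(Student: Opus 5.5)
The plan is to run Gowers's graph-theoretic argument, Lemma~\ref{gowerspathssingle}, on an auxiliary graph built from $\mathcal{Q}_1$. We then use weak transitivity \textbf{(iii)} as a substitute for ``adding up'' respected quadruples along paths, with the symmetries \textbf{(S1)}--\textbf{(S3)} used to put quadruples into the orientation that \textbf{(iii)} requires.

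\emph{Step 1: the graph and the set $A'$.} Call a pair $(a,a') \in A^2$ \emph{good} if there are at least $\frac{c}{2}|G|$ pairs $(b,b')$ with $(\upd a, a', b, \upd b') \in \mathcal{Q}_1$. Since $|\mathcal{Q}_1| \geq c|G|^3$ and each pair $(a,a')$ lies in at most $|G|$ such quadruples, averaging shows there are at least $\frac{c}{2}|G|^2$ good pairs. By \textbf{(S2)}, \textbf{(S3)} the good relation is essentially symmetric, so it defines a graph $\Gamma$ on $A$ with at least $\frac{c}{4}|G|^2$ edges. Lemma~\ref{gowerspathssingle} then gives $A' \subseteq A$ of size at least $(c/2)^{O(1)}|G|$ such that any two vertices of $A'$ are joined by at least $(c/2)^{O(1)}|G|^5$ paths of length $6$ in $\Gamma$.

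\emph{Step 2: the case $\ell = 1$ by chaining.} Fix $a_1 \in A'$. The goal is to exhibit $(c/2)^{O(1)}|G|^2$ triples $(y_1,y_2,y_3)$ with $(\upd a_1, y_1, y_3, \upd y_2) \in \mathcal{Q}_{36}$. The idea is to use the $6$-paths between $a_1$ and other points $a \in A'$. Each edge $x_{j-1}x_j$ of such a path carries $\frac c2|G|$ quadruples in $\mathcal{Q}_1$. Gluing two consecutive edges along a common pair $(b,b')$ and applying \textbf{(iii)} with $i=j=1$ puts the glued quadruple into $\mathcal{Q}_2$, after reorienting with \textbf{(S1)}--\textbf{(S3)}. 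This step only applies when the glued quadruple has at least $c'|G|$ witnesses. Since the number of witnesses is $(c/2)^{O(1)}|G|$ on average, a popularity or dependent-random-choice style averaging shows that all but a negligible fraction of glued quadruples have this many witnesses, provided $c' \leq (c/2)^{\blc}$. Iterating along a $6$-path, and then once more to link two paths, combines at most $36$ elementary quadruples from $\mathcal{Q}_1$ in a balanced tree of transitivity steps. This bounds the final index by $36$; I expect the number $36$ in the hypothesis is chosen exactly so that this works. Counting over paths and endpoints yields the claimed $(c/2)^{O(1)}|G|^2$ triples.

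\emph{Step 3: general $\ell$ by induction.} Assume the bound for $\mathcal{Z}_{\ell-1}^{(a_{[\ell-1]})}$. Given $a_\ell \in A'$, Step 2 supplies $(c/2)^{O(1)}|G|^2$ choices of $(y_{3\ell-1}, y_{3\ell})$, with the first condition holding via \textbf{(S1)}. It also supplies, for the point $a_\ell + y_{3\ell-1} - y_{3\ell}$, many choices of $(y_{3\ell-3}, y_{3\ell-2})$ satisfying the second condition. The new point $a_\ell + y_{3\ell-3} - y_{3\ell-2} + y_{3\ell-1} - y_{3\ell}$ then plays the role of the last coordinate of a tuple in $\mathcal{Z}_{\ell-1}$. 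To count completions we need this last coordinate to be ``typical'' in $\mathcal{Z}_{\ell-1}$. I would handle this by strengthening the induction to count $\mathcal{Z}_{\ell-1}$ tuples with a prescribed last coordinate averaged over a dense set, and then applying Cauchy--Schwarz. This loses only polynomial factors and gives the bound $(c/2)^{O_\ell(1)}|G|^{3\ell-1}$.

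The main obstacle is Step 2: arranging the transitivity applications so that every intermediate quadruple genuinely has $c'|G|$ witnesses while the indices stay within $[36]$. I would first try a pruning argument that discards, at each level of the tree, the rare quadruples with too few witnesses, with the hypothesis $|G| \geq 8c^{-1}$ used to absorb degenerate configurations.
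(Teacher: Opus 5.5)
The paper does not prove this theorem; it imports it from earlier work. So I have assessed your argument on its own terms. As written it has a genuine gap at Step~2, and Step~3 inherits it.

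\textbf{Step~2 does not compose.} Hypothesis \textbf{(iii)} composes two quadruples only when they share a whole side $(b,b')$, so the difference $a_2-a_1=b'-b=a_4-a_3$ is fixed throughout. Equivalently, for each $d$ let $\mathcal{R}^d_i$ be the graph on $A$ with $x\,\mathcal{R}^d_i\,y$ iff $(\upd{x}, x+d, y, \upd{(y+d)})\in\mathcal{Q}_i$. Then \textbf{(iii)} says exactly this: if $x\,\mathcal{R}^d_i\,z\,\mathcal{R}^d_j\,y$ for at least $c'|G|$ midpoints $z$, then $x\,\mathcal{R}^d_{i+j}\,y$.

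By \textbf{(S3)}, an edge $x_{j-1}x_j$ of your graph $\Gamma$ only says that $x_{j-1}\,\mathcal{R}^e_1\,x_j$ for some set $E_j$ of at least $\frac c2|G|$ directions $e$. The quadruples carried by consecutive edges in general share only the vertex $x_j$. A common pair $(b,b')$ would need $b'-b$ to equal both $x_j-x_{j-1}$ and $x_{j+1}-x_j$, and nothing relates $E_j$ to $E_{j+1}$.

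Even if some $e$ lies in $E_j\cap E_{j+1}$, \textbf{(iii)} needs $c'|G|$ distinct midpoints for the same outer quadruple $\{x_{j-1},x_{j+1},x_{j-1}+e,x_{j+1}+e\}$. A path supplies only the single midpoint $x_j$. Your popularity pruning controls witness counts, but not this: $\Gamma$ forgets the direction, which is exactly what transitivity needs. Note also that nothing in the hypotheses makes $\mathcal{Q}_i$ increasing in $i$, so landing at an index at most $36$ is not enough.

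\textbf{Step~3 cannot be rescued by Cauchy--Schwarz.} Step~2 only concerns points of $A'$, while $u=a_\ell+y_{3\ell-1}-y_{3\ell}$ and $\tilde y$ are arbitrary points of $A$. Moreover, the bound is required for every $a_{[\ell]}\in {A'}^{\ell}$, not on average. Already for $\ell=2$ it asserts that any two $a_1,a_2\in A'$ are joined by $\gg|G|^2$ chains $a_2,u,\tilde y,a_1$ in which each consecutive pair is a side of $\gg|G|$ quadruples of $\mathcal{Q}_{36}$. That is a uniform short-path property in a direction-aware graph, and it has to be built into the choice of $A'$.

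\textbf{The missing idea} is to apply Gowers's lemma inside each direction. For the $\gg c|G|$ differences $d$ for which $\mathcal{R}^d_1$ has $\gg c|G|^2$ edges, Lemma~\ref{gowerspathssingle} gives a set $B_d$. Among the $\gg c^9|G|^5$ six-paths joining two points of $B_d$, discard the at most $3c'|G|^5$ that have a double step with fewer than $c'|G|$ midpoints. Three rounds of \textbf{(iii)} then make $B_d$ an $\mathcal{R}^d_6$-clique, and composing through $B_d$ five more times makes it an $\mathcal{R}^d_{36}$-clique.

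A second, bipartite use of the paths lemma on the incidences $x\in B_d$ is then the kind of step that supplies the connectivity. For example, a $6$-path $a_2,d_1,u,d_2,\tilde y,d_3,a_1$ gives, via \textbf{(S1)}--\textbf{(S3)}, the element $(a_1+d_3,\tilde y+d_3,\tilde y+d_2,u+d_2,u+d_1,a_2+d_1)$ of $\mathcal{Z}_2^{(a_1,a_2)}$, and this map is injective. For $\ell\ge 3$ the chain must also pass through a quadruple with a corner at each intermediate $a_i$, so the choice of $A'$ needs more care than this sketch.
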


The flexibility of the theorem above is crucial, allowing us to use collections of additive quadruples $\mathcal{Q}_i$ coming from complicated respectedness conditions. In the \textbf{Step 1.1} we shall use $\mathcal{Q}_i$ as the set of additive quadruples $(\upd{x}_1, x_2, x_3, \upd{x}_4)$ such that 
    \[|Z(\phi_{x_1} - \phi_{x_2} - \phi_{x_3} + \phi_{x_4})| \geq c^i|G_{[k]}|,\]
where we stress that the zero density changes with $\mathcal{Q}_i$, and in the \textbf{Step 4.1} $\mathcal{Q}_i$ will be the set of additive quadruples $(\upd{x}_1, x_2, x_3, \upd{x}_4)$ such that \[\phi_{x_1} - \phi_{x_2} - \phi_{x_3} + \phi_{x_4} = 0\text{ on }W_i\cap V_{x_1} \cap V_{x_2} \cap V_{x_3} \cap V_{x_4}\]
where $\phi_{x_1} : V_{x_1} \to H, \dots, \phi_{x_4} : V_{x_4} \to H$ and the variety $W_i$ changes with $\mathcal{Q}_i$. In particular, the abstract Balog-Szemer\'edi-Gowers theorem interacts well with the weak sheaf properties of multilinear varieties which is key in the \textbf{Step 4.1}, .

\subsection{From Freiman multihomomorphisms to systems of multilinear maps}

In this subsection, we carry out the \textbf{Preliminary step}. 

\begin{proposition}\label{mlmaps-pass-step-proposition}
    Let $\phi : A \to H$ be a Freiman multihomomorphism on a set $A \subseteq G_{[0,k]}$ of density $c$. Then there exists a set $X \subseteq G_0$ and a collection of global multilinear map $\phi_x : G_{[k]} \to H$ such that $|Z(\phi_{x_1} - \phi_{x_2} + \phi_{x_3} - \phi_{x_4}))| \geq \eplog{c}|G_{[k]}|$ holds for at least $\eplog{c}|G_0|^3$ additive quadruples $(\upd{x}_1, x_2, \upd{x}_3, x_4)$ in $X$. Moreover, there exist a global multiaffine map $\psi : G_{[k]} \to H$ and elements $t_{[k]} \in G_{[k]}$ such that for each $x \in X$, there exist $\eplog{c}|G_{[k]}|$ choices of $y_{[k]} \in G_{[k]}$ such that
    \[\phi_x(y_{[k]}) = \phi(x, (y + t)_{[k]}) + \psi(x, y_{[k]}).\]
\end{proposition}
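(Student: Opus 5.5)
By averaging, the set $X_0 = \{x \in G_0 : |A_x| \geq \tfrac{c}{2}|G_{[k]}|\}$ has size at least $\tfrac{c}{2}|G_0|$, where $A_x = \{y_{[k]} : (x, y_{[k]}) \in A\}$. For each $x \in X_0$ the slice map $\phi(x, \bcdot) : A_x \to H$ is a Freiman multihomomorphism in $k$ variables. By the inductive hypothesis (Theorem~\ref{strFmult} for $k$ variables) there is a global multiaffine map $\Phi_x : G_{[k]} \to H$ which agrees with $\phi(x,\bcdot)$ on a set $S_x \subseteq A_x$ of size $\eplog{c}|G_{[k]}|$.

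The next task is to make the maps $\phi_x$ genuinely multilinear and to exhibit the shift $t_{[k]}$ and the correction $\psi$. A multiaffine $\Phi_x$ splits as $\sum_{I \subseteq [k]} \Phi_{x,I}(y_I)$ with each $\Phi_{x,I}$ multilinear. Fix a base point $t_{[k]}$, chosen by averaging so that many $x$ have many $y_{[k]}$ with $y + t \in S_x$. Then $y_{[k]} \mapsto \Phi_x((y+t)_{[k]})$ has top-degree part $\Phi_{x,[k]}(y_{[k]})$, and its lower-order parts are multilinear in proper subsets of coordinates. To recover the form $\phi_x(y) = \phi(x,(y+t)) + \psi(x,y)$ with one $\psi$ shared by all $x$, I would take $\phi_x := \Phi_{x,[k]}$ and control the lower-order parts. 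Each lower-order part of $\Phi_x(\bcdot + t)$ depends on $x$. Since $\phi$ is also Freiman in the $G_0$ direction, for a fixed $y$ the map $x \mapsto \phi(x,y)$ respects additive quadruples, so these lower-order coefficients behave approximately affinely in $x$. I would use Theorem~\ref{invhomm} on the lower-degree data, or alternatively use directional convolutions as the overview suggests: replace $\phi(x,\bcdot)$ by a discrete $k$-fold derivative, which kills the lower-order terms. This should make them agree with a single multiaffine $\psi(x, y_{[k]})$ on a dense subset $X \subseteq X_0$. Restricting to that $X$, with pigeonholing as needed, gives the ``moreover'' statement with densities $\eplog{c}$.

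For the quadruple count, I would take additive quadruples $(\upd x_1, x_2, \upd x_3, x_4)$ in $X$ together with points $y_{[k]}$ such that $(x_i, y+t) \in A$ and the agreement identity holds for all four $i$. At such $y$, the Freiman property in direction $0$ gives $\sum \pm\phi(x_i, y+t) = 0$. The $\psi$ terms also cancel, since $\psi$ is affine in $x$. Hence $\phi_{x_1} - \phi_{x_2} + \phi_{x_3} - \phi_{x_4}$ vanishes at $y$. It then suffices to show that the expected number of such pairs (quadruple, $y$) is $\eplog{c}|G_0|^3|G_{[k]}|$. Writing $T_x$ for the set of good $y$ for $x$, this count is $\sum_y \sum_{\text{quadruples}} \prod_i \id_{T_{x_i}}(y)$. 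For each fixed $y$ the inner sum is the additive energy of $\{x : y \in T_x\}$, which by Cauchy--Schwarz is at least (density)$^4|G_0|^3$. Averaging over $y$ and applying Hölder then gives the bound, and a further averaging yields $\eplog{c}|G_0|^3$ quadruples each having $\eplog{c}|G_{[k]}|$ zeros.

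The main obstacle is the middle step: producing a single $\psi$ common to all $x \in X$, that is, showing that the lower-order parts of $\Phi_x$ vary affinely in $x$ on a dense set. I would try the derivative/convolution trick first. The fallback is an induction on degree, using Theorem~\ref{invhomm} coordinate-by-coordinate on these lower-order parts (viewed as maps of $x$ valued in spaces of lower-degree multilinear maps), since the Freiman property in $G_0$ supplies the needed approximate homomorphism counts.
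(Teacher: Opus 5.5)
\textbf{There is a genuine gap.} It sits exactly where you flag it: the ``moreover'' clause, which asks for a single shift $t_{[k]}$ and a single multiaffine $\psi$ on $G_{[0,k]}$ that work for all $x\in X$. Neither of your routes delivers it.
\begin{itemize}
\item \emph{The fallback via Theorem~\ref{invhomm}.} That theorem needs exactly respected quadruples, i.e.\ $\Phi_{x_1,I}-\Phi_{x_2,I}+\Phi_{x_3,I}-\Phi_{x_4,I}=0$ identically for many quadruples. The Freiman property in $G_0$ only tells you that $\sum\pm\Phi_{x_i}$ vanishes on the dense set $S_{x_1}\cap\dots\cap S_{x_4}$. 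Multiaffine maps with dense zero sets need not vanish: a nonzero multilinear map of bounded partition rank already has a dense zero set. Upgrading ``many zeros'' to exact linearity in $x$ is what Phases 1--4 of the paper are for, so a one-variable theorem cannot do it here.
\item \emph{Applying Theorem~\ref{invhomm} separately for each fixed $y_I$.} This gives affine maps in $x$ whose dependence on $y_I$ is uncontrolled. Gluing them into one map is again a several-variable structure theorem.
\item \emph{The $k$-fold derivative.} It does not remove the lower-order terms from the target identity. It rewrites $\phi_x$ as a signed sum of $2^k$ values of $\phi$, and you must still show that the $2^k-1$ non-top values are multiaffine in $(x,y)$.
\item \emph{Your choice of $t$.} It is vacuous: $|\{y: y+t\in S_x\}|=|S_x|$ for every $t$.
\end{itemize}

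\textbf{The missing idea} is the one the paper uses. Fix $x$. A box (Gowers--Cauchy--Schwarz) count gives $\eplog{c}|G_{[k]}|^2$ pairs $(y,z)$ with $(y_I,z_{I^c})\in S_x$ for every $I\subseteq[k]$. For these pairs, $\phi_x(y-z)=\sum_{I}(-1)^{k-|I|}\phi(x,y_I,z_{I^c})$. Now average jointly over $(x,z)$, so that $z=t$ is fixed independently of $x$, and substitute $y\mapsto y+t$. This gives
$\phi_x(y)=\phi(x,(y+t)_{[k]})+\sum_{I\subsetneq[k]}(-1)^{k-|I|}\phi(x,(y+t)_I,t_{I^c})$ for a dense set of $(x,y)$.

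The key observation is that each map $(x,y_I)\mapsto\phi(x,(y+t)_I,t_{I^c})$ is itself a Freiman multihomomorphism. It lives in the $|I|+1\le k$ variables $(x,y_I)$, on a dense subset of $G_0\times G_I$. So the inductive hypothesis (Theorem~\ref{strFmult} in at most $k$ variables) applies to it \emph{jointly in $x$ and $y_I$}, producing a multiaffine $\psi_I(x,y_I)$. Then set $\psi=\sum_{I\subsetneq[k]}(-1)^{k-|I|}\psi_I$.

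Apply the induction to the different $I$ one after another. Each time, first restrict to those $(x,y_I)$ whose fibres in the current good set are dense, so that the agreement sets for different $I$ remain compatible.

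Once this is in place, your quadruple count is correct: $\psi$ cancels by affinity in $x$, and you bound the additive energy fibrewise and apply H\"older. It is a fine alternative to the paper's ordering, which counts quadruples first, directly from the box identity.
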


\begin{proof}
    Let $X$ be the collection of $x \in G_0$ such that $|A_{x}| \geq \frac{c}{2}|G_{[k]}$. By averaging, $|X| \geq \frac{c}{2} |G_0|$. For each $x \in X$, we may apply Theorem~\ref{strFmult} for products of $k$ vector spaces, to conclude that there exists a global multiaffine map $\phi^{\on{aff}}_x : G_{[k]} \to H$ such that $\phi^{\on{aff}}_x(y_{[k]}) = \phi(x, y_{[k]})$ for a set $S_x$ of $k$-tuples $y_{[k]} \in G_{[k]}$ of size at least $\eplog{c}|G_{[k]}|$. By Cauchy-Schwarz inequality, we have 
    \begin{align*}&\exx_{\ssk{x_1, x_2, d \in G_0\\y_{[k]} \in G_{[k]}}} \id_{S_{x_1 + d}}(y_{[k]})\id_{S_{x_2}}(y_{[k]})\id_{S_{x_2 + d}}(y_{[k]})\id_{S_{x_2}}(y_{[k]}) = \exx_{\ssk{d \in G_0\\y_{[k]} \in G_{[k]}}} \Big(\exx_{x \in G_0}  \id_{S_{x + d}}(y_{[k]})\id_{S_{x}}(y_{[k]})\Big)^2 \\
    &\hspace{2cm}\geq\Big( \exx_{\ssk{d \in G_0\\y_{[k]} \in G_{[k]}}} \exx_{x \in G_0}  \id_{S_{x + d}}(y_{[k]})\id_{S_{x}}(y_{[k]})\Big)^2  = \Big( \exx_{y_{[k]} \in G_{[k]}} \Big(\exx_{x \in G_0} \id_{S_{x}}(y_{[k]})\Big)^2 \Big)^2 \\
    &\hspace{2cm}\geq \Big(\exx_{\ssk{y_{[k]} \in G_{[k]}\\x \in G_0}}\id_{S_{x}}(y_{[k]}) \Big)^4 \geq \eplog{c}. \end{align*}
    Hence there are $\eplog{c} |G_0|^3$ additive quadruples $(\upd{x}_1, x_2, \upd{x}_3, x_4)$ in $X$ such that $|S_{x_1} \cap S_{x_2} \cap S_{x_3} \cap S_{x_4}| \geq \eplog{c} |G_{[k]}|$. Since $\phi$ is a Freiman multihomomorphism, we have $\phi^{\on{aff}}_{x_1}(y_{[k]}) - \phi^{\on{aff}}_{x_2}(y_{[k]}) + \phi^{\on{aff}}_{x_3}(y_{[k]}) - \phi^{\on{aff}}_{x_4}(y_{[k]}) = 0$ when $y_{[k]} \in  S_{x_1} \cap S_{x_2} \cap S_{x_3} \cap S_{x_4}$.

    Let $\phi_x$ be the multilinear part of $\phi^{\on{aff}}_x$ for each $x \in X$. Note the identity 
    \[\phi_x(y_1 - z_1, \dots, y_k - z_k) = \sum_{I \subseteq [k]} (-1)^{|I|} \phi^{\on{aff}}_x(y_I, z_{[k] \setminus I}).\]
    Note that for each additive quadruple $(\upd{x}_1, x_2, \upd{x}_3, x_4)$ above we have at least $\eplog{c} |G_{[k]}|^2$ choices of $y_{[k]}, z_{[k]} \in G_{[k]}$ such that $(y_I, z_{[k] \setminus I}) \in S_{x_1} \cap S_{x_2} \cap S_{x_3} \cap S_{x_4}$ holds for all $I \subseteq [k]$. Thus $|Z(\phi_{x_1} - \phi_{x_2} + \phi_{x_3} - \phi_{x_4}))| \geq \eplog{c}|G_{[k]}|$.

    For the second part of the proposition, for each $x \in X$, we similarly have $\eplog{c} |G_{[k]}|^2$ choices of $y_{[k]}, z_{[k]} \in G_{[k]}$ such that $(y_I, z_{[k] \setminus I}) \in S_{x}$. Thus
    \[\phi_x(y_1 - z_1, \dots, y_k - z_k) = \sum_{I \subseteq [k]} (-1)^{|I|} \phi^{\on{aff}}_x(y_I, z_{[k] \setminus I}) = \sum_{I \subseteq [k]} (-1)^{|I|} \phi(x, y_I, z_{[k] \setminus I}).\]

    By averaging over $z_{[k]}$ and making a change of variables, where we replace $y_i$ with $y_i + z_i$, we conclude that
    \[\phi_x(y_1, \dots, y_k) = \sum_{I \subseteq [k]} (-1)^{|I|} \phi(x, (y + z)_I, z_{[k] \setminus I})\]
    holds for $\eplog{c}|G_{[k]}|$ choices of $y_{[k]}$. We may apply the main result, Theorem~\ref{strFmult}, for lower arities for terms where $I \not=\emptyset$, to complete the proof.   
\end{proof}

\vspace{\baselineskip}

\section{Extension theory for MM-$r$-maps}\label{extensionSection}

\subsection{Vanishing and simultaneous extensions of MM-maps}

Recall that a general extension result for MM-$r$-maps was proved in~\cite{MultilinearExtensions}, which was stated as Theorem~\ref{qrExtension} in the present paper. Our goal in this subsection is to derive a result about simultaneous extension of such maps, which appears as Theorem~\ref{sim-ext-final-ver}. 

% Firstly, we prove a uniqueness result.

% \begin{lemma}[Extension uniqueness]
%     Let $V$ be a multilinear variety defined by forms $\beta_i : G_{I_i} \to \mathbb{F}_p$ for $i \in [s]$ and let $\alpha : G_{I_0} \to \mathbb{F}_p^{r}$ be a multilinear map. Suppose that
%     \[\Big|\exx_{x_{[k]} \in G_{[k]}} \omega\Big(\lambda \cdot \alpha_1(x_{I_0}) + \lambda' \cdot \alpha_2(x_{I_0}) + \sum_{i \in [s]} \mu_i \beta_i(x_{I_i})\Big)\Big| \leq .\]
%     Let $\phi : V \to H$ be a multilinear map such that $\phi = 0$ on $V \cap (\{\alpha = 0\} \times G_{[k] \setminus I_0})$. Then $\phi = 0$ on $V$.
% \end{lemma}

% \begin{proof}
    
% \end{proof}

Before proving our main vanishing and extension results, we need an easy extension lemma when the domain is extremely dense inside a multilinear variety.

\begin{lemma}\label{trivialextensionsverydense}
    Let $V$ be a multilinear variety of codimension at most $r$ inside $G_{[k]}$. Suppose that $A \subseteq V$ satisfies $|V \setminus A| \leq p^{-\blc (r+1)}|G_{[k]}|$. Let $\phi : A \to H$ be a multilinear map. Then $\phi$ extends to a unique multilinear map $\Phi : V \to H$.
\end{lemma}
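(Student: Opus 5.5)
We will define $\Phi$ point by point using a short-chain extension. The key observation is that a multilinear map on $V$ is determined by additive relations along lines: if $x_{[k]} \in V$ and $y_i \in G_i$ are such that $(x_{[k]\setminus\{i\}}, y_i)$ and $(x_{[k]\setminus\{i\}}, x_i - y_i)$ both lie in $A$, then we are forced to set
\[\Phi(x_{[k]}) = \phi(x_{[k]\setminus\{i\}}, y_i) + \phi(x_{[k]\setminus\{i\}}, x_i - y_i).\]
Note that the slice $V_{x_{[k]\setminus\{i\}}}$ is a subspace (not merely a coset), since every form defining $V$ is linear in $x_i$.

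The difficulty is that the bad set $V \setminus A$ could concentrate on a whole slice. So we do not use a single direction. Instead we work with $k$-dimensional "boxes": for $x_{[k]} \in V$ and $z_{[k]} \in G_{[k]}$, consider the $2^k$ points $(x_I - z_I, z_{[k]\setminus I})$. Expanding $\Phi(x_{[k]})$ by multilinearity in each coordinate gives
\[\Phi(x_{[k]}) = \sum_{I} \Phi\bigl((x-z)_I, z_{[k]\setminus I}\bigr).\]
This is valid provided all intermediate mixed points (those with some coordinates equal to $x_i$, some to $z_i$, some to $x_i - z_i$) lie in $V$.

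We therefore restrict $z_{[k]}$ to the set $Z_x$ of $z$ for which all $3^k$ mixed points lie in $V$. This is a multilinear-variety-type condition on $z$ of codimension $O_k(r)$, with $x$ fixed. Applying Lemma~\ref{lowcodsize} to the relevant variety of $z$'s, or positive correlation via Lemma~\ref{poscorvars} and the fact that forms evaluated on mixed points give a mixed-linear system in $z$, we expect $|Z_x| \geq p^{-O_k(r)}|G_{[k]}|$. Hmm — with $x$ fixed this is a variety in $z$ only if each defining form, evaluated at mixed points, is mixed-linear in $z$, which holds since each coordinate is affine in $z_i$. Affine varieties might be empty, but here $z=0$ lies in them, and for affine-in-each-variable systems we get a lower bound via iterated slicing as in Lemma~\ref{lowcodsize}, applied coordinate by coordinate with subspace slices. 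Each mixed point ranges over a fixed density, so by a union bound the number of $z \in Z_x$ for which some mixed point lies in $V \setminus A$ is at most $3^k p^{-\blc(r+1)}|G_{[k]}|$. Taking the constant large, a positive fraction of $Z_x$ is good, and we define $\Phi(x_{[k]})$ by the expansion formula using any good $z$.

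The main obstacle is \textbf{well-definedness}: two good choices $z, z'$ must give the same value. We plan to connect them through a third $z''$ that is good simultaneously with both, and to compare via intermediate boxes in which the coordinates are switched one at a time from $z$ to $z''$. Each comparison step is a one-dimensional additive relation among points of $A$, which $\phi$ respects. This requires that the set of $z''$ for which all points of the combined $z, z', z''$ configuration (now $O_k(1)$ points) lie in $V$ and avoid $V\setminus A$ be nonempty. This follows by the same density-plus-union-bound argument for the variety in $z''$ defined with $x, z, z'$ fixed, whose codimension is again $O_k(r)$.

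After this, multilinearity of $\Phi$ along each line in $V$ follows the same way: given $x, x', x+x'$ differing in coordinate $i$, pick a common good $z$ (again by density) and use additivity of the expansions. We also need $\Phi = \phi$ on $A$, which follows by taking $z$ whose box is in $A$ and using that $\phi$ is multilinear there. Uniqueness follows because any multilinear extension must satisfy the expansion formula for good $z$.
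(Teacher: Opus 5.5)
\textbf{The gap is in the union bounds.} As $z$ varies, only the $2^k$ corners $((x-z)_I,z_{[k]\setminus I})$ are affine bijective images of $z$. So only the corners can be pushed off $V\setminus A$ by counting; a point with some coordinates frozen cannot.

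\emph{The definition step.} Your claim that, for most $z$, all $3^k$ mixed points avoid $V\setminus A$ is false, since one of them is $x$ itself. This part is easily patched, because the definition only needs the corners.

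\emph{The well-definedness step.} Here the problem is fatal. In a switched box, the coordinates not yet switched are inherited from the fixed $z$. For $k=2$, the points $(z''_1,z_2)$ and $(x_1-z''_1,z_2)$ sweep out only the line $G_1\times\{z_2\}$ as $z''$ varies. A line is negligible compared with $p^{-\blc(r+1)}|G_{[k]}|$, so $V\setminus A$ may contain all of it except the corners of your given good $z$, and then no $z''$ works.

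\emph{The step ``$\Phi=\phi$ on $A$''.} The same issue breaks this step, because it needs points of $A$ on the lines through $x$.

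\textbf{This cannot be repaired, because the statement as written is false, already for $k=2$.}

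\emph{Counterexample.} Take $V=G_1\times G_2$ (so $r=0$) and nonzero $a_1\in G_1$, $a_2\in G_2$. Let $A$ be $V$ with $(G_1\times\{a_2\})\cup(\{a_1\}\times G_2)$ removed and the single point $(a_1,a_2)$ put back. Set $\phi=0$ on $A\setminus\{(a_1,a_2)\}$ and $\phi(a_1,a_2)=h\neq 0$.
\begin{itemize}
\item $|V\setminus A|<|G_1|+|G_2|$, which is below $p^{-\blc}|G_1\times G_2|$ once both dimensions are large.
\item $\phi$ is multilinear on $A$. The only point of $A$ on either line through $(a_1,a_2)$ is $(a_1,a_2)$ itself, and $a_i+a_i\neq a_i$, so no additive triple constrains that point.
\item A bilinear extension would vanish on more than $3/4$ of $V$. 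By Lemma~\ref{nonzeromapslemma} it would then be $0$, contradicting $\Phi(a_1,a_2)=h$.
\end{itemize}

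\emph{Effect on your scheme.} For odd $p$, the point $x=(2a_1,2a_2)$ has the good choice $z=(a_1,a_2)$. All four of its corners equal $(a_1,a_2)$, giving the value $4h$, while a generic good $z'$ gives $0$.

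\textbf{What is true, and how your method proves it.} Write $\varepsilon=|V\setminus A|/|G_{[k]}|$. The true statement is:
\begin{itemize}
\item uniqueness of the extension (your expansion argument for this is correct, and is a direct alternative to the paper's inductive uniqueness);
\item existence of a multilinear $\Phi\colon V\to H$ agreeing with $\phi$ on $A$ outside a set of size $O(\varepsilon^{\Omega(1)})|G_{[k]}|$.
\end{itemize}
This weaker form is what the paper's later applications actually use.

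Your method proves it if you admit only those $z$ satisfying the following condition. For every $S\subseteq[k]$ and every $u_S$ with $u_j\in\{z_j,x_j-z_j\}$, the slice $(V\setminus A)_{u_S}$ has density at most $\varepsilon'$ in $G_{[k]\setminus S}$, where $\varepsilon\ll\varepsilon'\le p^{-\blc(r+1)}$. The argument then runs as follows.
\begin{itemize}
\item Since $z_S\mapsto u_S$ is a bijection, Markov's inequality shows that only an $O(\varepsilon/\varepsilon')$ proportion of $z$ fail the condition.
\item Every point of a switched box, including auxiliary points such as $(w,z''_i-z_i)$, then lies in $V\setminus A$ for at most $\varepsilon'|G_{[k]}|$ choices of $z''$.
\end{itemize}

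\textbf{Comparison with the paper.} The paper instead inducts on $k$. It extends $\phi_{x_k}$ on the slices $V_{x_k}$ with few bad points by the inductive hypothesis, with base case Corollary~\ref{triviallinearextension}. On the remaining slices it sets $\Phi(x_{[k]})=\phi^{\on{ext}}_{y_k}(x_{[k-1]})+\phi^{\on{ext}}_{z_k}(x_{[k-1]})$, with $y_k+z_k=x_k$ chosen in good slices inside the subspace $V_{x_{[k-1]}}$, and uses inductive uniqueness for consistency. Working only through good slices sidesteps the concentration problem. However, that proof also never checks $\Phi=\phi$ at points of $A$ in bad slices, and the example shows this cannot hold in general.
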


In the proof, we make use of the following lemma, appearing in~\cite{FreimanMultihom} as Corollary 26. The final sentence in the conclusion is not present in~\cite{FreimanMultihom}, but follows from the proof.

\begin{lemma}\label{approxF2homm}Let $p$ be a prime and let $\epsilon \in (0, \frac{1}{4 \cdot 10^4})$. Suppose that $V \leq G$ and $H$ are finite-dimensional $\mathbb{F}_p$-vector spaces and that $v_0 \in G$. Let $X \subset v_0 + V$ and let $\phi \colon X \to H$ be a map such that $\phi(a) + \phi(b) = \phi(c) + \phi(a + b - c)$ holds for at least $(1-\varepsilon) |V|^3$ of $(a,b,c) \in X^3$. Then there is an affine map $\alpha \colon v_0 + V \to H$ and a set $X' \subseteq X$ of size at least $(1-5\sqrt[4]{\epsilon})|V|$ such that $\phi(x) = \alpha(x)$ holds for $x \in X'$.

\indent Moreover, the set $X'$ depends only on the set of triples $(a,b,c) \in X^3$ and not on the map $\phi$.\end{lemma}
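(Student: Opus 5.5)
We prove Lemma~\ref{approxF2homm} by the standard majority-vote argument. After translating by $v_0$ we may assume $v_0 = 0$ (affine maps are preserved), so that $X \subseteq V$. Call a triple $(a,b,c) \in X^3$ \emph{good} if $a+b-c \in X$ and $\phi(a)+\phi(b) = \phi(c)+\phi(a+b-c)$; by hypothesis at most $\varepsilon|V|^3$ triples in $V^3$ fail to be good. In particular $|X| \geq (1-\varepsilon)^{1/3}|V| \geq (1-\varepsilon)|V|$, since every good triple lies in $X^3$.

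For each $x \in V$ we define a candidate value
\[\alpha(x) = \text{the most frequent value of } \phi(a)+\phi(b)-\phi(c) \text{ over } (a,b,c)\in X^3 \text{ with } a+b-c = x.\]
\textbf{Step 1 (stability of votes).} We show that for all but $O(\sqrt{\varepsilon})|V|$ elements $x$, at least a $(1-O(\sqrt{\varepsilon}))$-proportion of the representations $(a,b,c)$ of $x$ give the same vote. To compare two representations $(a,b,c)$ and $(a',b',c')$ of the same $x$, we link them through auxiliary elements. For example, with $u = a+b-c'$ we use the identities
\[\phi(a)+\phi(b)-\phi(c') = \phi(u), \qquad \phi(u)+\phi(c')-\phi(c) = \text{vote},\]
and so on, changing one coordinate at a time. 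We take the representations, and one free auxiliary element, uniformly at random. Each intermediate triple is then uniformly distributed in $V^3$, so by a union bound it fails to be good with probability $O(\varepsilon)$. Averaging over $x$ and applying Markov's inequality gives the claim.

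\textbf{Step 2 (affinity).} We show that $\alpha(x)+\alpha(y) = \alpha(z)+\alpha(x+y-z)$ holds for \emph{all} $x,y,z \in V$. We choose random representations of $x, y, z$ and of $x+y-z$, correlated so that they share auxiliary variables, for instance
\[x = a+b-c,\qquad y = a'+b'-c'.\]
Each of the boundedly many triples involved is individually uniform on $V^3$. Hence the probability that some triple is bad, or that some vote disagrees with the corresponding majority value, is $O(\sqrt{\varepsilon}) < 1$ provided $\varepsilon < 1/(4\cdot 10^4)$. Fixing one outcome in which everything is good yields the identity. A map on $V$ satisfying this identity everywhere is affine.

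\textbf{Step 3 (the set $X'$).} We let $X'$ be the set of $x \in X$ such that at least half of the representations $(a,b,c)$ of $x$ are good triples. For $x \in X'$ the vote of every good representation equals $\phi(x)$, and these votes form a majority, so $\phi(x) = \alpha(x)$. By Markov's inequality, $|X \setminus X'| \leq O(\sqrt{\varepsilon})|V|$, which together with $|X| \geq (1-\varepsilon)|V|$ comfortably gives $|X'| \geq (1-5\sqrt[4]{\varepsilon})|V|$. The definition of $X'$ refers only to the set of good triples, which gives the final sentence of the lemma.

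The main obstacle is Step 2. The auxiliary elements must be chosen so that every intermediate triple is genuinely uniform on $V^3$, so that the union bound applies. They must also land in $X$, which has density only $1-O(\varepsilon)$, and this costs another additive $O(\varepsilon)$ per auxiliary element. The delicate part is the explicit constant bookkeeping that reaches $5\sqrt[4]{\varepsilon}$ under the hypothesis $\varepsilon < 1/(4\cdot10^4)$.
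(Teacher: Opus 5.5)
Your majority-vote (self-correction) strategy is a workable route, but as written Step 2 does not follow from Step 1. In Step 1 you average over $x$ and apply Markov's inequality, so you only get a dominant vote at all but $O(\sqrt{\varepsilon})|V|$ points. Step 2, however, claims $\alpha(x)+\alpha(y)=\alpha(z)+\alpha(x+y-z)$ for \emph{all} $x,y,z\in V$. Its union bound charges $O(\sqrt{\varepsilon})$ for the event that a random representation of $x$, $y$, $z$ or $x+y-z$ votes differently from the majority. At an exceptional point you have no such bound: $\alpha(x)$ might be backed by only a small fraction of the representations, so that event could have probability close to $1$. Affinity of $\alpha$ on all of $v_0+V$ is exactly what the lemma needs. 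Restricting Step 2 to non-exceptional points would leave you needing a separate argument to extend from a dense set.

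The repair is to prove Step 1 pointwise, with no averaging over $x$.
\begin{itemize}
\item Fix any $x\in V$. Take $a,b,a',b'\in V$ independent and uniform, and put $c=a+b-x$ and $c'=a'+b'-x$.
\item For this fixed $x$, the triples $(a,b,a')$ and $(b',c,c')$ are each uniform on $V^3$, and $a+b-a'=b'+c-c'$.
\item If both triples are good, all six elements lie in $X$ and $\phi(a)+\phi(b)-\phi(c)=\phi(a')+\phi(b')-\phi(c')$.
\item Let $q_{x,v}$ be the proportion of the $|V|^2$ representations of $x$ that lie in $X^3$ and vote $v$. Then $\sum_v q_{x,v}^2\geq 1-2\varepsilon$, so the majority vote at \emph{every} $x$ has proportion at least $1-2\varepsilon$.
\end{itemize}
Step 2 then works with an explicit coupling. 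Take independent uniform representations $(a_i,b_i,c_i)$ of $x,y,z$ for $i=1,2,3$. For $x+y-z$ use $(a_1+a_2-a_3,\,b_1+b_2-b_3,\,c_1+c_2-c_3)$, which is itself a uniform representation. The linking triples $(a_1,a_2,a_3)$, $(b_1,b_2,b_3)$, $(c_1,c_2,c_3)$ are each uniform on $V^3$. The total failure probability is then at most $3\varepsilon+4\cdot 2\varepsilon<1$.

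With this fix, Step 3 is fine as stated, since two vote classes of proportion at least $1/2$ and at least $1-2\varepsilon$ must coincide. A direct count also gives $|X\setminus X'|\leq 2\varepsilon|V|$, so $|X'|\geq(1-3\varepsilon)|V|$, which is much better than the required $1-5\sqrt[4]{\varepsilon}$.

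The paper itself gives no proof of this lemma; it quotes it as Corollary 26 of the earlier work and only remarks that the final sentence follows from that proof.
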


In particular, when we are given a map that respects most additive triples in a vector space, we may deduce the following result.

\begin{corollary}\label{triviallinearextension}
     Let $\varepsilon \in (0, \bsc)$. Suppose that $G$ and $H$ are finite-dimensional $\mathbb{F}_p$-vector spaces, and that $\phi \colon X \to H$ is a map on $X \subseteq G$ such that $\phi(x) + \phi(y) = \phi(x + y)$ holds for at least a $(1 - \varepsilon)|G|^2$ pairs $(x, y) \in X \times X$. Then there is a linear map $\alpha \colon G \to H$ and a set $Y \subseteq X$ of size at least $(1-20\sqrt[4]{\varepsilon})|G|$ such that $\phi(x) = \alpha(x)$ for $x \in Y$.
     
     \indent Moreover, the set $Y$ depends only on the set of pairs  $(x,y) \in X^2$ in the assumptions and not on the map $\phi$.
\end{corollary}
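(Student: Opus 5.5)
The plan is to reduce Corollary~\ref{triviallinearextension} to Lemma~\ref{approxF2homm}, applied with $V = G$ and $v_0 = 0$. The hypothesis concerns pairs $(x,y)$ with $\phi(x)+\phi(y)=\phi(x+y)$, whereas the lemma needs triples $(a,b,c)$ with $\phi(a)+\phi(b)=\phi(c)+\phi(a+b-c)$. I therefore first convert the pair information into triple information, then obtain an affine map, and finally check that the affine map has no constant term.

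\textbf{Step 1: from pairs to triples.} Let $P\subseteq X^2$ be the set of good pairs, so that $|P|\geq(1-\varepsilon)|G|^2$. In particular $|X|\geq(1-\varepsilon)|G|$. Call a triple $(a,b,c)\in X^3$ good if all of the following hold:
\begin{itemize}
\item[(i)] $a-c\in X$ and $a+b-c\in X$;
\item[(ii)] the pairs $(c,a-c)$, $(a-c,b)$ and $(c,a+b-c-c)$ lie in $P$.
\end{itemize}
For such a triple we write $d=a-c$. Then (ii) gives $\phi(a)=\phi(c)+\phi(d)$ and $\phi(a+b-c)=\phi(d)+\phi(b)$. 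Subtracting these two identities yields $\phi(a)+\phi(b)=\phi(c)+\phi(a+b-c)$. (One of the three pairs in (ii) is redundant; it is harmless to keep it.)

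Each membership condition fails for at most $O(\varepsilon)|G|^3$ triples $(a,b,c)\in G^3$. This is because each relevant map $(a,b,c)\mapsto$ pair, or $\mapsto$ element, has uniform fibres over $G^2$ or over $G$. Hence at least $(1-O(\varepsilon))|G|^3$ triples in $X^3$ are good. Since $\varepsilon<\bsc$, we have $O(\varepsilon)<1/(4\cdot 10^4)$.

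\textbf{Step 2: the affine map.} Lemma~\ref{approxF2homm} now gives an affine map $\alpha'\colon G\to H$ and a set $X'$ with $|X'|\geq(1-5\sqrt[4]{O(\varepsilon)})|G|$ on which $\phi=\alpha'$. The set of good triples depends only on $P$, and not on $\phi$. Therefore, by the final sentence of the lemma, $X'$ also depends only on $P$.

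\textbf{Step 3: removing the constant term.} Write $\alpha'=\alpha+h_0$ with $\alpha$ linear. We show $h_0=0$. Since $X'$ is dense, there are at least $(1-O(\sqrt[4]{\varepsilon}))|G|^2$ pairs $(x,y)$ with $x,y,x+y\in X'$. Since $P$ is dense, at least one such pair also lies in $P$. For that pair,
\[
\alpha(x)+\alpha(y)+2h_0=\phi(x)+\phi(y)=\phi(x+y)=\alpha(x+y)+h_0,
\]
so $h_0=0$.

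We take $Y=X'$. The constant in the bound $(1-20\sqrt[4]{\varepsilon})|G|$ comes from bookkeeping the $O(\varepsilon)$ above: roughly $4\varepsilon$ failures, and $5\cdot 4^{1/4}<20$. The set $Y$ still depends only on $P$.

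The only delicate point is Step~1, namely choosing the auxiliary pairs so that the losses are genuinely $O(\varepsilon)$ and that the triples produced lie in $X^3$. This is a routine counting argument.
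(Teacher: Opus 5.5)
Your proof is correct and follows the paper's route: you convert the good pairs into good triples, apply Lemma~\ref{approxF2homm} to get an affine map agreeing with $\phi$ on a set determined only by the pairs, and then kill the constant term using a surviving good pair. The differences are cosmetic. The paper produces the triples by Cauchy--Schwarz over the common sum $x+y$, whereas you use a union bound over the common difference $a-c$. Your Step~3 also explicitly secures $x+y\in X'$, which the paper's version leaves implicit.
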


\begin{proof}
    By Cauchy-Schwarz inequality, we see that $\phi(x) + \phi(y) = \phi(z) + \phi(x + y - z)$ holds for at least $(1-\varepsilon)^2|G|^3$ triples $(x,y,z) \in X^3$. By the previous lemma, there exists affine map $\alpha \colon G \to H$ such that $\phi = \alpha$ holds on a set $X' \subseteq X$ of size at least $(1-10\sqrt[4]{\varepsilon})|G|$. It remains to show that $\alpha$ is linear. To see that, note that the number of pairs removed from $X$ is at most $|X|^2 - |X'|^2 \leq 2|G| |G \setminus X'| \leq 20 \sqrt[4]{\varepsilon}|G|$. Hence, as $\varepsilon \in (0, \bsc)$, a pair $(x,y)$ such that $\phi(x) + \phi(y) = \phi(x + y)$ remains in $X'$, so we have $\alpha(x) + \alpha(y) = \alpha(x + y)$, implying that $\alpha(0) = 0$, which proves that $\alpha$ is linear.
\end{proof}

We may now prove Lemma~\ref{trivialextensionsverydense}.

\begin{proof}[Proof of Lemma~\ref{trivialextensionsverydense}]
    We prove the claim by induction on $k$. The base case $k=1$ follows from Corollary~\ref{triviallinearextension}.
    
    \indent Suppose that the claim holds for some $k - 1$ and that $\phi :A \to H$ is multilinear and $|V\setminus A| \leq \varepsilon |G_{[k]}|$. Let $U \subseteq G_{k}$ be the subspace of elements $x_k$ such that the slice $V_{x_k}$ is non-empty, i.e. $U  = V^{(\{k\})}$. Let $\eta \geq p^{-O(r)}$ be the error parameter for which the induction hypothesis holds for varieties of codimension at most $4r$. Let $X \subseteq U$ be the collection of $x_k$ such that $|V_{x_{k}} \setminus A_{x_{k}}| \leq \frac{1}{4}\eta |G_{[k-1]}|$. By averaging, we have $|U \setminus X| \leq 4\eta^{-1}\varepsilon |G_k|$.
    
    \indent For each $x_k \in X$, by induction hypothesis, we know that $\phi_{x_k}$ uniquely extends to a multilinear map $\phi_{x_k}^{\on{ext}} : V_{x_k} \to H$. It remains to define $\Phi$ at the remaining points $((U \setminus X) \times G_{[k-1]}) \cap V$.
    
    \indent Let $x_{[k]} \in V$ be arbitrary. Note that $V_{x_{[k-1]}}$ is a subspace of $U$ of codimension at most $r$. Hence $|X \cap V_{x_{[k-1]}}| \geq (1 - 4\eta^{-1}p^r \varepsilon)|V_{x_{[k-1]}}| \geq \frac{2}{3}|V_{x_{[k-1]}}|$, provided $\varepsilon \leq  p^{-\blc (r+1)}$. Therefore, there exist $y_k, z_k \in X \cap V_{x_{[k-1]}}$ such that $y_k + z_k = x_k$. We define $\Phi(x_{[k]}) = \phi_{y_k}^{\on{ext}}(x_{[k-1]}) + \phi_{z_k}^{\on{ext}}(x_{[k-1]})$. It remains to check that $\Phi$ is well-defined and multilinear.
    
    \indent Let us first show that $\Phi$ is well-defined. If $y'_k, z'_k \in X \cap V_{x_{[k-1]}}$ are another choice of elements with $y'_k + z'_k = x_k$, then we have
    \[|(V_{y_k} \cap V_{z_k} \cap V_{y'_k} \cap V_{z'_k}) \setminus (A_{y_k} \cap A_{z_k} \cap A_{y'_k} \cap A_{z'_k})| \leq |V_{y_k} \setminus A_{y_k}| + \dots + |V_{z'_k} \setminus A_{z'_k}| \leq \eta |G_{[k-1]}|,\]
    so $\phi_{y_k}^{\on{ext}} + \phi_{z_k}^{\on{ext}}$ and $\phi_{y'_k}^{\on{ext}} + \phi_{z'_k}^{\on{ext}}$ are both extensions of a multilinear map from $A_{y_k} \cap A_{z_k} \cap A_{y'_k} \cap A_{z'_k}$ to $V_{y_k} \cap V_{z_k} \cap V_{y'_k} \cap V_{z'_k}$. By the inductive hypothesis, such extension is unique so $\phi_{y_k}^{\on{ext}} + \phi_{z_k}^{\on{ext}} = \phi_{y'_k}^{\on{ext}} + \phi_{z'_k}^{\on{ext}}$ implying that $\Phi$ is well-defined. A very similar argument shows that $\Phi$ is linear in direction $G_k$.
    
    \indent Finally, let us check that $\Phi$ is multilinear in directions $G_{[k-1]}$. To that end, let $x_{[k]}, y_{[k]}, z_{[k]} \in V$ coincide in all coordinates except some $d \in [k-1]$ and assume $x_d + y_d = z_d$. Then we have
    \[|(V_{x_{[k-1]}} \cap V_{y_{[k-1]}} \cap V_{z_{[k-1]}}) \setminus X| \leq 4\eta^{-1}p^{3r} \varepsilon |V_{x_{[k-1]}} \cap V_{y_{[k-1]}} \cap V_{z_{[k-1]}}|,\]
    so, provided   $\varepsilon \leq  p^{-\blc (r+1)}$, we have $|V_{x_{[k-1]}} \cap V_{y_{[k-1]}} \cap V_{z_{[k-1]}} \cap X| \geq \frac{2}{3}|V_{x_{[k-1]}} \cap V_{y_{[k-1]}} \cap V_{z_{[k-1]}}|$ and thus we have $w_k, w'_k \in X \cap V_{x_{[k-1]}} \cap V_{y_{[k-1]}} \cap V_{z_{[k-1]}}$ such that $w_k + w'_k = x_k = y_k = z_k$. By definition of $\Phi$, we have
    \begin{align*}\Phi(x_{[k]}) + &\Phi(y_{[k]}) - \Phi(z_{[k]}) \\
    = &\phi_{w_k}^{\on{ext}}(x_{[k-1]}) + \phi_{w'_k}^{\on{ext}}(x_{[k-1]}) + \phi_{w_k}^{\on{ext}}(y_{[k-1]}) + \phi_{w'_k}^{\on{ext}}(y_{[k-1]}) - \phi_{w_k}^{\on{ext}}(z_{[k-1]}) + \phi_{w'_k}^{\on{ext}}(z_{[k-1]}) = 0.\qedhere\end{align*}
\end{proof}

At a couple of places in the paper, we are forced to consider maps that are almost multilinear. The next lemma treats the case of such maps.

\begin{lemma}\label{almost-multilinear-maps}
     Let $V$ be a multilinear variety of codimension at most $r$ inside $G_{[k]}$. Let $\varepsilon > 0$. Suppose that $A \subseteq V$ and let $\phi : A \to H$ be a map such that, for each direction $d \in [k]$, for all but at most $\varepsilon |G_{[k] \setminus \{d\}}|$ points $a_{[k]\setminus \{d\}} \in V^{([k]\setminus \{d\})}$, the map $x_d \mapsto \phi(a_{[k]\setminus \{d\}}, x_d)$ respects all but at most $\varepsilon|G_d|^2$ additive triples in $A_{a_{[k]\setminus \{d\}}}$. Then, provided $\varepsilon \leq p^{-\blc(r+1)}$ there exists a subset $A' \subseteq A$ of size $|A'| \geq (1 - O(\varepsilon^{\Omega(1)}))|V|$ on which $\phi$ is multilinear.
     
     \indent Moreover, the set $A'$ depends only on the set of the given respected directional additive triples and not on the map $\phi$.
\end{lemma}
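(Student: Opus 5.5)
The plan is to build $A'$ one direction at a time, using Corollary~\ref{triviallinearextension} on the one-dimensional slices and discarding a small exceptional set at each stage. Fix a direction $d \in [k]$ and a point $a_{[k]\setminus\{d\}} \in V^{([k]\setminus\{d\})}$ that is not among the at most $\varepsilon|G_{[k]\setminus\{d\}}|$ bad points. The slice $V_{a_{[k]\setminus\{d\}}}$ is a coset of a subspace of $G_d$ of codimension at most $r$, and in fact a subspace, since every form defining $V$ that involves coordinate $d$ is linear in $x_d$. Discard also the slices for which $|V_{a} \setminus A_{a}|$ is large; by Markov this removes few points, unless $|V\setminus A|$ is itself large, in which case the statement is trivial. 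On each remaining slice, $x_d \mapsto \phi(a, x_d)$ respects all but $\varepsilon|G_d|^2 \leq \varepsilon p^{2r}|V_a|^2$ additive triples. Applying Corollary~\ref{triviallinearextension} inside the subspace $V_a$, with $\varepsilon p^{2r} \leq \bsc$ guaranteed by $\varepsilon \leq p^{-\blc(r+1)}$, gives a linear map $\alpha_{a}$ on $V_a$ and a set $Y_{a} \subseteq A_a$ with $|V_a\setminus Y_a| \leq O((\varepsilon p^{2r})^{1/4})|V_a|$ on which $\phi = \alpha_a$. By the moreover clause of the corollary, $Y_a$ depends only on the respected triples.

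Let $B_d$ be the union over good slices $a$ of $\{a\}\times Y_a$. Then $|V \setminus B_d| \leq O(\varepsilon^{\Omega(1)})|V|$, after using Lemma~\ref{lowcodsize} to compare $|G_{[k]}|$ with $|V|$ at a cost of $p^{kr}$, which is absorbed by the assumption on $\varepsilon$. Now set $B = \bigcap_d B_d$. This set has the right size, but it is not yet enough. On $B$, the map $\phi$ agrees in direction $d$ with the linear map $\alpha_a$ defined on the whole slice $V_a$. Multilinearity, however, requires that whenever $x,y,z \in A'$ differ only in coordinate $d$ with $x_d+y_d=z_d$, we have $\phi(x)+\phi(y)=\phi(z)$. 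If $x,y,z \in B_d$, this holds because all three points lie in the same slice and $\phi=\alpha_a$ there.

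So the natural choice is simply $A' = B$. Any triple of points of $B$ differing in coordinate $d$ lies in $B_d$, hence in a single good slice $a$, and there $\phi$ coincides with the linear map $\alpha_a$. This gives additivity in every direction. It also shows that the argument is essentially a union bound and does not really need the induction structure of Lemma~\ref{trivialextensionsverydense}. The main point to check carefully is the size accounting. The bad set for direction $d$ consists of three parts: the exceptional $a$'s, which have $|V_a| \leq |G_d|$, so their contribution is at most $\varepsilon|G_{[k]}| \leq \varepsilon p^{kr}|V|$; the slices with many missing points; and the sets $V_a\setminus Y_a$, of total size $O((\varepsilon p^{2r})^{1/4})|V|$. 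Summing over $d$ gives $|A'| \geq (1-O(\varepsilon^{\Omega(1)}))|V|$ once $\varepsilon \leq p^{-\blc(r+1)}$, since then $p^{O(r)} \leq \varepsilon^{-1/2}$, say.

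The main obstacle is the step where Corollary~\ref{triviallinearextension} is applied inside a slice. The corollary is stated for the whole space $G$ with density measured relative to $|G|^2$, whereas our hypothesis counts the failing triples relative to $|G_d|^2$ while the slice is a subspace of codimension up to $r$. Rescaling to the ambient subspace $V_a$ costs $p^{2r}$, and this is exactly why the smallness condition $\varepsilon \leq p^{-\blc(r+1)}$ is needed. Finally, the set $A'$ is determined solely by the sets $Y_a$ and the exceptional sets. All of these depend only on which directional triples are respected, so the moreover clause follows.
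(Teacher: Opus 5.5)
Your proposal is correct and follows the paper's route. The paper's proof is the single instruction to apply Corollary~\ref{triviallinearextension} in each direction, and your rescaling to the slice subspace $V_a$ (at cost $p^{2r}$), the choice $A'=\bigcap_d B_d$, and the union bound via Lemma~\ref{lowcodsize} are the details behind that instruction.

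One aside needs correcting. If $|V\setminus A|$ were large, the conclusion would be impossible rather than trivial, since $A'\subseteq A$; under a literal reading, $A=\emptyset$ satisfies the hypothesis vacuously. So the hypothesis must be read as in Corollary~\ref{triviallinearextension} and in the paper's later applications: triples of $V_a$ not lying in $A_a$ count as failures. Under that reading $A_a$ automatically has density at least $1-\varepsilon p^{2r}$ in $V_a$ on every good slice, and your Markov discard step becomes unnecessary.
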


\begin{proof}
    Apply Corollary~\ref{triviallinearextension} in each direction.
\end{proof}

% We say that a multilinear form $\rho : G_I \to \mathbb{F}_p$ is \textit{$\eta$-quasirandom} with respect to a mixed-linear map $(\beta_j : G_{I_i} \to \mathbb{F}_p)_{j \in [r]}$ if
% \[\Big| \exx_{x_{[k]}} \omega\Big(\rho(x_I) + \sum_{i \in [r]} \lambda_i \beta_i(x_{I_i})\Big)\Big| \leq \eta\]
% for all choices of scalars $\lambda_1, \dots, \lambda_r \in \mathbb{F}_p$.\\

% \begin{theorem}[General extension]
%     Suppose that $I_1, \dots, I_\ell = [k]$ is a partition. Suppose that $\rho_i : G_{I_i} \to \mathbb{F}_p$ are multilinear forms. Let $\beta_i : G_{I_i} \to \mathbb{F}_p$, for $i \in [r]$, be further multilinear forms, such that each $\rho_i$ is $\eta$-quasirandom w.r.t. $(\beta_j)_{j \in [r]}$. Write $V = \{x_{[k]} \in G_{[k]} : (\forall i \in [r])\, \beta_i(x_{I_i}) = 0\}$. Then, given any multilinear form $\phi : V \cap \{\alpha_1 \alpha_2\dots\alpha_\ell = 0\}$, point $x_{[k]}$ such that $\alpha_1(x_{I_1}) \alpha_2\dots\alpha_\ell(x_{I_\ell}) \not= 0$ and an element $h_0$, there exists a unique multilinear map $\phi^{\on{ext}} : V \to H$ extending $\phi$ and having $\phi^{\on{ext}}(x_{[k]}) = h_0$.
% \end{theorem}

% \vspace{\baselineskip}

We now turn to vanishing and extension results.

\noindent\textbf{Vanishing result.} We begin with a simpler result, which shows that if a MM-$r$-map vanishes on two quasirandom pieces, then it vanishes everywhere.

\begin{theorem}
    \label{vanishingmmmaps}
    Let $V \subseteq G_{[k]}$ be a multilinear variety defined a mixed-linear $\beta : G_{[k]} \to \mathbb{F}_p^s$, let $\alpha_1 : G_{I} \to \mathbb{F}_p^{r_1}$ and $\alpha_2 : G_{I} \to \mathbb{F}_p^{r_2}$ be multilinear maps. Suppose that $(\alpha_1, \alpha_2)$ is $\eta$-quasirandom with respect to $\beta$.
    
    \indent Let $Q_1 = \{\alpha_1 = 0\} \times G_{[k] \setminus I_0}$ and $Q_2 = \{\alpha_2 = 0\} \times G_{[k] \setminus I_0}$. Suppose that $\phi : V \to H$ is a multilinear map such that $\phi = 0$ on $(V \cap Q_1) \cup (V \cap Q_2)$. Provided $\eta \leq p^{-\blc(r_1 + r_2 + s)}$, then $\phi = 0$ on $V$.
\end{theorem}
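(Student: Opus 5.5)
Let $I_0 = I$ (the statement uses both). The aim is to propagate vanishing from $(V\cap Q_1)\cup(V\cap Q_2)$ to all of $V$, using multilinearity along paths inside $V$, with the quasirandomness of $(\alpha_1,\alpha_2)$ with respect to $\beta$ supplying the needed points. I would argue by induction on $k$, in the style of Lemma~\ref{connectedlayerslemma} and Lemma~\ref{trivialextensionsverydense}, and pick a coordinate $d \in I$ to slice along.

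The key observation is this. Fix $x_{[k]} \in V$ and write $a = x_{[k]\setminus\{d\}}$. The slice $V_a$ is a subspace (or affine piece) of $G_d$, and $x_d \mapsto \phi(a, x_d)$ is linear on it. If there exist $y_d, z_d \in V_a$ with $y_d + z_d = x_d$, such that $(a,y_d) \in Q_1$ and $(a,z_d) \in Q_2$, then $\phi(x_{[k]}) = \phi(a,y_d) + \phi(a,z_d) = 0$. Both conditions $\alpha_1(a_{I\setminus\{d\}}, y_d) = 0$ and $\alpha_2(a_{I\setminus\{d\}}, x_d - y_d) = 0$ are linear conditions on $y_d$. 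Together with $y_d \in V_a$, $x_d - y_d \in V_a$, they form an affine system of at most $r_1 + r_2 + O(s)$ equations. The system is solvable when the vectors $R^{(1)}_i(a)$, $R^{(2)}_j(a)$ (the coefficient vectors of $\alpha_1, \alpha_2$ in direction $d$) are linearly independent modulo the span of the $B_\ell(a)$ (the coefficient vectors of the $\beta_\ell$ involving $d$). This is exactly the genericity condition from the claim in Lemma~\ref{connectedlayerslemma}.

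I would bound the set $E$ of bad $a$ by the same Fourier computation as there. A linear dependency means some nonzero $\lambda\cdot(\alpha_1,\alpha_2) + \mu\cdot\beta$ has vanishing $d$-coefficient at $a$. Averaging over $a$ then gives $|E| \le p^{O(r_1+r_2+s)}\eta\,|G_{[k]\setminus\{d\}}|$. This shows $\phi$ vanishes on all of $V$ outside $E\times G_d$, a set of proportion at most $\eta p^{O(r_1+r_2+s)} \le p^{-\blc(s+1)}$.

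To finish, I apply the uniqueness part of Lemma~\ref{trivialextensionsverydense}. Let $A = V\setminus (E\times G_d)$. Then $|V\setminus A| \le p^{-\blc(s+1)}|G_{[k]}|$, and the restriction of $\phi$ to $A$ is the zero map. Both $\phi$ and $0$ are multilinear extensions of it to $V$, so by uniqueness $\phi = 0$ on $V$. This avoids any induction on $k$.

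The main obstacle is the exceptional set, in particular making sure a genericity failure is correctly translated into bias of a form $\lambda\cdot\alpha + \mu\cdot\beta$ with $\lambda \neq 0$. Two points need care. First, the inhomogeneous case where $B$-combinations equal $R$-combinations needs the same unified treatment as in the claim of Lemma~\ref{connectedlayerslemma}. Second, one must confirm that $d\in I$ gives exact linear dependence on $x_d$, which holds since $\alpha_1,\alpha_2$ are multilinear on $G_I$.
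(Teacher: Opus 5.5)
Your argument is correct, and it takes a genuinely different, more elementary route than the paper.

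The paper slices along $G_{I^c}$:
\begin{itemize}
\item Lemma~\ref{quasirandomrest} keeps $(\alpha_1,\alpha_2)$ quasirandom on most slices $V_{y_{I^c}}$;
\item Lemma~\ref{connectedlayerslemma} supplies points $a^{(i)}_I\in V_{y_{I^c}}\cap Q_2$ with $(\alpha_1)_j(a^{(i)}_I)=\id(i=j)$;
\item Theorem~\ref{qrExtension} (uniqueness of an extension off the zero set of a quasirandom form, given one value) is applied $r_1$ times to get $\phi_{y_{I^c}}=0$;
\item Lemma~\ref{trivialextensionsverydense} finishes.
\end{itemize}

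You instead slice along a single direction $d\in I$, where everything reduces to linear algebra. Off the exceptional set, the restrictions of $R^{(1)}(a),R^{(2)}(a)$ to the subspace $V_a$ are independent. A dimension count then gives $V_a=(V_a\cap\ker R^{(1)}(a))+(V_a\cap\ker R^{(2)}(a))$, and a linear map on $V_a$ vanishing on both summands is zero.

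Your exceptional-set bound is in fact an exact identity. Fix $\lambda\neq0$ and $\mu$ supported on the components of $\beta$ whose coordinate set contains $d$. The density of $a$ at which the $G_d$-coefficient of $\lambda\cdot(\alpha_1,\alpha_2)+\mu\cdot\beta$ vanishes equals the bias of that form, hence is below $\eta$. So $|E|\le p^{r_1+r_2+s}\eta\,|G_{[k]\setminus\{d\}}|$, with no Cauchy--Schwarz step. Do restrict $\mu$ to those components: components not involving $d$ would add a phase in $a$ and break the identity. This costs nothing, since the hypothesis covers all $\mu$.

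The final appeal to uniqueness in Lemma~\ref{trivialextensionsverydense} is the same as the paper's. The cases $r_1=0$ or $r_2=0$ are trivial, which also lets you absorb the factor $p^{r_1+r_2+s}$ into $p^{-\blc(s+1)}$. The induction on $k$ you mention at the start is indeed unnecessary.

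Your route makes the vanishing theorem independent of the deep extension result Theorem~\ref{qrExtension} and of the connectivity lemma, and it gives explicit bounds. The paper's route reuses machinery it needs anyway for the simultaneous-extension results, where maps have to be constructed consistently in all directions rather than merely shown to vanish.
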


\begin{proof}
    We shall prove that $\phi(x_{[k]}) = 0$ holds for a vast majority of points in $V$ and apply Lemma~\ref{trivialextensionsverydense} to finish the proof.
    
    \indent By Lemma~\ref{quasirandomrest}, we have a set $Y \subseteq V^{(I^c)}$ of size $|Y| \geq | V^{(I^c)}| - \sqrt{\eta}p^r|G_{I^c}|$ such that $(\alpha_1, \alpha_2)$ is $\sqrt[4]{\eta}$-quasirandom with respect to $\beta_{y_{I^c}}$ for all $y_{I^c} \in Y$. We show that $\phi_{y_{I^c}} = 0$ for each such $y_{I^c}$.
    
    \indent Let $y_{I^c} \in Y$ be arbitrary. By Lemma~\ref{connectedlayerslemma}, there exist points $a^{(1)}_I, \dots, a^{(r_1)}_I$ such that $a^{(i)}_I \in V_{y_{I^c}} \cap Q_2$ and $(\alpha_1)_j(a^{(i)}_I) = \id(i = j)$. Apply Theorem~\ref{qrExtension} $r_1$ times to deduce that $\phi_{y_{I^c}} = 0$ on $V_{y_{I^c}}$.
    
    \indent Since $|Y| \geq | V^{(I^c)}| - \sqrt{\eta}p^r|G_{I^c}|$, the map $\phi$ is multilinear and vanishes at all but at most $\sqrt{\eta}p^r|G_{[k]}|$ points, so by Lemma~\ref{trivialextensionsverydense} it must coincide with the 0 map.
\end{proof}

\noindent\textbf{Simultaneous extensions.} We now move on to the question of simultaneous extensions of MM-$r$-maps. Suppose now that $U$ and $V$ are multilinear varieties defined by mixed-linear maps $(\beta_i : G_{I_i} \to \mathbb{F}_p)_{i \in [r]}$ and $(\gamma_i : G_{J_i} \to \mathbb{F}_p)_{i \in [r]}$, respectively. Note that when $\alpha : G_I \to \mathbb{F}_p$, then 
\[\Big(V \cap (\{\alpha= 0 \} \times G_{I^c}) \Big)\cup\Big(V \cap (G_I \times U^{(I^c)})\Big) = V \cap\Big(\cap_{J_i \subseteq I^c} \{\alpha \gamma_i = 0\} \times G_{I^c \setminus I_i}\Big)\]
is a variety containing both $V \cap (\{\alpha= 0 \} \times G_{I^c})$ and $V \cap (G_I \times U^{(I^c)})$. Here, $\alpha \gamma_i$ is the multilinear form on $G_{I \cup J_i}$ given by $x_{I \cup J_i} \mapsto \alpha(X_I) \gamma_i(x_{J_i})$, for which it is necessary to have $I$ and $J_i$ disjoint.

\begin{proposition}[Simultaneous extension - single step]
    Suppose that $I \subseteq [k]$ be a set. Suppose that $\alpha : G_{I} \to \mathbb{F}_p$ is a  multilinear form. Let $V$ and $U$ be multilinear varieties in $G_{[k]}$ defined by mixed-linear maps $(\beta_i : G_{I_i} \to \mathbb{F}_p)_{i \in [r]}$ and $(\gamma_i : G_{J_i} \to \mathbb{F}_p)_{i \in [r]}$, respectively. Suppose that, for all $d \in I^c$, the form $\alpha$ is $\eta$-quasirandom with respect to $(\beta_{z_d}, \gamma_{z_d}, \beta_{z'_d}, \gamma_{z'_d})$ for all but at most $\eta |G_d|^2$ pairs $(z_d, z'_d) \in G_d^2$.
    
    \indent Then, provided
    \[\eta \leq p^{-\blc (r + 1)},\]
    given any multilinear maps $\phi : V \cap (\{ \alpha = 0\} \times G_{I^c}) \to H$ and $\psi : U \to H$ agreeing on the intersection of the domains, there exists a unique multilinear map $\theta^{\on{ext}} : V \cap \Big(\cap_{J_i \subseteq I^c} \{\alpha \cdot \gamma_i = 0\} \times G_{I^c \setminus J_i}\Big) \to H$ such that
    \begin{itemize}
        \item we have $\phi = \theta^{\on{ext}}$ on $V \cap (\{\alpha = 0\} \times G_{I^c})$, and
        \item we have $\psi = \theta^{\on{ext}}$ on $V \cap U$.
    \end{itemize}
\end{proposition}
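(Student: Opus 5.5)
We build $\theta^{\on{ext}}$ slice by slice in the directions of $I^c$, using Theorem~\ref{qrExtension} on slices where the quasirandomness hypothesis holds. The remaining points are handled by Lemma~\ref{trivialextensionsverydense}. Write $W = V \cap \big(\cap_{J_i \subseteq I^c} \{\alpha\gamma_i = 0\} \times G_{I^c\setminus J_i}\big)$.

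For a point $x_{[k]} \in W$ we split according to $y_{I^c} = x_{I^c}$. If $y_{I^c} \in U^{(I^c)}$, so that $\gamma_i(y_{J_i}) = 0$ for all $J_i \subseteq I^c$, then the slice $W_{y_{I^c}}$ is $V_{y_{I^c}}$. On it, $\phi_{y_{I^c}}$ is defined on $V_{y_{I^c}} \cap \{\alpha=0\}$ and $\psi_{y_{I^c}}$ is defined on $U_{y_{I^c}} \cap V_{y_{I^c}}$.

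If $y_{I^c} \notin U^{(I^c)}$, then the slice $W_{y_{I^c}}$ is $V_{y_{I^c}} \cap \{\alpha = 0\}$. There we set $\theta^{\on{ext}} = \phi$, with no choice to make.

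So the real task is the slices with $y_{I^c} \in U^{(I^c)}$. There we want a multilinear map on $V_{y_{I^c}} \subseteq G_I$ which extends $\phi_{y_{I^c}}$ from $V_{y_{I^c}} \cap \{\alpha = 0\}$ and also agrees with $\psi_{y_{I^c}}$.

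Theorem~\ref{qrExtension} in the variables $G_I$ handles this, with $\rho = \alpha$ and $B = V_{y_{I^c}}$. Its hypothesis is that $\alpha$ is quasirandom with respect to $\beta_{y_{I^c}}$. Given it, the extension is determined by its value at one point $z_I \in V_{y_{I^c}}$ with $\alpha(z_I) \neq 0$. We choose $z_I \in V_{y_{I^c}} \cap U_{y_{I^c}}$ with $\alpha(z_I) = 1$ and set the value to $\psi(z_I, y_{I^c})$. Such a $z_I$ exists by Lemma~\ref{connectedlayerslemma}, applied with $\rho=\alpha$ and the pair $(\beta_{y_{I^c}}, \gamma_{y_{I^c}})$.

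Let $\phi'$ denote this extension. The difference $\phi' - \psi_{y_{I^c}}$ is an MM-map on $V_{y_{I^c}} \cap U_{y_{I^c}}$. It vanishes on the part where $\alpha = 0$ and at $z_I$. A second application of Theorem~\ref{qrExtension}, now with $B = V_{y_{I^c}} \cap U_{y_{I^c}}$ and its uniqueness clause, then shows that it vanishes identically. Hence the slice extension agrees with $\psi$.

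The quasirandomness requirements are controlled by Lemma~\ref{quasirandomrest}, since the assumption is stated for pairs $(z_d, z'_d)$ of slices. They hold for all $y_{I^c}$ outside an exceptional set of density $\eta^{\Omega(1)}p^{O(r)}$.

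Gluing across the good slices gives a map defined on all of $W$ except a set of density $p^{-\blc(r+1)}$. This map is multilinear in the directions of $I$ by construction. For a direction $d \in I^c$, take points $x, x', x+x'$ in direction $d$. Then $\theta(x_d) + \theta(x'_d) - \theta(x_d + x'_d)$ is a multilinear map in $G_I$ on the common slice. It vanishes where $\alpha = 0$, because there it equals a combination of values of $\phi$. It also vanishes on the $U$-part, where it equals a combination of values of $\psi$. These two sets are quasirandom pieces, so Theorem~\ref{vanishingmmmaps} forces the combination to vanish.

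This step is exactly why the hypothesis involves pairs $(z_d, z'_d)$: we need quasirandomness with respect to $(\beta_{z_d},\gamma_{z_d},\beta_{z'_d},\gamma_{z'_d})$ jointly. With multilinearity in every direction established, Lemma~\ref{trivialextensionsverydense} extends the map to all of $W$. Uniqueness follows from the uniqueness clauses of Theorem~\ref{qrExtension} and Lemma~\ref{trivialextensionsverydense}.

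The main obstacle is the multilinearity check in the directions of $I^c$. Two difficulties arise there.

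First, the three slices involved may have different types: some may lie in $U^{(I^c)}$ and some not. For these mixed cases we must verify that the combination is still determined by $\phi$ alone on $\{\alpha=0\}$, and that on the remaining points it is forced by the $\{\alpha\gamma_i=0\}$ constraints.

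Second, the three slices are not independent: the third slice is determined by the other two. The $\eta|G_d|^2$ exceptional pairs must therefore be discarded while keeping almost all triples, so that the hypothesis of Lemma~\ref{trivialextensionsverydense} in its almost-multilinear form (Lemma~\ref{almost-multilinear-maps}) is met.
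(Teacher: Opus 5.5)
Your route is essentially the paper's: extend each slice with $y_{I^c}\in U^{(I^c)}$ from $V_{y_{I^c}}\cap\{\alpha=0\}$ to $V_{y_{I^c}}$ by Theorem~\ref{qrExtension}, seed the value from $\psi$, use the pair hypothesis for the $I^c$ directions, and finish with Lemmas~\ref{almost-multilinear-maps} and~\ref{trivialextensionsverydense}. Your second application of Theorem~\ref{qrExtension} on $V_{y_{I^c}}\cap U_{y_{I^c}}$, giving agreement with $\psi$ on a slice, is correct and cleaner than the paper's appeal to connectedness. However, the step you call the main obstacle is left unresolved, and the observation that closes it is short. Suppose one of three collinear points, in a direction $d\in I^c$, lies in a slice outside $U^{(I^c)}$. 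There $W_{y_{I^c}}=V_{y_{I^c}}\cap\{\alpha=0\}$, so the common coordinate $x_I$ satisfies $\alpha(x_I)=0$. Hence all three points lie in $V\cap(\{\alpha=0\}\times G_{I^c})$, your map equals $\phi$ at each of them, and the relation is just multilinearity of $\phi$. This is the paper's reduction. It writes $W=U_1\cup U_2$ with $U_1=V\cap(U^{(I^c)}\times G_I)$ and $U_2=V\cap(\{\alpha=0\}\times G_{I^c})$, and notes that a map on a union of two multilinear varieties is multilinear once it is multilinear on each. So only the extension to $U_1$ needs work. Your second worry is just counting, since directional triples are parametrised by the pair $(z_1,z_2)$ and the remaining coordinates.

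The other gap is that Theorem~\ref{vanishingmmmaps} does not apply to triples whose three slices all lie in $U^{(I^c)}$. That theorem needs both pieces to be zero sets of maps that are jointly quasirandom with respect to the variety. The $U$-piece is cut out by the $\gamma_i$, about which nothing is assumed. Use instead the uniqueness clause of Theorem~\ref{qrExtension} on $B=V_{(y,z_1)}\cap V_{(y,z_2)}$, where $y$ denotes the coordinates in $I^c\setminus\{d\}$. The combination $\tau$ vanishes on $B\cap\{\alpha=0\}$. At one point $a_I\in B\cap U_{(y,z_1)}\cap U_{(y,z_2)}$ with $\alpha(a_I)\neq 0$ it is a combination of values of $\psi$, hence $0$. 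Such an $a_I$ exists by Lemmas~\ref{quasirandomrest} and~\ref{connectedlayerslemma}, using the joint quasirandomness with respect to $(\beta_{z_d},\gamma_{z_d},\beta_{z'_d},\gamma_{z'_d})$, so $\tau=0$. Finally, after Lemma~\ref{trivialextensionsverydense} you still need $\theta^{\on{ext}}=\phi$ on all of $V\cap(\{\alpha=0\}\times G_{I^c})$ and $\theta^{\on{ext}}=\psi$ on all of $V\cap U$, including the discarded slices. This follows from the uniqueness part of Lemma~\ref{trivialextensionsverydense} applied to these two varieties, since agreement holds off a set of size $p^{O(r)}\eta^{\Omega(1)}|G_{[k]}|$.
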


\begin{proof} The extension domain is $(V \cap U^{(I^c)} \times G_I) \cup (V \cap \{\alpha = 0\} \times G_{I^c})$, which is a union of two multilinear varieties we denote as $U_1 = (V \cap U^{(I^c)} \times G_I)$ and $U_2 = (V \cap \{\alpha = 0\} \times G_{I^c})$. Observe that a map $A : U_1 \cup U_2 \to H$ is multilinear if it is multilinear on each $U_i$ separately. Namely, if $x_{[k]},y_{[k]},z_{[k]} \in U_1 \cup U_2$ are three points differing only in the coordinate $d$, where $x_d + y_d = z_d$, then by the pigeonhole principle, two of them belong to the same $U_i$. But $U_i$ is a multilinear variety so, owing to the condition $x_d + y_d = z_d$, all three points belong to $U_i$. Multilinearity on $U_i$ gives $A(x_{[k]}) + A(y_{[k]}) = A(z_{[k]})$. Therefore, it suffices to extend $\phi$ to a multilinear map on $V \cap (U^{(I^c)} \times G_I)$, so that it coincides with $\phi$ on the intersection $V \cap (U^{(I^c)} \times G_I) \cap (\{\alpha = 0\} \times G_{I^c})$.

Hence, for each $y_{I^c} \in V^{(I^c)} \cap U^{(I^c)}$ we need to extend $\phi_{y_{I^c}}$ from $V_{y_{I^c}} \cap \{\alpha = 0\}$ to $V_{y_{I^c}}$.

\begin{claim}
    Provided $\eta \leq p^{-\blc r}$, we may find a set $Y \subseteq V^{(I^c)} \cap U^{(I^c)}$ of size $|Y| \geq |V^{(I^c)} \cap U^{(I^c)}| - p^{2r}\sqrt{\eta}|G_{I^c}|$ such that for all $y_{I^c} \in Y$
    \begin{itemize}
        \item[\textbf{(i)}] $\alpha$ is $\sqrt[4]{\eta}$-quasirandom with respect to $(\beta_{y_{I^c}}, \gamma_{y_{I^c}})$,
        \item[\textbf{(ii)}] the set $\{\alpha \not= 0\} \cap V_{y_{I^c}} \cap U_{y_{I^c}}$ is non-empty and connected.
    \end{itemize}
\end{claim}

\begin{proof}
    The claim follows form Lemmas~\ref{quasirandomrest} and~\ref{connectedlayerslemma}. 
\end{proof}

Let us extend $\phi_{y_{I^c}}$ from $V_{y_{I^c}} \cap \{ \alpha = 0\}$ to $V_{y_{I^c}}$ for each $y_{I^c} \in Y$, using Theorem~\ref{qrExtension}, by taking any $a_I \in V_{y_{I^c}} \cap U_{y_{I^c}} \cap\{ \alpha \not= 0\}$ and using value $\psi(a_I, y_{I^c})$. Call this extension $\phi^{\on{ext}}_{y_{I^c}} : V_{y_{I^c}}  \to H$. The extension is independent of the actual choice of $a_I$, as the set $\{\alpha \not=0\} \cap V_{y_{I^c}} \cap U_{y_{I^c}}$ are connected. We need to show multilinearity in directions $G_{I^c}$.

Let $d \in I^c$ and let $(x_{I}, y_{I^c \setminus \{d\}}, z_i)$ be points in the extended domain for $z_1, z_2, z_3 \in G_d$ with $z_1 + z_2 = z_3$. Consider the map $\tau: a_I \mapsto \phi^{\on{ext}}_{y_{I^c \setminus \{d\}}, z_1}(a_I) + \phi^{\on{ext}}_{y_{I^c \setminus \{d\}}, z_2}(a_I) - \phi^{\on{ext}}_{y_{I^c \setminus \{d\}}, z_3}(a_I)$. On the variety
\[V_{(y_{I^c \setminus \{d\}}, z_1)} \cap V_{(y_{I^c \setminus \{d\}}, z_2)} \cap V_{(y_{I^c \setminus \{d\}}, z_3)} \cap \{\alpha = 0\} = V_{(y_{I^c \setminus \{d\}}, z_1)} \cap V_{(y_{I^c \setminus \{d\}}, z_2)} \cap \{\alpha = 0\}\]
it coincides with $a_I \mapsto \phi_{y_{y_{I^c \setminus \{d\}}, z_1}}(a_I) + \phi_{y_{I^c \setminus \{d\}}, z_2}(a_I) - \phi_{y_{I^c \setminus \{d\}}, z_3}(a_I)$, which vanishes. Also, taking any
\[a_I \in V_{(y_{I^c \setminus \{d\}}, z_1)} \cap V_{(y_{I^c \setminus \{d\}}, z_2)} \cap U_{(y_{I^c \setminus \{d\}}, z_1)} \cap U_{(y_{I^c \setminus \{d\}}, z_2)} \cap\{\alpha\not=0\},\]
we have 
\begin{align*}\tau(a_I) = &\phi^{\on{ext}}_{y_{I^c \setminus \{d\}}, z_1}(a_I) + \phi^{\on{ext}}_{y_{I^c \setminus \{d\}}, z_2}(a_I) - \phi^{\on{ext}}_{y_{I^c \setminus \{d\}}, z_3}(a_I) \\
= &\psi(a_I, y_{I^c \setminus \{d\}}, z_1) + \psi(a_I, y_{I^c \setminus \{d\}}, z_2) - \psi(a_I, y_{I^c \setminus \{d\}}, z_3) = 0.\end{align*}
In other words, $\tau$ is extension of zero map on $V_{(y_{I^c \setminus \{d\}}, z_1)} \cap V_{(y_{I^c \setminus \{d\}}, z_2)} \cap \{\alpha = 0\}$ with a zero value. By assumption, for all but at most $\eta|G_d|^2$ pairs $(z_1, z_2) \in G_d \times G_d$ we have that $\alpha$ is $\eta$-quasirandom with respect to $(\beta_{z_1}, \beta_{z_2})$, so by Theorem~\ref{qrExtension}, such an extension is unique and thus $\tau = 0$.

To sum up, defining $\phi'$ on $(Y \times G_I) \cap V$ by merging together the extensions $\phi^{\on{ext}}_{y_{I^c}}$ on slices, we get a map is multilinear in directions $G_I$ and for each $d \in I^c$ the map $\phi'$ respects all but at most $\eta |G_d||G_{[k]}|$ additive triples in direction $G_d$. By Lemma~\ref{almost-multilinear-maps} there exists a subset $Z \subseteq V \cap (U^{(I^c)} \times G_I)$ of size $|Z| \geq |V \cap (U^{(I^c)} \times G_I)| - p^{O(r)} \eta^{\Omega(1)}|G_{[k]}|$ on which $\phi'$ is multilinear. Apply Lemma~\ref{trivialextensionsverydense} to find a multilinear map $\theta^{\on{ext}} : V \cap (U^{(I^c)} \times G_I) \to H$ extending $\phi'$.

It remains to show that $\theta^{\on{ext}}$ coincides with $\phi$ and $\psi$ on suitable varieties. Note firstly that $\theta^{\on{ext}} = \phi'$ on $Z$, and thus $\theta^{\on{ext}}(x_{[k]}) = \phi(x_{[k]})$ holds for all but at most $p^{O(r)} \eta^{\Omega(1)}|G_{[k]}|$ points $x_{[k]}$ in $V \cap (U^{(I^c)} \times G_I) \cap\{\alpha = 0\}$. By Lemma~\ref{trivialextensionsverydense}, we have $\theta^{\on{ext}} = \phi$ on the whole variety $V \cap (U^{(I^c)} \times G_I) \cap\{\alpha = 0\}$.

\indent Similarly, we have  $\theta^{\on{ext}} = \phi'$ on $Z$, so  $\theta^{\on{ext}} = \psi$ on $Z$ and the conclusion follows from another application of Lemma~\ref{trivialextensionsverydense}.
\end{proof}

\vspace{\baselineskip}

We may now deduce the main result of this subsection.

\begin{theorem}[Simultaneous extension]\label{sim-ext-final-ver}
    Suppose that $I \subseteq [k]$ be a set. Suppose that $\alpha : G_{I} \to \mathbb{F}_p^{r_1}$ and $\beta : G_I \to \mathbb{F}_p^{r_2}$ are multilinear maps. Let $V$ and $U$ be multilinear varieties in $G_{[k]}$ of codimension at most $r$ such that, for all $d \in I^c$, $(\alpha, \beta)$ is $\eta$-quasirandom with respect to $(U \cap V)_{z_d} \cap (U \cap V)_{z'_d}$ for all but at most $\eta |G_d|^2$ pairs $(z_d, z'_d) \in G_d^2$.
    
    \indent Then, provided
    \[\eta \leq p^{-\blc (r + r_1 + r_2)},\]
    given any multilinear maps $\phi : V \cap (\{ \alpha = 0\} \times G_{I^c}) \to H$ and $\psi : U \cap (\{\beta = 0\} \times G_{I^c}) \to H$ agreeing on the intersection of the domains, there exist unique multilinear maps $\phi^{\on{ext}} : V \cap (U^{(I^c)} \times G_I) \to H$ and $\psi^{\on{ext}} : U \cap (V^{(I^c)} \times G_I) \to H$ such that
    \begin{itemize}
        \item[\textbf{(i)}] we have $\phi = \phi^{\on{ext}}$ on $V \cap (\{\alpha = 0\} \times G_{I^c}) \cap (U^{(I^c)} \times G_I)$ and $\psi = \phi^{\on{ext}}$ on $U \cap V \cap (\{\beta = 0\} \times G_{I^c})$, and
        \item[\textbf{(ii)}] we have $\psi = \psi^{\on{ext}}$ on $U \cap (\{\beta = 0\} \times G_{I^c}) \cap (V^{(I^c)} \times G_I)$ and $\phi = \psi^{\on{ext}}$ on $U \cap V \cap (\{\alpha = 0\} \times G_{I^c})$.
    \end{itemize}

    Moreover, whenever $\phi^{\on{ext}} : V \cap (U^{(I^c)} \times G_I) \to H$ and $\psi^{\on{ext}} : U \cap (V^{(I^c)} \times G_I) \to H$ satisfy \textbf{(i)} and \textbf{(ii)}, then we have
    \[\phi^{\on{ext}} = \psi^{\on{ext}}\text{ on }V \cap U.\]
\end{theorem}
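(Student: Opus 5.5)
The plan is to deduce the theorem from the single-step proposition above by adjoining the components of $\alpha$ one at a time, and then to use Theorem~\ref{vanishingmmmaps} both for uniqueness and for the final agreement statement. By symmetry between $(V,\alpha,\phi)$ and $(U,\beta,\psi)$ it suffices to construct $\phi^{\on{ext}}$ with property \textbf{(i)}; $\psi^{\on{ext}}$ is obtained the same way with the roles swapped.

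\emph{Construction of $\phi^{\on{ext}}$.} The map $\psi$ is defined on $U\cap(\{\beta=0\}\times G_{I^c})$, which has codimension at most $r+r_2$. For $j=0,1,\dots,r_1$ set
\[V_j = V\cap(\{\alpha_{j+1}=\dots=\alpha_{r_1}=0\}\times G_{I^c}),\]
so that $V_{r_1}=V\cap(\{\alpha=0\}\times G_{I^c})$ is the domain of $\phi$. Going down from $j=r_1$ to $j=0$, I will extend a multilinear map $\phi_j$ on $V_j\cap(\text{suitable piece of }U^{(I^c)}\times G_I)$. At each step I apply the single-step proposition with the form $\alpha_j$, with the variety $V_{j-1}$ playing the role of $V$, and with the variety $U\cap(\{\beta=0\}\times G_{I^c})$ playing the role of $U$; the map $\psi$ is the second input.

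The hypothesis of the single-step proposition is that $\alpha_j$ is quasirandom with respect to the slices $(\beta_{z_d},\gamma_{z_d},\beta_{z'_d},\gamma_{z'_d})$ for most pairs $(z_d,z'_d)$. This follows from the assumption that the whole tuple $(\alpha,\beta)$ is $\eta$-quasirandom with respect to $(U\cap V)_{z_d}\cap(U\cap V)_{z'_d}$. The reason is that quasirandomness of a combination $\lambda\cdot\alpha+\mu\cdot\beta$ with $\lambda_j\neq0$ covers the situation where the other $\alpha_i$ and all the $\beta_i$ are absorbed into the defining variety. The output domain of the single step is
\[V_{j-1}\cap\bigcap_{J_i\subseteq I^c}\{\alpha_j\gamma_i=0\},\]
which contains $V_{j-1}\cap(U^{(I^c)}\times G_I)$. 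After $r_1$ steps I restrict to $V\cap(U^{(I^c)}\times G_I)$. Property \textbf{(i)} then follows by tracking agreements: the final map agrees with $\phi$ on the original domain, and it agrees with $\psi$ on $U\cap V\cap(\{\beta=0\}\times G_{I^c})$ because each step preserves agreement with $\psi$. The loss in $\eta$ is only a polynomial change in the constant, which is absorbed by the bound $\eta\le p^{-\blc(r+r_1+r_2)}$.

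\emph{Uniqueness and the final claim.} Suppose $\theta_1,\theta_2$ both satisfy \textbf{(i)}, and let $\theta=\theta_1-\theta_2$ on $V\cap(U^{(I^c)}\times G_I)$. Then $\theta$ vanishes on $\{\alpha=0\}$ and on $\{\beta=0\}$ inside $U\cap V$. Restricted to $U\cap V$, Theorem~\ref{vanishingmmmaps} with the pair $(\alpha,\beta)$ gives $\theta=0$ on $U\cap V$. To conclude vanishing on all of $V\cap(U^{(I^c)}\times G_I)$, I use the uniqueness in the single-step proposition (that is, uniqueness from Theorem~\ref{qrExtension}) slice by slice. Concretely, $\theta$ vanishes on $\{\alpha=0\}$ and is determined on each slice by a single value at a point of $U\cap\{\alpha\neq0\}$, where it is zero. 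Hence $\theta$ vanishes on almost every slice, and Lemma~\ref{trivialextensionsverydense} finishes. For the moreover part, $\phi^{\on{ext}}-\psi^{\on{ext}}$ on $U\cap V$ vanishes on $U\cap V\cap\{\alpha=0\}$ by \textbf{(i)} and \textbf{(ii)}, and likewise on $\{\beta=0\}$. Theorem~\ref{vanishingmmmaps} then gives equality on $U\cap V$.

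The main obstacle is checking the single-step hypotheses at each intermediate stage: the quasirandomness must hold with respect to the enlarged variety that includes the remaining $\alpha_i$. Handling this requires care with the slice-pair condition, and I expect to need Lemma~\ref{quasirandomrest} to pass from the assumed pairwise slice quasirandomness to the form the proposition actually requires.
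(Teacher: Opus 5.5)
Your proposal is correct and is essentially the paper's proof. Like you, the paper restricts to $V\cap(U^{(I^c)}\times G_I)$ from the outset, which is what keeps each intermediate map defined on the whole next domain (your $V_j$ indexing is reversed, but the intent is clear). It then iterates the single-step proposition over the components of $\alpha$, and the quasirandomness hypothesis transfers verbatim, so Lemma~\ref{quasirandomrest} is not needed there.

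The only variation is at the end. The paper obtains $\psi^{\on{ext}}$ by applying the single step to $\phi^{\on{ext}}$ and $\psi$, so $\phi^{\on{ext}}=\psi^{\on{ext}}$ on $U\cap V$ holds by construction and the final claim follows from uniqueness. You instead build $\psi^{\on{ext}}$ symmetrically and derive agreement and uniqueness from Theorem~\ref{vanishingmmmaps}. That is legitimate, but it needs $(\alpha,\beta)$ to be $O(\eta)$-quasirandom with respect to $U\cap V$ itself, which you don't state; it follows from the slice-pair hypothesis by averaging over $z_d$.
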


\begin{proof}
    We may restrict $\phi$ to $V \cap (\{\alpha = 0\} \times G_{I^c}) \cap (U^{(I^c)} \times G_I)$ and $\psi$ to $U \cap (\{\beta = 0\} \times G_{I^c}) \cap (V^{(I^c)} \times G_I)$. Apply Theorem~\ref{sim-ext-final-ver} iteratively to each component of $\alpha$ to get a multilinear map $\phi^{\on{ext}} : V \cap (U^{(I^c)} \times G_I) \to H$ such that 
    \begin{itemize}
        \item $\phi = \phi^{\on{ext}}$ on $V \cap (\{\alpha = 0\} \times G_{I^c}) \cap (U^{(I^c)} \times G_I)$, and
        \item $\psi = \phi^{\on{ext}}$ on $V \cap U \cap (\{\beta = 0\} \times G_{I^c})$.
    \end{itemize}

    Apply Theorem~\ref{sim-ext-final-ver} another time, to maps $ \phi^{\on{ext}}$ and $\psi$, to get a multilinear map $\psi^{\on{ext}} : U \cap (V^{(I^c)} \times G_I) \to H$ such that
     \begin{itemize}
        \item $\phi^{\on{ext}} = \psi^{\on{ext}}$ on $V \cap U$, and
        \item $\psi = \psi^{\on{ext}}$ on $U \cap (\{\beta = 0\} \times G_{I^c}) \cap (V^{(I^c)} \times G_I)$.
    \end{itemize}
    The last point follows from the uniqueness of extensions.
\end{proof}

The following corollary is the specialization to the case when the varieties $U$ and $V$ coincide. We record it separately as it is the most common application of the theorem above.

\begin{corollary}\label{simextcorsinglevar}
    Suppose that $I \subseteq [k]$ be a set. Suppose that $\alpha : G_{I} \to \mathbb{F}_p^{r_1}$ and $\beta : G_I \to \mathbb{F}_p^{r_2}$ be multilinear maps. Let $V$ be a multilinear variety in $G_{[k]}$ of codimension at most $r$ such that for each $d \in I^c$, the map $(\alpha, \beta)$ is $\eta$-quasirandom with respect to $V_{z_d} \cap V_{z'_d}$ for all but at most $\eta |G_d|^2$ pairs $(z_d, z'_d) \in G_d^2$.
    
    \indent Then, provided
    \[\eta \leq p^{-\blc (r + r_1 + r_2)},\]
    given any multilinear maps $\phi : V \cap (\{ \alpha = 0\} \times G_{I^c}) \to H$ and $\psi : V \cap (\{\beta = 0\} \times G_{I^c}) \to H$ agreeing on the intersection of the domains, there exist a unique multilinear map $\theta^{\on{ext}} : V \to H$ such that
    \begin{itemize}
        \item[\textbf{(i)}] we have $\phi = \theta^{\on{ext}}$ on $V \cap (\{\alpha = 0\} \times G_{I^c})$, and
        \item[\textbf{(ii)}] we have $\psi = \theta^{\on{ext}}$ on $V \cap (\{\beta = 0\} \times G_{I^c})$.
    \end{itemize}
\end{corollary}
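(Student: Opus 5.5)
This corollary is the specialization $U = V$ of Theorem~\ref{sim-ext-final-ver}. We apply that theorem with $U = V$, the same $I$, $\alpha$, $\beta$, and the same parameters $r, r_1, r_2, \eta$.

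First, the hypotheses match. When $U = V$ we have $(U\cap V)_{z_d} \cap (U \cap V)_{z'_d} = V_{z_d}\cap V_{z'_d}$, so the quasirandomness assumption of the corollary is exactly that of the theorem. The bound on $\eta$ is identical. The maps $\phi$ and $\psi$ are defined on $V \cap (\{\alpha=0\}\times G_{I^c})$ and $V\cap(\{\beta=0\}\times G_{I^c})$ and agree on the intersection of their domains, which is also what the theorem requires.

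Next, the domains simplify. Since $V^{(I^c)}\times G_I \supseteq V$ (the forms defining $V^{(I^c)}$ are among those defining $V$), we get $V \cap (V^{(I^c)} \times G_I) = V$. Hence both $\phi^{\on{ext}}$ and $\psi^{\on{ext}}$ given by the theorem are multilinear maps $V \to H$. Conditions \textbf{(i)} and \textbf{(ii)} of the theorem then read:
\begin{itemize}
\item $\phi = \phi^{\on{ext}}$ on $V\cap(\{\alpha=0\}\times G_{I^c})$ and $\psi = \phi^{\on{ext}}$ on $V \cap (\{\beta=0\}\times G_{I^c})$;
\item the symmetric statements hold for $\psi^{\on{ext}}$.
\end{itemize}
Setting $\theta^{\on{ext}} = \phi^{\on{ext}}$ gives existence. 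The ``moreover'' clause also yields $\phi^{\on{ext}} = \psi^{\on{ext}}$ on $U\cap V = V$, so the two extensions coincide.

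For uniqueness, suppose $\theta_1, \theta_2 : V \to H$ both satisfy \textbf{(i)} and \textbf{(ii)}. Then the pair $(\theta_1, \theta_2)$ satisfies conditions \textbf{(i)} and \textbf{(ii)} of Theorem~\ref{sim-ext-final-ver}, with $\theta_1$ in the role of $\phi^{\on{ext}}$ and $\theta_2$ in the role of $\psi^{\on{ext}}$. The ``moreover'' part then gives $\theta_1 = \theta_2$ on $U \cap V = V$.

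Alternatively, uniqueness follows directly from Theorem~\ref{vanishingmmmaps}. The difference $\theta_1-\theta_2$ is multilinear on $V$ and vanishes on $V\cap Q_1$ and $V\cap Q_2$, where $Q_1 = \{\alpha = 0\}\times G_{I^c}$ and $Q_2 = \{\beta=0\}\times G_{I^c}$. To use this route one needs $(\alpha,\beta)$ to be globally quasirandom with respect to the mixed-linear map defining $V$. That should follow from the slice hypothesis by averaging over pairs $(z_d, z'_d)$ together with a Cauchy--Schwarz step, as in Lemma~\ref{quasirandomrest}.

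The only point needing care is the domain identification $V\cap(V^{(I^c)}\times G_I) = V$; everything else is direct substitution into Theorem~\ref{sim-ext-final-ver}. I expect no genuine obstacle.
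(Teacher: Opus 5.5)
Your proposal is correct and matches the paper: there the corollary is simply recorded as the specialization $U=V$ of Theorem~\ref{sim-ext-final-ver}, where $V\cap(V^{(I^c)}\times G_I)=V$ and uniqueness is read off from the theorem. The alternative uniqueness argument via Theorem~\ref{vanishingmmmaps} is not needed.
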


\vspace{\baselineskip}

\subsection{Vanishing and common extensions in linear systems of varieties}

Let $\alpha : G_{[0,k]} \to \mathbb{F}_p^r$ be a mixed-linear map whose components depend linearly on $G_0$. A \textit{linear system of varieties} $(V_a)_{a \in G_0}$ for $\alpha$ is a collection of multilinear varieties defined as $V_a = \{x_{[k]} \in G_{[k]} : \alpha(a, x_{[k]}) = 0\}$, or in shorter notation $V_a = \{\alpha_a = 0\}$. Note that this is equivalent to defining $V \subseteq G_{[0,k]}$ as the variety $\{\alpha = 0\}$ and then taking slices $V_a$. However, we take the perspective of having a system of varieties as it emphasizes the role of $G_0$.

\indent In this subsection, we consider the questions of vanishing and simultaneous extensions of MM-$r$-maps in the context of linear systems of varieties. Ultimately, such results will be required in the fourth phase of our main proof. For example, given a multilinear map $\phi : G_{[k]} \to H$, what can we say about it if we know that $\phi|_{V_a} = 0$ holds for many indexing elements $a$? If $V$ is quasirandom, then we would expect to have $\phi$ vanishing on the whole space $G_{[k]}$. However, given a general linear system of varieties, that might not be the case. Remarkably, after a passing to a bounded codimension multilinear variety $Z$, depending only on $V$ and not on $\phi$, we have that $\phi$ vanishes on $Z$.

\indent The next theorem, in analogy with the sheaf theory, may be thought of as the \textit{weak locality} property for multilinear varieties. Similarly, in the other main result of this subsection, Theorem~\ref{linsystextns}, we show that multilinear varieties have an analogue of the second important sheaf property, namely they posses \emph{weak gluing} property.

\begin{theorem}[Vanishing in linear systems of varieties -- weak locality]\label{lin-sys-vanishing}
    Let $(V_a)_{a \in G_0}$ be a linear system of multilinear varieties of codimension $r$. Let $s \in \mathbb{N}$ and $c > 0$. Then there exists a multilinear variety $Z \subseteq G_{[k]}$ of codimension $( r + s + \log c^{-1})^{O(1)}$ such that, whenever $\phi : W \to H$ is a multilinear map on a multilinear variety $W \subseteq G_{[k]}$ of codimension $s$ such that $\phi = 0$ on $W \cap V_a$ for at least $c |G_0|$ choices of $a \in G_0$, then $\phi  = 0$ on $W \cap Z$.
\end{theorem}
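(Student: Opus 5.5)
Write $V=\{\alpha=0\}\subseteq G_{[0,k]}$ with components $\alpha_i(a,x_{I_i})$ linear in $a$, so that $V_a=\{\alpha_a=0\}$. The idea is to regularise the family $(\alpha_a)$ so that, for a typical $a$, the variety $V_a$ splits into a part that does not depend on $a$ and a quasirandom part. The $a$-independent part will be $Z$. On the quasirandom part, vanishing on the slice is forced to spread by Theorem~\ref{vanishingmmmaps} and Theorem~\ref{qrExtension}.

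\textbf{Step 1: regularisation.} Write each $\alpha_i(a,x_{I_i})=\sum_j a_j\,\alpha_{i,j}(x_{I_i})$ in coordinates of $G_0$. I will run an iterative regularisation (energy increment) on the multilinear maps $x\mapsto \alpha(\cdot,x)\in\mathrm{Hom}(G_0,\mathbb{F}_p^r)$, which are linear combinations over $a$. At each stage we keep a mixed-linear map $\gamma:G_{[k]}\to\mathbb{F}_p^{t}$, independent of $a$. Suppose that for at least $(c/2)|G_0|$ values of $a$ there is $\lambda\neq 0$ such that $\lambda\cdot\alpha_a$ is not $\eta$-quasirandom with respect to $\gamma$ (together with the lower-order components of $\alpha_a$). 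Then we pigeonhole over $\lambda$ and apply Theorem~\ref{arankprank} to the averaged bias. Using Cauchy--Schwarz in $a$, this produces a bias bound for a form not depending on $a$, which Theorem~\ref{arankprank} turns into a partition rank decomposition of $\lambda\cdot\alpha_a$ relative to $\gamma$. The lower-order factors of that decomposition are added to $\gamma$. Alternatively, via Theorem~\ref{densetolowcodim}, we add a bounded-codimension variety on which the form vanishes.

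To make the process terminate I will use the multilinear bound: each step drops the dimension of the span $\langle \alpha_{a}:a\rangle$ modulo $\gamma$-generated forms in the relevant sense. Rather than relying on that, I prefer a density-increment argument, so that the number of steps is $O(r+\log c^{-1})$ and the codimension stays at $(r+s+\log c^{-1})^{O(1)}$. The parameter $\eta$ is chosen as $p^{-\blc(r+s+t)}$, which must also dominate the codimension $s$ of $W$. We could even add the forms defining $W$ into the quasirandomness requirement, but $Z$ must not depend on $W$. Since $W$ is arbitrary of codimension $s$, I instead require quasirandomness strong enough to survive intersecting with an arbitrary codimension-$s$ variety. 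This is where $s$ enters the bound.

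\textbf{Step 2: the conclusion.} At the end, the good set $A$ of $a$ has the following property for every $a\in A$: the non-$\gamma$ part of $\alpha_a$ is quasirandom with respect to $\gamma$, and $|A|\ge(1-c/2)|G_0|$. Hence some $a$ with $\phi|_{W\cap V_a}=0$ lies in $A$. Set $Z=\{\gamma=0\}$, so that $W\cap V_a\cap Z$ is $W\cap Z$ cut out further by quasirandom forms $\alpha_a$ relative to $W\cap Z$. Fix a slice in the top coordinates, as in the proof of Theorem~\ref{vanishingmmmaps}. By Lemma~\ref{quasirandomrest} most slices remain quasirandom, and Lemma~\ref{connectedlayerslemma} supplies points with prescribed $\alpha_a$-values. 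Theorem~\ref{qrExtension}, applied once per component (the zero map extends uniquely by zero), then gives $\phi=0$ on almost every slice of $W\cap Z$. Lemma~\ref{trivialextensionsverydense} upgrades this to $\phi=0$ on all of $W\cap Z$.

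\textbf{Main obstacle.} The hard part is Step 1. Components $\alpha_i$ may involve proper subsets $I_i$ and different arities, so quasirandomness has to be arranged in the hierarchical form needed by Theorem~\ref{qrExtension}: components of full arity are quasirandom relative to lower ones. It must also hold uniformly for a $1-c/2$ fraction of $a$, not merely on average. I will first try to handle this by inducting on the arity: lower-arity components are regularised first, and forms that are biased for many $a$ are absorbed into $\gamma$ via Theorem~\ref{densetolowcodim}, which is applied after a Cauchy--Schwarz step eliminating $a$.
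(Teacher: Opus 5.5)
There is a genuine gap in Step~2. There you use only one element $a$ from the given dense set, and you claim that $\phi=0$ on $W\cap Z\cap V_a$, with the non-$\gamma$ part of $\alpha_a$ quasirandom, forces $\phi=0$ on $W\cap Z$. This is false.

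Theorem~\ref{qrExtension} gives uniqueness only once a value $h_0$ at a point of $B\setminus B^0$ is prescribed. So the zero map on $B\cap\{\rho=0\}$ does \emph{not} extend uniquely by zero. Concretely, if $V_a=\{\rho_a=0\}$ for a quasirandom form $\rho\colon G_{[0,k]}\to\mathbb{F}_p$, then $\phi=\rho_a(\cdot)\,h$ vanishes on $V_a$ but not on $W\cap Z$. If your one-$a$ deduction held for all $a$ in a set $A$ of density $1-c/2$, applying it to these maps would force $W\cap Z\subseteq\{x:\rho(\cdot,x)=0\}$. That set has density $\on{bias}\rho\le\eta$, which contradicts the codimension bound on $Z$.

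Lemma~\ref{connectedlayerslemma} does give you points with prescribed $\alpha_a$-values, but you know nothing about $\phi$ at those points. Theorem~\ref{vanishingmmmaps}, which you invoke, needs vanishing on \emph{two} jointly quasirandom pieces. The second piece is exactly what supplies anchor points at which $\phi$ is already known to be $0$.

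The missing idea is to use the density hypothesis to choose two (or more) elements $a,b$ with $\phi=0$ on both $W\cap V_a$ and $W\cap V_b$, such that the quasirandom parts $(\rho_a,\rho_b)$ are jointly quasirandom relative to $W\cap Z$ and to the remaining forms of $V_a$ and $V_b$. Theorem~\ref{vanishingmmmaps} then removes $\rho_a$ and $\rho_b$. The lower-order forms still depend on $a$ and $b$, so what you get is vanishing on $W\cap Z\cap V''_a\cap V''_b$ for many pairs. This is why the paper sets the argument up as a downward induction over down-sets $\mathcal{P}$. The inductive statement is vanishing on $W\cap Z\cap V'_{a_1}\cap\dots\cap V'_{a_\ell}$ for $c'|G_0|^\ell$ tuples, and $\ell$ doubles at each step. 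Combinations of the top-level forms that are biased \emph{as forms on $G_{\{0\}\cup I}$} are absorbed into $Z$ via Theorem~\ref{arankprank}.

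This setup also resolves your difficulty with $W$. You cannot make $\alpha_a$ quasirandom relative to an arbitrary codimension-$s$ variety for each fixed $a$, since $W$ may contain the forms $\rho_a$ themselves. However, small bias of $\rho$ with the $G_0$ variable included implies that, for each fixed $W$, all but a $p^{O(r+s+s')}\eta^{\Omega(1)}$ fraction of tuples are good, where $s'$ is the codimension of the current $Z$. This suffices because the good tuples may depend on $W$, while $Z$ does not.
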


 Slightly more generally, for small $\ell$, the theorem applies when $\phi = 0$ on $W \cap V_{a_1} \cap \dots \cap V_{a_\ell}$ for a dense collection of $\ell$-tuples $a_{[\ell]}$. Namely, we may consider a variety $\tilde{V} \subseteq G_0^\ell \times G_{[k]}$, given by a mixed-linear map $\tilde{\alpha} : G_0^\ell \times G_{[k]} \to (\mathbb{F}_p^r)^\ell$, with $\tilde{\alpha}_{i,j}(a_{[\ell]}, x_{[k]}) = \alpha_{i}(a_j, x_{[k]})$.
 
\indent In the proofs in this section, we will use induction over collections of sets of coordinates that components of the underlying mixed-linear map $\alpha$ depend on. We need the concept of a \textit{down-set} $\mathcal{P}$ inside the power-set $\mathcal{P}[d] = \{I : I \subseteq [d]\}$, meaning that $\mathcal{P}$ is closed under taking subsets.

\begin{proof}
    Let $\alpha : G_{[0,k]} \to \mathbb{F}_p^r$ be a mixed-linear map defining $V$. By downwards induction on $d \in [0, k]$, then on down-sets $\mathcal{P} \subseteq \mathcal{P}[d]$, we show that there exist a positive integer $\ell = O(1)$, a parameter $c' \geq (c/2)^{O(1)}$ and a variety $Z\subseteq G_{[k]}$ of codimension $( r + s + \log c^{-1})^{O(1)}$ such that whenever $\phi : W \to H$ has the given property, then $\phi = 0$ on $W \cap Z \cap V'_{a_1} \cap \dots \cap V'_{a_\ell}$ for at least $c'|G_0|^\ell$ choices of $(a_1, \dots, a_\ell) \in G^\ell_0$, where $V'_a$ is the zero set of those multilinear forms $\alpha_i$ depending on $G_{\{0\} \cup I}$ for some $I \in \mathcal{P}$. Note that $Z$ can have any dependencies on $G_{[k]}$, but that it does not depend on $G_0$ instead.

    The base case when $d = k$ and $\mathcal{P} = \mathcal{P}[k]$ is trivial. Suppose that the claim holds for some down-set $\mathcal{P} \cup \{I\}$, in which $I$ is a maximal set, with $d \in I$. Without loss of generality, suppose that $\alpha_1, \dots, \alpha_{r'}$ are components of $\alpha$ depending on $G_{\{0\} \cup I}$ and write $\alpha' = (\alpha_1, \dots, \alpha_{r'})$. Write $V''_a$ for the the zero set of those multilinear forms $\alpha_i$ depending on $G_{\{0\} \cup J}$ for some $J \in \mathcal{P}$.

    Apply the induction hypothesis for $\mathcal{P} \cup \{I\}$ to obtain the relevant $\ell, c'$ and $Z$ such that whenever $\phi : W \to H$ has the given property, then $\phi = 0$ on $W \cap Z \cap V''_{a_1} \cap \{\alpha'_{a_1} = 0\} \cap \dots \cap V''_{a_\ell} \cap \{\alpha'_{a_\ell} = 0\}$ for at least $c'|G_0|^\ell$ choices of $(a_1, \dots, a_\ell) \in G^\ell_0$. The goal of the inductive step is to remove $\alpha'$ from this conclusion. Let $s' \leq ( r + s + \log c^{-1})^{O(1)}$ be the codimension of $Z$.

    Let $\eta > 0$ be a parameter to be chosen later. Take a maximal linearly independent set $\lambda_1, \dots, \lambda_m \in \mathbb{F}_p^{r'}$ of linear combinations such that $\lambda_i \cdot \alpha'$ fails to be $\eta$-quasirandom with respect to  $\mathcal{P}$-dependent forms in $\alpha$. Extend these linear combinations to a basis $\mu_1, \dots, \mu_{r' - m}$ of $\mathbb{F}_p^{r'}$. Write $\sigma_i = \lambda_i \cdot \alpha'$ and $\rho_i = \mu_i \cdot \alpha'$. With this notation, $ \{\alpha'_{a} = 0\} = \{\sigma_{a} = 0\} \cap  \{\rho_{a} = 0\}$.

    In particular, we have  $\phi = 0$ on $W \cap Z \cap \Big(\cap_{i \in [\ell]} V''_{a_i}\Big) \cap \Big(\cap_{i \in [\ell]} \{\sigma = 0\}_{a_i}\Big)  \cap \Big(\cap_{i \in [\ell]} \{\rho = 0\}_{a_i}\Big) $ and on $W \cap Z \cap \Big(\cap_{i \in [\ell]} V''_{b_i}\Big) \cap \Big(\cap_{i \in [\ell]} \{\sigma = 0\}_{b_i}\Big)  \cap \Big(\cap_{i \in [\ell]} \{\rho = 0\}_{b_i}\Big)$ for at least ${c'}^2|G_0|^{2\ell}$ choices of $(a_{[\ell]}, b_{[\ell]})$. Map $(\rho_{a_1}, \dots, \rho_{a_\ell}, \rho_{b_1}, \dots, \rho_{b_\ell})$ is $\sqrt[4]{\eta}$-quasirandom with respect to 
    \[W \cap Z \cap \Big(\cap_{i \in [\ell]} V''_{a_i}\Big) \cap \Big(\cap_{i \in [\ell]} \{\sigma = 0\}_{a_i}\Big) \cap \Big(\cap_{i \in [\ell]} V''_{b_i}\Big) \cap \Big(\cap_{i \in [\ell]} \{\sigma = 0\}_{b_i}\Big) \]
    for all but at most $p^{O_k(r + s + s')}\eta^{\Omega_k(1)}|G_0|^{2\ell}$ choices of $(a_{[\ell]}, b_{[\ell]})$. Provided $\eta \leq p^{-\blc(r + s + s')} {c'}^{\blc}$, for at least $\frac{1}{2}{c'}^2|G_0|^{2\ell}$ choices of $(a_{[\ell]}, b_{[\ell]})$ we may apply
    Corollary~\ref{vanishingmmmaps} to deduce that $\phi = 0$ on 
    \[W \cap Z \cap \Big(\cap_{i \in [\ell]} \{\sigma = 0\}_{a_i}\Big) \cap  \Big(\cap_{i \in [\ell]} \{\sigma = 0\}_{b_i}\Big) \cap \Big(\cap_{i \in [\ell]} V''_{a_i}\Big) \cap \Big(\cap_{i \in [\ell]} V''_{b_i}\Big).\]
    
    We are done after using Theorem~\ref{arankprank} to replace the $ \{\sigma = 0\}_{a_i}$ varieties with a variety independent of $G_0$.
\end{proof}

% By a signature, we mean a tuple $\sigma = (I_1, \dots, I_\ell)$ of disjoint sets inside $[k]$. The sets are always ordered by their minimal elements, so $\min I_1 < \min I_2 < \dots$. We say that a signature $\sigma'$ refines $\sigma$, if it has all its members and has more members.

% \begin{proof}
%     Let $\mathcal{S}$ be the collection of all signatures inside $[k]$. At each step, we maintain $\mathcal{S}$, closed under taking refinements, an integer $m = O(1)$, a map $\on{Ind} : \{ (\sigma, i) : \sigma \in \mathcal{S}, i\text{ index of member}\} \to [m]$, assigning an index to each member of $\sigma$, a dense collection of parameters $X \subseteq G_0^m$ and a multilinear variety $Z \subseteq G_{[k]}$, such that $\phi = 0$ on
%     \[W \cap Z \cap \{x_{[k]} : (\forall \sigma = (I_1, \dots, I_\ell) \in \mathcal{S}) (\forall j_1, \dots, j_\ell) \,\,\prod_{i \in [\ell]}\alpha_{j_i}(x_{\on{Ind}(\sigma, i)}; y_{I_i})\}\]
%     for all $(x_1, \dots, x_m) \in X$, where $j_i$ ranges over indices of $\alpha_\kappa$ that depend on $G_0 \times G_{I_i}$.\\

%     The base case is trivial, by assumptions. Suppose now that $\sigma = (I_1, \dots, I_\ell)$ is a minimal signature in $\mathcal{S}$. Our goal is to remove it from $\mathcal{S}$.
% \end{proof}

We now turn to the problem of extensions in linear systems of varieties, which in this setting becomes the following. Suppose that for a linear system of varieties $(V_a)_{a \in G_0}$, we have a multilinear map $\phi_a : V_a \to H$, and that these maps frequently agree on the intersection of their domains, namely $\phi_a = \phi_b$ on $V_a \cap V_b$. The natural guess would be that maps $\phi_a$ arise as restrictions of a single global multilinear map to varieties $V_a$. The following theorem says that, after passing to appropriate subvarieties, that is indeed the case.

\begin{theorem}[Common extensions in linear systems of varieties -- weak gluing]\label{linsystextns}
    Given a linear system of varieties $(V_a)_{a \in G_0}$ of codimension $r$, an integer $s \in \mathbb{N}$ and $c > 0$, there exist a multilinear variety $Z$ of codimension $( r + s + \log c^{-1})^{O(1)}$  and a further linear system of varieties $(\tilde{V}_a)_{a \in G_0}$, with $\tilde{V}_a \subseteq V_a$, of codimension $( r + s + \log c^{-1})^{O(1)}$, such that, whenever we are given:
    \begin{itemize}
        \item a set $X \subseteq G_0$,
        \item a multilinear variety $W$ of codimension $s$,
        \item a multilinear map $\phi_a : W \cap V_a \to H$ for each $a \in X$, such that
        \begin{equation}\label{agreeing-triangles-cond}\phi_a = \phi_b\text{ on }W \cap V_a \cap V_b\end{equation}
        for $c|G_0|^2$ pairs in $X$,
    \end{itemize}
    there exists a map $\Psi : W \cap Z \to H$ such that 
    \[\Psi = \phi_a\text{ on }W \cap Z \cap \tilde{V}_a\]
    holds for at least $(c/2)^{O(1)}|G_0|$ elements $a \in G_0$.
\end{theorem}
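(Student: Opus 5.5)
We follow the template of the proof of Theorem~\ref{lin-sys-vanishing}: downward induction on $d\in[0,k]$ and then on down-sets $\mathcal{P}\subseteq\mathcal{P}[d]$, peeling off one maximal set $I\ni d$ at a time from the forms of $\alpha$ that depend on $G_{\{0\}\cup I}$. The inductive statement keeps the following data:
\begin{itemize}
\item an integer $\ell=O(1)$ and a density $c'\geq (c/2)^{O(1)}$;
\item a variety $Z$ independent of $G_0$, of codimension $(r+s+\log c^{-1})^{O(1)}$;
\item a linear system $(\tilde V_a)$ obtained from $\alpha$ by adding $G_0$-linear forms produced along the way (the non-quasirandom combinations discussed below);
\item the conclusion that, for at least $c'|G_0|^{\ell}$ tuples $a_{[\ell]}$, there is a multilinear map $\Psi_{a_{[\ell]}}$ on $W\cap Z\cap\bigcap_i V^{\mathcal{P}}_{a_i}$ agreeing with $\phi_{a_i}$ on $W\cap Z\cap\tilde V_{a_i}$.
\end{itemize}
Here $V^{\mathcal{P}}_a$ is cut out by the $\mathcal{P}$-dependent components. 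At the end $\mathcal{P}=\emptyset$, so $\Psi_{a_{[\ell]}}$ lives on $W\cap Z$. Averaging then fixes all but one index, giving a single $\Psi$ agreeing with $\phi_a$ on $W\cap Z\cap\tilde V_a$ for $(c/2)^{O(1)}|G_0|$ values of $a$.

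\textbf{Base case.} To start, we use \eqref{agreeing-triangles-cond} and a Gowers-type path argument (Lemma~\ref{gowerspathssingle}) on the graph of agreeing pairs. This yields many pairs, or short chains, of indices $a,b$ for which $\phi_a$ and $\phi_b$ glue on $W\cap V_a\cap V_b$. The natural base object is then the pair of maps $\phi_a$ on $W\cap V_a$ and $\phi_b$ on $W\cap V_b$, which agree on the intersection.

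\textbf{Inductive step.} Let $\alpha'$ be the components depending on $G_{\{0\}\cup I}$. As before, split $\alpha'$ into a maximal family $\sigma$ of linear combinations that are not $\eta$-quasirandom relative to the $\mathcal{P}$-dependent forms, and a complementary family $\rho$.
\begin{itemize}
\item \emph{The $\rho$ part.} For two parameter tuples $a_{[\ell]}$ and $b_{[\ell]}$ we hold two glued maps. One is defined on the variety with $\{\rho_{a_i}=0\}$ imposed, the other on the variety with $\{\rho_{b_i}=0\}$ imposed. Lemma~\ref{quasirandomrest} shows that for most such tuples the map $(\rho_{a_{[\ell]}},\rho_{b_{[\ell]}})$ is $\sqrt[4]{\eta}$-quasirandom relative to the remaining variety and its slices. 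Corollary~\ref{simextcorsinglevar} (or Theorem~\ref{sim-ext-final-ver}) then glues the two maps into one map on the variety with both $\rho$-constraints removed, uniquely. Uniqueness guarantees that it still agrees with each $\phi_{a_i}$ and $\phi_{b_i}$ where required.
\item \emph{The $\sigma$ part.} The forms $\sigma$ cannot be removed this way. By Theorem~\ref{arankprank}, each $\sigma_a$ has low partition rank relative to lower-order forms, so $\{\sigma_a=0\}$ contains a variety defined by forms that either do not involve $G_0$ (absorbed into $Z$) or are of lower order in $G_{\{0\}\cup I}$ (absorbed into $\tilde V_a$ and the next down-set). This is why the conclusion only asserts agreement on $\tilde V_a\subseteq V_a$ and on $Z$.
\end{itemize}
The parameter $\eta$ is chosen as $p^{-\blc(r+s+s')}{c'}^{\blc}$, so all codimensions stay polynomial.

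\textbf{Main obstacle.} The hardest part is maintaining consistency across many tuples. When we pass from tuples of length $\ell$ to length $2\ell$, the glued map must be independent of choices, and it must remain a restriction-compatible extension of every $\phi_{a_i}$. The single-direction agreement \eqref{agreeing-triangles-cond} is only given for pairs, not for all intersections.

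I expect to need Theorem~\ref{lin-sys-vanishing} here. Applied to the difference of two candidate gluings, which vanishes on $W\cap V_a$ for many $a$, it forces the difference to vanish on $W\cap Z'$ for a variety $Z'$ depending only on the system. Intersecting $Z$ with $Z'$ therefore makes the gluings compatible.

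Controlling the quasirandomness hypotheses of Theorem~\ref{sim-ext-final-ver} uniformly over pairs of slices $z_d,z'_d$ will need another application of Lemma~\ref{quasirandomrest} on the $2\ell$-fold product. This is where the exponent bookkeeping is tightest.
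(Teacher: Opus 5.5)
Your proposal has a genuine gap at exactly the point where weak gluing has content: producing \emph{one} map from pairwise-compatible data.

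Your invariant only gives, for each good $\ell$-tuple, a map $\Psi_{a_{[\ell]}}$ tied to those $O(1)$ indices. The closing step does not work. After averaging to fix $a_2,\dots,a_\ell$, you still have a family $\Psi_{(a_1,a_2,\dots,a_\ell)}$ on $W\cap Z$ indexed by the free $a_1$. Nothing forces these maps to coincide: they are only known to agree with the fixed maps $\phi_{a_2},\dots,\phi_{a_\ell}$ on fixed pieces. Moreover, $\Psi_{a_{[\ell]}}$ is only defined on $W\cap Z\cap\bigcap_j V^{\mathcal P}_{a_j}$. So its agreement with $\phi_{a_i}$ holds on a region also cut out by the other indices, and you never remove those constraints.

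The same defect appears in your inductive step. Applying Corollary~\ref{simextcorsinglevar} to $\Psi_{a_{[\ell]}}$ and $\Psi_{b_{[\ell]}}$ requires them to agree on the full intersection of their domains. Your invariant contains no agreement between maps of different tuples. Your suggested remedy via Theorem~\ref{lin-sys-vanishing} is not set up correctly either. The difference of two candidate gluings is attached to $O(1)$ indices, so there is no reason for it to vanish on $W\cap V_e$ for a dense set of $e$, which is the input weak locality needs.

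In the base case, the path lemma is only useful together with Theorem~\ref{lin-sys-vanishing} applied along the length-$6$ paths. This upgrades the density-$c$ agreement to a dense $X'$ in which \emph{every} pair satisfies $\phi_x=\phi_y$ on $W'\cap V_x\cap V_y$. All later consistency checks rely on this clique property.

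The missing idea is to make the inductive statement the theorem itself for linear systems with fewer supports (Proposition~\ref{linsystextns-induction}), and to invoke it twice per step rather than tracking tuples. The paper's step runs as follows.
\begin{enumerate}
\item After the clique upgrade, split the top-level forms on $G_{\{0\}\cup I}$ into biased forms $\beta$ and quasirandom forms $\rho$. The $\beta$ are absorbed via Theorem~\ref{arankprank} into a $G_0$-independent variety $Z\subseteq\{\beta_a=0\}$.
\item For most pairs, Corollary~\ref{simextcorsinglevar} gives $\theta_{x,y}$ on (essentially) $W'\cap Z\cap V^{\mathrm{low}}_x\cap V^{\mathrm{low}}_y$, extending both $\phi_x$ and $\phi_y$. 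Theorem~\ref{vanishingmmmaps} then gives $\theta_{x,y}=\theta_{x,z}$ on triple intersections.
\item For each fixed $x$, the family $(\theta_{x,y})_y$ is an instance of the theorem for the lower-complexity system $(V^{\mathrm{low}}_y)_y$, with common variety $W'\cap Z\cap V^{\mathrm{low}}_x$. The induction hypothesis produces a single map $\Psi_x$ whose domain no longer carries the $\rho_x$-constraints.
\item Theorem~\ref{lin-sys-vanishing} gives $\Psi_x=\phi_x$ on $\tilde Z\cap V_x$. Combining $\Psi_x=\theta_{x,y}$, $\Psi_{x'}=\theta_{x',y}$ and $\theta_{x,y}=\theta_{x',y}$ for many $y$ with weak locality yields $\Psi_x=\Psi_{x'}$ for many pairs.
\item The induction hypothesis then applies once more to the family $(\Psi_x)_x$.
\end{enumerate}
Without this nested use of the full gluing statement, or some equivalent mechanism that propagates agreement between different tuples' maps, your single-layer, vanishing-style induction does not reach a single $\Psi$.
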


Similarly to the vanishing theorem for linear systems, we have to carefully induct on the support set of multilinear forms defining the linear system. To that end, we formulate the following proposition, which captures the inductive proof. The notation $\mathcal{P} + \{0\}$ in the statement denotes the family $\{I \cup \{0\} : I \in \mathcal{P}\}$.

\begin{proposition}[Common extensions in linear systems -- inductive statement]\label{linsystextns-induction}
    Let $1 \leq d \leq k$ and let $\mathcal{P} \subseteq \mathcal{P}[d] \setminus \{\emptyset\}$ be a down-set (i.e. closed under taking non-empty subsets). Then Theorem~\ref{linsystextns} holds for $(\mathcal{P} + \{0\})$-supported forms defining the linear system $(V_x)_{x \in G_0}$, with the linear system of varieties $(\tilde{V}_a)_{a \in G_0}$  in the conclusion being $(\mathcal{P} + \{0\})$-supported.
\end{proposition}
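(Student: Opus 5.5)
We argue by induction over down-sets, in the same order as in the proof of Theorem~\ref{lin-sys-vanishing}: first on $d$, then on $\mathcal{P} \subseteq \mathcal{P}[d]\setminus\{\emptyset\}$, adding one maximal set at a time.

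\textbf{Base case.} When $\mathcal{P} = \emptyset$, every form defining the system depends only on $G_0$. Then each $V_a$ is either empty or all of $G_{[k]}$, and the set of $a$ with $V_a = G_{[k]}$ is a subspace. The condition~\eqref{agreeing-triangles-cond} for $c|G_0|^2$ pairs then says that many $\phi_a$, all defined on $W$, coincide pairwise. Pigeonholing a single $b$ with $c|G_0|$ partners, we take $\Psi = \phi_b$, $Z = G_{[k]}$ and $\tilde V_a = V_a$.

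\textbf{Inductive step: splitting off the new forms.} Let $I$ be maximal in $\mathcal{P}$ with $d \in I$. Write $\alpha' = (\alpha_1,\dots,\alpha_{r'})$ for the components depending on $G_{\{0\}\cup I}$, and $V''_a$ for the variety cut out by the $(\mathcal{P}\setminus\{I\})+\{0\}$-supported forms. Fix $\eta = p^{-\blc(r+s+s')}(c/2)^{\blc}$. As in the vanishing proof, choose a maximal set of linear combinations $\sigma = \lambda\cdot\alpha'$ that are not $\eta$-quasirandom with respect to the $\mathcal{P}\setminus\{I\}$-dependent forms, and complete them to $\rho$, so that $\{\alpha'_a = 0\} = \{\sigma_a=0\}\cap\{\rho_a=0\}$. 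By Theorem~\ref{arankprank}, each biased combination $\sigma$ has low partition rank, so $\{\sigma_a = 0\}$ contains a variety built from forms on proper subsets of $\{0\}\cup I$ (and forms on $G_I$ independent of $a$). The pieces independent of $a$ go into $Z$; the pieces of the form $\beta(a,x_J)\gamma(x_{I\setminus J})$ are replaced by $\beta(a,x_J)=0$, which has support $\{0\}\cup J$ with $J\subsetneq I$. This makes $\tilde V_a$ strictly smaller, but it is now supported on $(\mathcal{P}\setminus\{I\})+\{0\}$ together with the quasirandom part $\rho$.

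\textbf{Inductive step: removing $\rho$ by gluing.} The key step is to remove $\rho$. Restrict each $\phi_a$ to $W\cap Z_0\cap V''_a\cap\{\rho_a=0\}$. For a typical good pair $(a,b)$ we have $\phi_a=\phi_b$ on the intersection. Because $(\rho_a,\rho_b)$ is quasirandom with respect to the remaining variety for all but $\eta^{\Omega(1)}p^{O(\cdot)}|G_0|^2$ pairs, by Lemma~\ref{quasirandomrest} applied to slices, Corollary~\ref{simextcorsinglevar} (with $\alpha=\rho_a$ and $\beta=\rho_b$, and the hypotheses on pairs of slices obtained similarly) glues $\phi_a$ and $\phi_b$ into a unique map $\theta_{a,b}$ on $W\cap Z_0\cap V''_a\cap V''_b$. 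Uniqueness of these extensions, with Theorem~\ref{vanishingmmmaps} applied to differences, shows that for triples $a,b,b'$ of good pairs we have $\theta_{a,b}=\theta_{a,b'}$ on the common domain. Fixing a popular $a$ via Lemma~\ref{gowerspathssingle}, we define $\phi'_b := \theta_{a,b}$ restricted to $W\cap Z_0\cap V''_b$ for many $b$. This yields a new system on the linear system $(V''_b)$, which is supported on $(\mathcal{P}\setminus\{I\})+\{0\}$, and which still satisfies~\eqref{agreeing-triangles-cond} for $(c/2)^{O(1)}|G_0|^2$ pairs. Here the agreement of $\phi'_b$ and $\phi'_{b'}$ is deduced from agreement on the $\rho$-subvarieties together with the uniqueness part of Corollary~\ref{simextcorsinglevar}.

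\textbf{Conclusion and main obstacle.} The induction hypothesis then gives $\Psi$ on $W\cap Z$, and since $\phi'_b$ extends $\phi_b$, the function $\Psi$ agrees with $\phi_b$ on $W\cap Z\cap\tilde V_b$. I expect the main obstacle to be the consistency of the glued maps $\theta_{a,b}$ over different choices, which is the cocycle-type issue. It requires ensuring that, for triples, the relevant joint forms $(\rho_a,\rho_b,\rho_{b'})$ remain quasirandom with respect to the slices, and this is where the codimension and density bookkeeping must be done carefully, so that the final bounds stay $(r+s+\log c^{-1})^{O(1)}$ after $O(1)$ inductive steps.
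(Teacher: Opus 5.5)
Your biased/quasirandom split, the absorption of the biased part into an $a$-independent variety, the pairwise gluing $\theta_{a,b}$ via Corollary~\ref{simextcorsinglevar} and the triple consistency via Theorem~\ref{vanishingmmmaps} all match the first half of the paper's argument. However, two essential steps are missing.

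\textbf{Missing clique upgrade.} The consistency $\theta_{a,b}=\theta_{a,b'}$ requires all three pairs $(a,b)$, $(a,b')$, $(b,b')$ to satisfy~\eqref{agreeing-triangles-cond}. On the piece where $\rho_b=\rho_{b'}=0$, the difference $\theta_{a,b}-\theta_{a,b'}$ equals $\phi_b-\phi_{b'}$, and the hypothesis says nothing about this off $\{\rho_a=0\}$. A graph of density $c$ can be triangle-free (e.g.\ complete bipartite), so your claim that $(\phi'_b)$ still satisfies~\eqref{agreeing-triangles-cond} for $(c/2)^{O(1)}|G_0|^2$ pairs is unjustified. The paper first upgrades the hypothesis to a clique. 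Lemma~\ref{gowerspathssingle} gives $X'\subseteq X$ in which any two vertices are joined by $(c/2)^{O(1)}|G_0|^5$ paths of length $6$. Hence $\phi_x=\phi_y$ on $W\cap V_x\cap V_y\cap V_{z_1}\cap\dots\cap V_{z_5}$ for many $z_{[5]}$. Theorem~\ref{lin-sys-vanishing}, applied to the product system, then yields a single $W'\subseteq W$, independent of $x,y$, with $\phi_x=\phi_y$ on $W'\cap V_x\cap V_y$ for \emph{all} $x,y\in X'$. Only after this are almost all triples good.

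\textbf{Missing second application of the induction hypothesis.} The glued map $\theta_{a,b}$ is defined only on $W\cap Z_0\cap V''_a\cap V''_b$, not on $W\cap Z_0\cap V''_b$. After fixing $a$, your system therefore has common variety $W\cap Z_0\cap V''_a$, and the induction hypothesis returns $\Psi$ only on $W\cap Z_0\cap V''_a\cap Z^{\on{IH}}$. Since $a$ is chosen from the data, this is not of the form $W\cap Z$ with $Z$ depending only on the linear system, $s$ and $c$. In general no bounded-codimension $Z$ lies inside every $V''_a$.

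The paper removes this dependence with a second stage:
\begin{enumerate}
\item It performs your construction for every $x$ in a large set $X''$, obtaining $\Psi_x$ on $Z^{\on{IH}}\cap W'\cap Z\cap V^{\on{low}}_x$. Here $Z^{\on{IH}}$ and $(V^{\on{IH}}_y)$ are the same for all $x$, precisely because of the uniformity built into the statement.
\item Theorem~\ref{lin-sys-vanishing} gives $\Psi_x=\phi_x$ on $\tilde Z\cap V_x$.
\item Through common $y$ with $\theta_{x,y}=\theta_{x',y}$, one gets $\Psi_x=\Psi_{x'}$ on the relevant intersection for $(c/2)^{O(1)}|G_0|^2$ pairs $(x,x')$.
\item The induction hypothesis is applied a \emph{second} time, to the system $(\Psi_x)$ over the $((\mathcal{P}\setminus\{I\})+\{0\})$-supported linear system $(V^{\on{low}}_x\cap V^{\on{IH}}_x)_x$.
\end{enumerate}
This last step is what eliminates $V^{\on{low}}_x$ from the domain and produces the required $\Psi$ on $W\cap Z$.
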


Note that the case when $d = k, \mathcal{P} = \mathcal{P}[k]$ proves Theorem~\ref{linsystextns}.

\begin{proof}[Proof of Proposition~\ref{linsystextns-induction}] We prove the proposition by induction on $d$ and $|\mathcal{P}|$. The base case when $\mathcal{P} = \emptyset$ is trivial. Let now $d$ and $\mathcal{P}$ be given, and let the claim hold for smaller values of $d$ and smaller sizes of $\mathcal{P}$. Let $I \in \mathcal{P}$ be a maximal set containing $d$. If there is no such set, then we may decrease $d$ to $d-1$, so induction hypothesis applies. Also, we remark that the induction hypothesis applies to $\mathcal{P} \setminus \{I\}$.

Firstly, we strengthen the assumption by showing that the graph of pairs $(x,y)$ satisfying~\eqref{agreeing-triangles-cond} contains large cliques, upon passing to a suitable subvariety of $W$. Namely, let $\Gamma$ be a graph on the vertex set $X$, whose edges are given by the condition~\eqref{agreeing-triangles-cond}. It has density $c$. By Lemma~\ref{gowerspathssingle}, there exists a set $X' \subseteq X$ of size $c_1|G_0|$ for some $c_1 \geq (c/2)^{O(1)}$ such that any two $x, y \in X'$ are joined by $(c/2)^{O(1)}|G_0|^5$ paths of length 6 in $\Gamma$. But, if $x, z_1, \dots, z_5, y$ is such a path, then
\[\phi_x = \phi_{z_1} = \phi_{z_2} = \dots = \phi_y\text{ on }W \cap V_x \cap V_{z_1} \cap \dots \cap V_{z_5} \cap V_y.\]
By Theorem~\ref{lin-sys-vanishing} for the given system and codimension parameter $s + 2r$, we may pass to a multilinear variety $W' \subseteq W$ of codimension $r' \leq (r + s + \log c^{-1})^{O(1)}$, independent of $x$ and $y$, such that $\phi_x - \phi_y$ vanishes on $W' \cap V_x \cap V_y$. Hence, for all $x, y \in X'$, we have $\phi_x = \phi_y$ on $W' \cap V_x \cap V_y$.

Write $V_x = V^{\on{low}}_x \cap (\{\alpha_x = 0\} \times G_{I^c})$ for a $(\mathcal{P} \setminus \{I\} + 0)$-supported linear system $(V^{\on{low}}_x)_{x \in G_0}$ and a multilinear map $\alpha : G_{\{0\} \cup I} \to \mathbb{F}_p^r$. Let $\eta > 0$ be a parameter to be chosen later and let $\lambda_1, \dots, \lambda_m \in \mathbb{F}_p^r$ be a maximal independent set of linear combinations such that $\on{bias}  (\lambda_{i} \cdot \alpha) \geq \eta$. We may extend these linear combinations with further $\mu_1, \dots, \mu_{r - m}$ to a basis $\mathbb{F}_p^r$. Write $\rho_i = \mu_i \cdot \alpha$ for the quasirandom linear combinations and $\beta_i = \lambda_i \cdot \alpha$ for the biased ones. In particular, we may write
\[V_x = V^{\on{low}}_x \cap (\{\beta_x = 0\} \times G_{I^c}) \cap (\{\rho_x = 0\} \times G_{I^c}).\]

Let $Q$ be the set of pairs $(x,y) \subseteq X' \times X'$ such that for all $\ell \notin I$, the map $(\rho_x, \rho_y)$ is $\eta^{2^{-k-2}}$-quasirandom with respect to $(V^{\on{low}}_x \cap  V^{\on{low}}_y \cap W')_{z_\ell}$, $(V^{\on{low}}_x \cap  V^{\on{low}}_y \cap W')_{z'_\ell}$ and $(\beta_x, \beta_y)$ for all but at most $\eta^{2^{-k-2}}|G_\ell|^2$ pairs $(z_\ell,z'_\ell) \in G_\ell^2$.

\begin{claim}
    We have $|Q| \geq (1 - p^{16r+2r'}\eta^{2^{-k-1}}) |X'|^2$.
\end{claim}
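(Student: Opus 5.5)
The plan is to bound the complement of $Q$ by a union bound over the coordinates $\ell \notin I$. For each fixed $\ell$ we control the number of pairs $(x,y)\in G_0^2$ for which the quasirandomness condition fails, and we do this with the same Cauchy--Schwarz/averaging mechanism as in Lemma~\ref{quasirandomrest}. Quasirandomness is only needed with respect to the small collection of forms consisting of the components of $V^{\on{low}}_x$, $V^{\on{low}}_y$ and $W'$ (sliced at $z_\ell$ and at $z'_\ell$), together with $(\beta_x,\beta_y)$. This collection has at most $4r+4r+2r'$ components after doubling, which accounts for the factor $p^{16r+2r'}$.

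Fix $\ell\notin I$ and call a quadruple $(x,y,z_\ell,z'_\ell)$ \emph{bad} if some nonzero combination $\mu\cdot\rho_x+\mu'\cdot\rho_y$, after adding some linear combination of the defining forms above, has bias at least $\eta^{2^{-k-2}}$. Pigeonholing over the coefficient vectors costs at most $p^{2r}$ for $(\mu,\mu')$ and $p^{O(r)+2r'}$ for the coefficients of the remaining forms. After this we obtain a fixed linear combination which is biased on a set $F'$ of quadruples of density at least $p^{-16r-2r'}$ times the bad density.

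Since $\rho$ is a function of $x$ and $I$ only, the form $\mu\cdot\rho_x+\mu'\cdot\rho_y$ equals $\mu\cdot\rho(x,\cdot)+\mu'\cdot\rho(y,\cdot)$. Here $\mu\cdot\rho$ is multilinear on $G_{\{0\}\cup I}$, with $\mu\cdot\rho$ not biased by more than $\eta$ in the sense of the choice of the $\lambda_i$. By maximality of $\lambda_1,\dots,\lambda_m$, every nonzero $\mu\cdot\rho+\lambda\cdot\beta$ has $\on{bias}<\eta$. Without loss of generality $\mu\neq0$.

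Square the bias and average over $(x,y,z_\ell,z'_\ell)$. Then apply Cauchy--Schwarz in a coordinate $i_0\in I$, exactly as in the proof of Lemma~\ref{quasirandomrest}, to kill all forms that do not involve $G_{i_0}$. Repeating the differencing step over the variables $x$ and $y$ removes $y$ and the sliced $V^{\on{low}}$, $W'$, $\beta$ terms. Forms involving $G_{\{0\}}$ together with $i_0$ are handled because the whole expression is linear in $x$: differencing in $x$ turns $\mu\cdot\rho_{x}+\dots$ into a form on $G_{\{0\}\cup I}\times\ldots$ which is, at the end, $\mu\cdot\rho+\lambda\cdot\beta$ tensored with dummy variables. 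This requires at most $k+2$ Cauchy--Schwarz steps, and so bounds the $2^{k+2}$-th power of the averaged bias by $\on{bias}(\mu\cdot\rho+\lambda\cdot\beta)<\eta$. Consequently the density of bad quadruples is at most $p^{16r+2r'}\eta^{2^{-k-1}}$ after accounting for the $\eta^{2^{-k-2}}$ threshold.

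By Markov's inequality, the fraction of pairs $(x,y)$ having more than $\eta^{2^{-k-2}}|G_\ell|^2$ bad pairs $(z_\ell,z'_\ell)$ is at most a correspondingly small quantity. Summing over the at most $k$ choices of $\ell$, and restricting to $X'\times X'$, which costs the density $c_1^{-2}$, absorbed by taking $\eta$ small relative to $c_1$ (as the paper allows later), gives the claim.

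The main obstacle is the repeated Cauchy--Schwarz bookkeeping. I must ensure that the forms from $W'$, $V^{\on{low}}$ and $\beta$, which are lower-order in $I$ (their supports are in $\mathcal{P}\setminus\{I\}$ or do not contain $I$), are genuinely eliminated, leaving exactly the quasirandom $\mu\cdot\rho$ modulo the $\beta$ combinations.
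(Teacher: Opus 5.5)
There is a genuine gap, in the quantitative core. Your strategy is the paper's: pigeonhole over $\ell\notin I$ and the coefficient vectors, square, and use Gowers--Cauchy--Schwarz to reduce to the bias of a combination $\mu\cdot\rho+\lambda\cdot\beta$ with $\mu\neq 0$, which is $<\eta$ by maximality of $\lambda_1,\dots,\lambda_m$.

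The set $Q$ uses the same threshold $\eta^{2^{-k-2}}$ for the quasirandomness parameter and for the exceptional proportion of pairs $(z_\ell,z'_\ell)$. For Markov's inequality to say anything, the mean square $\ex_{x,y,z_\ell,z'_\ell}|\ex_a\omega(A)|^2$ must therefore beat $\eta^{2^{-k-1}}$. Starting from $\on{bias}<\eta$, you can afford at most $k$ Cauchy--Schwarz halvings of the exponent. With your $k+2$ steps, the average of $|\ex_a\omega(A)|$ is only bounded by $\eta^{2^{-k-2}}$, which equals the threshold. The proportion of bad quadruples is then only bounded by $1$, so the sentence producing $p^{16r+2r'}\eta^{2^{-k-1}}$ does not follow.

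Your account of what gets eliminated is also off. The forms of $W'$ are arbitrary forms on $G_{[k]}$, so their slices may involve every coordinate of $I$. The $\beta$-forms live on all of $\{0\}\cup I$. Neither is lower order in $I$, so Cauchy--Schwarz in an $I$-coordinate as in Lemma~\ref{quasirandomrest} does not remove them. In that lemma the surviving forms are absorbed using quasirandomness with respect to the full $\beta$; here only the bias of combinations of $\alpha$ is controlled.

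What works, and what the paper's one-line appeal to Gowers--Cauchy--Schwarz amounts to, is the following. Fix the combination and $y,z_\ell,z'_\ell$, and expand $|\ex_a\omega(A)|^2=\ex_{a,a'}\omega(A(a)-A(a'))$. Treat $a'$ and $a_{[k]\setminus(I\cup\{\ell\})}$ as outer variables. Then apply Gowers--Cauchy--Schwarz once over the coordinate set $\{0\}\cup I$, with $x$ as coordinate $0$; this set has at most $k$ elements since $\ell\notin I$.

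Relative to this set, every term other than $\mu\cdot\rho_x+\lambda\cdot\beta_x$ is lower order:
(i) the $W'$-terms and all $y$-terms do not depend on $x$;
(ii) $A(a')$ depends, among the inner variables, on $x$ alone;
(iii) each $V^{\on{low}}_x$-term misses a coordinate of $I$, because $I$ is maximal in $\mathcal{P}$.
No differencing in $y$ is needed, and the $\beta_x$-terms are kept, not removed. This gives mean square at most $\on{bias}(\mu\cdot\rho+\lambda\cdot\beta)^{2^{-k}}<\eta^{2^{-k}}$, hence at most an $\eta^{2^{-k-1}}$ proportion of bad quadruples per combination.

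Your final Markov step over $(z_\ell,z'_\ell)$ then costs a further factor $\eta^{-2^{-k-2}}$. You therefore land at $p^{O(r+r')}\eta^{2^{-k-2}}$ rather than the stated exponent. The paper's displayed lower bound likewise omits the factor $\eta^{2^{-k-2}}$ coming from the density of the exceptional pairs $(z_\ell,z'_\ell)$. This is harmless, since only $|Q|\geq(1-\eta^{\Omega(1)})|X'|^2$ is used afterwards.

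If you want the literal exponent, replace Gowers--Cauchy--Schwarz by the lossless bound $|\ex\,\omega(\phi+\psi)|\leq\on{bias}(\phi)$. Here $\phi$ is multilinear and $\psi$ is a sum of multilinear forms on proper subsets of the coordinates; prove it by induction, averaging out one coordinate at a time. This bounds the mean square by $\eta$.
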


\begin{proof}
    Let $F$ be the set of all pairs $(x,y) \in X' \times X' \setminus Q$. By averaging, there exists $\ell \notin I$, a non-trivial linear combination of all involved forms 
    \begin{align*}A_{x,y,z_\ell, z'_\ell}(a_{[k] \setminus \{\ell\}}) = \lambda \cdot \alpha_x(a_I) + &\mu \cdot \alpha_y(a_I) + \tau(x, a_{[k] \setminus \{\ell\}}, z_\ell)\\
    +&\tau(x, a_{[k] \setminus \{\ell\}}, z'_\ell) + \tau'(y, a_{[k] \setminus \{\ell\}}, z_\ell) + \tau'(y, a_{[k] \setminus \{\ell\}}, z'_\ell),\end{align*}
    in which at least one of $\rho_x$ and $\rho_y$ appears, such that for at least $p^{-8r-2r'}|F|$ pairs $(x,y)$ we have at least $\eta^{2^{-k-2}}|G_\ell|^2$ pairs $(z_\ell,z'_\ell) \in G_\ell^2$ with
    \[\Big|\exx_{a_{[k] \setminus \{\ell\}}}\omega(A_{x,y,z_\ell, z'_\ell}(a_{[k] \setminus \{\ell\}}))\Big| \geq \eta^{2^{-k-2}}.\]
    Here, $\tau$ and $\tau'$ are multiaffine forms arising from forms defining varieites $(V^{\on{low}}_x)_{z_\ell},$ $(V^{\on{low}}_y)_{z_\ell},$ $(V^{\on{low}}_x)_{z'_\ell},$ $(V^{\on{low}}_y)_{z'_\ell},$ $(W')_{z_\ell}$ and $(W')_{z'_\ell}$. In particular, multilinear parts of $\tau$ and $\tau'$ either do not depend on $G_0$ or are $(\mathcal{P} \setminus \{I\} + \{0\})$-supported. Furthermore, since $\rho_x$ or $\rho_y$ appears in the linear combination, we have $\lambda$ or $\mu$ outside $\langle \lambda_1, \dots, \lambda_m \rangle$. Hence
    \[p^{-16r-2r'}\eta^{2^{-k-1}} \frac{|F|}{|G_0|^2} \leq \exx_{x,y \in G_0, z_\ell, z'_\ell \in G_\ell} \Big|\exx_{a_{[k] \setminus \{\ell\}}}\omega(A_{x,y,z_\ell, z'_\ell}(a_{[k] \setminus \{\ell\}}))\Big|^2 \leq (\on{bias} (\lambda \cdot \alpha_x)^{2^{-k}},\]
    by the Gowers-Cauchy-Schwarz inequality, proving the claim.
\end{proof} 

For such a pair $(x,y)$, Corollary~\ref{simextcorsinglevar} applies to give a multilinear map $\theta_{x,y} : W' \cap V^{\on{low}}_x \cap (\{\beta_x = 0\} \times G_{I^c}) \cap V^{\on{low}}_y \cap (\{\beta_y = 0\} \times G_{I^c}) \to H$ such that $\phi_x = \theta_{x,y}$ on $W' \cap V^{\on{low}}_x \cap (\{\beta_x = 0\} \times G_{I^c}) \cap V^{\on{low}}_y \cap (\{\beta_y = 0\} \times G_{I^c}) \cap (\{\rho_x = 0\} \times G_{I^c})$ and similarly for $\phi_y$. Then
\begin{equation}\phi_x = \theta_{x,y}\text{ on }W' \cap V_x \cap V^{\on{low}}_y  \cap (\{\beta_y = 0\} \times G_{I^c}) \text{ and }\phi_y = \theta_{x,y}\text{ on }W'\cap V_y \cap V^{\on{low}}_x  \cap (\{\beta_x = 0\} \times G_{I^c}) .\label{commonextgoodpair}\end{equation}

Provided $\eta \leq p^{-\blc (r + r')}$, we have $|Q| \geq (1-\eta^{\Omega(1)})|X'|^2$. Moreover, 
\begin{align}(\rho_x, \rho_y, \rho_z)&\text{ is }\eta^{\Omega(1)}\text{-quasirandom with respect to }\nonumber\\
&W' \cap V^{\on{low}}_x \cap V^{\on{low}}_y \cap V^{\on{low}}_z\cap (\{\beta_x = 0\} \times G_{I^c}) \cap  (\{\beta_y = 0\} \times G_{I^c}) \cap (\{\beta_z = 0\} \times G_{I^c})\label{triangleQR}\end{align}
for all but at most $p^{O(r + r')}\eta^{\Omega(1)}|G_0|^3$ triples $(x,y,z) \in G_0^3$. Thus, provided $\eta \leq p^{-\blc(r + r')} c^{-\blc}$, by averaging, we get subset $X'' \subseteq X'$ of size $|X''| \geq (1-\eta^{\Omega(1)})|X'|$ such that for each $x \in X''$ there are at least $(1-\eta^{\Omega(1)})|X'|^2$ choices of $(y,z) \in X$ such that all three pairs $(x,y), (x,z)$ and $(y,z)$ belong to $Q$, so~\eqref{commonextgoodpair} holds for each of them, and condition~\ref{triangleQR} holds for $(x,y,z)$. Let $T_x$ be the collection of such $(y,z)$. Note that, when $(y,z) \in T_x$, then $\theta_{x,y}$ and $\theta_{x,z}$ are both simultaneous extensions of maps 
\begin{align*}&\phi_x : W' \cap V^{\on{low}}_x \cap (\{\beta_x = 0\} \times G_{I^c}) \cap (\{\rho_x = 0\} \times G_{I^c}) \to H\\
\text{ and }&\phi_{y} = \phi_z : W'\cap V^{\on{low}}_y \cap V^{\on{low}}_z \cap  (\{\beta_y = 0\} \times G_{I^c}) \cap (\{\beta_z = 0\} \times G_{I^c})\\
&\hspace{8cm}\cap (\{\rho_y = 0\} \times G_{I^c})\cap (\{\rho_z = 0\} \times G_{I^c}) \to H,\end{align*}
as both contain the extension domain $W' \cap V^{\on{low}}_x \cap V^{\on{low}}_y \cap V^{\on{low}}_z\cap (\{\beta_x = 0\} \times G_{I^c}) \cap  (\{\beta_y = 0\} \times G_{I^c}) \cap (\{\beta_z = 0\} \times G_{I^c})$. By Theorem~\ref{vanishingmmmaps}, provided $\eta \leq p^{-\blc(r + r')} c^{-\blc}$, by condition~\eqref{triangleQR} we have $\theta_{x,y} = \theta_{x,z}$ on that variety.

Let $c_2 \geq (c/2)^{O(1)}$ be the density in the conclusion of Proposition~\ref{linsystextns-induction} (i.e. in the conclusion of Theorem~\ref{linsystextns}), when the induction hypothesis is applied for the collection of sets $\mathcal{P} \setminus \{I\}$ and the density parameter $c_1^2/100$ in assumption~\ref{agreeing-triangles-cond}.

\indent We may choose $\eta \geq p^{-O(r + r')}c^{O(1)} \geq \exp(-(r + s + \log c^{-1})^{O(1)})$ so that all required bounds on $\eta$ hold and so that bounds $|X''| \geq (1-\eta^{\Omega(1)})|X'|$ and $|T_x| \geq (1-\eta^{\Omega(1)})|X'|^2$ become $|X''| \geq (1-c_2/2)|X'|$ and $|T_x| \geq (1-c_2/2)|X'|^2$. By Theorem~\ref{arankprank}, we may find a multilinear variety $Z \subseteq G_{[k]}$ of codimension $s' \leq (r + \log \eta^{-1})^{O(1)}$ such that $Z \subseteq (\{\beta_a = 0\} \times G_{I^c})$ for all $a$. Hence, for the given $x \in X''$ we get at least $\frac{c^2_1}{2}|G_0|^2$ pairs $(y,z)$ such that $\theta_{x,y} = \theta_{x,z}$ on $(W' \cap Z \cap V^{\on{low}}_x) \cap V^{\on{low}}_y \cap V^{\on{low}}_z$.

As linear forms defining $V^{\on{low}}_y$ are $\mathcal{P} \setminus \{I\}  + \{0\}$-supported, the induction hypothesis applies to the linear system $(V^{\on{low}}_y)_{y \in G_0}$, codimension parameter $r' + s' + r$ and density parameter $\frac{c^2_1}{2}$ to give a multilinear variety $Z^{\on{IH}}$ of codimension $s'' \leq (r + r' + s' + \log c^{-1})^{O(1)}$ and a further linear system $(V^{\on{IH}}_y)_{y \in G_0}$ of codimension $(r + r' + s' + \log c^{-1})^{O(1)}$ with the described property. Hence, for each $x \in X''$, we get a multilinear map $\Psi_x : Z^{\on{IH}} \cap W' \cap Z \cap V^{\on{low}}_x \to H$ such that $\Psi_x = \theta_{x,y}$ on $Z^{\on{IH}}\cap W' \cap Z \cap V^{\on{low}}_x \cap V^{\on{IH}}_y$ for $c_2|G_0|$ elements $y \in G_0$ (recall that $c_2$ was chosen with the application of induction hypothesis in mind). In particular, we have $\Psi_x = \theta_{x,y} = \phi_x$ on $Z^{\on{IH}}\cap W' \cap Z  \cap V_x \cap V^{\on{IH}}_y \cap V^{\on{low}}_y$ for many $y$. Theorem~\ref{lin-sys-vanishing} applies another time to find a multilinear variety $\tilde{Z} \subseteq Z^{\on{IH}} \cap W' \cap Z$ such that $\Psi_x- \phi_x$ vanishes on $\tilde{Z} \cap V_x$.

Finally, we get that $\Psi_x = \theta_{x,y}$ on $Z^{\on{IH}}\cap W' \cap Z \cap V^{\on{low}}_x \cap V^{\on{IH}}_y$ and $ \Psi_{x'} = \theta_{x', y}$ on $Z^{\on{IH}}\cap W' \cap Z \cap V^{\on{low}}_{x'} \cap V^{\on{IH}}_y$ hold for at least $\frac{c_1^2c_2^2}{4}|G_0|^3$ triples in $X''$. Due to choice of $\eta$, for at least half of such triples, we additionally have
\[\theta_{x,y} = \theta_{x',y}\text{ on }W' \cap V^{\on{low}}_x \cap V^{\on{low}}_y \cap V^{\on{low}}_{x'}\cap Z.\]

Hence, for $(c/2)^{O(1)}|G_0|^2$ pairs $(x,x')$ we have
\[\Psi_x = \Psi_{x'}\text{ on }Z^{\on{IH}}\cap W' \cap Z \cap V^{\on{low}}_x \cap V^{\on{IH}}_x \cap V^{\on{low}}_{x'} \cap V^{\on{IH}}_{x'} \cap V^{\on{IH}}_y\]
for at least $(c/2)^{O(1)}|G_0|$ elements $y \in G_0$.

\indent Apply Theorem~\ref{lin-sys-vanishing} for such pairs and the induction hypothesis another time to complete the proof.
\end{proof}

\subsection{Simultaneous vanishing of MM-$r$-maps}

In this subsection, we treat the problem of MM-$r$-maps that have very dense zero sets. As it was remarked in the introduction, the only linear maps with such a property are zero maps. However, the situation in the multilinear setting is considerably more difficult and we prove a significantly weaker result in the form of the following dichotomy.

\begin{proposition}\label{nonvanishing-or-variety-dichotomy}
    Let $W \subseteq G_{[k]}$ be a multilinear variety of codimension $r$. Then there exists a multilinear variety $Z \subseteq W$ of codimension $r^{O(1)}$ such that, whenever $\psi_1, \dots, \psi_m : W \to H$ are multilinear maps, then we have
    \begin{itemize}
        \item[\textbf{(A)}] a point $x_{[k]} \in W$ such that $\psi_i(x_{[k]}) \not = 0$ for at least $\Omega(m)$ maps, or
        \item[\textbf{(B)}]we have $Z \subseteq \on{Z}(\psi_i)$ for at least $\Omega(m)$ maps.
    \end{itemize}
\end{proposition}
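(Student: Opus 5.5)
We will build $Z$ by an iterative regularization of $W$ that does not depend on the maps $\psi_i$. The approach is then to show that, for any family of maps, one of the two alternatives must hold.

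\textbf{Reduction.} First, observe that it suffices to prove the following statement. If alternative \textbf{(A)} fails with some small constant $\epsilon_k$, then for every $x_{[k]} \in W$ fewer than $\epsilon_k m$ of the maps $\psi_i$ are non-zero at $x_{[k]}$. Averaging over $x_{[k]} \in W$ (uniformly chosen), we find that at least $m/2$ of the maps $\psi_i$ have zero set of density at least $1 - 2\epsilon_k$ in $W$. So the real task is a single-map statement, which we aim to prove in the following form: there is $Z \subseteq W$ of codimension $r^{O(1)}$, depending only on $W$, such that every multilinear $\psi : W \to H$ with $|Z(\psi) \cap W| \geq (1-2\epsilon_k)|W|$ vanishes on $Z$.

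\textbf{Construction of $Z$.} We proceed by induction on $k$, working coordinate by coordinate. Fix a mixed-linear $\beta$ defining $W$. We regularize it in the style of Theorem~\ref{arl}. Repeatedly, whenever some non-trivial linear combination of the top-order components (those depending on all of $G_{[k]}$), taken modulo the lower-order ones, is $\eta$-biased, we use Theorem~\ref{arankprank} to replace it by lower-order forms. We do the same recursively for the lower-order parts, using down-set induction as in Theorem~\ref{lin-sys-vanishing}. Taking $\eta = p^{-\blc r}$ keeps the codimension at $r^{O(1)}$. The output is a subvariety $Z = \{\gamma = 0\} \subseteq W$ whose defining map is quasirandom in the sense required by Lemma~\ref{connectedlayerslemma}, Lemma~\ref{quasirandomrest} and Theorem~\ref{qrExtension}. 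In addition, $Z$ has density at least $p^{-O(r^{O(1)})}$, by Lemma~\ref{lowcodsize}.

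\textbf{Single-map statement on $Z$.} Suppose $\psi$ vanishes on a $(1-2\epsilon_k)$-proportion of $W$. We would like to transfer this density to $Z$, but $Z$ may be much sparser than $W$, so density does not transfer directly. Instead, we use directional structure. For each $d$ and a typical slice $x_{[k]\setminus\{d\}}$ of $W$, the map $\psi$ is linear on the subspace $W_{x_{[k]\setminus\{d\}}}$. A linear map with more than half zeros on a subspace is identically zero there. Since $\epsilon_k < 2^{-k-2}$, Markov's inequality shows that $\psi$ vanishes on entire lines for a $(1 - O(\epsilon_k))$-proportion of the slices in direction $k$. Iterating this through directions $k, k-1, \dots$ (as in the proof of Lemma~\ref{nonzeromapslemma}, run in reverse) gives the following. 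The set $S$ of points $y_{[k-1]} \in W^{([k-1])}$ whose full slice in direction $k$ lies in $Z(\psi)$ is dense in $W^{([k-1])}$. Moreover, $S$ is the zero set of the multilinear map $y_{[k-1]} \mapsto \psi(y_{[k-1]}, \cdot)$ restricted to the variety. We then apply the induction hypothesis in $k-1$ variables, or, if the density is only bounded below, Theorem~\ref{densetolowcodim} together with Theorem~\ref{basicextensionstheory}. This shows that this map vanishes on the lower-order part of $Z$, and hence $\psi = 0$ on $Z$. To make $Z$ independent of $\psi$, we use the clause in Lemma~\ref{approxF2homm} and Lemma~\ref{almost-multilinear-maps} stating that the good sets depend only on the domain.

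\textbf{Main obstacle.} The hard step is the last one. Theorem~\ref{densetolowcodim} produces a variety that depends on the map. We must instead produce a single $Z$ working for all maps simultaneously, and this is why the constant in \textbf{(B)} is only $\Omega(m)$ rather than $m$. My first attempt is to choose $Z$ as the quasirandom refinement above and apply Theorem~\ref{vanishingmmmaps} and Theorem~\ref{qrExtension} on it. On a quasirandom piece, $\psi|_Z$ is determined by its values on lower-order subvarieties. A $(1-\epsilon_k)$-dense zero set on every quasirandom layer $\{\rho = \mu\} \cap Z$ then forces vanishing, because by Lemma~\ref{connectedlayerslemma} these layers are connected with bounded diameter, and vanishing propagates through the connecting paths. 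If the uniformity over $\psi$ fails, I would fall back on a pigeonholing argument. From the polynomially many candidate varieties produced by Theorem~\ref{densetolowcodim}, I would select one that works for an $\Omega(1)$-fraction of the maps.
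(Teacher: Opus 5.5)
Your reduction is correct: if \textbf{(A)} fails with a small constant $\epsilon_k$, then at least $m/2$ of the $\psi_i$ have zero sets of density at least $1-2\epsilon_k$ in $W$. But it leaves you needing a uniform statement: a single $Z$, depending only on $W$, on which \emph{every} MM-map on $W$ with a $(1-2\epsilon_k)$-dense zero set vanishes. This is strictly stronger than the proposition. The paper never proves it and deliberately settles for the dichotomy, and your sketch does not establish it.

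\emph{The induction on $k$ fails.} The assignment $y_{[k-1]} \mapsto \psi(y_{[k-1]}, \cdot)|_{W_{y_{[k-1]}}}$ takes values in $\operatorname{Hom}(W_{y_{[k-1]}}, H)$, whose domain varies with $y_{[k-1]}$. So it is not a multilinear map on a variety in $G_{[k-1]}$, and $S$ is not its zero variety. In fact $S$ need not be closed under directional addition. For $y, y' \in S$ differing in one coordinate, you only learn that $\psi(y+y', \cdot)$ vanishes on $W_y \cap W_{y'}$, which can be a proper subspace of $W_{y+y'}$. Moreover, slice sizes vary by factors up to $p^r$, so the unweighted density of $S$ is only guaranteed to be about $p^{-r}$, not $1-O(\epsilon_k)$.

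\emph{The fallbacks do not repair this.} Theorem~\ref{densetolowcodim} gives a variety that depends on $\psi$. There is no polynomially bounded list of candidate varieties to pigeonhole over, since they are cut out by arbitrary forms on $G_{[k]}$. Different maps can give different varieties, and intersecting them makes the codimension grow with $m$. In any case $Z$ must be fixed before the maps. Your ``first attempt'' needs dense zeros of $\psi$ on the layers of the sparse subvariety $Z$, which, as you note yourself, does not follow from density in $W$.

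\emph{The missing idea is to avoid zero density altogether.} The paper builds, by down-set induction on the supports of the defining forms, an extension procedure that depends only on $W$.
\begin{itemize}
\item Biased combinations of the top-order forms are replaced by a lower-order variety via Theorem~\ref{arankprank}. This produces a fixed variety $C \subseteq W$ of codimension $r^{O(1)}$ on which the extension agrees with the original map.
\item Quasirandom combinations $\rho$ are removed by Theorem~\ref{qrExtension}, prescribing the value $0$ at fixed points $e^{(1)}, \dots, e^{(t')}$ with $\rho_i(e^{(j)}) = \id(i=j)$. The slice extensions are then glued using Lemmas~\ref{almost-multilinear-maps} and~\ref{trivialextensionsverydense}.
\end{itemize}
Because the relevant layers are connected and of bounded diameter (Lemma~\ref{connectedlayerslemma}), the procedure is linear in the input. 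The result $\psi^{\on{ext}}$ is a global multilinear map, and every $x_{[k]} \in G_{[k]}$ has a formula $\psi^{\on{ext}}(x_{[k]}) = \sum_{j \le \ell} \lambda_j \psi(y^{(j)})$. Here $\ell = O(1)$, $y^{(j)} \in W$, and the $y^{(j)}$ and $\lambda_j$ do not depend on $\psi$.

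The dichotomy then follows at once.
\begin{itemize}
\item If half of the $\psi_i^{\on{ext}}$ vanish, \textbf{(B)} holds with $Z = C$.
\item Otherwise, each nonzero global extension is nonzero at $2^{-k}|G_{[k]}|$ points (Lemma~\ref{nonzeromapslemma}). Hence some $x_{[k]}$ has $2^{-k-1}m$ nonzero extensions. Pigeonholing over the $\ell$ arguments of its formula gives a point of $W$ where $\Omega(m)$ of the $\psi_i$ are nonzero, which is \textbf{(A)}.
\end{itemize}
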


\noindent\textbf{Brief proof overview.} In this subsection, we use extension theory in more detail. The extension theory of MM-$r$-maps gives us a procedure which `peels off' quasirandom forms defining the domain one-by-one, and replaces biased forms by lower order ones. Eventually we get global maps, by extending with zero maps at each step. If from a given map $\psi_i : W \to H$ we get a zero global map, then the zero map coincides with the map $\psi_i$ on a dense variety, independent of the actual $\psi_i$, but depending only on $W$ instead. On the other hand, if the extension of $\psi_i$ does not vanish everywhere, since it is a global multilinear map, it has at least $2^{-k}|G_{[k]}|$ points with non-zero values. Extension theory tells us that value of the extension map at each point in $G_{[k]}$ is determined by a formula of fixed length, so we are done after averaging.

\indent Therefore, during the proof, we shall describe an extension procedure that depends on the starting variety $W$, but not on the actual map $\phi : W \to H$. The extensions will typically be denoted by $\phi^{\on{ext}}$. We say that a point $x_{[k]} \in G_{[k]}$ has a \textit{formula} of length $\ell$ in a set $W \subseteq G_{[k]}$ if there are pairs $(y^{(i)}, \lambda_i) \in W \times \mathbb{F}_p$, where $i \in [\ell]$, such that, for any multilinear map $\phi : W \to H$ our extension procedure gives a map $\phi^{\on{ext}}$ such that
\[\phi^{\on{ext}}(x_{[k]}) = \sum_{i \in [\ell]} \lambda_i \phi(y^{(i)}_{[k]}).\]
As a simple example, if $U \leq G$ is a subspace and $\pi : G \to U$ is a projection, then way extend $\phi : U \to H$ by setting $\phi^{\on{ext}}(x) = \phi(\pi(x))$. This is a formula of length 1, with point $\pi(x)$.

\begin{proof}[Proof of Proposition~\ref{nonvanishing-or-variety-dichotomy}]
    During the proof, for a down-set $\mathcal{P}$, we shall find a $\mathcal{P}$-supported multilinear variety $D^{\mathcal{P}}$ of codimension $r^{O(1)}$ such that most maps $\psi_i$ can be extended to a map $\psi^{(\mathcal{P})}_i : D^{\mathcal{P}} \to H$ with the property that for any $x_{[k]}$ there is a formula independent of $\psi_i$, namely, the identity holds 
    \[\phi^{\on{ext}}(x_{[k]}) = \sum \lambda_j \phi(y^{(j)}_{[k]})\]
    for any multilinear map $\phi$ on $W$ and its resulting extension $\phi^{\on{ext}}$ to $D^{\mathcal{P}}$. We shall simultaneously find a multilinear variety $C^{\mathcal{P}}$ of codimension $r^{O(1)}$, not necessarily $\mathcal{P}$-supported, such that $\phi^{\on{ext}} = \phi$ on $C^{\mathcal{P}}$.

    The claim is true for the base case $\mathcal{P} = \mathcal{P}[k]$. Let a down-set $\mathcal{P}$ for which the inductive hypothesis holds be given with a maximal set $I$. Let $D = D^{\mathcal{P}}$ and $C = C^{\mathcal{P}}$ be the varieties of codimension $s \leq r^{O(1)}$ obtained so far.
    
    \indent Let $\eta > 0$ be a parameter to be chosen later. Let $\alpha_1, \dots, \alpha_{s_I} : G_I \to \mathbb{F}_p$ be the $I$-dependent multilinear forms defining $D$. Let us write $D = V \cap (\{\alpha = 0\} \times G_{I^c})$ for a $(\mathcal{P}\setminus \{I\})$-dependent variety $V$. Let $\beta_1, \dots, \beta_t : G_I \to \mathbb{F}_p$ be a maximal collection of linearly independent linear combinations of $\alpha_i$ which are not $\eta$-quasirandom with respect to $V$. Extend these to a basis of linear combinations $\rho_1, \dots, \rho_{t'} : G_I \to \mathbb{F}_p$. Thus, we have that each $\rho_i$ is $\eta$-quasirandom with respect to $V \cap (\{\beta = 0\} \times G_{I^c}) \cap (\cap_{j \not=i }\{\rho_j = 0\} \times G_{I^c})$.

    Take any $i_0 \in I$.

    \begin{claim}
        There exist points $e^{(1)}_I, \dots, e^{(t')}_I \in \{\beta = 0\} \cap V^{(I)}$, differing only in coordinate $i_0$, such that for all $i, j \in [t']$
        \[\rho_i(^{(j)}_I) = \id(i = j).\]
    \end{claim}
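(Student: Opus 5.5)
The plan is to fix the coordinates in $I\setminus\{i_0\}$ first, and then choose the $i_0$-coordinate by elementary linear algebra on the resulting slice. For $x_{I\setminus\{i_0\}} \in G_{I \setminus\{i_0\}}$, every form on $G_J$ with $J\subseteq I$ and $i_0 \in J$ becomes a linear form in $x_{i_0}$ once the other coordinates are frozen. I would write $\rho_i(x_{I}) = R_i(x_{I\setminus\{i_0\}})\cdot x_{i_0}$ and $\beta_j(x_I) = B_j(x_{I\setminus\{i_0\}})\cdot x_{i_0}$. Similarly, for each form $\gamma_l$ defining $V^{(I)}$ with $i_0\in J_l$, write $\gamma_l = C_l(x_{J_l\setminus\{i_0\}})\cdot x_{i_0}$. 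The forms defining $V^{(I)}$ with $i_0\notin J_l$ only constrain $x_{I\setminus\{i_0\}}$, so I require $x_{I\setminus\{i_0\}} \in V^{(I\setminus\{i_0\})}$.

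Call $x_{I\setminus\{i_0\}}$ \emph{good} if it lies in $V^{(I\setminus\{i_0\})}$, the vectors $R_1,\dots,R_{t'}$ are linearly independent, and their span meets $\langle B_j, C_l\rangle$ only in $0$. For a good point, the system $\beta=0$, $\gamma_l=0$, $\rho_i = \id(i=j)$ is a consistent linear system in $x_{i_0}$. Hence it has a solution $e^{(j)}_{i_0}$ for each $j\in[t']$. Setting $e^{(j)}_I = (x_{I\setminus\{i_0\}}, e^{(j)}_{i_0})$ then gives the claim, and these points differ only in coordinate $i_0$ by construction.

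It remains to show that a good point exists. This is the same counting as in the Claim inside the proof of Lemma~\ref{connectedlayerslemma}. If $x_{I\setminus\{i_0\}}\in V^{(I\setminus\{i_0\})}$ is bad, then $\lambda\cdot R = \mu\cdot B + \nu\cdot C$ for some $\lambda\in\mathbb{F}_p^{t'}\setminus\{0\}$ and some $\mu,\nu$. Suppose the number $M$ of bad points satisfies $M \geq \frac12 |V^{(I\setminus\{i_0\})}| \geq \frac12 p^{-ks}|G_{I\setminus\{i_0\}}|$ (by Lemma~\ref{lowcodsize}). Pigeonholing over the at most $p^{O(s)}$ choices of $(\lambda,\mu,\nu)$ fixes one triple valid on $p^{-O(s)}M$ points. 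Writing the indicator of $\lambda\cdot R-\mu\cdot B-\nu\cdot C=0$ as an average over $x_{i_0}$ of $\omega(\cdot)$, and inserting the indicator of $V^{(I\setminus\{i_0\})}$ as an average over the characters of its defining forms, gives the following. Some form $\lambda\cdot\rho-\mu\cdot\beta-\sum\nu_l\gamma_l+(\text{combination of forms of }V)$ has bias at least $p^{-O(s)}$.

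I then need this to contradict the choice of the $\rho$'s. The point is that $\lambda\cdot\rho - \mu\cdot\beta$ is a linear combination of the $\alpha$'s lying outside $\langle\beta_1,\dots,\beta_t\rangle$, because $\lambda\neq 0$ and the $\rho$'s complete the $\beta$'s to a basis. By maximality of $\beta_1,\dots,\beta_t$, it is therefore $\eta$-quasirandom with respect to $V$. Choosing $\eta\le p^{-\blc s}$ gives the contradiction. The step I expect to need care is exactly this one. I must make sure that the quasirandomness used is with respect to all forms of $V$, including those depending on coordinates outside $I$; on this averaged expression those forms can be controlled by the Cauchy--Schwarz argument of Lemma~\ref{quasirandomrest}. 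I must also check that the maximality of the $\beta$'s genuinely yields quasirandomness of every combination outside their span, rather than only of the basis elements $\rho_i$. If the latter fails, I would redefine $\beta$ via an iterative choice so that the quotient space is quasirandom, at the cost of $\eta$ depending on $t'$.
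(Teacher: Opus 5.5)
Your proposal is correct and follows essentially the same route as the paper. You fix a point of $V^{(I\setminus\{i_0\})}$ at which the $R_i$ are independent modulo the span of the $B_j$ and the $i_0$-slices of the forms of $V^{(I)}$, and solve the resulting linear system in the $i_0$-coordinate. You then rule out the absence of such a point by pigeonholing a vanishing linear combination and turning its density into a bias that contradicts quasirandomness.

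Your closing worries do not arise. Every form that appears is supported inside $I$, so you can take zero coefficients on the other forms of $V$ in the definition of quasirandomness, and no Cauchy--Schwarz step is needed. Inclusion-maximality of $\beta_1,\dots,\beta_t$ already makes every combination outside their span $\eta$-quasirandom with respect to $V$. Inserting the indicator of $V^{(I\setminus\{i_0\})}$ is also unnecessary, since the bad points for a fixed combination already have density $p^{-O(s)}$ in all of $G_{I\setminus\{i_0\}}$.
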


    \begin{proof}
        Write $\rho_i(x_I) = R_i(x_{I \setminus \{i_0\}}) \cdot x_{i_0}$ and similarly let $\Gamma_j(x_{J_j})$ be multilinear maps arising from others forms defining $V$ which depend on $G_{i_0}$ and let $B_j(x_{I \setminus \{i_0\}})$ be multilinear maps arising from $\beta_1, \dots, \beta_{t}$. It suffices to find a point $e_{I \setminus \{i_0\}} \in V^{(I \setminus \{i_0\})}$ such that $R_i(e_{I \setminus \{i_0\}})$ are independent from other $\Gamma_j(e_{J_j})$, when $J_j \subseteq I \setminus \{i_0\}$. If it is not possible to find such a point $e_{I \setminus \{i_0\}}$, we have a linear combination which is non-zero at some $R_i$ and at least $p^{-s}|V^{(I \setminus \{i_0\})}| \geq p^{-2s}|G_{I \setminus \{i_0\}}|$ elements $e_{I \setminus \{i_0\}}$ such that 
        \[\sum_{i} \lambda_i R_i(e_{I \setminus \{i_0\}}) + \sum_j \mu_j \Gamma_j(e_{J_j}) + \sum_{j'} \mu'_jB_{j'}(e_{I \setminus \{i_0\}}) = 0.\]
        Hence,
        \begin{align*}&\exx_{e_I} \omega\Big(\sum_{i} \lambda_i R_i(e_{I \setminus \{i_0\}}) \cdot e_{i_0} + \sum_j \mu_j \Gamma_j(e_{J_j}) \cdot e_{i_0} + \sum_j \mu'_j B_j(e_{I \setminus \{i_0\}}) \cdot e_{i_0}\Big) \\
        &\hspace{1cm}= \exx_{e_I} \omega\Big(\sum_{i} \lambda_i \rho_i(e_I) + \sum_j \mu_j \gamma_j(e_{J_j \cup \{i_0\}}) + \sum_{i} \mu'_i \beta_i(e_I)\Big) \leq \eta,\end{align*}
        which is a contradiction with quasirandomness.
    \end{proof}

    Since the points $e^{(1)}_I, \dots, e^{(t')}_I$ coincide in the coordinates $I \setminus \{i_0\}$, we may write $e_{I \setminus \{i_0\}}$ for their common values. Let $U = \cap_{i \in [t']} V_{e^{(i)}_I}$. Our goal is to extend the given maps to $V \cap (U \times G_I) \cap (\{\beta = 0\} \times G_{I^c})$ using extension theory. This is done by first considering quasirandom slices $V_{x_{I^c}} \cap \{\beta = 0\}$ for most of $x_{I^c} \in U$ and then extending to the full variety. To that end, define $X \subseteq U$ to be the set of all $x_{I^c} \in U$ such that $\rho$ is $\sqrt[4]{\eta}$-quasirandom with respect to $\{\beta =0 \} \cap V_{x_{I^c}}$. By Lemma~\ref{quasirandomrest}, we have $|U \setminus X| \leq p^s\sqrt{\eta}|G_{I^c}|$.

    Let $\theta : V \cap (\{\beta = 0\} \times G_{I^c}) \cap (\{\rho = 0\} \times G_{I^c})$ be an arbitrary multilinear map.

    \noindent\textbf{Extension to most of $V \cap (X \times G_I) \cap (\{\beta = 0\} \times G_{I^c})$.} Let $x_{I^c} \in X$ be given. By Theorem~\ref{qrExtension}, there exists a unique multilinear map $\theta'_{x_{I^c}} : V_{x_{I^c}} \cap \{\beta = 0\} \to H$ extending $\theta_{x^{I^c}}$ with values $\theta'_{x_{I^c}}(e^{(i)}_I) = 0$ for all $i \in [t']$.
    
    \indent We observe that for each $x_{[k]} \in V \cap (X \times G_I) \cap (\{\beta = 0\} \times G_{I^c})$ we have a bounded length formula. Namely, if $x_{I^c} \in X$, by Lemma~\ref{connectedlayerslemma}, the layer $V_{x_{I^c}} \cap \{\beta = 0\} \cap \{\rho  = \rho(x_{I^c})\}$ is connected and of diameter at most $(k+2)^2$. The bounded diameter implies existence of desired formulas for points in $V \cap (X \times G_I) \cap (\{\beta = 0\} \times G_{I^c})$.

    \begin{claim}
        If $x_{[k]} \in V \cap (X \times G_I) \cap (\{\beta = 0\} \times G_{I^c})$, then there exists a formula for $x_{[k]}$ of length at most $(k+2)^2$, with all arguments in $D$.
    \end{claim}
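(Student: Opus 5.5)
The point is that the extension $\theta'_{x_{I^c}}$ on the slice is determined linearly by values of $\theta$ on the slice domain $V_{x_{I^c}}\cap\{\beta=0\}\cap\{\rho=0\}$, together with the prescribed zero values at the points $e^{(i)}_I$. We therefore write $x_I$ as a short combination of points in $\{\rho=0\}$ and the points $e^{(i)}_I$.

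\emph{Reduction to a layer.} Fix $x_{[k]}$ with $x_{I^c}\in X$ and set $\mu=\rho(x_I)\in\mathbb{F}_p^{t'}$. Since all $e^{(i)}_I$ share the coordinates $e_{I\setminus\{i_0\}}$, the point $e^{\mu}_I:=(e_{I\setminus\{i_0\}},\sum_i\mu_i e^{(i)}_{i_0})$ satisfies $\rho(e^\mu_I)=\mu$ by linearity in coordinate $i_0$. Moreover, $\theta'_{x_{I^c}}(e^\mu_I)=\sum_i\mu_i\theta'_{x_{I^c}}(e^{(i)}_I)=0$. The point $e^\mu_I$ lies in $V_{x_{I^c}}\cap\{\beta=0\}$, because $x_{I^c}\in U=\cap_i V_{e^{(i)}_I}$ and the defining conditions are linear in $i_0$. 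So $e^\mu_I$ and $x_I$ lie in the same layer $L=V_{x_{I^c}}\cap\{\beta=0\}\cap\{\rho=\mu\}$.

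\emph{Walking along a path.} By Lemma~\ref{connectedlayerslemma}, applicable since $\rho$ is $\sqrt[4]{\eta}$-quasirandom with respect to $\{\beta=0\}\cap V_{x_{I^c}}$ for $x_{I^c}\in X$ and $\eta\le p^{-\blc(s+1)}$, there is a path $e^\mu_I=w^{(0)},\dots,w^{(m)}=x_I$ in $L$ with $m\le(k+2)^2$, consecutive points differing in a single coordinate $d_j\in I$. For each step, $u=w^{(j)}-w^{(j-1)}$ is supported on coordinate $d_j$, and the point $z^{(j)}$ equal to $w^{(j)}$ except with $d_j$-coordinate $w^{(j)}_{d_j}-w^{(j-1)}_{d_j}$ satisfies $\rho(z^{(j)})=\rho(w^{(j)})-\rho(w^{(j-1)})=0$. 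It also lies in $V_{x_{I^c}}\cap\{\beta=0\}$, since the variety is a subspace in each coordinate direction. Multilinearity of $\theta'_{x_{I^c}}$ in direction $d_j$ then gives
\[\theta'(w^{(j)})=\theta'(w^{(j-1)})+\theta(z^{(j)},x_{I^c}).\]
Telescoping, $\theta'_{x_{I^c}}(x_I)=\sum_{j}\theta(z^{(j)},x_{I^c})$, a formula of length $m\le(k+2)^2$ with all coefficients equal to $1$ and arguments $(z^{(j)},x_{I^c})\in V\cap(\{\beta=0\}\times G_{I^c})\cap(\{\rho=0\}\times G_{I^c})=D$. The path depends only on the varieties and $x_{[k]}$, not on $\theta$, so this is a formula in the required sense.

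\emph{Main obstacle.} The delicate point is checking that every intermediate point $z^{(j)}$ genuinely lies in $D$, i.e.\ that all forms defining $V$ and $\beta$ vanish there. I would verify this by noting that $w^{(j)}$, $w^{(j-1)}$ and $z^{(j)}$ form an additive triple in direction $d_j$ within the variety $V_{x_{I^c}}\cap\{\beta=0\}$, which is closed under such triples. A secondary check is that the length bound $(k+2)^2$ from Lemma~\ref{connectedlayerslemma} holds uniformly, which requires choosing the parameter $\eta$ small in terms of $s$ and $t'\le s$.
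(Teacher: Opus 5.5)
Your proposal is correct and follows the paper's own proof. Both arguments use Lemma~\ref{connectedlayerslemma} to join $x_I$, inside the layer $V_{x_{I^c}}\cap\{\beta=0\}\cap\{\rho=\rho(x_I)\}$, to the point $e'_I$ (your $e^\mu_I$) obtained by combining the $e^{(j)}_I$ in coordinate $i_0$. Both then take differences of consecutive path points to land in $D$, and use that the extension vanishes at $e'_I$; you simply write out the membership and telescoping checks that the paper leaves implicit.
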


    \begin{proof}
        The layer $V_{x_{I^c}} \cap \{\beta = 0\} \cap \{\rho  = \rho(x_{I^c})\}$ is connected and of diameter at most $(k+2)^2$. Since $x_{I^c} \in U$, we have $e^{(i)}_I \in  V_{x_{I^c}}$ for all $i \in [t']$, so the point $e'_I$, given by coordinates $e'_{I \setminus \{i_0\}} = e_{I \setminus \{i_0\}}$ and $e'_{i_0} = \sum_{j \in [t']} \rho_j(x_I)e^{(j)}_{i_0}$ belongs to the layer. Hence, there is a path of length at most $(k+2)^2$ from $x_{I}$ to $e'_I$. The differences of consecutive elements give points in $D$, and the extension vanishes at the final point.
    \end{proof}

    Define map $\theta' : V \cap (X \times G_I) \cap (\{\beta = 0\} \times G_{I^c}) \to H$ by putting these maps together, meaning that $\theta'(x_{[k]}) = \theta'_{x_{I^c}}(x_I)$. So far, we know that $\theta'$ is multilinear in coordinates $G_I$. We move on to proving that it is almost multilinear in the remaining coordinates and using this to obtain multilinear extension to the full variety $V \cap (U \times G_I) \cap (\{\beta = 0\} \times G_{I^c})$.

    \noindent\textbf{Extension to $V \cap (U \times G_I) \cap (\{\beta = 0\} \times G_{I^c})$.} Let $i_0 \in I^c$ be a direction, and let $x_{[k]}, y_{[k]}, z_{[k]}$ be three points in $ V \cap (X \times G_I) \cap (\{\beta = 0\} \times G_{I^c})$ differing only in the coordinate $i_0$, with $x_{i_0} + y_{i_0} = z_{i_0}$. Then the map $w_{I} \mapsto \theta'_{x_{I^c}}(w_I) + \theta'_{y_{I^c}}(w_I) - \theta'_{z_{I^c}}(w_I)$ is a multilinear extension of the zero map from $V_{x_{I^c}} \cap V_{y_{I^c}}\cap \{\beta = 0\} \cap \{rho\}$ to $V_{x_{I^c}} \cap V_{y_{I^c}}\cap \{\beta = 0\}$, with values $e^{(1)}_I, \dots, e^{(t')}_I \mapsto 0$. Provided the map $\rho$ is $p^{-\blc(s+1)}$-qausirandom with respect to the variety $V_{x_{I^c}} \cap V_{y_{I^c}}\cap \{\beta = 0\}$, Theorem~\ref{qrExtension} implies that such an extension is unique and therefore 
    \[\theta'(x_{[k]}) + \theta'(y_{[k]}) - \theta'(z_{[k]}) =  \theta'_{x_{I^c}}(x_I) + \theta'_{y_{I^c}}(y_I) - \theta'_{z_{I^c}}(z_I) = \theta'_{x_{I^c}}(x_I) + \theta'_{y_{I^c}}(x_I) - \theta'_{z_{I^c}}(x_I) = 0.\]
    The required quasiradnomness condition holds for all but at most $p^{O(s+1)}\eta^{\Omega(1)}|G_{i_0}||G_{[k]}|$ such triples, so, provided $\eta \leq p^{-\blc s}$, we have that $\theta'$ respects all but at most $\eta^{\Omega(1)}|G_{i_0}||G_{[k]}|$ additive triples in direction $i_0$. Crucially, the structure of the respected directional additive triples depends only on the variety $V$ and maps $\beta$ and $\rho$. Hence, by Lemma~\ref{almost-multilinear-maps}, there exists a set $Y \subseteq G_{[k]}$ on which it is multilinear and thus extends uniquely by Lemma~\ref{trivialextensionsverydense}. It remains to show that this extension has bounded length formulas.

    For the points outside $X$, we may reach them using directional convolutions in $O(1)$ steps, so the formulas still have bounded length.

    Finally, apply theorem~\ref{arankprank} to components of $\beta_i$, to find a multilinear variety $Z \subseteq G_{I \setminus \{\max I\}}$ of codimension $\log^{O(1)} (2\eta^{-1})$ such that $Z \times G_{\max I} \subseteq \{\beta = 0\}$. After choosing $\eta \geq p^{-\blc s}$ so that all required bounds on $\eta$ hold, this completes the proof of the inductive step.

    \noindent\textbf{Obtaining the dichotomy.} After we conclude the induction above, we reach the case $\mathcal{P} = \emptyset$, so we get variety $C$ of codimension $r^{O(1)}$ and for each $x_{[k]} \in G_{[k]}$ we get a formula of length $O(1)$ with points in the initial variety $W$. We distinguish two cases.

    \noindent\textbf{Case 1.} Suppose first that at least half of maps are extended to the zero map. As extensions coincide with initial maps on the variety $C$, we obtain the conclusion \textbf{(B)}.

    \noindent\textbf{Case 2.} Now assume that at least half of maps extend to a non-zero map. Global multilinear map has at least $2^{-k}|G_{[k]}|$ points that are non-zero, so by averaging, there exists a point $x_{[k]}$ such that $\psi^{\on{ext}}_i(x_{[k]}) \not=0$ for at least $2^{-k-1}m$ maps. But we have a formula of length $O(1)$ for $x_{[k]}$. By averaging, for some $y_{[k]}$ among the arguments in the formula $\psi_i(y_{[k]}) \not=0$ for $\Omega(m)$ functions.
\end{proof}

\section{Approximately linear system of global multilinear maps}

In this section we carry out the first phase of the proof, comprising \textbf{Steps 1.1--1.3}. Our starting point is a set $X \subseteq G_0$ and a collection of global multilinear map $\phi_x : G_{[k]} \to H$ such that $|Z(\phi_{x_1} - \phi_{x_2} + \phi_{x_3} - \phi_{x_4}))| \geq c|G_{[k]}|$ holds for at least $c|G_0|^3$ additive quadruples $(\upd{x}_1, x_2, \upd{x}_3, x_4)$ in $X$. The goal in this section is to improve the structure in two ways, firstly, to have a full subspace in place of $X$ and, secondly, to have all additive quadruples respected. Thus, the work in this section can be summarized by the following theorem.

\begin{theorem}[Phase 1 of the proof]\label{phase1mainres}
    Suppose that $X \subseteq G_0$ and that $(\phi_x : G_{[k]} \to H)_{x \in X}$ is a collection of global multilinear maps such that $|Z(\phi_{x_1} - \phi_{x_2} + \phi_{x_3} - \phi_{x_4})| \geq c|G_{[k]}|$ holds for at least $c|G_0|^3$ additive quadruples $(\upd{x}_1, x_2, \upd{x}_3, x_4)$ in $X$. Then there exist a quantity $r \leq \log^{O(1)}(2c^{-1})$, subspace $U \leq G_0$ of codimension $r$, a system of global multilinear maps  $(\psi_x : G_{[k]} \to H)_{x \in U}$ such that every additive quadruple in $(\psi_x)_{x \in U}$ is $\exp(-r)$-respected and for each $x \in U$ we have  at least  $\exp(-r)|G_0|^3$ quadruples $(x_1 ,x_2 , x_3 , x_4) \in X^4$ such that $x_1 + x_2 - x_3 - x_4 = x$ and 
    \[|Z(\psi_x - \phi_{x_1} - \phi_{x_2} + \phi_{x_3} + \phi_{x_4})| \geq \exp(-r)|G_{[k]}|.\]
\end{theorem}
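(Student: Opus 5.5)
The plan follows \textbf{Steps 1.1--1.3} of the overview. For \textbf{Step 1.1}, for $i \in [36]$ let $\mathcal{Q}_i$ be the set of additive quadruples $(\upd x_1, x_2, x_3, \upd x_4)$ in $X$ with $|Z(\phi_{x_1} - \phi_{x_2} - \phi_{x_3} + \phi_{x_4})| \geq c_0^{i}|G_{[k]}|$, where $c_0 = c^{O(1)}$.

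\emph{Largeness} is the hypothesis. \emph{Symmetry} is immediate, since the zero set of a map equals that of its negative and the four-term combination is invariant under the relevant permutations. For \emph{weak transitivity}, if $(\upd a_1,a_2,b,\upd b')\in\mathcal{Q}_i$ and $(\upd b,b',a_3,\upd a_4)\in\mathcal{Q}_j$, then the sum of the two four-term combinations is $\phi_{a_1}-\phi_{a_2}-\phi_{a_3}+\phi_{a_4}$. So its zero set contains $Z_1\cap Z_2$.

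Each $Z_\ell$ is the zero set of a global multilinear map, hence a multilinear variety. It has density at least $c_0^i$, respectively $c_0^j$. By Lemma~\ref{poscorvars}, $|Z_1\cap Z_2|\geq c_0^{i+j}|G_{[k]}|$. Thus weak transitivity holds even with a single pair $(b,b')$. Theorem~\ref{absg} then gives a dense $A'\subseteq X$ of size $(c/2)^{O(1)}|G_0|$ and the counting statement for $\mathcal{Z}_\ell$. I would use it with $\ell$ up to $4$ to get quantitative control of linear combinations $\phi_{a_1}-\phi_{a_2}-\phi_{a_3}+\phi_{a_4}$ for arbitrary additive quadruples in $A'$, and of longer alternating sums. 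For each such combination, the many paths through $\mathcal{Q}_{36}$-quadruples and positive correlation show that it agrees with one of a short list of "error" multilinear maps $\psi_1,\dots,\psi_m$ on a set of density $c^{O(1)}$. The list comes from fixing a base point and pigeonholing.

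\textbf{Step 1.2} removes the error maps by algebraic dependent random choice. I would pick random points $y^{(1)},\dots,y^{(\ell)}\in G_{[k]}$, or small product subspaces $P_i$, and random projections $\pi_i : H\to\mathbb{F}_p^s$. Then restrict to $X'' = \{x \in A' : \pi_i(\phi_x(y^{(i)})) = \mu_i\ \forall i\}$, with the $\mu_i$ chosen by pigeonhole so that $|X''|\geq p^{-O(\ell s)}|A'|$.

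By Lemma~\ref{nonzeromapslemma}, a nonzero error map $\psi_j$ is nonzero at a $2^{-k}$-fraction of points. More usefully, if its zero set has density below the threshold, then random evaluations detect its nonvanishing with high probability. Hence any quadruple in $X''$ whose combination equals a "bad" $\psi_j$ on a dense set is excluded, because the combination vanishes at all $y^{(i)}$ after projection. I would aim for $\ell,s = \log^{O(1)}(2c^{-1})$, using Theorem~\ref{densetolowcodim} to convert "dense zero set" into "contains a low-codimension variety". After this step, every additive quadruple in $X''$ is $\exp(-\log^{O(1)})$-respected. I expect Step 1.2 to be the main obstacle: arranging that the events depend only on $x$, and not on the quadruple, while killing all bad error functions simultaneously without losing more than quasipolynomial density.

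In \textbf{Step 1.3}, apply Theorem~\ref{rbrlemma} to $X''$ to obtain $U$ of codimension $r=\log^{O(1)}(2c^{-1})$. For $u\in U$ there are many $(x_1,\dots,x_4)\in X''^4$ with $x_1+x_2-x_3-x_4=u$. Fix one such representation $(x^u_1,\dots,x^u_4)$ and set $\psi_u=\phi_{x^u_1}+\phi_{x^u_2}-\phi_{x^u_3}-\phi_{x^u_4}$.

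Two representations of the same $u$ differ by a combination that is a sum of two respected quadruple combinations in $X''$. By Lemma~\ref{poscorvars}, that combination has zero density $\exp(-O(r))$. This yields the final clause for $\exp(-r)$ many quadruples. An additive quadruple in $U$ yields a $16$-term alternating sum in $X''$, which splits into boundedly many respected quadruples. So, again by positive correlation of the zero varieties, every additive quadruple of $(\psi_u)$ is $\exp(-O(r))$-respected. Adjusting constants gives the $\exp(-r)$ bounds.
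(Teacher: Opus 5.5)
\paragraph{Verdict.} Your overall architecture and Step~1.1 match the paper: thresholds $c^i$, and weak transitivity from a single pair via Lemma~\ref{poscorvars}. There is, however, a genuine gap in Step~1.2, which is the crux, and a smaller one in Step~1.3.

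\paragraph{Step 1.2 fails with test points.} Suppose $X''$ is defined by the values $\pi_i(\phi_x(y^{(i)}))$ at test points, and take an additive tuple in $X''$ with alternating sum $\Sigma$. Membership in $X''$ only tells you that $\pi_i(\Sigma(y^{(i)}))=0$. To rule out that $\Sigma$ is attached to a bad error map $\psi_j$, you need $\Sigma(y^{(i)})=\psi_j(y^{(i)})$, i.e.\ $y^{(i)}\in Z(\psi_j-\Sigma)$. That dense set depends on the tuple, and boundedly many points cannot lie in all of the roughly $|G_0|^{2d-1}$ such sets. So ``the combination vanishes at all $y^{(i)}$ after projection'' excludes nothing.

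\paragraph{The missing idea: rigid test sets.} Take $P_i=W_{i,1}\times\dots\times W_{i,k}$ with $\dim W_{i,j}=r>k(r_0+1)$. Here $r_0=\log^{O(1)}(2c^{-1})$ is the codimension supplied by Theorem~\ref{densetolowcodim}. Define $X''$ by fixing the entire restricted map $\pi_i\circ\phi_x|_{P_i}=\mu_i$; there are only $p^{sr^k}$ possibilities, since such a map is determined on a basis. Then $\pi_i\circ\Sigma\equiv 0$ on all of $P_i$ for every additive tuple in $X''$. Meanwhile a codimension-$r_0$ variety inside $Z(\psi_j-\Sigma)$ meets $P_i$ in at least $p^{-kr_0}|P_i|>kp^{-r}|P_i|\geq|P_i\cap\mathsf{Z}|$ points (Lemma~\ref{lowcodsize}). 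This gives $y\in P_i\setminus\mathsf{Z}$ with $\pi_i(\psi_j(y))=0$, for every tuple at once.

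\paragraph{Choosing the pairs.} The pairs $(P_i,\pi_i)$ must be chosen so that, for each bad $j$, some $\pi_i\circ\psi_j$ has \emph{no} zeros on $P_i\setminus\mathsf{Z}$. This ties the badness threshold to $r$: bad must mean $|Z(\psi_j)|\leq\eta|G_{[k]}|$ with $\eta\leq\frac14p^{-kr}$ (a first-moment count over $P_i$), together with $s\geq kr+2$. Then a random pair succeeds for a given bad $j$ with probability at least $1/2$, so $\log_2 m$ pairs suffice by repeated halving. Good $j$ are handled by Lemma~\ref{poscorvars}. This is what makes the defining event depend only on $x$, which you correctly flagged as the obstacle but did not resolve.

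\paragraph{Step 1.3: the splitting claim.} Two representations of $u$ differ by an additive $8$-tuple in $X''$, and an additive quadruple in $U$ yields an additive $16$-tuple. These do not in general split into additive quadruples \emph{inside} $X''$. The splitting needs intermediate elements such as $x_1+x_2-x_3$ to lie in $X''$, and $\phi$ is undefined elsewhere. So knowing only that all quadruples in $X''$ are respected is not enough. Instead, run Steps~1.1 and~1.2 directly for additive $16$-tuples: use Theorem~\ref{absg} with $\ell$ up to $16$, and note that the dependent random choice above handles all tuple lengths simultaneously. Then every additive $16$-tuple in $X''$ is respected, and Step~1.3 needs no splitting.

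\paragraph{The error list in Step 1.1.} ``Fixing a base point and pigeonholing'' is too vague. A greedy cover by single representative tuples would make $m$ grow like $\log|G_0|$. What works is a maximal family of pairwise disjoint collections $\mathcal{Z}^{(q)}$, whose size is $(2/c)^{O(1)}$ by density. Any tuple is then compared to a member of this family through a shared element, using Lemma~\ref{poscorvars}.
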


Recall that, throughout the paper, we assume the main result, Theorem~\ref{strFmult}, for products of $k$ vector spaces.

For the rest of this section, we consider systems of global multilinear maps $\phi_x : G_{[k]} \to H$, defined for $x$ belonging to a subset of $G_0$. By an additive $2d$-tuple $x_{[2d]}$ we think of elements satisfying $\sum_{i \in [2d]} (-1)^i x_i = 0$, and we allow variance in signs for quadruples. We say that an additive $2d$-tuple is \textit{$c$-respected} if $\Big|Z\Big(\sum_{i \in [2d]} (-1)^i \phi_{x_i}\Big)\Big| \geq c |G_{[k]}|$. In this language, the main assumption of this section becomes that $c|G_0|^3$ additive quadruples in $X$ are $c$-respected.

The rest of the section is split into three subsections, each covering a step of the proof.

\subsection{First abstract BSG step}

In this subsection, we apply the abstract Balog-Szemer\'edi-Gowers theorem for the first time. This step provides us with a short list of multilinear maps $\psi_1, \dots, \psi_m : G_{[k]} \to H$ and a large sub-collection of the original multilinear maps in which all additive $O(1)$-tuples are $(c/2)^{(O(1)}$-respected, after subtracting a multilinear map among $\psi_1, \dots, \psi_m$.

\begin{proposition}\label{phase1step1prop}
    Let $d \leq O(1)$ be a natural number. Let $X \subseteq G_0$ be a set and let $(\phi_x : G_{[k]} \to H)_{x \in X}$ be a system of global multilinear maps. Suppose that at least $c|G_0|^3$ additive quadruples in $X$ are $c$-respected. Then there exists a subset $X' \subseteq X$ of size $|X'| \geq (2/c)^{O(1)}|G_0|$ and a collection of multilinear maps $\psi_1, \dots, \psi_m : G_{[k]} \to H$, where $m \leq (2/c)^{O(1)}$, such that for each additive $2d$-tuple $x_{[2d]}$ in $X'$ we have 
    \[\Big|Z\Big(\sum_{i \in [2d]} (-1)^i\phi_{x_i}\Big) - \psi_j\Big| \geq (c/2)^{O(1)}|G_{[k]}|\]
    for some $j \in [m]$.
\end{proposition}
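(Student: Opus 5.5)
We apply Theorem~\ref{absg} with $A = X$ and, for $i \in [36]$, take $\mathcal{Q}_i$ to be the set of additive quadruples $(\upd x_1, x_2, x_3, \upd x_4)$ in $X$ with $|Z(\phi_{x_1} - \phi_{x_2} - \phi_{x_3} + \phi_{x_4})| \geq c_0^{i}|G_{[k]}|$. Here $c_0 = (c/2)^{\blc}$, chosen small enough that $c_0^{i}$ absorbs the losses below. The three hypotheses are checked as follows.

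\textbf{Largeness.} After relabelling signs, largeness is the assumption.

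\textbf{Symmetry.} (S1)--(S3) only permute the four maps up to an overall sign in the signed sum, and the zero set of a map equals that of its negative.

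\textbf{Weak transitivity.} Suppose $(\upd a_1,a_2,b,\upd b')\in\mathcal{Q}_i$ and $(\upd b,b',a_3,\upd a_4)\in\mathcal{Q}_j$. Then
\[\phi_{a_1}-\phi_{a_2}-\phi_{a_3}+\phi_{a_4} = (\phi_{a_1}-\phi_{a_2}-\phi_b+\phi_{b'}) + (\phi_b-\phi_{b'}-\phi_{a_3}+\phi_{a_4}).\]
The right-hand side vanishes on the intersection of the two zero sets. Each zero set is a multilinear variety of density at least $c_0^i$, respectively $c_0^j$. By Lemma~\ref{poscorvars}, the intersection has density at least $c_0^{i+j}$, so weak transitivity holds even for a single pair $(b,b')$. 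Hence the parameter $c'$ can be anything, and Theorem~\ref{absg} produces $A' = X'$ of density $(c/2)^{O(1)}$. For every $\ell$-tuple $a_{[\ell]}$ in $X'$, with $\ell \le d$, we also get $(c/2)^{O(1)}|G_0|^{3\ell-1}$ representations $\mathcal{Z}^{(a_{[\ell]})}_\ell$.

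Next we convert these representations into a short list of error maps. Unwinding the recursive definition of $\mathcal{Z}_\ell$, each tuple $y \in \mathcal{Z}_\ell^{(a_{[\ell]})}$ expresses the signed sum $\sum_{i\in[\ell]}(-1)^i\phi_{a_i}$ as $\sum_{i\in[3\ell]}(-1)^i\phi_{y_i}$ plus a sum of $O(\ell)$ combinations from $\mathcal{Q}_{36}$-quadruples. Each such combination has zero set of density $\ge c_0^{36}$. Intersecting these zero sets via Lemma~\ref{poscorvars}, we get
\[\sum_{i\in[\ell]}(-1)^i\phi_{a_i} = \sum_{i\in[3\ell]}(-1)^i\phi_{y_i} \quad \text{on a variety of density } (c/2)^{O(1)}.\]
Apply this with $\ell = 2d$ to an additive $2d$-tuple $x_{[2d]}$ in $X'$; we need a $y$ that does not depend on $x_{[2d]}$. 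Two random $y, y' \in \mathcal{Z}_{2d}^{(x_{[2d]})}$ give the same signed sum on a dense variety, but this is not enough by itself. Instead we pick the list $\psi_j$ by a random/greedy covering argument, as follows.

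Consider the set $S$ of all $(6d)$-tuples $y$ in $X^{6d}$ with $\sum(-1)^i y_i = 0$, and the maps $\Theta_y = \sum(-1)^i\phi_{y_i}$. For each $x_{[2d]}$, the set $\mathcal{Z}_{2d}^{(x_{[2d]})}$ occupies a $(c/2)^{O(1)}$ fraction of $S$. Call $\Theta_y,\Theta_{y'}$ \emph{close} if $|Z(\Theta_y-\Theta_{y'})|\ge \delta|G_{[k]}|$. Choose a maximal family $\psi_1,\dots,\psi_m$ among the $\Theta_y$ that are pairwise not close, restricted to $y$ in "popular" regions, i.e.\ lying in $\mathcal{Z}^{(x_{[2d]})}$ for many tuples $x_{[2d]}$. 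To bound $m$, use a Cauchy--Schwarz/double counting. For a fixed $x_{[2d]}$, any two elements $y,y'$ of $\mathcal{Z}^{(x_{[2d]})}$ have $\Theta_y,\Theta_{y'}$ close, with $\delta=(c/2)^{O(1)}$, since both agree with $\sum(-1)^i\phi_{x_i}$ on dense varieties, and these varieties intersect densely by Lemma~\ref{poscorvars}. Hence the classes $\mathcal{Z}^{(x_{[2d]})}$ are "cliques" of density $(c/2)^{O(1)}$ in $S$, and pairwise non-close representatives sit in essentially disjoint such cliques, which bounds their number by $(c/2)^{-O(1)}$. Maximality then gives, for every $x_{[2d]}$, some $j$ with $\Theta_y$ close to $\psi_j$ for a $y \in \mathcal{Z}^{(x_{[2d]})}$. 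Combining by Lemma~\ref{poscorvars} yields $|Z(\sum(-1)^i\phi_{x_i}-\psi_j)|\ge (c/2)^{O(1)}|G_{[k]}|$.

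The main obstacle is this counting step: closeness is not transitive with fixed parameter, since density degrades under composition. I would handle it by using a two-level threshold. Declare "close" at $\delta_1$ for cliques, and pick the net at threshold $\delta_2=\delta_1^{2}$. Lemma~\ref{poscorvars} applied to zero sets of differences (which are varieties) then controls the loss in one composition step.
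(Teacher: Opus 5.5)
Your proof is correct and follows the paper's argument. It uses the same families $\mathcal{Q}_i$ defined by zero-set density thresholds, obtains weak transitivity from a single pair via Lemma~\ref{poscorvars}, unwinds $\mathcal{Z}_{2d}$ into additive $6d$-tuples, and ends with a packing argument that yields the short list $\psi_1,\dots,\psi_m$. The only difference is cosmetic: the paper takes a maximal pairwise disjoint subfamily of the sets $U_{x_{[2d]}}$ (your ``cliques'') and telescopes through three differences, which avoids your two-threshold bookkeeping. In your version, the eligible $y$ should be those lying in at least one $\mathcal{Z}^{(x_{[2d]})}$, not ``many'', so that maximality covers every additive $2d$-tuple in $X'$.
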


\begin{proof}
    For $i \in [36]$, let $\mathcal{Q}_i$ be the collection of all additive quadruples $(\upd{x}_1, x_2, x_3, \upd{x}_4)$ in $X$ such that 
    \[|Z(\phi_{x_1} - \phi_{x_2} - \phi_{x_3} + \phi_{x_4})| \geq c^i|G_{[k]}|.\]

    Let us check the conditions of the abstract Balog-Szemer\'erdi--Gowers theorem (Theorem~\ref{absg}), where we we use $X$ in place of $A$ and $G_0$ in place of $X$. The conditions \textbf{(i)} and \textbf{(ii)} trivially hold. Weak transitivity holds for $c' = \frac{1}{|G_0|}$, i.e. a single pair suffices due to Lemma~\ref{poscorvars}.

    Apply Theorem~\ref{absg} to find a subset $X' \subseteq X$ of size $|X'| \geq (c/2)^{O(1)} |G_0|$ such that for each additive $2d$-tuple $x_{[2d]}$ in $X'$ we may pass to $(c/2)^{O(1)}|G_0|^{6d - 1}$ additive $6d$-tuples $u_{[6d]}$ via additive quadruples in $\mathcal{Q}_{36}$. Using Lemma~\ref{poscorvars} again, we get
    \[\bigg|Z\bigg(\Big(\sum_{i \in [2d]} (-1)^i\phi_{x_i}\Big) - \Big(\sum_{i \in [6d]} (-1)^i\phi_{u_i}\Big)\bigg)\bigg| \geq (c/2)^{O(1)}|G_{[k]}|.\]
    Let $U_{x_{[2d]}}$ be the collection of such additive $6d$-tuples $u_{[6d]}$.

    Let $U_1, \dots, U_m$ be a maximal collection of mutually disjoint sets among $U_{x_{[2d]}}$. The bounds on size of $U_{x_{[2d]}}$ imply that $m \leq (2/c)^{O(1)}$. For each $i \in [m]$, take an arbitrary $u_{[6d]} \in U_i$, and define $\psi_i = \sum_{j \in [6d]} (-1)^j \phi_{u_j}$. We claim that these maps have the desired property.

    Take an arbitrary additive $2d$-tuple $x_{[2d]}$ in $X'$. Thus $U_{x_{[2d]}}$ intersects some $U_j$, coming from $2d$-tuple $x'_{[2d]}$, at some $v_{[6d]}$, say. Let $\psi_j$ be defined by $u_{[6d]} \in U_j$. Then we have
    \begin{align*}\Big(\sum_{i \in [2d]} (-1)^i\phi_{x_i}\Big) - \psi_j = &\sum_{i \in [2d]} (-1)^i\phi_{x_i} - \sum_{i \in [6d]} (-1)^i \phi_{u_i} \\
    = &\Big(\sum_{i \in [2d]} (-1)^i\phi_{x_i} - \sum_{i \in [6d]} (-1)^i \phi_{v_i}\Big) + \Big(\sum_{i \in [6d]} (-1)^i \phi_{v_i} - \sum_{i \in [2d]} (-1)^i\phi_{x'_i}\Big) \\
    &\hspace{6cm}+ \Big(\sum_{i \in [2d]} (-1)^i\phi_{x'_i} - \sum_{i \in [6d]} (-1)^i \phi_{u_i}\Big).\end{align*}
    The proposition follows after applying Lemma~\ref{poscorvars} a few times.
\end{proof}

\subsection{Ensuring all quadruples are respected}\label{ensuringrespStep}

The outcome of the previous step is not quite what we want, as there are error functions $\psi_1, \dots, \psi_m$ present in the respectedness condition. In this step, we remove these functions.

\begin{proposition}\label{phase1step2prop}
    Let $d \leq O(1)$ be a natural number. Let $X \subseteq G_0$ be a set of density $c$ and let $(\phi_x : G_{[k]} \to H)_{x \in X}$ be a system of global multilinear maps. Let $\psi_1, \dots, \psi_m : G_{[k]} \to H$ be a further collection of multilinear maps. Suppose that every additive $2d$-tuple $x_{[2d]}$ in $X$ satisfies
    \[\Big|Z\Big(\psi_j - \sum_{i \in [2d]} (-1)^i\phi_{x_i}\Big)\Big| \geq c|G_{[k]}|\]
    for some $j \in [m]$. Then there exists a subset $X' \subseteq X$ of size $|X'| \geq \exp(-O(\log m + \log^{O(1)}(2c^{-1}))) |G_0|$ such that every additive $2d$-tuple in $X'$ is $p^{-k \log^{O(1)}(2c^{-1})}$-respected.
\end{proposition}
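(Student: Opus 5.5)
Following the outline of \textbf{Step 1.2}, I would use an algebraic dependent random choice. First, sort the error maps. For each $j \in [m]$, if $|Z(\psi_j)| \geq c'|G_{[k]}|$ for a suitable threshold $c' = (c/2)^{O(1)}$, then $\psi_j$ is harmless. By Lemma~\ref{poscorvars}, the zero sets of $\psi_j$ and of $\psi_j - \sum_i (-1)^i \phi_{x_i}$ have intersection of density at least $c c'$. On that intersection $\sum_i (-1)^i\phi_{x_i}$ vanishes, so the tuple is already respected with density $\geq cc'$.

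The remaining maps $\psi_j$ have few zeros. By Theorem~\ref{densetolowcodim} in contrapositive form (or Theorem~\ref{arankprank} applied to $\lambda\cdot\psi_j$ for projections $\lambda$), these maps are far from vanishing on low-codimensional varieties. The aim is to choose $X' \subseteq X$ so that no tuple in $X'$ can match a bad $\psi_j$. Let $s, \ell = \log^{O(1)}(2c^{-1})$. Choose random points $a^{(1)}_{[k]}, \dots, a^{(\ell)}_{[k]} \in G_{[k]}$, or more generally small product subspaces $P_i$ spanned by boundedly many random points in each coordinate. Choose random linear maps $\pi_i : H \to \mathbb{F}_p^s$. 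Set $X' = \{x \in X : \pi_i(\phi_x|_{P_i}) = \mu_i\ \forall i\}$, with the labels $\mu_i$ fixed by pigeonhole to maximise $|X'|$. The pigeonhole step costs a factor $p^{-O(\ell s \dim P)}$, which gives the bound $\exp(-O(\log m + \log^{O(1)}(2c^{-1})))$. The $\log m$ term comes from a union bound over $j$ when fixing the random choices.

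Now take an additive $2d$-tuple in $X'$ associated with a bad $\psi_j$. Because the tuple is alternating and each $\phi_{x_i}$ restricted to $P_i$ has the same fingerprint $\mu_i$, we get $\pi_i\big(\sum_i (-1)^i\phi_{x_i}\big) = 0$ on each $P_i$, using multilinearity and the fact that signs balance for an even tuple. But $\psi_j - \sum_i(-1)^i\phi_{x_i}$ vanishes on a $c$-dense set. So $\pi_i \circ \psi_j$ must agree with a function vanishing on $P_i$, on at least a $c$-dense portion. Since $\psi_j$ has few zeros, a random choice of $P_i, \pi_i$ detects $\psi_j$ with probability bounded below: some $i$ gives $\pi_i\psi_j|_{P_i} \neq 0$ on points where the tuple combination is forced zero, a contradiction.

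I expect the main obstacle to be this last step: making the detection rigorous despite the coupling between $\psi_j$ and the tuple only holding on a $c$-dense set. The difficulty is to show that with the right $P_i$ (taken as sub-boxes of a $c$-dense multilinear structure, via Theorem~\ref{densetolowcodim} applied to $Z(\psi_j - \sum)$), the bad $\psi_j$ is nonzero on some random point inside that zero set with probability $\Omega(1)$. Amplifying over $\ell$ independent trials then makes the failure probability below $1/(2m)$. Finally, the respectedness bound $p^{-k\log^{O(1)}(2c^{-1})}$ comes from Lemma~\ref{lowcodsize}, applied to the low-codimension variety where the tuple combination is shown to vanish.
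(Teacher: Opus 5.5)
Your architecture matches the paper's: discard error maps with many zeros via Lemma~\ref{poscorvars}, fingerprint $\pi_i\circ\phi_x|_{P_i}$ on random product subspaces, pigeonhole the labels, and use sign cancellation in an alternating tuple. However, the step you flag as the main obstacle is a genuine gap, and the fix you sketch cannot work. The random choices of $P_i,\pi_i$ are fixed before $X'$ is defined, so they must handle all tuples in $X'$ simultaneously. A detection event that depends on the tuple (``$\pi_i\psi_j$ is non-zero at some point of $Z(\psi_j-\sum_t(-1)^t\phi_{x_t})\cap P_i$ with probability $\Omega(1)$'') can only be union-bounded over $j\in[m]$, not over the roughly $|G_0|^{2d-1}$ tuples. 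You also cannot take $P_i$ inside the tuple-dependent variety produced by Theorem~\ref{densetolowcodim}. Single random points $a^{(i)}$ fail for the same reason: they land in the $c$-dense, tuple-dependent zero set only with tuple-dependent probability.

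The missing idea is to make detection depend on $\psi_j$ alone, and to make the meeting with the tuple's zero set deterministic. Let $P=W_1\times\dots\times W_k$ with $\dim W_i=r$, and let $\mathsf{Z}$ be the set of points with some zero coordinate. Require that $Z(\pi\circ\psi_j)\cap P\subseteq\mathsf{Z}$, i.e.\ $\pi\circ\psi_j$ has no non-trivial zero anywhere on $P$.

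\textbf{Why this event has probability at least $1/2$.} Suppose $|Z(\psi_j)|\le\eta|G_{[k]}|$ with $\eta\le\frac14p^{-kr}$. Each point outside $\mathsf{Z}$ lies in $P$ with probability at most $p^{kr}/|G_{[k]}|$, so the expected number of non-trivial zeros of $\psi_j$ in $P$ is at most $1/4$. A random $\pi:H\to\mathbb{F}_p^s$ with $s\ge kr+2$ then kills none of the at most $p^{kr}$ non-zero values, except with probability at most $1/4$. Iterative halving, or your $O(\log m)$ trials with a union bound over $j$, then covers every bad $\psi_j$.

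\textbf{Why no tuple in $X'$ can match a bad $\psi_j$.} Theorem~\ref{densetolowcodim} gives a variety $W\subseteq Z(\psi_j-\sum_t(-1)^t\phi_{x_t})$ of codimension $r_0=O(\log 2c^{-1})$. Lemma~\ref{lowcodsize}, applied inside $P_i$, gives
$|W\cap P_i|\ge p^{-kr_0}|P_i|>kp^{-r}|P_i|\ge|P_i\cap\mathsf{Z}|$
once $r>k(r_0+1)$. So there is a point $y\in (W\cap P_i)\setminus\mathsf{Z}$, and at it $\pi_i\psi_j(y)=\sum_t(-1)^t\pi_i\phi_{x_t}(y)=0$, contradicting the choice of $(P_i,\pi_i)$.

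Hence every tuple in $X'$ falls in the harmless case and is $\eta c$-respected. This means the factor dimension $r$ must grow like $\log(2c^{-1})$, so the $P_i$ are not spanned by boundedly many points. Your few-zeros threshold $c'$ must be tied to it via $c'\le\frac14p^{-kr}$. Accordingly, the final respectedness bound comes from $\eta$, not from Lemma~\ref{lowcodsize} applied to a variety on which the tuple combination vanishes.
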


\begin{proof}
    We employ algebraic dependent random choice. Let $r, s$ be positive integers and let $\eta > 0$, to be chosen later. Let $J$ be the set of indices $j \in [m]$ such that $|Z(\psi_j)| \leq \eta |G_{[k]}|$. We begin by choosing a sequence of pairs $(P_i, \pi_i)$, where $P_i = W_{i,1} \times \dots \times W_{i, k}$ is a product of subspaces $W_{i, j} \leq G_j$ of dimension $r$, and $\pi_i:H \to \mathbb{F}_p^s$ is a linear projection, for $i \in [\ell]$ for some $\ell$ that will not be too large.

    We choose these pairs iteratively, until for each $j \in J$ we have some pair $(P_i, \pi_i)$ such that the only points $x_{[k]} \in P_i$ for which $\pi_i \circ \psi_j(x_{[k]}) = 0$ have one of $x_1, \dots, x_k$ equal to 0. Writing $\mathsf{Z}$ for the set of $x_{[k]} \in G_{[k]}$ such that some $x_i = 0$, we thus want $Z(\pi_i \circ \psi_j) \cap P_i \subseteq \mathsf{Z}$. Note also that any global multilinear map necessarily vanishes on $\mathsf{Z}$.

    Let $J' \subseteq J$ be the collection of those indices $j$ for which $\psi_j$ still does not have the desired pair. Let $P$ and $\pi$ be randomly chosen uniformly over all admissible such objects.

    \begin{claim}
        Suppose that $\eta \leq \frac{1}{4}p^{-kr}$ and $s \geq kr + 2$. For each $j \in J'$, the probability that $Z(\pi \circ \psi_j) \cap P  \subseteq \mathsf{Z}$ is at least $1/2$.
    \end{claim}

    \begin{proof}
        We first show that the probability that $Z(\psi_j) \cap P  \subseteq \mathsf{Z}$ is at least $3/4$. Namely, for any $x_{[k]} \in G_{[k]} \setminus \mathsf{Z}$ we have $\mathbb{P}(x_{[k]} \in P) = \prod_{i \in [k]} \frac{p^r - 1}{|G_i| - 1} \leq p^{kr}/|G_{[k]}|$. Hence
        \[\exx |(Z(\psi_j) \setminus \mathsf{Z}) \cap P| = \sum_{x_{[k]} \in Z(\psi_j) \setminus \mathsf{Z}} \mathbb{P}(x_{[k]} \in P) \leq \eta |G_{[k]}| p^{kr}/|G_{[k]}| = \eta p^{kr}.\]

        By Markov's inequality, the probability that $|(Z(\psi_j) \setminus \mathsf{Z}) \cap P| \geq 1$ is at most $\eta p^{kr}$, proving the claim, as long as $\eta p^{kr}\leq \frac{1}{4}$.

        Fix now a choice of $P$ such that $Z(\psi_j) \cap P  \subseteq \mathsf{Z}$. We show that the probability that $Z(\pi \circ \psi_j)  \cap P  \subseteq \mathsf{Z}$ also holds is at least 3/4. Namely, given any point $x_{[k]} \in P \setminus \mathsf{Z}$, we know that $\psi_j(x_{[k]}) \not= 0$. Then $\mathbb{P}(\pi \circ \psi(x_{[k]}) = 0) = p^{-s}$. Thus, the probability that $\pi \circ \psi_j \not=0$ holds at all points in $P \setminus \mathsf{Z}$ is at least $1 - p^{kr-s}$. As long as $kr + 2\leq s$, we get the desired bound on the probability. With the work above, the claim follows.
    \end{proof}

    Assuming $\eta \leq \frac{1}{4}p^{-kr}$ and $s \geq kr + 2$, at each step, we may choose a pair $(P_i, \pi_i)$ halving the number of $\psi_j$ without the desired pair. Hence, the procedure terminates in $\ell \leq \log_2 m$ steps.

    \noindent\textbf{Choosing the set $X'$.} To choose the set $X'$, we take random multilinear maps $\mu_i : P_i \to \mathbb{F}_p^s$ uniformly and independently at random and set $X' = \{x \in X : (\forall i \in [\ell]) \pi_j \circ \phi_x |_{P_i} = \mu_i\}$. The number of possible choices of the map is $\mu_i$ is $p^{sr^k}$ as the a multilinear map on $P_i = W_{i, 1} \tdt W_{i,k}$ corresponds to a linear map on the tensor product $W_{i, 1} \otimes \dots \otimes W_{i,k}$ and the dimension that tensor product is $r^k$. Hence, there exists a choice of $X'$ such that $|X'| \geq p^{-\ell sr^k}|X|$. We claim that $X'$ has the desired property.

    Take any additive $2d$-tuple $x_{[2d]}$ in $X'$. By assumptions of the proposition, there exists $j \in [m]$ such that 
     \[\Big|Z\Big(\psi_j - \sum_{i \in [2d]} (-1)^i\phi_{x_i}\Big)\Big| \geq c|G_{[k]}|.\]
    If $j \notin J$, then $|Z(\psi_j)| \geq \eta |G_{[k]}|$ and by Lemma~\ref{poscorvars}, we have 
    \[\Big|Z\Big(\sum_{i \in [2d]} (-1)^i\phi_{x_i}\Big)\Big| \geq \eta c|G_{[k]}|,\]
    which completes the proof in this case.

    If $j \in J$, then there exists $i \in [\ell]$ such that $Z(\pi_i \circ \psi_j) \cap P_i  \subseteq \mathsf{Z}$. On the other hand, $Z\Big(\psi_j - \sum_{i \in [2d]} (-1)^i\phi_{x_i}\Big)$ is a variety in $G_{[k]}$ of density $c$, so by Theorem~\ref{densetolowcodim}, there exists a multilinear variety $W$ of codimension $r_0 = \log^{O(1)}(2c^{-1})$ inside it. Note that by Lemma~\ref{lowcodsize} $|W \cap P_i| \geq p^{-kr_0}|P_i|$. On the other hand, $|P_i \cap \mathsf{Z}| \leq k p^{(k-1)r} = k p^{-r} |P_i|$, so, as long as $r > k (r_0 + 1)$, $W \cap P_i$ contains a point $y_{[k]}$ outside $\mathsf{Z}$. For such a point, we have
    \[\pi_i \circ \psi_j(y_{[k]}) = \pi_i\circ \Big( \sum_{j' \in [2d]} (-1)^{j'
    }\phi_{x_i}\Big)(y_{[k]}) = \sum_{j' \in [2d]} (-1)^{j'} \pi_i \circ \phi_{x_{j'}}(y_{[k]}) = 0,\]
    which is a contradiction.

    We set $r =  k (r_0 + 1) + 1$, $\eta = \frac{1}{4}p^{-kr}$ and $s = kr + 2$ to finish the proof.
\end{proof}

\subsection{Linear system indexed by a subspace}

We now apply the robust Bogolyubov-Ruzsa theorem to obtain the completion of the given system of global multilinear maps.

\begin{proposition}\label{phase1step3prop}
    Let $d$ be a natural number. Let $X \subseteq G_0$ be a set of density $c$ and let $(\phi_x : G_{[k]} \to H)_{x \in X}$ be a system of global multilinear maps such that every additive $16$-tuple is $c$-respected. Then there exist a subspace $U \leq G_0$ of codimension $\log^{O(1)}(2c^{-1})$, a system of global multilinear maps  $(\psi_x : G_{[k]} \to H)_{x \in U}$ such that every additive quadruple in $(\psi_x)_{x \in U}$ is $c$-respected and for each $x \in U$ we have  at least  $(c/2)^{O(1)}|G_0|^3$ quadruples $(x_1 ,x_2 , x_3 , x_4) \in X^4$ such that $x_1 + x_2 - x_3 - x_4 = x$ and 
    \[|Z(\psi_x - \phi_{x_1} - \phi_{x_2} + \phi_{x_3} + \phi_{x_4})| \geq c|G_{[k]}|.\]
\end{proposition}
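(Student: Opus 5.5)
We apply the robust Bogolyubov--Ruzsa lemma (Theorem~\ref{rbrlemma}) to $X$. This gives a subspace $U \leq G_0$ of codimension $\log^{O(1)}(2c^{-1})$ with the following property: for every $u \in U$ there are at least $(c/2)^{O(1)}|G_0|^3$ quadruples $(x_1,x_2,x_3,x_4) \in X^4$ with $x_1+x_2-x_3-x_4 = u$. For each $u \in U$ we fix one such quadruple $q(u) = (x^u_1,x^u_2,x^u_3,x^u_4)$ and set
\[\psi_u = \phi_{x^u_1} + \phi_{x^u_2} - \phi_{x^u_3} - \phi_{x^u_4}.\]

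First we check the second conclusion. Take any other quadruple $(y_1,y_2,y_3,y_4) \in X^4$ representing $u$. The combination
\[\psi_u - \phi_{y_1} - \phi_{y_2} + \phi_{y_3} + \phi_{y_4}\]
is a signed sum of eight maps $\phi$ whose indices form an additive $8$-tuple in $X$. We can pad it to a $16$-tuple by repeating an index with both signs, for instance adding $\phi_{y}-\phi_{y}+\phi_y-\phi_y$ arranged so that the indices still form an additive $16$-tuple with alternating signs. The hypothesis then says it is $c$-respected. So $|Z(\cdot)| \geq c|G_{[k]}|$ for every representation of $u$, and there are $(c/2)^{O(1)}|G_0|^3$ representations. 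This proves the second claim with constant $c$, as stated.

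Next we check that every additive quadruple in $U$ is respected. Let $u_1+u_3 = u_2+u_4$ in $U$, with the paper's sign convention. Then
\[\psi_{u_1} - \psi_{u_2} + \psi_{u_3} - \psi_{u_4}\]
is a signed sum of $16$ maps $\phi_x$ with $x \in X$. Since $\sum_i \pm u_i = 0$, the indices form an additive $16$-tuple. We order them so that the signs alternate: each $\psi_u$ contributes two positive and two negative terms, so the $16$ terms can be paired into an alternating sequence. The hypothesis then gives that the combination is $c$-respected, which is exactly what is required.

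The only genuine point to watch is the bookkeeping of signs. We must make sure the combinations arising really are additive $16$-tuples in the alternating-sign convention $\sum_{i}(-1)^i x_i = 0$ used to define $c$-respectedness. For $\psi_{u_1}-\psi_{u_2}+\psi_{u_3}-\psi_{u_4}$ the positive terms are $x^{u_1}_1, x^{u_1}_2, x^{u_2}_3, x^{u_2}_4, x^{u_3}_1, x^{u_3}_2, x^{u_4}_3, x^{u_4}_4$ and the negative terms are the complementary eight. Their difference sums to $u_1-u_2+u_3-u_4 = 0$, so interleaving them gives a valid $16$-tuple. The same interleaving, with padding, handles the $8$-term combination in the second conclusion. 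No quantitative loss occurs beyond Theorem~\ref{rbrlemma}.
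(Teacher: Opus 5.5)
Your proof is correct and follows the paper's argument: apply the robust Bogolyubov--Ruzsa lemma, define $\psi_u$ from one fixed representation $u = x_1+x_2-x_3-x_4$, and note that the relevant combinations are signed sums over additive $8$- and $16$-tuples in $X$. Your padding of the $8$-tuple to a $16$-tuple, which the paper leaves implicit, is right in idea, but the count is off: you must append a single index $y$ eight times with alternating signs, not four.
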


\begin{proof}
    Apply robust Bogolyubov-Ruzsa theorem (Theorem~\ref{rbrlemma}) to the set $X$ to obtain a subspace $U \leq G_0$ of the claimed codimension such that for each $x \in U$ we have  at least  $(c/2)^{O(1)}|G_0|^3$ quadruples $(x_1 ,x_2 , x_3 , x_4) \in X^4$ such that $x_1 + x_2 - x_3 - x_4 = x$. Take arbitrary such quadruple and define $\psi_x =  \phi_{x_1} +\phi_{x_2} - \phi_{x_3} - \phi_{x_4}$. If $(x'_1 ,x'_2 , x'_3 , x'_4) \in X^4$ is any other quadruple with $x'_1 + x'_2 - x'_3 - x'_4 = x$, then joining it with $(x_1, \dots, x_4)$ gives a $c$-respected addditive 8-tuple, so we have
    \[|Z(\psi_x - \phi_{x'_1} - \phi_{x'_2} + \phi_{x'_3} + \phi_{x'_4})| = |Z( \phi_{x_1} + \phi_{x_2} - \phi_{x_3} - \phi_{x_4} - \phi_{x'_1} - \phi_{x'_2} + \phi_{x'_3} + \phi_{x'_4}))|  \geq c|G_{[k]}|.\]

    Similarly, for any additive quadruple $(\upd{y_1}, y_2, \upd{y_3}, y_4)$ in $X'$, the multilinear map $\psi_{y_1} - \psi_{y_2} + \psi_{y_3} -\psi_{y_4}$ equals a linear combination $\sum_{i \in [16]} (-1)^{i}\phi_{z_i}$ for a suitable additive 16-tuple $z_{[16]}$. Hence, every additive quadruple in $(\psi_y)_{y \in U}$ is $c$-respected.
\end{proof}

\vspace{\baselineskip}

Combining all steps in this section, we conclude Theorem~\ref{phase1mainres}.

\begin{proof}[Proof of Theorem~\ref{phase1mainres}]
    The theorem follows after applying Propositions~\ref{phase1step1prop},~\ref{phase1step2prop} and~\ref{phase1step3prop}.
\end{proof}

\section{Changing the category}

In this section, we carry out the second phase of the argument and pass from a system of global multilinear maps to a system of MM-$r$-maps. Given a system of MM-$r$-maps $(\phi_x: V_x \to H)_{x \in X}$ we use a different notion of respectedness. Namely, we say that an additive $2d$-tuple $x_{[2d]}$ is \textit{variety-respected} if $\sum_{i \in [2d]} (-1)^i \phi_{x_i}(y_{[k]}) = 0$, whenever $y_{[k]} \in \cap_{i \in [2d]} V_{x_i}$. Our goal is to find a system where most of additive tuples are variety-respected.

\begin{theorem}\label{changecategorystep}
     Let $d \leq O(1)$ be a natural number and let $\varepsilon > 0$. Let $(\phi_x : G_{[k]} \to H)_{x \in G_0}$ be a system of global multilinear maps such that every additive quadruple is $c$-respected. Then there exist multilinear varieties $V_x$ for $x \in G_0$ of codimension at most $ \log^{O(1)}(2\varepsilon^{-1}c^{-1})$, such that $(1-\varepsilon)|G_0|^{2d-1}$ additive $2d$-tuples in $G_0$ are variety-respected in the system $(\phi_x : V_x \to H)_{x \in G_0}$.
\end{theorem}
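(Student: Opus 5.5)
We will construct the varieties $V_x$ by a random choice, making them depend on the maps $\phi_x$ through low partition rank decompositions. For each additive quadruple $(\upd{x}_1, x_2, \upd{x}_3, x_4)$ the map $\phi_{x_1} - \phi_{x_2} + \phi_{x_3} - \phi_{x_4}$ has a zero set of density at least $c$. By Theorem~\ref{densetolowcodim}, applied to the components after composing with a projection $H \to \mathbb{F}_p$, or more directly via Theorem~\ref{arankprank} applied to $\lambda \cdot (\phi_{x_1} - \dots)$ for $\lambda \in H^*$, every linear combination of such a quadruple's maps is controlled. An additive $2d$-tuple is a sum of $O(d)$ quadruples $x_{[2d]}$, since all quadruples are $c$-respected. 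By Lemma~\ref{poscorvars}, the combination $\sum_i (-1)^i \phi_{x_i}$ has zero density at least $c^{O(d)}$. Theorem~\ref{densetolowcodim} then gives a multilinear variety of codimension $O(\log(2c^{-1}))$, depending on the tuple, inside its zero set. The task is to make this variety essentially depend on each $x_i$ separately, so that it is contained in $\cap_i V_{x_i}$.

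\textbf{Step 1: randomise the dual.} Fix a reference element $u_0$. For each $x$, consider $\phi_x - \phi_{u_0}$. I would pick a random tuple $a = (a_1,\dots,a_t)$ of points of $G_0$ and write $x = (x - a_1 + a_2) + \dots$, expressing $\phi_x$ modulo low-partition-rank errors relative to a few shifted maps. The cleaner plan is to work with a random family of test points. Choose $t = O(\log(\varepsilon^{-1}c^{-1}))$ independent random $b_1, \dots, b_t \in G_0$. For each $x$, let $V_x$ be the intersection over $j$ of the varieties given by Theorem~\ref{densetolowcodim} for the zero sets of the quadruple maps $\phi_x - \phi_{b_j} - \phi_{x - b_j + b'_j} + \phi_{b'_j}$. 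Arrange the pairing so that, for a $2d$-tuple $x_{[2d]}$, the $b$-terms telescope when we take the alternating sum over $i$. Then $\sum_i(-1)^i\phi_{x_i}$ equals a sum of quadruple maps, each vanishing on its own $V$-part, plus a leftover $\sum_i (-1)^i \phi_{x_i - b + b'}$. This leftover is again an additive $2d$-tuple, but a random one.

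\textbf{Step 2: kill the leftover.} The leftover tuples are uniformly distributed. The goal is to make them vanish identically with probability $1-\varepsilon$. Here I would use the regularity/low-partition-rank bookkeeping promised in the overview. Write each relevant combination as $\sum_{i}\beta_i\gamma_i$ with $O(\log^{O(1)}(2c^{-1}))$ terms. Then apply the algebraic regularity method: refine the collection of lower-order forms $\beta_i,\gamma_i$ until they are quasirandom with respect to each other, at a cost $\log^{O(1)}$ in codimension. Then include in $V_x$ the zero sets of these lower-order forms, which is legitimate because they come from the quadruple decompositions involving $x$. A map of partition rank $R$ vanishes on the variety cut out by its lower-order factors. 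Hence if each $V_{x_i}$ contains the factor variety of every quadruple containing $x_i$ in the chosen decomposition, the $2d$-tuple becomes variety-respected. Failure should occur only when the random decomposition points are atypical, which Markov bounds by $\varepsilon$ after taking $t \sim \log \varepsilon^{-1}$ independent repetitions and intersecting.

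\textbf{Main obstacle.} The hard step is Step 2: ensuring the decomposition of $\sum_i(-1)^i\phi_{x_i}$ uses only pieces attached to individual $x_i$. Naive decompositions of the leftover depend jointly on the tuple, and partition rank decompositions are not unique. I would first try to enforce consistency by regularising all factor forms arising from the sets $\{\phi_x - \phi_{b_j}\}$ jointly against a common quasirandom basis. Theorem~\ref{arankprank} and Lemma~\ref{quasirandomrest} would then make the decomposition essentially canonical, up to events of probability $\varepsilon$.
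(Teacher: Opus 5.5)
There is a genuine gap. Your proposal never gives a mechanism that turns information about the whole tuple $x_{[2d]}$ into conditions on individual $x_i$, and the two concrete steps you describe make no progress.

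In Step 1, all you learn on $\bigcap_i V_{x_i}$ is that $\phi_{x_i} = \phi_{b_j} - \phi_{b'_j} + \phi_{x_i+u_j}$ with $u_j = b'_j - b_j$. After the $b$-terms cancel this gives only
\[\sum_{i\in[2d]}(-1)^i\phi_{x_i} = \sum_{i\in[2d]}(-1)^i\phi_{x_i+u_j},\]
so the alternating sum equals its own translate. This is circular.
\begin{itemize}
\item The leftover is not a uniformly random additive $2d$-tuple; it is a translate of the given one, and just as hard to control.
\item It would now have to vanish on $\bigcap_i V_{x_i}$, varieties not adapted to the points $x_i+u_j$.
\item No choice of $b$'s can make a non-zero combination vanish identically.
\end{itemize}

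In Step 2, the partition rank factors $\beta,\gamma$ of such an alternating sum depend jointly on the tuple. The decompositions attached to $x$ (of $\phi_x-\phi_{b_j}-\phi_{x+u_j}+\phi_{b'_j}$) do not decompose it. Putting into $V_x$ the factors of every tuple containing $x$ costs codimension unbounded in $|G_0|$.

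Your proposed fix does not apply either. The map $\phi_x-\phi_{b_j}$ has no low-rank structure; only alternating sums over additive tuples do. A family of forms indexed by all of $G_0$ cannot be jointly regularised at bounded cost. The Markov argument also has no identified bad event.

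The missing idea, which is the paper's argument, is to randomise in the domain coordinates rather than in $G_0$. The steps are as follows.
\begin{itemize}
\item Set $\tilde\phi_x(y_{[k]},h)=\phi_x(y_{[k]})\cdot h$ on $G_{[k]}\times H$.
\item Decompose $\Delta_q=\sum_i(-1)^i\tilde\phi_{x_i}$ with Lemma~\ref{poscorvars} and Theorem~\ref{arankprank}.
\item Eliminate the supports $I$ of the factors one at a time, starting from a maximal one.
\end{itemize}

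For maximal $I$, the top factors $\beta_{q,j}$ on $G_I$ are recovered, up to lower-order terms, as slices $z_I\mapsto\Delta_q(z_I,y_{I^c})$. Here $y_{I^c}$ ranges over points where $\gamma_q(y_{I^c})$ is a basis vector, and these points form a dense set.

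The key observation is that a slice of $\Delta_q$ at a fixed point $e$ is the alternating sum of the slices $\tilde\phi_{x_i}(\cdot,e)$. Each of these depends on a single $x_i$.

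So one chooses random $s$-dimensional subspaces $E_i$ for $i\notin I$, independent of $x$ and of the tuple. One then adds to $V_x$ the zero sets of $z_I\mapsto\tilde\phi_x(z_I,e)$ for $e$ ranging over products of basis vectors.

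Lemma~\ref{randomproductintersection} ensures that, for all but a $2^{-k}\varepsilon$ proportion of tuples, $\prod_i E_i$ meets every required dense set of slicing points. Multilinearity then forces each $\beta_{q,j}$ to be lower order on $\bigcap_i V_{x_i}$.

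Taking $s=O(r+\log\varepsilon^{-1})$ and iterating over at most $2^k$ supports gives the claimed codimension $\log^{O(1)}(2\varepsilon^{-1}c^{-1})$.
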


In the proof, we need a preliminary lemma that shows that random products of small subspaces meet dense sets with high probability.

    \begin{lemma}\label{randomproductintersection}
        Let $s$ be a positive integer. For each $i \in [k]$, take elements $e_{i, j} \in G_i$ for $j \in [s]$ uniformly and independently at random and define subspace $E_i = \langle e_{i, 1}, \dots, e_{i, s}\rangle$.
        
        \indent Let $X \subseteq G_{[k]}$ be a set of density $\alpha$. Then 
        \[\mathbb{P}(X \cap E_{1} \tdt E_k \not= \emptyset) \geq 1 - \frac{4pk\alpha^{-2}}{p^s - 1}.\]
    \end{lemma}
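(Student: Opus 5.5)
We argue by the second moment method, counting points of $X$ inside the random product $P = E_1 \tdt E_k$. Rather than looking at the spans directly, we parametrise $P$ by coefficients. For $\lambda^{(1)}, \dots, \lambda^{(k)} \in \mathbb{F}_p^s \setminus \{0\}$, the vector $\lambda^{(i)} \cdot e_i = \sum_{j} \lambda^{(i)}_j e_{i,j}$ is uniformly distributed in $G_i$, since $e_{i,1}, \dots, e_{i,s}$ are independent and uniform. Set $\Lambda = (\mathbb{F}_p^s \setminus \{0\})^k$ and define
\[N = \sum_{\lambda \in \Lambda} \id_X\big(\lambda^{(1)} \cdot e_1, \dots, \lambda^{(k)} \cdot e_k\big).\]
If $N > 0$, then $X \cap P \neq \emptyset$. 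So it suffices to show $\mathbb{P}(N = 0) \leq \frac{4pk\alpha^{-2}}{p^s-1}$, and by Chebyshev $\mathbb{P}(N=0) \leq \on{Var} N/(\ex N)^2$.

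The first moment is immediate: $\ex N = \alpha |\Lambda|$, because each summand has expectation exactly $\alpha$.

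For the second moment, expand $\ex N^2$ as a sum over pairs $(\lambda, \mu) \in \Lambda^2$. In each coordinate $i$ there are two cases.
\begin{itemize}
\item If $\lambda^{(i)}$ and $\mu^{(i)}$ are linearly independent in $\mathbb{F}_p^s$, then $(\lambda^{(i)} \cdot e_i, \mu^{(i)} \cdot e_i)$ is uniform on $G_i \times G_i$.
\item Otherwise $\mu^{(i)}$ is one of the $p-1$ nonzero multiples of $\lambda^{(i)}$.
\end{itemize}
Different coordinates $i$ use independent random vectors. Hence if every coordinate is in the independent case, the pair contributes exactly $\alpha^2$. Call a pair bad if at least one coordinate is dependent. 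For a fixed $\lambda$, the number of bad $\mu$ is at most $k(p-1)|\mathbb{F}_p^s\setminus\{0\}|^{k-1} = |\Lambda| \cdot \frac{k(p-1)}{p^s-1}$. Each bad pair contributes at most $1$, and the good pairs contribute at most $\alpha^2|\Lambda|^2$ in total. Therefore
\[\on{Var} N \leq |\Lambda|^2 \cdot \frac{k(p-1)}{p^s-1} \leq |\Lambda|^2\frac{pk}{p^s-1}.\]

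Putting the two moments together gives
\[\mathbb{P}(N=0) \leq \frac{pk}{p^s-1}\,\alpha^{-2},\]
which is stronger than the claimed bound by a factor of $4$. The only step needing care is the pairwise independence claim for linearly independent coefficient vectors. It holds because the map $(e_{i,1}, \dots, e_{i,s}) \mapsto (\lambda\cdot e_i, \mu\cdot e_i)$ is a surjective linear map from $G_i^s$ onto $G_i^2$, so it pushes the uniform distribution forward to the uniform distribution. Excluding $\lambda^{(i)} = 0$ from $\Lambda$ keeps this uniformity argument valid.
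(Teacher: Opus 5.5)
Your proposal is correct and is essentially the paper's proof. The paper also counts tuples of nonzero coefficient vectors landing in $X$, uses uniformity of $(\lambda^{(i)}\cdot e_i,\mu^{(i)}\cdot e_i)$ for linearly independent coefficients, bounds the at most $k(p-1)(p^s-1)^{2k-1}$ dependent pairs by $1$ each, and finishes with Chebyshev. The only difference is that the paper applies Chebyshev at deviation $\tfrac12\ex N$ rather than $\ex N$, which accounts for the factor $4$ you save.
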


    \begin{proof}
        We use the second moment method. Let $\Lambda$ be the collection of $k$-tuples of vectors $(\lambda_1, \dots, \lambda_{k}) \in (\mathbb{F}_p^s \setminus \{0\})^k$ such that $(\sum_j \lambda_{1, j} e_{1,j}, \dots, \sum_j \lambda_{k,j} e_{k, j}) \in X$. We have $\mathbb{P}\Big((\lambda_{1}, \dots, \lambda_{k}) \in \Lambda\Big) = \frac{|X|}{|G_{[k]}|}$, so 
        \[\exx |\Lambda| = (p^{s} - 1)^k \frac{|X|}{|G_{[k]}|}.\]
        Note also that if $(\mu_1, \dots, \mu_{k})$ is another tuple of vectors such that $\mu_i$ and $\lambda_i$ are independent for all $i$, then 
        \[\mathbb{P}\Big((\lambda_1, \dots, \lambda_{k}), (\mu_1, \dots, \mu_{k}) \in \Lambda\Big) = \frac{|X|^2}{|G_{[k]}|^2}.\]
        Hence 
        \begin{align*}\exx |\Lambda|^2 \leq & (p-1)k (p^s - 1)^{2k - 1} + (p^s - 1)^{2k}\frac{|X|^2}{|G_{[k]}|^2} \\
        = &(p-1)k (p^s - 1)^{2k - 1} + \Big(\exx |\Lambda|\Big)^2 \leq \Big(1 + \frac{pk\alpha^{-2}}{p^s - 1}\Big)\Big(\exx |\Lambda|\Big)^2.\end{align*}

        By Chebyshev's inequality, we have
        \[\mathbb{P}\Big(\Big||\Lambda| - \ex |\Lambda|\Big| \geq \frac{1}{2}\ex |\Lambda|\Big) \leq \frac{4pk\alpha^{-2}}{p^s - 1},\]
        proving the claim.
    \end{proof}

    We may now prove Theorem~\ref{changecategorystep}. We rely on Theorem~\ref{arankprank}. However, it is applied only at the beginning, and the rest of the proof is about algebra of multilinear forms and careful manipulation of their partition rank.

\begin{proof}[Proof of Theorem~\ref{changecategorystep}]
    For each $x \in G_0$, let $\tilde{\phi}_x : G_{[k]} \times H \to \mathbb{F}_p$ be the multilinear form given by $\tilde{\phi}_x (y_{[k]}, h) = \phi_x(y_{[k]}) \cdot h$. The assumption of the proposition translates to 
    \[\on{bias}\Big(\sum_{i \in [4]}  (-1)^{i}\tilde{\phi}_{x_i} \Big) \geq c\]
    for any additive quadruple $(\upd{x_1}, x_2, \upd{x_3}, x_4)$. During this proof, we write $G_{k+1}$ for $H$ to simplify the notation.

    The proof is inductive. Let $\mathcal{P} \subseteq \mathcal{P}[k] \setminus \{\emptyset\}$ denote a down-set. Initially, $\mathcal{P} = \mathcal{P}[k] \setminus \{\emptyset\}$ and at each step of the proof we remove a set from $\mathcal{P}$ until it becomes empty.

    We claim that for each $\mathcal{P}$ we have a collection $\mathcal{Q}$ of additive $2d$-tuples in $G_0$ of size $|\mathcal{Q}| \geq \Big(1 - \frac{2^k - |\mathcal{P}|}{2^k}\varepsilon\Big)|G_0|^{2d-1}$, a parameter $r \leq \log^{O(1)}(2\varepsilon^{-1}c^{-1})$, a collection of varieties $(V_x)_{x \in G_0} \subseteq G_{[k]}$ of codimension at most $r$ such that, for each $q = x_{[2d]} \in \mathcal{Q}$, we have a low partition rank decomposition such that
    \begin{equation}\label{controlledprankeqonvars}
           \Big(\forall y_{[k+1]} \in (V_{x_1} \cap \dots \cap V_{x_{2d}}) \times G_{k+1} \Big) \hspace{1cm}\sum_{i \in [2d]} (-1)^{i} \tilde{\phi}_{x_i}(y_{[k+1]}) = \sum_{j \in [t_q]} \beta_{q, j}(y_{I_{q,j}}) \gamma_{q, j}(y_{[k+1] \setminus I_{q,j}}),
    \end{equation}

    with $t_q \leq r$ and for all $j \in [t_q]$, we have $k+1 \notin I_{q,j} \in \mathcal{P}$.

    By Lemma~\ref{poscorvars}, $\on{bias}\Big(\sum_{i \in [2d]}  (-1)^{i}\tilde{\phi}_{x_i} \Big) \geq c^{O(1)}$, so we may apply Theorem~\ref{arankprank} to conclude the desired equality in the case when $\mathcal{P} = \mathcal{P}[k] \setminus \{\emptyset\}$, with $V_x = G_{[k]}$ for all $x \in G_0$, settling the induction base.

    Suppose now that~\eqref{controlledprankeqonvars} holds for some non-empty $\mathcal{P}$ with maximal set $I$. Reordering the forms, we may rewrite the right-hand side of~\eqref{controlledprankeqonvars} as
    \begin{equation}\sum_{j \in [t'_q]} \beta_{q, j}(y_{I}) \gamma_{q, j}(y_{[k+1] \setminus I}) + L_q(y_{[k+1]})\label{controlledprankeqonvarsNewRHS}\end{equation}
    where $L_q(y_{[k+1]})$ has the desired shape for the down-set $\mathcal{P}\setminus \{I\}$. Also, write $V_q = V_{x_1} \cap \dots \cap V_{x_{2d}}$.

    Let $\Lambda_q$ be the set of all values attained by the vector $(\gamma_{q, j}(y_{[k+1] \setminus I}))_{j\in t'_q}$ inside $\mathbb{F}_p^{t'_q}$ as $y_{[k+1] \setminus I}$ ranges over $V_q^{([k] \setminus I)} \times G_{k+1}$. We claim that for $\lambda \in \Lambda_q$, the linear combination $\lambda \cdot \beta_q$ is closely related to $\sum_{i \in [2d]} (-1)^{i} \tilde{\phi}_{x_i}$.
    
    \begin{claim}\label{claimyqlrankdec}
        Let $\lambda \in \Lambda_q$. Then there exists a set $Y_{q, \lambda} \subseteq G_{[k+1] \setminus I}$ of size at least $p^{-k(2d+1)r}|G_{[k+1] \setminus I}|$ such that for each $y_{[k+1] \setminus I}$, there exists a multilinear form $\delta_q^{(y_{[k+1] \setminus I})} : G_I \to \mathbb{F}_p$ of partition rank at most $r$, such that 
        \[\Big(\forall z_I \in (V_q)_{y_{[k] \setminus \{I\}}}\Big)\hspace{1cm} \sum_{i \in [2d]} (-1)^{i} \tilde{\phi}_{x_i}(y_{[k+1] \setminus I}, z_I) = \lambda \cdot \beta_q(z_I) + \delta_q^{(y_{[k+1] \setminus I})}(z_I).\]
    \end{claim}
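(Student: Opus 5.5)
Fix $q = x_{[2d]}$ and $\lambda \in \Lambda_q$. The plan is to take for $Y_{q,\lambda}$ the set of those $y_{[k+1]\setminus I} \in V_q^{([k]\setminus I)} \times G_{k+1}$ with $(\gamma_{q,j}(y_{[k+1]\setminus I}))_{j \in [t'_q]} = \lambda$. For such a point we freeze the coordinates outside $I$ in the decomposition~\eqref{controlledprankeqonvars}, rewritten as~\eqref{controlledprankeqonvarsNewRHS}. For $z_I \in (V_q)_{y_{[k]\setminus I}}$ the main term $\sum_{j \in [t'_q]} \beta_{q,j}(z_I)\gamma_{q,j}(y_{[k+1]\setminus I})$ is exactly $\lambda\cdot\beta_q(z_I)$. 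Hence we set $\delta^{(y_{[k+1]\setminus I})}_q(z_I) = L_q(y_{[k+1]\setminus I}, z_I)$ and check two things: that it has partition rank at most $r$ as a form on $G_I$, and that $Y_{q,\lambda}$ is large.

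\textbf{Partition rank.} Each term of $L_q$ has the form $\beta_{q,j}(y_{I_{q,j}})\gamma_{q,j}(y_{[k+1]\setminus I_{q,j}})$ with $I_{q,j} \in \mathcal{P}\setminus\{I\}$ and $k+1 \notin I_{q,j}$. After freezing $y_{[k+1]\setminus I}$, the term becomes a product of a form in $z_{I\cap I_{q,j}}$ and a form in $z_{I\setminus I_{q,j}}$. Since $I$ is maximal in $\mathcal{P}$ and $I_{q,j} \neq I$, we have $I_{q,j}\not\supseteq I$, so $I \setminus I_{q,j} \neq \emptyset$. If moreover $I \cap I_{q,j} \neq \emptyset$, the term is a genuine partition-rank-one form on $G_I$. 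If instead $I\cap I_{q,j} = \emptyset$, the first factor is a constant and the term is a multilinear form in $z_{I\setminus I_{q,j}}$ only. As a multilinear form on $G_I$ with $I \setminus I_{q,j}$ a proper subset, it must vanish identically unless $I_{q,j}\cap I \ne \emptyset$; this degenerate case has to be checked carefully. In every case each term contributes partition rank at most one, so $\on{prank}\delta_q^{(y)} \le t_q \le r$.

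\textbf{Size of $Y_{q,\lambda}$.} This is the step I expect to be the main obstacle. The set $V_q^{([k]\setminus I)}\times G_{k+1}$ is a multilinear variety of codimension at most $2dr$ in $G_{[k+1]\setminus I}$, so by Lemma~\ref{lowcodsize} it has size at least $p^{-2dkr}|G_{[k+1]\setminus I}|$. The map $\gamma_q = (\gamma_{q,j})_{j}$ is multilinear, and it is linear in the $G_{k+1}$ coordinate, since each $\gamma_{q,j}$ involves $k+1$. For a fixed $y_{[k]\setminus I}$ in the variety, the map $y_{k+1} \mapsto \gamma_q(y_{[k]\setminus I}, y_{k+1})$ is linear, so each value in its image is attained on a coset of the kernel, and that kernel has codimension at most $t'_q \le r$. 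The difficulty is that $\lambda$ need not lie in the image for a given $y_{[k]\setminus I}$. We handle this as follows. Since $\lambda \in \Lambda_q$, there is some $y^0_{[k]\setminus I}$ at which $\lambda$ is attained. If $\lambda = 0$, every $y_{[k]\setminus I}$ in the variety works, which gives density at least $p^{-2dkr - r}$. For $\lambda \neq 0$ we use multilinearity in a single coordinate $i \notin I$: the map $y_i \mapsto \gamma_q(y^0_{[k]\setminus (I\cup\{i\})}, y_i, y_{k+1})$ is bilinear, and iterating over coordinates as in the proof of Lemma~\ref{nonzeromapslemma} should show that $\lambda$ is attained for a $p^{-O(kr)}$ proportion of points in each slice of the variety. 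Combining the slices gives $|Y_{q,\lambda}| \ge p^{-k(2d+1)r}|G_{[k+1]\setminus I}|$. If this coordinatewise argument loses too much, the fallback is to average over all $\mu\in\mathbb{F}_p^{t'_q}$ with characters to bound the count of $\gamma_q = \lambda$ from below. The exponent $k(2d+1)r$ is chosen to absorb both the $2dkr$ loss from the variety and the additional $kr$ loss from the fibres.
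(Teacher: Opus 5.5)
\textbf{There is a gap: the size bound for $Y_{q,\lambda}$ is not proved, and your partition-rank step mishandles one case.} Your choice of $Y_{q,\lambda}$ (points of $V_q^{([k]\setminus I)}\times G_{k+1}$ with $(\gamma_{q,j})_{j\in[t'_q]}=\lambda$) and of $\delta_q^{(y)}=(L_q)_{y_{[k+1]\setminus I}}$ is exactly the paper's.

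\textbf{Size of $Y_{q,\lambda}$.} You leave this at ``should show'', and the route you sketch does not reach it. Fibring over $y_{k+1}$ reduces the problem to showing that $\lambda$ lies in the image of $y_{k+1}\mapsto\gamma_q(y_{[k]\setminus I},y_{k+1})$ for many $y_{[k]\setminus I}$, and this is not a variety condition.
\begin{itemize}
\item Your assertion that $\lambda$ is attained for a $p^{-O(kr)}$ proportion of points in each slice is false. On any slice where some $y_i=0$ with $i\in[k]\setminus I$, the map $\gamma_q$ vanishes identically, so $\lambda\neq 0$ is never attained there.
\item The character-sum fallback gives no lower bound for $\lambda\neq0$, because the phases $\omega(-\mu\cdot\lambda)$ can cancel the non-negative bias terms.
\end{itemize}
The paper's one-line argument is that $Y_{q,\lambda}$ is a non-empty level set of a mixed-linear map with at most $t'_q+2dr\le(2d+1)r$ components, on a product of $|[k+1]\setminus I|\le k$ spaces. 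Such sets have density at least $p^{-k(2d+1)r}$.

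The idea you are missing is to anchor the count at a point of the level set:
\begin{enumerate}
\item Take $y^0\in Y_{q,\lambda}$; it exists because $\lambda\in\Lambda_q$.
\item Fix all coordinates of $y^0$ except one. Every defining condition (the forms of $V_q^{([k]\setminus I)}$ vanish, and $\gamma_q=\lambda$) is affine-linear in the free coordinate. So the admissible values form a coset of codimension at most $(2d+1)r$ containing the corresponding coordinate of $y^0$.
\item For each admissible value, the slice of $Y_{q,\lambda}$ there is again a non-empty level set of a mixed-linear map with at most $(2d+1)r$ components, in one fewer variable.
\item Induction on the number of coordinates then loses $p^{-(2d+1)r}$ per coordinate.
\end{enumerate}
This is the mechanism of Lemma~\ref{nonzeromapslemma}, which you mention. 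It has to be applied to the whole level set, variety forms together with $\gamma_q=\lambda$, starting from a point of it, not slice by slice.

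\textbf{Partition rank, case $I\cap I_{q,j}=\emptyset$.} Here $I\setminus I_{q,j}=I$, which is not a proper subset of $I$. The frozen term is the scalar $\beta_{q,j}(y_{I_{q,j}})$ times $\gamma_{q,j}(z_I,y_{[k+1]\setminus(I\cup I_{q,j})})$, a form depending on all of $z_I$. It neither vanishes nor need have partition rank one when $|I|\ge 2$. So your conclusion that ``in every case each term contributes partition rank at most one'' is not justified in this case. The paper's proof handles this step only by remarking that $L_q$ involves no $I$-dependent forms, and does not single out the disjoint case. So you are not missing an argument the paper supplies here, but the reasoning you wrote for that case is incorrect.
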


    \begin{proof}
        Let $y_{[k+1] \setminus I}$ be such that $(\gamma_{q,j}(y_{[k+1] \setminus I}))_{j\in t'} = \lambda$ and $y_{[k+1] \setminus I} \in (V_q)^{([k] \setminus I)} \times G_{k+1}$. Note that~\eqref{controlledprankeqonvars} and~\eqref{controlledprankeqonvarsNewRHS} imply 
        \begin{align*}&\Big(\forall y_{I} \in (V_q)_{y_{[k+1] \setminus I}} \times G_{k+1} \Big)\\
        &\hspace{1cm}\sum_{i \in [2d]} (-1)^{i} \tilde{\phi}_{x_i}(y_{[k+1]}) = \sum_{j \in [t'_q]} \beta_{q, j}(y_{I}) \gamma_{q, j}(y_{[k+1] \setminus I}) + L_q(y_{[k+1]}) = \lambda \cdot \beta_q(y_I) + (L_q)_{y_{[k+1] \setminus I}}(y_I).\end{align*}
        The map $L_q$ does not involve $I$-dependent forms in its partition rank decomposition, so setting $\delta_q^{(y_{[k+1] \setminus I})}(z_I) = (L_q)_{y_{[k+1] \setminus I}}(z_I)$, we reach the desired conclusion.
        
        \indent On the other hand, the set of such $y_{[k+1] \setminus I}$ such that $(\gamma_{q,j}(y_{[k+1] \setminus I}))_{j\in t'} = \lambda$ and $y_{[k+1] \setminus I} \in (V_q)^{([k] \setminus I)} \times G_{k+1}$ is a non-empty multiaffine variety of codimension at most $t'_q + 2dr\leq (2d+1)r$, so it has size at least $p^{-k(2d+1)r}|G_{[k+1] \setminus I}|$.
    \end{proof}

    Let $s$ be a positive integer to be specified later. Take elements $e_{i, j} \in G_i$, for $i \in [k+1]\setminus I$, $j \in [s]$, uniformly and independently at random and define subspaces $E_i = \langle e_{i,1}, \dots, e_{i, s} \rangle$. By Lemma~\ref{randomproductintersection}, we have $\mathbb{P}(Y_{q,\lambda} \cap \prod_{i \in [k+1] \setminus I} E_i \not=\emptyset) \geq 1 - 8kp^{k(2d+1)r + 1 - s}$. Hence, taking $s = k(4d+2)r + 10k + r + \log_p \varepsilon^{-1}$, there exist such elements $e_{i, j}$ and subspaces $E_i$ and a collection of additive $2d$-tuples $\mathcal{Q}' \subseteq \mathcal{Q}$ of size $|\mathcal{Q}'| \geq |\mathcal{Q}| - 2^{-k}\varepsilon|G_0|^{2d-1}$ such that for each $q \in \mathcal{Q}'$, we have that $Y_{q,\lambda} \cap \prod_{i \in [k+1] \setminus I}E_i \not=\emptyset$ holds for all $\lambda \in \Lambda_q$. Now, define new varieties
    \[\tilde{V}_x = V_x \cap \Big(\bigcap_{j : I \to [s]} \{z_I \in G_I : \tilde{\phi}_x\Big(z_I, (e_{i, j(i)} : i \in [k+1] \setminus I)\Big) = 0\}\Big) \times G_{[k] \setminus I}.\]

    The codimension of $\tilde{V}_x$ is at most $r + s^k \leq (2r + \log \varepsilon^{-1})^{O(1)} \leq \log^{O(1)}(2\varepsilon^{-1}c^{-1})$. Finally, let us check that on these varieties we get desired identity. Fix any additive $2d$-tuple $q= x_{[2d]}$ in $\mathcal{Q}'$. By using a spanning set of $\Lambda_q$ and changing the basis, we may assume that vectors $(\gamma_{q, j}(y_{[k+1] \setminus I}))_{j\in t'_q}$ appearing in~\eqref{controlledprankeqonvars} actually attain all basis vectors of $\mathbb{F}_p^{t_q'}$.
    
    \indent Let $j \in [t'_q]$. Hence the standard basis vector $\lambda$ of $\mathbb{F}_p^{t_q'}$ having a single 1 at $j$\tss{th} coordinate appears in $\Lambda_q$, so by the definition of $\mathcal{Q}'$, $Y_{q,\lambda} \cap \prod_{i \in [k+1] \setminus I}E_i \not=\emptyset$. Let $y_{[k+1] \setminus I} = \Big(\sum_{j' \in [s]}\mu_{j, j'} e_{j ,j'} : j \in [k+1] \setminus I\Big)$ be a point in $Y_{q,\lambda} \cap \prod_{i \in [k+1] \setminus I}E_i$, for some scalars $\mu_{j, j'}$. Hence, by the conclusion of Claim~\ref{claimyqlrankdec}, we have
    \[\beta_{q,j}(z_I) = \sum_{i \in [2d]} (-1)^{i} \tilde{\phi}_{x_i}\Big(y_{[k+1]\setminus I} , z_I\Big) - \delta_q^{(y_{[k+1] \setminus I})}(z_I)\]
    for all $z_I \in (V_q)_{y_{[k] \setminus I}}$. In particular, when $z_{[k+1]} \in (\tilde{V}_{x_1} \cap \dots \cap \tilde{V}_{x_{2d}}) \times G_{k+1}$, then the $\tilde{\phi}_{x_i}$ terms in the identity above vanish and we have $\beta_{q,j}(z_I) = -\delta_q^{(y_{[k+1] \setminus I})}(z_I)$, completing the proof.
\end{proof}

\section{Multilinear Bogolyubov argument}

In this section we derive a multilinear Bogolyubov argument which is the third phase of our main proof. The goal is to ensure that the varieties in the domains of the system of MM-$r$-maps depend linearly on the indexing element. Recall that we say that $(V_x)_{x \in G_0}$ is a \textit{linear system} of mulitlinear varieties if each $V_x$ is a slice of a variety $V \subseteq G_{[0,k]}$ defined by a mixed-linear map whose all components depend linearly on $G_0$.

\begin{theorem}\label{multbogstep}
    Let $\varepsilon < \bsc$ and $d = 2^{2^{k+1}}$. Let $(\phi_x : V_x \to H)_{x \in G_0}$ be a system of MM-$r$-maps such that $(1-\varepsilon)|G_0|^{2d-1}$ additive $2d$-tuples are variety-respected. Then there exist a positive quantity $c \geq \exp(-(2r)^{O(1)})$ and a integer $s \leq (2r)^{O(1)}$, another system $(\psi_x : W\cap V'_x \to H)_{x \in X}$ of MM-$s$-maps such that $c|G_0|^3$ additive quadruples in $X$ are variety-respected, where $W \subseteq G_{[k]}$ is a multilinear variety, independent of $x$, and $V'_x\subseteq G_{[k]}$ depends linearly on $x$. Moreover, for each $x \in X$, we have 
    \[\bigg|Z\Big(\psi_x - \Big(\sum_{i \in [2d]} (-1)^i \phi_{y_i}\Big)\Big)\bigg| \geq c|G_{[k]}|\]
    for at least $c|G_0|^{2d-1}$ choices of $y_{[2d]}$ with $\sum_{i \in [2d]} (-1)^i y_i = x$.
\end{theorem}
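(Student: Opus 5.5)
The plan is to imitate the bilinear Bogolyubov argument from the $\mathsf{U}^4$ proof (Step 3.1 of the overview). Partially defined linear maps are replaced by MM-$r$-maps, and linear-algebra extensions are replaced by the simultaneous extension theory of Section~\ref{extensionSection}.

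\textbf{Step 1: first differences.} Fix the system $(\phi_x : V_x \to H)_{x \in G_0}$. For $a \in G_0$ and $x \in G_0$, consider the difference
\[\Delta_{a,x} = \phi_{x+a} - \phi_x \quad\text{on } V_{x+a} \cap V_x.\]
Since $(1-\varepsilon)|G_0|^{2d-1}$ of the $2d$-tuples are variety-respected and $\varepsilon < \bsc$, averaging gives the following: for most $a$ and most pairs $(x,y)$, the maps $\Delta_{a,x}$ and $\Delta_{a,y}$ agree on $V_{x+a}\cap V_x \cap V_{y+a} \cap V_y$. Iterating this $2^{k+1}$ times explains the choice $d = 2^{2^{k+1}}$. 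It gives agreement of iterated differences $\Delta_{a_1,\dots,a_m,x}$ over many tuples, which is what the Bogolyubov iteration needs.

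\textbf{Step 2: one map per shift.} For a fixed $a$, the maps $\Delta_{a,x}$ are many MM-$2r$-maps that pairwise agree on intersections of domains. I would glue them using Corollary~\ref{simextcorsinglevar} and Theorem~\ref{linsystextns}. First I would pass to a common subvariety on which the defining forms of $V_x$ split into biased parts (made $x$-independent via Theorem~\ref{arankprank}) and quasirandom parts. This should produce a single map $\psi_a$ whose domain is the intersection of an $x$-independent variety $W_a$ with a "sum" of the varieties $V_{x+a}\cap V_x$.

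\textbf{Step 3: linear dependence on $a$.} This is the multilinear Bogolyubov step proper. The defining forms of $V_x$ are arbitrary in $x$, but differencing and averaging over $x$ should replace them by forms depending multilinearly on $a$. I would run a dependent-random-choice or Fourier argument in the style of Hosseini--Lovett on the forms $\beta_x$. Concretely, for dense $a$, the slices of the combined domain contain $\{\alpha(a,\cdot)=0\}$ for a mixed-linear $\alpha : G_{[0,k]}\to\mathbb{F}_p^{s}$ depending linearly on $G_0$. The quasipolynomial bounds of that argument keep $s \leq (2r)^{O(1)}$ and $c \geq \exp(-(2r)^{O(1)})$.

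Restricting $\psi_a$ to $W \cap V'_a$, with $V'_a=\{\alpha_a=0\}$ and $W=\bigcap W_a$ (made uniform by the same regularisation), gives the new system indexed by a dense $X$. Quadruples $a_1-a_2+a_3-a_4=0$ unfold into $2d$-tuples of the original system, so they are variety-respected for a $c$-proportion. The "moreover" clause follows because each $\psi_x$ is, by construction, equal to an alternating sum of $\phi_{y_i}$ on a positive-density variety; Lemma~\ref{lowcodsize} then gives density $c$.

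\textbf{Main obstacle.} I expect the hardest part to be Steps 2 and 3 together. Unlike the linear case, a common extension of two MM-maps agreeing on the intersection need not exist on the union or sum of domains. Extensions exist only after quasirandomising the relevant forms with respect to slices, as in the hypotheses of Theorem~\ref{sim-ext-final-ver}. Keeping these quasirandomness conditions valid for the many tuples produced by the Bogolyubov averaging, and not only for typical pairs, is where I anticipate the most work.
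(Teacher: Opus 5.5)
There is a genuine gap, and it lies in Steps 2 and 3.

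In Step 2 you want $\psi_a$ to live on an $x$-independent variety intersected with a \emph{sum} of the varieties $V_{x+a}\cap V_x$. For $k\ge 2$ such a sum is not a multilinear variety; at best it is a multilinear set. None of the extension results of Section~\ref{extensionSection} extend maps to such a set, and the paper explicitly rejects this route.

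Theorem~\ref{linsystextns} is also unavailable here. It presupposes that $(V_x)$ is a linear system, which is exactly what this theorem is meant to produce.

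Next, Theorem~\ref{arankprank} does not make a biased combination $\lambda\cdot\alpha_x$ independent of $x$. That trick works only once the forms depend linearly on $G_0$, as in the proofs of Theorems~\ref{lin-sys-vanishing} and~\ref{linsystextns}. For arbitrary $\alpha_x$, the factors in the partition-rank decomposition depend on $x$, so you only lower the order.

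Likewise, $W=\bigcap_a W_a$ has unbounded codimension unless the $W_a$ are cut out by forms from a bounded list. Finally, Step 3 delegates the whole content, namely producing forms linear in $a$, to an unspecified Hosseini--Lovett-type argument for multilinear forms. No such tool is available.

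The missing ingredient is the mechanism that creates linear dependence on $G_0$, which in the paper is the inductive hypothesis. Your proposal never uses it. The paper's route (Proposition~\ref{inductivemlbogstep}) goes as follows.
\begin{enumerate}
\item It inducts on a down-set $\mathcal{D}$, peeling off a maximal $I$ and writing $V_x=V'_x\cap(\{\alpha_x=0\}\times G_{[k]\setminus I})$.
\item It regularizes (Claim~\ref{mlbogregularization}). Combinations of $\alpha_{x+a},\alpha_x,\alpha_{y+a},\alpha_y$ that fail quasirandomness, relative to slices of the other defining forms and for many additive quadruples, are fed into Lemma~\ref{biasedsystemmm}.
\item That lemma writes $\alpha_x(y_{[k]})=A(x,y_{[k-1]})\cdot y_k$ and applies Theorem~\ref{invhomm} together with Theorem~\ref{strFmult} in $k$ variables. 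The conclusion is that the structured part equals $\theta'_x+\gamma'$ on a lower-order variety $V''_x$, with $\theta'$ linear in $G_0$ and $\gamma'$ common.
\item The quasirandom parts are eliminated by Corollary~\ref{simextcorsinglevar}, applied to just two maps, $\phi_{x+a}-\phi_x$ and $\phi_{y+a}-\phi_y$. Consistency and respectedness come from Theorem~\ref{vanishingmmmaps}.
\item One then fixes boundedly many pairs $(x_i,y_i)$ and defines $\psi_a$ on the \emph{intersection} $\bigcap_i\tilde D_{a,x_i,y_i}$, not on a sum.
\item Linearity in $a$ then comes from $\theta_{x_i+a}=\theta_{x_i}+\theta_a$ with $x_i$ fixed.
\end{enumerate}
Each removed $I$ halves the admissible tuple length, which is where $d=2^{2^{k+1}}$ comes from. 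At the end, random index sets $A_0,B_0$, drawn from the bounded list of forms, make $W$ and $(V'_x)$ uniform in $x$.
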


Recall that for a collection of sets $\mathcal{D} \subseteq \mathcal{P}[k]$, we say that a multilinear variety is \textit{$\mathcal{D}$-dependent} if it is defined by forms whose coordinate sets belong to $\mathcal{D}$.

Our approach towards obtaining a linear behaviour in $G_0$ in the domain varieties of maps in the system will be based on convolutions in the following sense. At each step, for $a \in G_0$, we shall consider the map $\phi_{x+a} - \phi_x$ for $x\in G_0$. Note that $\phi_{x+a} - \phi_x$ is only defined on $V_{x + a} \cap V_x$. However, if we consider another choice $y$ instead of $x$, typically additive quadruple $(\upd{x+a}, x, y+a, \upd{y})$ is variety-respected so we have $\phi_{x+a} - \phi_x - \phi_{y+a} + \phi_y = 0$ on $V_{x+a} \cap V_x \cap V_{y+a} \cap V_y$. In other words, $\phi_{x+a} - \phi_x$ and $\phi_{y+a} - \phi_y$ coincide on the intersection of their domains, and we may try to take a simultaneous extension of $\phi_{x+a} - \phi_x$ and $\phi_{y+a} - \phi_y$ as a new map, whose domain is expected to behave linearly in $a$.

\indent Nevertheless, there is an obstacle to such a strategy coming from the general problem of extending multilinear maps, which is not present in $k = 1$ case, which was the subject of the previous paper~\cite{QPU4}. In that paper, we relied on the simple linear-algebraic fact that linear maps on subspaces $L_1 : U_1 \to H$ and $L_2 : U_2 \to H$, coinciding on the intersection of the domains, have a unique common extension on $U_1 + U_2$. This suggests the following approach when $U_1, U_2$ are varieties in $G_{[k]}$. Naively, we would need to consider the 'variety generated by $U_1$ and $U_2$' and extend the maps to that domain. However, such an approach is problematic for two reasons. Firstly, it is not obvious that multilinear maps can be extended directly and, even if they can, the best we can guarantee is extension to the smallest multilinear set (a set which is a subspace on each line in principal directions) containing $U_1 \cup U_2$. That would mean that we need to consider multilinear maps defined only on multilinear sets rather than varieties which is not a well-behaved category of maps, so we need a different approach. The proof requires some setting up, so we first introduce the relevant notation in Proposition~\ref{inductivemlbogstep}, and then explain the key ideas. Before proceeding with the proof, we derive an auxiliary result on a system of multilinear forms that is not quasirandom.

\subsection{Biased systems of multilinear forms}

In this subsection we consider a system of multilinear forms that is not quasirandom.

\begin{lemma}\label{biasedsystemmm}
    Let $X_1, \dots, X_4 \subseteq G_0$ be sets, and let $\alpha^{(i)}_{x} : G_{[k]} \to \mathbb{F}_p$ be a multilinear form for each $i \in [4]$ and $x \in X_i$. Let $\theta^{(1)}, \dots, \theta^{(3)} : G_{[0,k]} \to \mathbb{F}_p$ and $\gamma : G_{[k]} \to \mathbb{F}_p$ be multilinear forms. Suppose that
    \begin{equation}
        \on{bias}\Big(\alpha^{(1)}_{x+a} + \alpha^{(2)}_x + \alpha^{(3)}_{y+a} + \alpha^{(4)}_y + \theta^{(1)}_x + \theta^{(2)}_y + \theta^{(3)}_a + \gamma\Big) \geq c \label{biasedsystemeqn}
    \end{equation}
    holds for at least $c|G_0|^3$ additive quadruples $(x + a, x, y+a, y) \in X_1 \tdt X_4$. Then there exist further multilinear forms $\mu : G_{[0, k]} \to \mathbb{F}_p$ and $\mu' : G_{[k]} \to \mathbb{F}_p$ such that
    \[\on{bias}(\alpha^{(1)}_x - \mu_x - \mu') \geq \eplog{c}\]
    holds for at least $\eplog{c} |G_0|$ elements $x \in X$.
\end{lemma}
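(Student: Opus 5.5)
Throughout, the plan is to isolate the dependence on $x+a$ by repeated Cauchy--Schwarz in the group variable and then invoke the inductive hypothesis (Theorem~\ref{strFmult} for fewer variables) together with Theorem~\ref{arankprank}. First, rewrite biases as counts of zeros of multilinear forms. By Theorem~\ref{densetolowcodim}, bias of a form $\beta : G_{[k]} \to \mathbb{F}_p$ is comparable (up to $\exp(-\log^{O(1)})$ losses) to the density of the variety $\{x_{[k-1]} : \beta(x_{[k-1]}, \cdot) = 0\}$. Equivalently, one can work with $\on{bias}\beta = \ex_{x_{[k-1]}} \id(\beta(x_{[k-1]},\cdot)=0)$, which is the exact identity for multilinear forms. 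So~\eqref{biasedsystemeqn} says: writing $z=x+a$, for $c|G_0|^3$ triples $(z,x,y)$ the form $\alpha^{(1)}_z + \alpha^{(2)}_x + \alpha^{(3)}_{y+z-x} + \alpha^{(4)}_y + \theta^{(1)}_x + \theta^{(2)}_y + \theta^{(3)}_{z-x} + \gamma$ has many zeros in $G_{[k-1]}$ (as a map to $G_k^*$). Since $\theta^{(i)}$ are linear in the $G_0$ variable, $\theta^{(3)}_{z-x} = \theta^{(3)}_z - \theta^{(3)}_x$, so all $\theta$ terms split into pieces depending on one of $z,x,y$ only.

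Next, I pigeonhole: fix $x,y$ (or rather, by averaging, a pair $(x_0,y_0)$) such that for $\Omega(c)|G_0|$ elements $z$ the condition holds. The term $\alpha^{(3)}_{y_0+z-x_0}$ is the obstacle, since it depends on $z$ through a translate. To handle it, I apply Cauchy--Schwarz twice in the $G_0$ variables, in the spirit of the proof of Proposition~\ref{mlmaps-pass-step-proposition}. Concretely, I aim to show that
\[\exx_{z,z'} \on{bias}\big(\alpha^{(1)}_z - \alpha^{(1)}_{z'} + \text{(terms in }z,z'\text{ linear via }\theta\text{)} + \text{(a form independent of } z,z')\big)\]
is large, by eliminating the $x,y$ dependence. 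For this I use that biases of sums of forms supported on products are controlled multiplicatively: by Lemma~\ref{poscorvars}, zero sets of $\beta_1$ and $\beta_2$ (as maps on $G_{[k-1]}$) intersect densely, so the zero set of $\beta_1-\beta_2$ is dense. Applied to two different $z,z'$ sharing the same $(x,y)$-dependent remainder, the $\alpha^{(2)},\alpha^{(4)},\theta^{(1)},\theta^{(2)}$ terms cancel. The $\alpha^{(3)}$ term does not cancel automatically, so I would instead parametrise by $(a,y)$ with $x=y+b$. I would then pick pairs of quadruples sharing $x+a$ and $y+a$ up to swapping, using the symmetry of additive quadruples, so that after two differencing steps only $\alpha^{(1)}_{z}$, a fixed $\alpha^{(1)}_{z_0}$ and $G_0$-linear forms survive.

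The outcome I target is this: for $\eplog{c}|G_0|$ values of $z$, the form $\alpha^{(1)}_z - \sigma_z - \sigma'$ has bias $\eplog{c}$. Here $\sigma_z$ is a sum of forms linear in $z$ built from $\theta^{(i)}$, which I take as $\mu$, and $\sigma'$ collects $\alpha^{(1)}_{z_0}$ and the other fixed pieces, which I take as $\mu'$. Lemma~\ref{poscorvars} keeps the zero density from decaying by more than a power at each differencing step. The losses are only polynomial, and Theorem~\ref{densetolowcodim} converts between bias and zero density with quasipolynomial loss, which gives the stated bound.

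The main obstacle I expect is exactly the elimination of the $\alpha^{(3)}_{y+a}$ term. This is the step where a naive pigeonholing leaves a translate-dependent form. If the double-differencing does not close up, my fallback is to treat $\mathcal{A}(u) = \alpha^{(1)}_u$ as a map into the space of forms. Biased combinations then induce approximate respectedness of additive quadruples, as in Step 1.1, and I would apply Theorem~\ref{absg} or Lemma~\ref{gowerspathssingle} to pass to paths of length 6. Along such paths, all forms other than $\alpha^{(1)}$ telescope into $G_0$-linear pieces or fixed forms.
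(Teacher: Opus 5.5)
Your proposal has a genuine gap, and it is exactly where the content of the lemma lies: producing the form $\mu$. The outcome you aim for is that after differencing, $\alpha^{(1)}_z-\sigma_z-\sigma'$ is biased for many $z$, with $\sigma_z$ assembled from the $\theta^{(i)}$ and $\sigma'$ a fixed form containing $\alpha^{(1)}_{z_0}$. This is false in general. Take $k=1$, $G_0=G_1$, all $\theta^{(i)}$ and $\gamma$ zero, $\alpha^{(1)}_x(t)=\alpha^{(4)}_x(t)=x\cdot t$ and $\alpha^{(2)}_x(t)=\alpha^{(3)}_x(t)=-x\cdot t$. The form in \eqref{biasedsystemeqn} is then identically zero for every quadruple, so the hypothesis holds with $c=1$. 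But $\sigma_z=0$, and $\alpha^{(1)}_z-\sigma'$ is a linear form on $G_1$ whose bias is $0$ unless it vanishes, which happens for at most one $z$. The conclusion here needs something like $\mu(x,t)=x\cdot t$, which appears nowhere in the data; for $k=1$ the lemma contains Theorem~\ref{invhomm} as a special case. Differencing with Lemma~\ref{poscorvars}, Cauchy--Schwarz, or passing to length-$6$ paths via Lemma~\ref{gowerspathssingle} or Theorem~\ref{absg} only ever yields biased alternating sums of $\alpha^{(1)}$ in which several indices vary together. These tools never isolate a single $\alpha^{(1)}_z$ against a fixed form, and they cannot create a new form linear in $z$. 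That is the Freiman-type half of Theorem~\ref{invhomm}, not its Balog--Szemer\'edi--Gowers half.

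By contrast, the step you flag as the main obstacle is routine once you choose the right pivots. First absorb the $\theta^{(i)}$ and $\gamma$ into the $\alpha^{(i)}$, using $\theta^{(3)}_a=\theta^{(3)}_{x+a}-\theta^{(3)}_x$ as you noted, and call the results $\tilde\alpha^{(i)}$. For fixed $a$ and $x_{[k]}$, the average over $(x,y)$ then factorises into an $x$-part and a $y$-part. So Cauchy--Schwarz over $(a,x_{[k]})$ removes $\tilde\alpha^{(3)},\tilde\alpha^{(4)}$, and a second application removes $\tilde\alpha^{(2)}$. Equivalently, pivot with Lemma~\ref{poscorvars} first on $(a,y)$ and then on $(x,x')$, rather than on a fixed $(x_0,y_0)$. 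This gives $c^{O(1)}|G_0|^3$ additive quadruples in $X_1$ on which $\tilde\alpha^{(1)}_{u_1}-\tilde\alpha^{(1)}_{u_2}-\tilde\alpha^{(1)}_{u_3}+\tilde\alpha^{(1)}_{u_4}$ is $c^{O(1)}$-biased.

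The missing idea is what the paper does next. Write $\tilde\alpha^{(1)}_x(y_{[k]})=A(x,y_{[k-1]})\cdot y_k$ with $A:X_1\times G_{[k-1]}\to G_k$. Then bias becomes the density of $y_{[k-1]}$ at which $A(\cdot,y_{[k-1]})$ respects the quadruple. Applying Theorem~\ref{invhomm} on slices gives a dense $Y\subseteq X_1\times G_{[k-1]}$ on which $A$ is an honest Freiman multihomomorphism in the $k$ directions $G_0,G_1,\dots,G_{k-1}$; it is already multilinear in $G_{[k-1]}$. The inductive hypothesis, Theorem~\ref{strFmult} in $k$ variables, then gives a global multiaffine $\Psi:G_{[0,k-1]}\to G_k$ agreeing with $A$ on a dense set. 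After making $\Psi$ multilinear in $G_{[k-1]}$, take $\mu(x,y_{[k]})=(\Psi(x,y_{[k-1]})-\Psi(0,y_{[k-1]}))\cdot y_k-\theta^{(3)}_x(y_{[k]})$ and $\mu'(y_{[k]})=\Psi(0,y_{[k-1]})\cdot y_k-\gamma(y_{[k]})$. You mention the inductive hypothesis in your first sentence, but your plan never uses it. This is where it must enter, and it is what creates $\mu$.
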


In the proposition above, the notation $\theta^{(1)}_x$ means the restricted multilinear form $z_{[k]} \mapsto \theta^{(1)}(x, z_{[k]})$.

\begin{proof}
    By defining $\tilde{\alpha}_x^{(1)} = \alpha_x^{(1)} + \theta^{(3)}_x + \gamma$, $\tilde{\alpha}_x^{(2)} = \alpha_x^{(2)} + \theta^{(1)}_x - \theta^{(3)}_x$, $\tilde{\alpha}_x^{(3)} = \alpha_x^{(3)}$ and $\tilde{\alpha}_x^{(4)} = \alpha_x^{(4)} + \theta^{(2)}_x$, condition~\eqref{biasedsystemeqn} simplifies to
    \[\on{bias}\Big(\tilde{\alpha}^{(1)}_{x+a} + \tilde{\alpha}^{(2)}_x + \tilde{\alpha}^{(3)}_{y+a} + \tilde{\alpha}^{(4)}_y \Big) \geq c.\]
    
    Furthermore, by triangle and Cauchy-Schwarz inequalities, we obtain
    \begin{align*}c^4 \leq &\Big|\exx_{x, y, a \in G_0} \id_{X_1}(x + a)\id_{X_2}(x) \id_{X_3}(y + a)\id_{X_4}(y) \exx_{x_{[k]} \in G_{[k]}} \\
    &\hspace{6cm}\omega\Big(\tilde{\alpha}^{(1)}_{x+a}(x_{[k]}) + \tilde{\alpha}^{(2)}_x(x_{[k]}) + \tilde{\alpha}^{(3)}_{y+a}(x_{[k]}) + \tilde{\alpha}^{(4)}_y(x_{[k]})\Big)\Big|^2\\
    \leq & \exx_{a \in G_0, x_{[k]} \in G_{[k]}} \Big|\Big(\exx_{x \in G_0} \id_{X_1}(x + a)\id_{X_2}(x) \omega\Big(\tilde{\alpha}^{(1)}_{x+a}(x_{[k]}) + \tilde{\alpha}^{(2)}_x(x_{[k]})\Big) \Big)\\
    &\hspace{6cm}\Big(\exx_{y \in G_0} \id_{X_3}(y + a)\id_{X_4}(y) \omega\Big(\tilde{\alpha}^{(3)}_{y+a}(x_{[k]}) + \tilde{\alpha}^{(4)}_y(x_{[k]})\Big)\Big)\Big|^2\\
    \leq & \exx_{a \in G_0, x_{[k]} \in G_{[k]}} \Big|\Big(\exx_{x \in G_0} \id_{X_1}(x + a)\id_{X_2}(x) \omega\Big(\tilde{\alpha}^{(1)}_{x+a}(x_{[k]}) + \tilde{\alpha}^{(2)}_x(x_{[k]})\Big) \Big) \Big|^2\\
    = & \exx_{x, y, a \in G_0} \id_{X_1}(x + a)\id_{X_2}(x) \id_{X_1}(y + a)\id_{X_2}(y)\\
    &\hspace{6cm}\exx_{x_{[k]} \in G_{[k]}} \omega\Big(\tilde{\alpha}^{(1)}_{x+a}(x_{[k]}) + \tilde{\alpha}^{(2)}_x(x_{[k]}) + \tilde{\alpha}^{(1)}_{y+a}(x_{[k]}) + \tilde{\alpha}^{(2)}_y(x_{[k]})\Big).
    \end{align*}

    Another round of such maneuvers leads to 
    \begin{equation}
        c^8 \leq \exx_{x, y, a \in G_0} \id_{X_1}(x + a)\id_{X_1}(x) \id_{X_1}(y + a)\id_{X_1}(y) \exx_{x_{[k]} \in G_{[k]}} \omega\Big(\tilde{\alpha}^{(1)}_{x+a}(x_{[k]}) + \tilde{\alpha}^{(1)}_x(x_{[k]}) + \tilde{\alpha}^{(1)}_{y+a}(x_{[k]}) + \tilde{\alpha}^{(1)}_y(x_{[k]})\Big).\label{diraddquadsmultbogaux}
    \end{equation}
    We simplify the notation and write $\alpha_x$ and $X$ instead of $\tilde{\alpha}^{(1)}_x$ and $X_1$.

    Define map $A : X \times G_{[k-1]} \to G_k$ so that for all $x \in X$ and $y_{[k]} \in G_{[k]}$ we have 
    \[A(x, y_{[k-1]}) \cdot y_k = \alpha_x(y_{[k]}).\]
    It is multilinear in coordinates $G_{[k-1]}$. By~\eqref{diraddquadsmultbogaux}, we have that $A$ respects at least $c^8 |G_0|^3 |G_{[k-1]}|$ additive quadruples in direction $G_0$. Averaging and applying Theorem~\ref{invhomm}, we may pass to a subset of domain $Y \subseteq X \times G_{[k-1]}$ of size $|Y| \geq \eplog{c} |G_{[0,k-1]}|$ where $A$ is a Freiman multi-homomorphism. Since we assume the main result (Theorem~\ref{strFmult}) for $k$ variables, there exists a global multiaffine map $\Psi : G_{[0, k-1]} \to G_k$ that coincides with $A$ at at least $\eplog{c} |G_{[0,k-1]}|$ points. By averaging, we find a subset $X' \subseteq X$ of size  $\eplog{c} |G_0|$ such that for each $x \in X'$, we have $A(x, y_{[k-1]}) = \Psi(x, y_{[k-1]})$ for at least $\eplog{c} |G_{[k-1]}|$ points $y_{[k-1]} \in G_{[k-1]}$. Note also that for such an $x$,
    \[A(x, y_1 - z_1, \dots, y_{k-1} - z_{k-1}) = \sum_{I \subseteq [k-1]} (-1)^{k-1-|I|} A(x, y_I, z_{[k-1] \setminus I}) = \sum_{I \subseteq [k-1]} (-1)^{k-1-|I|} \Psi(x, y_I, z_{[k-1] \setminus I})\]
    holds for $\eplog{c} |G_{[k-1]}|^2$ choices of $(y_{[k-1]}, z_{[k-1]})$ so we may assume that $\Psi$ is multilinear in coordinates $G_{[k-1]}$. The claim follows after averaging and setting $\mu(x, y_{[k]}) = \Psi(x, y_{[k-1]}) \cdot y_k - \Psi(0, y_{[k-1]}) \cdot y_k - \theta^{(3)}_x(y_{[k]})$ and $\mu'(y_{[k]}) = \Psi(0, y_{[k-1]}) \cdot y_k - \gamma(y_{[k]})$. 
\end{proof}

\subsection{Obtaining a linear system of varieties}

In this subsection we complete the multilinear Bogolyubov argument step. The proof will be inductive and we shall gradually linearize the behaviour of the domain varieties. The inductive step is formulated as the next proposition.

\begin{proposition}\label{inductivemlbogstep}
    Let $\mathcal{D} \subseteq \mathcal{P}[k]$ be a down-set and let $D_0 \leq O(1)$. Let $\theta_i : G_0 \times G_{I_i} \to \mathbb{F}_p$ and $\gamma_i : G_{J_i} \to \mathbb{F}_p$ be multilinear forms for $i \in [m]$. For each $x \in G_0$, let $A_x, B_x \subseteq [m]$ be sets of size at most $r$ and let $V_x \subseteq G_{[k]}$ be a $\mathcal{D}$-dependent multilinear variety of codimension at most $r$. Write 
    \[C_x = \{y_{[k]} \in G_{[k]} : (\forall i \in A_x) \gamma_i(y_{J_i}) = 0\},\,\,L_x = \{y_{[k]} \in G_{[k]} : (\forall i \in B_x) \theta_i(x, y_{J_i}) = 0\},\,\, S_x = C_x \cap L_x \cap V_x\]
    and let $\phi_x : S_x \to H$ be a multilinear map for each $x \in X$, for some subset $X \subseteq G_0$. Suppose that, for each $d \in \{2, \dots, 2D_0\}$, $(1-\varepsilon)|G_0|^{2d-1}$ additive $2d$-tuples are variety-respected in the system $(\phi_x)_{x \in X}$. Then there exist a system of MM-maps $(\psi_a)_{a \in A}$ such that $Z|(\psi_a - (\phi_{x+a} - \phi_x))| \geq p^{-k(2D_0+1)r}$ on and for each $d \in \{2, \dots, D_0\}$, $(1-O(\varepsilon))|G_0|^{2d-1}$ additive $2d$-tuples are variety-respected in the system $(\psi_x)_{x \in A}$.
\end{proposition}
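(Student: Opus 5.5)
We will take $A$ to be the set of $a \in G_0$ with many good convolution pairs, and define $\psi_a$ as a common extension of the differences $\phi_{x+a}-\phi_x$. For a fixed $a$, the map $\phi_{x+a}-\phi_x$ is defined on $S_{x+a}\cap S_x$. Since $(1-\varepsilon)|G_0|^{3}$ additive quadruples are variety-respected, for most $a$ and most pairs $(x,y)$ the quadruple $(\upd{x+a},x,y+a,\upd{y})$ is variety-respected. Hence $\phi_{x+a}-\phi_x$ and $\phi_{y+a}-\phi_y$ agree on the intersection of their domains.

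The key structural point is how the domains decompose. The pieces $C_{x+a}\cap C_x$ and $V_{x+a}\cap V_x$ are fine as they stand. The linear piece satisfies
\[L_{x+a}\cap L_x \subseteq \{\theta_i(a,\cdot)=\theta_i(x,\cdot)\text{-type constraints}\},\]
and its part common to all $x$ is $\{\theta_i(a,y_{J_i})=0 : i \in B_{x+a}\cap B_x\}$, which depends linearly on $a$.

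\textbf{Step 1.} For fixed $a$, split each domain into three parts: an $a$-linear part, a part common to $x$ and $y$, and a part specific to $x$ or to $y$.

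\textbf{Step 2.} Glue the differences across all good pairs. We apply Theorem~\ref{linsystextns} (weak gluing) to the linear system indexed by $x$, with the maps $\phi_{x+a}-\phi_x$, where agreement on $c|G_0|^2$ pairs comes from the variety-respected quadruples. This produces a variety $Z_a$ and a map $\psi_a$ that agrees with $\phi_{x+a}-\phi_x$ on $W\cap Z_a\cap \tilde V_x$ for a dense set of $x$. Where quasirandomness is available we use Corollary~\ref{simextcorsinglevar}, and we use Theorem~\ref{lin-sys-vanishing} to show that the extension does not depend on choices. The zero-density bound $p^{-k(2D_0+1)r}$ for $\psi_a-(\phi_{x+a}-\phi_x)$ then follows from Lemma~\ref{lowcodsize}, since they coincide on a variety of codimension about $(2D_0+1)r$.

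\textbf{Step 3.} Transfer respectedness. For an additive $2d$-tuple $a_{[2d]}$ with $d\le D_0$, choose a generic $x$ and write
\[\sum_i (-1)^i\psi_{a_i} = \sum_i (-1)^i\bigl(\phi_{x_i+a_i}-\phi_{x_i}\bigr).\]
With suitably coupled $x_i$, the right-hand side is an alternating sum over an additive $4d$-tuple in $X$. By hypothesis with $2d\le 2D_0$, this tuple is variety-respected for all but an $O(\varepsilon)$ fraction of choices. Averaging over $x$ and combining with the local agreement of $\psi_{a_i}$ with the differences gives vanishing on a large subset of the intersection of domains. We then upgrade this to vanishing on the whole intersection via Lemma~\ref{trivialextensionsverydense} or Theorem~\ref{vanishingmmmaps}. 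Counting exceptional tuples gives the $(1-O(\varepsilon))|G_0|^{2d-1}$ bound.

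The main obstacle is Step 2. The extension has to live on a genuine multilinear variety whose $L$-part depends linearly on $a$, not on a union of varieties, and the $x$-specific $C$- and $V$-parts must be absorbed into a single common variety. I would first try to remove biased $\theta$-components using Lemma~\ref{biasedsystemmm}, so that the remaining forms are quasirandom and Corollary~\ref{simextcorsinglevar} applies. Keeping the codimension polynomial in $r$ requires choosing the quasirandomness threshold $\eta = p^{-\blc r}$ carefully.
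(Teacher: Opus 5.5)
There is a genuine gap, and it is in your Step~2, which carries the whole content of the proposition.

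\textbf{Why Step~2 fails.} Theorem~\ref{linsystextns}, and likewise Theorem~\ref{lin-sys-vanishing}, only applies to maps defined on $W\cap V_x$ with $W$ fixed and $(V_x)_{x\in G_0}$ a \emph{linear} system. The domain of $\phi_{x+a}-\phi_x$ is $S_{x+a}\cap S_x$, and this is not of that form:
\begin{itemize}
\item $V_{x+a}\cap V_x$ is an arbitrary $\mathcal{D}$-dependent variety with no linear dependence on $x$. Making it linear is exactly what this proposition is for.
\item $C_x$ and $L_x$ vary with $x$ through the arbitrary index sets $A_x$ and $B_x$.
\end{itemize}
So the weak gluing hypotheses fail. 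Those theorems are Phase~4 tools that presuppose the output of this phase. Even if you could glue for each fixed $a$, the variety $Z_a$ you obtain would depend on $a$ in an uncontrolled way, so the domain of $\psi_a$ would be no more structured than before.

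Note that the statement as printed omits the structural half of the conclusion, which is what its iteration in Theorem~\ref{multbogstep} relies on. For a maximal $I\in\mathcal{D}$, the domain of $\psi_a$ must again be an intersection of three pieces: common forms from a bounded list, forms linear in $a$, and a $(\mathcal{D}\setminus\{I\})$-dependent variety. The respectedness part alone already holds for $\psi_a=\phi_{x_0+a}-\phi_{x_0}$ with a single generic $x_0$. Your plan never identifies what decreases from one application to the next.

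\textbf{The missing mechanism.} The paper proceeds as follows.
\begin{itemize}
\item Write $V_x=V'_x\cap(\{\alpha_x=0\}\times G_{[k]\setminus I})$, where $V'_x$ is $(\mathcal{D}\setminus\{I\})$-dependent.
\item Regularize (Claim~\ref{mlbogregularization}). The linear combinations of $\alpha_x$ split into quasirandom ones and structured ones. The structured ones correlate with $\theta'_j(x,\cdot)+\gamma'_j$ for a bounded list of new forms. This is where Lemma~\ref{biasedsystemmm} enters. It is applied to the $I$-dependent forms of $V_x$, not to the $\theta_i$, which are already linear in $x$.
\item Corollary~\ref{simextcorsinglevar} removes the quasirandom part. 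It gives a common extension $\tau_{a,x,y}$ of $\phi_{x+a}-\phi_x$ and $\phi_{y+a}-\phi_y$.
\item Theorem~\ref{vanishingmmmaps} shows that these extensions agree with one another and inherit respectedness.
\item Crucially, one fixes by averaging $O(1)$ base points $x_{[2D_0]},y_{[2D_0]}$ that serve \emph{all} $a\in A$ simultaneously. Then $\psi_a$ is defined as the common value of the $\tau_{a,x_i,y_i}$.
\end{itemize}
Fixing the base points is what produces the right domain. Forms evaluated at $x_i$ and $y_i$ become common forms. The identity $\theta(x_i+a,\cdot)=\theta(x_i,\cdot)+\theta(a,\cdot)$ then yields the part linear in $a$. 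Finally, Theorem~\ref{arankprank} replaces the structured part by the forms $\theta'_j(x,\cdot)+\gamma'_j$ on a lower-order variety. At the fixed base points this splits into common forms, forms $\theta'_j(a,\cdot)$, and a $(\mathcal{D}\setminus\{I\})$-dependent variety.

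Your Step~3, passing to the additive $4d$-tuple $(x_i+a_i,x_i)_i$, is in line with the paper. However, it presupposes a construction of $\psi_a$ that your Step~2 does not supply.
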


\noindent\textbf{Remark on the notation.} During the proof, we maintain a list of multilinear forms depending on $G_0$, denoted $\theta_1, \dots, \theta_m$, and a list of multilinear forms independent of $G_0$, denoted $\gamma_1, \dots, \gamma_m$. Here, $m$ is somewhat large, but reasonable, namely we may think of it as comparable to $2^t$, where $t = O(r)$ is the codimension of the domains of the multilinear maps in the system. The current domains are intersections of three varieties. The first one is independent of $G_0$, denoted by $C_x$, where the letter $C$ is chosen to indicate the variety comes from \textit{common} forms, as they are shared between many $x$. The second one exhibits linear behaviour on $x$ and is (therefore) denoted $L_x$. The final one is $\mathcal{D}$-dependent, and this dependence decreases in each step. As the varieties $C_x$ and $L_x$ have codimension that is much smaller than the length lists, they are determined by the choice of forms in the list, which is the role of sets $A_x$ and $B_x$.

\noindent\textbf{Brief proof overview.} Out strategy to define $\psi_a$ is to find $x,y \in G_0$ such that $x+a,x, y+a, y$ is a variety-respected additive quadruple in $X$ and take $\psi_a$ as a common extension of $\phi_{x+a} - \phi_x$ and $\phi_{y+a} - \phi_y$. These two maps have domains
\[S_{x+a} \cap S_x = C_{x+a} \cap C_x \cap L_{x + a} \cap L_x \cap V_{x + a} \cap V_x\]
and
\[S_{y+a} \cap S_y = C_{y+a} \cap C_y \cap L_{y + a} \cap L_y \cap V_{y + a} \cap V_y.\]

Let $I \subseteq \mathcal{D}$ be a maximal set and let $\alpha_x : G_{I} \to \mathbb{F}_p^r$ be a multilinear map such that $V_x = V'_x \cap (\{\alpha_x = 0\} \times G_{[k] \setminus I})$ for a $(\mathcal{D} \setminus \{I\})$-supported multilinear variety $V'_x$. Write
\[D_{a, x} = (C_{x + a}\cap C_x) \cap (L_{x+a} \cap L_x) \cap ( V'_{x + a} \cap  V'_x) \subseteq G_{[k]}.\]

For each $x$, we shall find multilinear maps $\alpha^{\on{qr}}_x$, arising from the quasirandom linear combinations of components of $\alpha_x$, and $\alpha^{\on{str}}_x$, which is structured in the sense that its components are close to further forms depending linearly on $x$. Then, we extend $\phi_{x+a} - \phi_x$ and $\phi_{y+a} - \phi_y$ to a multilinear map $\tau_{a,x,y}$ defined on
\begin{align*}D_{a,x} \cap D_{a,y} \cap ((\{\alpha^{\on{str}}_{x +a} = 0\} \cap \{\alpha^{\on{str}}_{x} = 0\}\cap \{\alpha^{\on{str}}_{y +a} = 0\} \cap \{\alpha^{\on{str}}_{y} = 0\}) \times G_{[k] \setminus I}).\end{align*}

To obtain the final maps, we shall pick $O(1)$ choices of $x$ and $y$ and define a new map for element $a \in G_0$ as $\tau_{a,x,y}$ for some $x,y$ among the chosen ones, and we also need to remove $I$-dependent varieties $\{\alpha^{\on{str}}_{x+a} = 0\}\cap \dots \cap \{\alpha^{\on{str}}_{y} = 0\}$.

\indent Let us turn to details. 
    
\begin{proof}[Proof of Proposition~\ref{inductivemlbogstep}] Let $\eta > 0$ be a parameter to be chosen later. 

    \begin{claim}\label{mlbogregularization}
        We may find a positive integer $m' \leq \exp\Big((2r\log (\eta^{-1}m))^{O(1)}\Big)$, multilinear maps $\theta'_i : G_0 \times G_I \to \mathbb{F}_p$, $\gamma'_i : G_I \to \mathbb{F}_p$ for $i \in [m']$ and, for each $x \in G_0$, independent elements $\lambda_{x, 1}, \dots, \lambda_{x, s_x}$ of $\mathbb{F}_p^r$ such that, writing $\Lambda_x = \langle \lambda_{x, 1}, \dots, \lambda_{x, s_x} \rangle$, we have
        \begin{itemize}
            \item[\textbf{(i)}] for each $i \in [s_x]$ there exists $j \in [m']$ such that $\on{bias} \Big(\lambda_{x, i} \cdot \alpha_x - (\theta'_j)_x - \gamma'_j\Big) \geq \exp(-\log^{O(1)}(2\eta^{-1}))$,
            \item[\textbf{(ii)}] for all but at most $\eta|G_0|^3$ additive quadruples $(x+a,x,y+a,y)$ in $X$, when $\mu, \mu', \nu,\nu' \in \mathbb{F}_p^r$ are linear combinations such that $\mu\notin\Lambda_{x+a}$, $\mu'\notin\Lambda_{x}$, $\nu\notin\Lambda_{y+a}$ or $\nu'\notin\Lambda_{y}$, for all directions $\kappa \notin I$ the multilinear form $\mu  \cdot \alpha_{x+a} + \mu' \cdot \alpha_{x+a} + \nu \cdot \alpha_{y+a} +  \nu' \cdot \alpha_{y})$ is $\eta$-quasirandom with respect to $(D_{a,x} \cap D_{a,y})_{z_\kappa} \cap (D_{a,x} \cap D_{a,y})_{z'_\kappa}$ for all but at most $\eta |G_\kappa|^2$ pairs $(z_\kappa, z'_\kappa) \in G_\kappa^2$,
            \item[\textbf{(iii)}] for all but at most $\eta|G_0|^{6d-1}$ of $(a_{[2d]}, x_{[2d]}, y_{[2d]})$ with $\sum_{i \in [2d]} (-1)^i a_i = 0$ and $x_i + a_i, x_i, y_i +a_i, y_i \in X$, when $\mu^{(i)}, {\mu'}^{(i)}, {\nu}^{(i)}, {\nu'}^{(i)} \in \mathbb{F}_p^r$, $i \in [2d]$, are linear combinations such that for some $i \in [2d]$ at least one $\mu^{(i)}\notin\Lambda_{x_i+a_i}$, ${\mu'}^{(i)}\notin\Lambda_{x_i}$, ${\nu}^{(i)}\notin\Lambda_{y_i+a_i}$ or ${\nu'}^{(i)}\notin\Lambda_{y_i}$ holds, the form
            \[\sum_{i \in [2d]}\mu^{(i)} \cdot \alpha_{x_i + a_i} + {\mu'}^{(i)} \cdot \alpha_{x_i} + \nu^{(i)} \cdot \alpha_{y_i + a_i} + {\nu'}^{(i)} \cdot \alpha_{y_i}\]
            is $\eta$-quasirandom with respect to $D_{a_1, x_1} \cap D_{a_1, y_1} \cap D_{a_2, x_2} \cap \dots \cap D_{a_{2d}, y_{2d}}$.
        \end{itemize}
    \end{claim}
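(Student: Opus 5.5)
The plan is an energy-increment (iterative regularization) argument. We maintain a list of pairs $(\theta'_j,\gamma'_j)_{j\in[m']}$, with $\theta'_j : G_0\times G_I\to\mathbb{F}_p$ and $\gamma'_j : G_I\to\mathbb{F}_p$. For each $x$ we let $\Lambda_x$ be the subspace of those $\lambda\in\mathbb{F}_p^r$ for which $\lambda\cdot\alpha_x - (\theta'_j)_x - \gamma'_j$ is biased (bias at least $\eta_t$, a threshold depending on the stage $t$) for some $j$ in the current list. Taking a basis of this span gives the $\lambda_{x,i}$. Property \textbf{(i)} then holds by construction. Since $\dim\Lambda_x\le r$, the quantity $\sum_x \dim\Lambda_x$ can increase at most $r|G_0|$ times in total. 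We will arrange that every failure of \textbf{(ii)} or \textbf{(iii)} increases $\Lambda_x$ for a set of $x$ of density $\exp(-\log^{O(1)}(2\eta^{-1}))$. Hence after at most $\exp(\log^{O(1)}(2\eta^{-1}))\cdot r$ iterations both properties hold. To keep \textbf{(i)} uniform, the bias thresholds should decrease along the iteration, e.g. $\eta_{t+1}=\exp(-\log^{O(1)}(2\eta_t^{-1}))$ over a bounded number of levels, or we simply track a single quasipolynomial bound.

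For the failure of \textbf{(ii)}, suppose more than $\eta|G_0|^3$ quadruples $(x+a,x,y+a,y)$ fail. By pigeonholing we fix the direction $\kappa\notin I$ and the coefficient vectors $\mu,\mu',\nu,\nu'$, at a cost of $p^{-4r}$. We also fix the pattern of forms defining $D_{a,x}\cap D_{a,y}$. These come from the $\gamma_i$ (independent of $G_0$, fixed by pigeonholing the sets $A_x$ over $m^{O(r)}$ choices), from the $\theta_i$ (linear in $x$, $x+a$, $y$, $y+a$) and from $V'$ (lower-order, $(\mathcal{D}\setminus\{I\})$-supported). After choosing the slices $z_\kappa,z'_\kappa$, the lower-order terms become forms depending on fewer coordinates inside $I$, or restricted forms in $G_I$. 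We then use the Gowers--Cauchy--Schwarz inequality in the coordinates of $[k]\setminus I$, as in the proof of the Claim in Proposition~\ref{linsystextns-induction}, to remove the forms not supported on $I$. The remaining forms depend on $G_I$ and on the indices only through $\theta$-terms linear in $x,x+a,y,y+a$ or $\gamma$-terms, together with $\mu\cdot\alpha_{x+a}+\dots$. This yields exactly the hypothesis \eqref{biasedsystemeqn} of Lemma~\ref{biasedsystemmm}, after reordering so that an index with a coefficient outside its $\Lambda$ is placed first, say $\mu\notin\Lambda_{x+a}$. Lemma~\ref{biasedsystemmm} then produces forms $\mu_{\mathrm{new}} : G_{[0,k]}\to\mathbb{F}_p$ and $\mu'_{\mathrm{new}}$ with $\on{bias}(\mu\cdot\alpha_x-(\mu_{\mathrm{new}})_x-\mu'_{\mathrm{new}})\ge\exp(-\log^{O(1)}(2\eta^{-1}))$ for many $x$. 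We append $(\mu_{\mathrm{new}},\mu'_{\mathrm{new}})$ to the list. Since $\mu$ was outside $\Lambda_x$ for those $x$ (after pigeonholing $\mu$ in the failing set), $\Lambda_x$ strictly grows on a dense set. For this to work, the dimension of $\Lambda_x$ for the relevant $x$ has to be pigeonholed before applying the lemma, so that the first variable indeed carries a new direction.

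Property \textbf{(iii)} is handled identically, with $8d$ indexed forms instead of four. We first apply Cauchy--Schwarz in the variables other than one chosen index to isolate a single $\alpha_{x_i+a_i}$ (or $\alpha_{x_i}$, etc.) whose coefficient lies outside its $\Lambda$, reducing to a biased quadruple-type configuration. The number of new pairs added per step is bounded by $p^{O(dr)}$. This gives $m'\le m\cdot\exp((2r\log(\eta^{-1}m))^{O(1)})$, because the pigeonholing over $A_x,B_x$ costs $m^{O(r)}$ and only makes the densities $\exp(-(r\log m\,\log\eta^{-1})^{O(1)})$.

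The main obstacle I expect is the reduction to the form required by Lemma~\ref{biasedsystemmm}. The quasirandomness in \textbf{(ii)} is relative to the slices $(D_{a,x}\cap D_{a,y})_{z_\kappa}\cap(\cdot)_{z'_\kappa}$, and the forms there carry affine dependence on $x,a,y$, both through $L_x$ and through the lower-order $V'_x$. These $V'_x$ terms are arbitrary in $x$, not linear, so they cannot be absorbed into $\theta^{(j)}$. I would first try to eliminate them by Gowers--Cauchy--Schwarz along a coordinate of $I$ not in their support. This uses that they are $(\mathcal{D}\setminus\{I\})$-supported and $I$ is maximal, so each such form misses some coordinate of $I$ or involves coordinates outside $I$, which are averaged out. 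Any residue would then be treated as part of the "$\alpha^{(2)},\alpha^{(3)},\alpha^{(4)}$" arbitrary terms, which Lemma~\ref{biasedsystemmm} permits.
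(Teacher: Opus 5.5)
Your proposal is correct and follows essentially the paper's route. Each failure of \textbf{(ii)} or \textbf{(iii)} is pigeonholed over $\kappa$, the coefficient vectors and the $\gamma_i/\theta_i$ patterns, then reduced by Gowers--Cauchy--Schwarz (plus Cauchy--Schwarz steps for \textbf{(iii)}) to the hypothesis of Lemma~\ref{biasedsystemmm}; the new pair it produces enlarges $\Lambda_x$ on a dense set, so the iteration stops after $\exp((2r\log(\eta^{-1}m))^{O(1)})$ steps. Two corrections: the Gowers--Cauchy--Schwarz step must be taken in the coordinates of $I$, not $[k]\setminus I$, and because $I$ is maximal in $\mathcal{D}$, every $(\mathcal{D}\setminus\{I\})$-supported form misses a coordinate of $I$ and is killed outright, so your residue fallback and decreasing bias thresholds are unnecessary ($\eta$ stays fixed throughout).
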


    We call an additive quadruple satisfying the condition \textbf{(ii)} and a $6d$-tuple satisfying the condition \textbf{(iii)} \textit{good}.

    \begin{proof}
        We iteratively add new multilinear forms until the conditions \textbf{(ii)} and \textbf{(iii)} holds. Suppose that one of them fails.

        \begin{claim}
            If condition \textbf{(ii)} or \textbf{(iii)} fails, then we have  multilinear forms
             $T, T', T'' : G_{\{0\} \cup I} \to \mathbb{F}_p$ and $\Gamma: G_I \to  \mathbb{F}_p$ and $\mu, \mu', \nu, \nu' \in \mathbb{F}_p^r$, such that 
            \[\on{bias} \Big(\mu  \cdot \alpha_{x+a} + \mu' \cdot \alpha_{x+a} + \nu \cdot \alpha_{y+a} +  \nu' \cdot \alpha_{y} + T_{a} + T'_{x} + T''_{y} + \Gamma\Big) \geq \eta\]
            and $\mu \notin \Lambda_{x+a}$ hold for at least $(pm)^{-O(r)}\eta^{O(1)}|G_{0}|^3$ choices of $(a,x,y) \in G_0^3$.
        \end{claim}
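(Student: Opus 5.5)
The claim to be proved is the inner one: if condition \textbf{(ii)} or \textbf{(iii)} of Claim~\ref{mlbogregularization} fails, there is a single biased combination with the stated uniform structure holding for many triples $(a,x,y)$. The plan is to unfold the failure of quasirandomness into a biased form and then use averaging to make everything except the $\alpha$-part depend on only one of the three variables. The two conditions are treated separately, with \textbf{(iii)} reduced to the shape of \textbf{(ii)}.

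\emph{Condition (ii).} Suppose at least $\eta|G_0|^3$ additive quadruples $(x+a,x,y+a,y)$ in $X$ are bad. For each bad one, unfolding the definition gives a direction $\kappa\notin I$, coefficients $\mu,\mu',\nu,\nu'$ with, say, $\mu\notin\Lambda_{x+a}$ (after relabelling, via symmetry of the four slots), and at least $\eta|G_\kappa|^2$ pairs $(z_\kappa,z'_\kappa)$ for which the form $\mu\cdot\alpha_{x+a}+\dots+\nu'\cdot\alpha_y$ plus a linear combination of the forms defining $(D_{a,x}\cap D_{a,y})_{z_\kappa}\cap(D_{a,x}\cap D_{a,y})_{z'_\kappa}$ has bias at least $\eta$.
\begin{itemize}
\item[1.] Pigeonhole over $\kappa$, over $\mu,\mu',\nu,\nu'$ (at most $p^{4r}$ choices), over which slot carries the non-membership, and over the coefficient vector in front of the defining forms. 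The defining forms are the $\gamma_i$ with $i\in A_{x+a}\cup A_x\cup A_{y+a}\cup A_y$, the $\theta_i$ with $i\in B_{\cdot}$, and the $\mathcal{D}\setminus\{I\}$-supported forms of $V'$. Since the index sets $A_\cdot,B_\cdot$ are subsets of $[m]$ of size at most $r$, this costs $(pm)^{O(r)}$. It leaves $(pm)^{-O(r)}\eta^{O(1)}|G_0|^3$ triples with one fixed shape.
\item[2.] Restrict each slice form to $G_I$. The restrictions at $z_\kappa,z'_\kappa$ of the $\gamma_i$ become forms independent of $G_0$. The restrictions of the $\theta_i$ are linear in $a$, $x$ or $y$, because $\theta_i(x+a,\cdot)=\theta_i(x,\cdot)+\theta_i(a,\cdot)$, so they split into a $T_a+T'_x+T''_y$ part. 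The $V'$-forms, however, depend on the index through the arbitrary $\mathcal{D}$-dependent $V_x$.
\end{itemize}

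The main obstacle is this non-linear dependence of $V'_x$ on $x$. To handle it, pass first to a bias statement on $G_I$. Use Cauchy--Schwarz in the variables of $G_{[k]\setminus I}$ (as in Lemma~\ref{quasirandomrest}) to eliminate all forms not supported inside $I$. Forms supported on proper subsets of $I$, or meeting $I^c$, disappear after this Cauchy--Schwarz differencing. Only full-$I$ supported pieces remain, and by maximality of $I$ in $\mathcal{D}$ these have no $V'$-contribution. Then fix, by averaging, the slice points $z_\kappa,z'_\kappa$ and the values of the remaining $G_{I^c}$ variables, so that they no longer depend on $(a,x,y)$. What is left is exactly $T_a+T'_x+T''_y+\Gamma$ plus the $\alpha$-combination, with bias at least $\eta^{O(1)}$.

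\emph{Condition (iii).} A bad $6d$-tuple yields a biased form involving $\alpha$ at $8d$ indices. Fix by averaging all entries other than one quadruple $(a_1,x_1,y_1)$, chosen so that it contains the index with $\mu\notin\Lambda$. The remaining constraint $\sum(-1)^ia_i=0$ is then absorbed, with the other $a_i$ free. The $\alpha$-terms at the fixed indices become constant forms, which are absorbed into $\Gamma$. This leaves the same shape as in case \textbf{(ii)}, with a $(pm)^{-O(r)}\eta^{O(1)}$ loss and $d=O(1)$.
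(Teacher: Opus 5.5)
\paragraph{Condition (ii).} Your argument here is essentially the paper's, with two corrections. The steps match: pigeonhole over the $(pm)^{O(r)}$ possible shapes, remove the $V'$-forms with the Gowers--Cauchy--Schwarz inequality, then average over $z_\kappa,z'_\kappa$ and the remaining $G_{[k]\setminus I}$-coordinates, splitting the $\theta_i$-pieces by linearity in $G_0$. The $V'$-forms die because, by maximality of $I$ in the down-set $\mathcal{D}$, no set of $\mathcal{D}\setminus\{I\}$ contains $I$. The corrections are as follows.
\begin{itemize}
\item The differencing must be in the $I$-variables, with the $G_{[k]\setminus I}$-variables kept outside. Differencing in $G_{[k]\setminus I}$ would kill $M$ itself.
\item Forms whose support strictly contains $I$ (and so meets $I^c$) do not disappear. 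They survive, and once the $I^c$-coordinates are fixed they become exactly $T_a+T'_x+T''_y+\Gamma$.
\end{itemize}

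\paragraph{Condition (iii): the genuine gap.} You cannot fix all entries other than $(a_1,x_1,y_1)$ and still let $a_1$ vary, because $\sum_i(-1)^ia_i=0$ determines $a_1$.
\begin{itemize}
\item If some other $a_j=\pm a_1+c$ moves with $a_1$, then with $x_j,y_j$ fixed the terms $\mu^{(j)}\cdot\alpha_{x_j+a_j}$ and $\nu^{(j)}\cdot\alpha_{y_j+a_j}$ are arbitrary functions of $a_1$, since $(\alpha_z)_{z\in G_0}$ has no structure in $z$. They are neither constant, so not part of $\Gamma$, nor linear, so not part of $T_a$.
\item If instead you fix all the $a_i$, then $a_1$ is fixed too. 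Your ``constant'' $\Gamma$ then depends on that choice, so no single $\Gamma$ serves $|G_0|^3$ triples.
\end{itemize}

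The missing idea is to remove the other indices by Cauchy--Schwarz rather than by fixing them; this is the paper's ``two applications of the Cauchy--Schwarz inequality''.
\begin{itemize}
\item After the \textbf{(ii)}-type reduction you have forms $F_i(a,x,y)$ on $G_I$ of the claimed shape, with the non-membership at, say, $\mu^{(1)}\notin\Lambda_{x_1+a_1}$.
\item Biases of multilinear forms are non-negative. So you may replace the bad tuples by the full average, keeping the indicator of that non-membership.
\item For fixed $w_I$ the average factorizes as $\mathbb{E}_{\sum_i(-1)^ia_i=0}\prod_i g_i(a_i)$, where $g_i(a)=\mathbb{E}_{x,y}\,\omega(F_i(a,x,y)(w_I))$ and $|g_i|\leq1$.
\item Write $g_1(a)=\omega\big((T_1)_a(w_I)+\Gamma_1(w_I)\big)u(a)v(a)$, where
\[u(a)=\mathbb{E}_x\,\id(\mu^{(1)}\notin\Lambda_{x+a})\,\omega\big(\mu^{(1)}\cdot\alpha_{x+a}(w_I)+{\mu'}^{(1)}\cdot\alpha_x(w_I)+(T'_1)_x(w_I)\big)\]
and $v$ is the analogous $y$-average.
\item Then
\[\mathbb{E}_{w_I}\mathbb{E}_{\sum_i(-1)^ia_i=0}\prod_i g_i(a_i)\leq\mathbb{E}_{w_I,a}|g_1(a)|\leq\mathbb{E}_{w_I,a}|u(a)|\leq\Big(\mathbb{E}_{w_I,a}|u(a)|^2\Big)^{1/2}.\]
\item Expanding $|u(a)|^2$ and averaging over $w_I$ shows that
\[\on{bias}\big(\mu^{(1)}\cdot\alpha_{x+a}+{\mu'}^{(1)}\cdot\alpha_x-\mu^{(1)}\cdot\alpha_{x'+a}-{\mu'}^{(1)}\cdot\alpha_{x'}+(T'_1)_x-(T'_1)_{x'}\big)\]
is large on average over $(a,x,x')$, with $\mu^{(1)}\notin\Lambda_{x+a}$. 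This is the claimed shape with $y=x'$, which is what the proof needs.
\end{itemize}
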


        \begin{proof}
            Suppose first that the condition \textbf{(ii)} fails. Let us examine the structure of forms involved in $D_{a,x}$. We have $D_{a,x} = (C_{x + a}\cap C_x) \cap (L_{x+a} \cap L_x) \cap ( V'_{x + a} \cap  V'_x)$, so $D_{a,x}$ has codimension at most $6r$ and involves forms $\gamma_i$ for $i \in A_{x+a} \cup A_x$, $\theta_i$ for $B_{x+a} \cup B_x$ and $(\mathcal{D}\setminus \{I\})$-supported forms defining $V_{x+a} \cap V_x$. Hence, for $a, x,y, z_\kappa, z'_\kappa$, if the given multilinear form $M = \mu  \cdot \alpha_{x+a} + \mu' \cdot \alpha_{x+a} + \nu \cdot \alpha_{y+a} +  \nu' \cdot \alpha_{y}$ is not $\eta$-quasirandom with respect $(D_{a,x} \cap D_{a,y})_{z_\kappa} \cap (D_{a,x} \cap D_{a,y})_{z'_\kappa}$, then, for some  we have a linear combination $\tilde{\gamma}$ of $(\gamma_i)_{z_\kappa}$, $(\gamma_i)_{z'_\kappa}$ for $i \in A_{x + a} \cup A_x \cup A_{y+a} \cup A_y$, a linear combination $\tilde{\theta}$ of $(\theta_i)_{z_\kappa}$, $(\theta_i)_{z'_\kappa}$ for $i \in B_{x + a} \cup B_x \cup B_{y+a} \cup B_y$, and a linear combination $\psi$ of $(\mathcal{D}\setminus \{I\}) \cap \mathcal{P}([k] \setminus \{\kappa\})$-supported forms such that
            \[\Big|\exx_{w_{[k] \setminus \{\kappa\}}} \omega\Big(M(w_I) + \tilde{\theta}(w_{[k] \setminus \{\kappa\}}) +\tilde{\gamma}(w_{[k] \setminus \{\kappa\}}) + \psi(w_{[k] \setminus \{\kappa\}})\Big)\Big| \geq \eta.\]

            By Gowers-Cauchy-Schwarz inequality, we have
            \[\exx_{w_{[k] \setminus \{\kappa\}}} \omega\Big(M(w_I) + \tilde{\theta}'(w_{[k] \setminus \{\kappa\}}) +\tilde{\gamma}'(w_{[k] \setminus \{\kappa\}}))\Big) \geq \eta^{2^k},\]
            where $\tilde{\theta}'$ arises from $\tilde{\theta}$ by removing multilinear parts that do not depend on all variables in $I$, and similarly $\tilde{\gamma}'$ arises from $\tilde{\gamma}$. In particular, $\tilde{\theta}'$ is a linear combination of at most $8r$ forms among $\theta_1, \dots, \theta_m$, restricted at $x+a,x,y+a$ or $y$ and $z_\kappa$ or $z_{\kappa}'$ and similarly for $\tilde{\gamma}'$, so there are at most $(pm)^{O(r)}$ choices for $\tilde{\gamma}'$ and $\tilde{\theta}'$. By averaging over such forms and $z_\kappa$ and $z'_\kappa$, we get forms $T, T', T'' : G_{[0, k]} \to \mathbb{F}_p$, linear in coordinates $G_{\{0\} \cup I}$ and $\Gamma: G_{[k]} \to  \mathbb{F}_p$, linear in coordinates $G_I$, such that 
            \[\on{bias} \Big(\mu  \cdot \alpha_{x+a} + \mu' \cdot \alpha_{x+a} + \nu \cdot \alpha_{y+a} +  \nu' \cdot \alpha_{y} + T_{a, t_{[k] \setminus I}} + T'_{x, t_{[k] \setminus I}} + T''_{t, t_{[k] \setminus I}} + \Gamma_{t_{[k] \setminus I}}\Big) \geq \eta^{2^k}\]
            holds for at least $(pm)^{-O(r)}\eta^{O(1)}|G_0|^3|G_{[k] \setminus I}|$ choices of $(a,x,y, t_{[k] \setminus I})$. Average over $t_{[k] \setminus I}$ to reach the desired conclusion.

            For the failure of condition \textbf{(iii)}, analogous arguments apply to give multilinear maps $T_i, T'_i, T''_i : G_{[0, k]} \to \mathbb{F}_p$ and $\Gamma_i: G_{I} \to  \mathbb{F}_p$, $i \in [2d]$ such that

            \[\on{bias}\Big(\sum_{i \in [2d]}\mu^{(i)} \cdot \alpha_{x_i + a_i} + {\mu'}^{(i)} \cdot \alpha_{x_i} + \nu^{(i)} \cdot \alpha_{y_i + a_i} + {\nu'}^{(i)} \cdot \alpha_{y_i} + (T_i)_{a_i} + (T''_i)_{x_i} + (T''_i)_{y_i} + \Gamma_i\Big) \geq \eta.\]
            
            Two applications of the Cauchy-Schwarz inequality allow us to obtain the desired conclusion.
        \end{proof}

        Lemma~\ref{biasedsystemmm} applies to complete the iteration step to give new maps $\theta'_{m'+1}$ and $\gamma'_{m'+1}$ so we move the appropriate $\mu$ into the sequence $\lambda_{x,1}, \dots, \lambda_{x, s_x}$ for at least $\exp\Big(-(2r\log (\eta^{-1}m))^{O(1)}\Big)|G_0|$ elements $x \in G_0$. The procedure thus terminates in $\exp\Big((2r\log (\eta^{-1}m))^{O(1)}\Big)$ steps.
    \end{proof}

    Extend the sequence  $ \lambda_{x, 1}, \dots, \lambda_{x, s_x}$ to a basis with additional vectors $ \mu_{x, 1}, \dots, \mu_{x,r- s_x}$ and define multilinear maps $\alpha^{\on{qr}} : G_I \to \mathbb{F}_p^{r - s_x}$ and $\alpha^{\on{str}}_x : G_I \to \mathbb{F}_p^{s_x}$ by $\alpha^{\on{qr}}_{x,i} = \mu_{x, i} \cdot \alpha_x$ and $\alpha^{\on{str}}_{x,i} = \lambda_{x, i} \cdot \alpha_x$. Define multilinear varieties in $G_I$ by $Q_{a,x} = \{\alpha^{\on{qr}}_{x+a}\} \cap \{\alpha^{\on{qr}}_x = 0\}$ and $R_{a,x} = \{\alpha^{\on{str}}_{x+a}\} \cap \{\alpha^{\on{str}}_x = 0\}$.

    We are now ready to carry out the extension procedure.

    \vspace{\baselineskip}

    \begin{claim}
        Suppose that $\eta \leq \varepsilon p^{-\blc r}$. For all but at most $O(\varepsilon|G_0|^3)$ choices of $(a,x,y) \in G_0^3$, we have $x+a,x,y+a, y \in X$ and there exist $\tau_{a,x,y} : D_{a,x} \cap D_{a,y} \cap ((R_{a, x} \cap R_{a, y}) \times G_{[k] \setminus I}) \to H$ such that $\tau_{a,x,y} = \phi_{x+a} - \phi_x$ on $D_{a,x} \cap D_{a,y} \cap ((R_{a, x} \cap R_{a, y} \cap Q_{a, x}) \times G_{[k] \setminus I})$ and $\tau_{a,x,y} = \phi_{y+a} - \phi_y$ on $D_{a,x} \cap D_{a,y} \cap ((R_{a, x} \cap R_{a, y} \cap Q_{a,y}) \times G_{[k] \setminus I})$.\\
        \indent Moreover, for $d \in \{2, \dots, 2D_0\}$, for all but at most $O(\varepsilon |G_0|^{6d - 1})$ choices of $(a_{[2d]}, x_{[2d]}, y_{[2d]}) \in G_0^{6d}$ such that $\sum_{i \in [2d]} (-1)^i a_i = 0$, we have the maps $\tau_{a_i,x_i,y_i}$ defined and 
        \[\sum_{i \in [2d]} (-1)^i\tau_{a_i,x_i,y_i} = 0\text{ on }\cap_{i \in [2d]} D_{a_i, x_i} \cap D_{a_i, y_i} \cap ((R_{a_i, x_i} \cap R_{a_i, y_i}) \times G_{[k] \setminus I}).\]
    \end{claim}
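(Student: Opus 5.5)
The plan is to obtain $\tau_{a,x,y}$ from the simultaneous extension result, Theorem~\ref{sim-ext-final-ver} (in the form of Corollary~\ref{simextcorsinglevar}), applied to the two differences $\phi_{x+a}-\phi_x$ and $\phi_{y+a}-\phi_y$. Then, for the second part, I would use uniqueness of extensions, via the vanishing result Theorem~\ref{vanishingmmmaps} together with Theorem~\ref{qrExtension}.

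\textbf{Step 1: the good triples.} Discard the triples $(a,x,y)$ for which $x+a,x,y+a,y\notin X$, or for which $(\upd{x+a},x,y+a,\upd{y})$ is not variety-respected, or which are not good in the sense of condition \textbf{(ii)} of Claim~\ref{mlbogregularization}. Since at least $(1-\varepsilon)|G_0|^{3}$ quadruples are variety-respected and $\eta\le\varepsilon$, this removes at most $O(\varepsilon|G_0|^3)$ triples. Implicitly, $X$ is assumed dense enough that the first condition costs only $O(\varepsilon)$.

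\textbf{Step 2: constructing $\tau_{a,x,y}$.} For a surviving triple, put $V=D_{a,x}\cap D_{a,y}\cap((R_{a,x}\cap R_{a,y})\times G_{[k]\setminus I})$. Take $\alpha=(\alpha^{\on{qr}}_{x+a},\alpha^{\on{qr}}_x)$ and $\beta=(\alpha^{\on{qr}}_{y+a},\alpha^{\on{qr}}_y)$. The map $\phi_{x+a}-\phi_x$ is defined on $S_{x+a}\cap S_x\supseteq V\cap (Q_{a,x}\times G_{[k]\setminus I})$, and similarly for $y$. Variety-respectedness says exactly that these two maps agree on the intersection of the domains. Condition \textbf{(ii)} gives that $(\alpha,\beta)$ is $\eta$-quasirandom with respect to the slices $(D_{a,x}\cap D_{a,y})_{z_\kappa}\cap(D_{a,x}\cap D_{a,y})_{z'_\kappa}$ for most pairs $(z_\kappa,z'_\kappa)$. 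This has to be upgraded to quasirandomness relative to the slices of $V$. That is automatic, since linear combinations of the $\alpha^{\on{str}}$ components are exactly the combinations lying in $\Lambda$, and condition \textbf{(ii)} already allows arbitrary coefficients on those. With $\eta\le p^{-\blc r}$ (the codimension of $V$ is $O(r)$), Corollary~\ref{simextcorsinglevar} yields the required $\tau_{a,x,y}$.

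\textbf{Step 3: additive relations among the $\tau$.} Fix $d$, and discard the $6d$-tuples $(a_{[2d]},x_{[2d]},y_{[2d]})$ that are not good for \textbf{(iii)}, that contain a bad triple, or for which one of the following additive $2d$-tuples fails to be variety-respected: $(x_i+a_i)_i$-type combinations mixing $x_i$ and $y_i$, such as $(x_1+a_1,x_1,\dots)$ with the signs arranged appropriately. Here the hypothesis for $2d$ up to $4D_0$ is needed; this is where the range $d\le 2D_0$ enters, since $\sum(-1)^i(\phi_{x_i+a_i}-\phi_{x_i})$ is an alternating sum over a $4d$-tuple. By the hypotheses this costs $O(\varepsilon|G_0|^{6d-1})$. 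On the common domain $\bigcap_i V_i$, the map $\sigma=\sum_i(-1)^i\tau_{a_i,x_i,y_i}$ vanishes on $\bigcap_i V_i\cap(\bigcap_i Q_{a_i,x_i}\times G_{[k]\setminus I})$, because there it equals $\sum(-1)^i(\phi_{x_i+a_i}-\phi_{x_i})$, which vanishes by variety-respectedness. Likewise it vanishes on the piece cut out by $\bigcap_i Q_{a_i,y_i}$. Condition \textbf{(iii)} says that the two families $(\alpha^{\on{qr}}_{x_i+a_i},\alpha^{\on{qr}}_{x_i})_i$ and $(\alpha^{\on{qr}}_{y_i+a_i},\alpha^{\on{qr}}_{y_i})_i$ are jointly $\eta$-quasirandom with respect to the $D$ and $R$ parts. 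Theorem~\ref{vanishingmmmaps} then gives $\sigma=0$ everywhere.

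\textbf{Main obstacle.} The main difficulty is the bookkeeping in Step 3. Theorem~\ref{vanishingmmmaps} needs quasirandomness with respect to a variety that includes the structured pieces $R$. Condition \textbf{(iii)} is phrased only relative to the $D$'s, so one must check that adding the $\alpha^{\on{str}}$ forms is harmless. This works because they are precisely the combinations excluded by the definition of $\Lambda$: a combination that is nonzero outside $\Lambda$ stays quasirandom after adding any $\Lambda$-combination, and those are allowed in \textbf{(iii)}. Choosing $\eta\le\varepsilon p^{-\blc r}$ makes all of these applications valid.
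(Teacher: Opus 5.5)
Your proposal is correct and follows the paper's proof essentially step for step. You obtain $\tau_{a,x,y}$ from Corollary~\ref{simextcorsinglevar} for good, variety-respected quadruples; you show the alternating sum vanishes on the two pieces cut out by $\bigcap_i Q_{a_i,x_i}$ and $\bigcap_i Q_{a_i,y_i}$ using the two variety-respected $4d$-tuples; and you finish with Theorem~\ref{vanishingmmmaps}. Your explicit check that adding the $\alpha^{\on{str}}$ forms preserves quasirandomness is exactly what the paper uses implicitly.

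One small wording point: the $4d$-tuples you need are the unmixed ones, $(x_i+a_i,x_i)_{i}$ and $(y_i+a_i,y_i)_{i}$. The hypothesis covers these only for $d\le D_0$. This indexing looseness is already in the paper's statement and is harmless, since only $d\le D_0$ is used afterwards.
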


    \vspace{\baselineskip}

    \begin{proof}
        Define multilinear map $\delta_{a, x} = \phi_{x+a} - \phi_x$ on the domain $D_{a, x} \cap R_{a,x} \cap Q_{a, x}$.

        \indent Let $x, y \in G_0$ be such that the additive quadruple $(x+a, x, y+a, y)$ is good in the of Claim~\ref{mlbogregularization} and variety-respected. If $\eta \leq \varepsilon$, this holds for all but at most $O(\varepsilon|G_0|^3)$ additive quadruples in $G_0$. Since the quadruple is good, for all directions $\kappa \notin I$ map $(\alpha^{\on{qr}}_{x+a}, \alpha^{\on{qr}}_{x}, \alpha^{\on{qr}}_{y+a}, \alpha^{\on{qr}}_{y})$ is $\eta$-quasirandom with respect to $(D_{a,x} \cap D_{a,y})_{z_\kappa} \cap (D_{a,x} \cap D_{a,y})_{z'_\kappa} \cap ((R_{a,x} \cap R_{a,y}) \times G_{[k] \setminus I})$ for all but at most $\eta |G_\kappa|^2$ pairs $(z_\kappa, z'_\kappa) \in G_\kappa^2$. Then, provided $\eta \leq p^{-\blc r}$, Corollary~\ref{simextcorsinglevar} gives multilinear maps $\tau_{a, x,y} : D_{a,x} \cap D_{a, y} \cap ((R_{a,x} \cap R_{a,y}) \times G_{[k] \setminus I})\to H$ such that 
        \begin{align*}\tau_{a, x,y} = &\delta_{a, x}\text{ on }D_{a,x}\cap D_{a,y} \cap ((R_{a,x} \cap R_{a,y} \cap Q_{a, x}) \times G_{[k] \setminus I})\text{ and}\\
        \tau_{a, x,y} = &\delta_y\text{ on }D_{a,x} \cap D_{a, y} \cap ((R_{a,x} \cap R_{a,y} \cap Q_{a, y}) \times G_{[k] \setminus I}).\end{align*}

        Now consider $(a_{[2d]}, x_{[2d]}, y_{[2d]}) \in G_0^{6d-1}$ such that $\sum_{i \in [2d]} (-1)^i a_i = 0$ and such that $\tau_{a_i,x_i,y_i}$ are all defined. Assume also that additive $(4d)$-tuples $(x_1 + a_1,  x_1, \dots, x_{2d} + a_{2d}, x_{2d})$ and $(y_{2d} + a_{2d}, y_{2d}, \dots, y_{2d} + a_{2d}, y_{2d})$ are variety-respected. Then we have
        \[\sum_{i \in [2d]} \Big(\phi_{x_i + a_i} - \phi_{x_i}\Big) = 0\text{ on }\cap_{i \in [2d]} (D_{a_i, x_i} \cap ((R_{a_i, x_i} \cap Q_{a_i, x_i}) \times G_{[k] \setminus I})).\]
        Writing $D' = \cap_{i \in [2d]} D_{a_i, x_i} \cap (R_{a_i, x_i} \times G_{[k] \setminus I})$ and $D'' = \cap_{i \in [2d]} D_{a_i, y_i} \cap R_{a_i, y_i}$, we have
        \[\sum_{i \in [2d]}  (-1)^i \tau_{a_i,x_i,y_i} = 0\text{ on }D' \cap D'' \cap  \Big(\cap_{i \in [2d]} Q_{a_i, x_i}  \times G_{[k] \setminus I}\Big),\]
        and similarly
        \[\sum_{i \in [2d]}  (-1)^i \tau_{a_i,x_i,y_i} = 0\text{ on }D' \cap D'' \cap  \Big(\cap_{i \in [2d]} Q_{a_i, y_i}  \times G_{[k] \setminus I}\Big),\]
        
        By Theorem~\ref{vanishingmmmaps}, provided $\eta \leq p^{-\blc r}$ and $(a_{[2d]}, x_{[2d]}, y_{[2d]})$ is good, we have
        \[\sum_{i \in [2d]}  (-1)^i \tau_{a_i,x_i,y_i} = 0\text{ on }D' \cap D'',\]
        as desired.
    \end{proof}

    We need another claim in the similar spirit that tells us that the maps $\tau_{a,x,y}$ agree on the intersection of domains for most of choices of $(x,y)$. Write $\tilde{D}_{a,x,y} = D_{a,x} \cap D_{a,y} \cap ((R_{a, x} \cap R_{a, y}) \times G_{[k] \setminus I})$, which is the domain of $\tau_{a,x,y}$. 

    \begin{claim}
        Suppose that $\eta \leq \varepsilon p^{-\blc r}$. Then for all but at most $O(\varepsilon|G_0|^{2D_0+1})$ choices of $(a, x_{[2D_0]}, y_{[2D_0]})$ the maps $\tau_{a, x_i, y_i}$, $i \in [2D_0]$, are defined and they all agree on the intersection $\cap_{i \in [2D_0]}\tilde{D}_{a,x_i,y_i} $.
    \end{claim}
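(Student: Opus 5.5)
The plan mirrors the preceding claim. Fix $a$ and a tuple $(x_{[2D_0]}, y_{[2D_0]})$. The aim is to show that, for most such tuples, the differences $\tau_{a,x_i,y_i} - \tau_{a,x_j,y_j}$ vanish on $\cap_{i} \tilde{D}_{a,x_i,y_i}$. It suffices to treat pairs $i,j$ and take a union bound over the $O(D_0^2)$ pairs, so fix two indices, say $1$ and $2$.

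The first step is to exhibit the additive structure. By construction,
\[\tau_{a,x_1,y_1} = \phi_{x_1+a} - \phi_{x_1} \text{ on } \tilde{D}_{a,x_1,y_1} \cap (Q_{a,x_1}\times G_{[k]\setminus I}),\]
and similarly $\tau_{a,x_2,y_2} = \phi_{x_2+a} - \phi_{x_2}$ on $\tilde{D}_{a,x_2,y_2} \cap (Q_{a,x_2}\times G_{[k]\setminus I})$. Since $(\upd{x_1+a}, x_1, x_2+a, \upd{x_2})$ is an additive quadruple, it is variety-respected for all but $O(\varepsilon|G_0|^3)$ choices. On that quadruple, the difference $\tau_{a,x_1,y_1} - \tau_{a,x_2,y_2}$ vanishes on
\[\tilde{D} \cap \big((Q_{a,x_1} \cap Q_{a,x_2})\times G_{[k]\setminus I}\big),\]
where $\tilde{D} = \tilde{D}_{a,x_1,y_1}\cap\tilde{D}_{a,x_2,y_2}$. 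Running the same argument with the other choices of representatives gives vanishing on the pieces carrying $Q_{a,x_1}\cap Q_{a,y_2}$, $Q_{a,y_1}\cap Q_{a,x_2}$ and $Q_{a,y_1}\cap Q_{a,y_2}$; these use the quadruples built from $(x_1,y_2)$, $(y_1,x_2)$ and $(y_1,y_2)$.

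The second step removes the quasirandom pieces. I would use the vanishing result Theorem~\ref{vanishingmmmaps} twice. First, take the two pieces sharing the factor $Q_{a,x_1}$ and apply the theorem on the variety $\tilde{D} \cap (Q_{a,x_1}\times G_{[k]\setminus I})$ with $\alpha_1 = \alpha^{\on{qr}}_{x_2+a}, \alpha^{\on{qr}}_{x_2}$ and $\alpha_2 = \alpha^{\on{qr}}_{y_2+a}, \alpha^{\on{qr}}_{y_2}$. This gives vanishing on $\tilde{D}\cap (Q_{a,x_1}\times G_{[k]\setminus I})$, and the same argument gives vanishing on $\tilde{D}\cap (Q_{a,y_1}\times G_{[k]\setminus I})$. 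A second application, with $\alpha_1 = \alpha^{\on{qr}}_{x_1+a},\alpha^{\on{qr}}_{x_1}$ and $\alpha_2 = \alpha^{\on{qr}}_{y_1+a},\alpha^{\on{qr}}_{y_1}$, then gives vanishing on all of $\tilde{D}$.

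The quasirandomness these applications need comes from condition \textbf{(iii)} of Claim~\ref{mlbogregularization}, applied to the $6d$-tuple with $a_1=a_2=a$ and signs chosen so that $\sum (-1)^i a_i = 0$. Linear combinations involving a quasirandom component lie outside the corresponding $\Lambda$, so they are $\eta$-quasirandom relative to $D_{a,x_1}\cap\dots\cap D_{a,y_2}$. Including the structured parts $R$ only adds forms among the $\lambda$-combinations, and this is harmless because condition \textbf{(iii)} controls every combination with at least one component outside $\Lambda$. I need $\eta \le p^{-\blc r}$ so the hypothesis of Theorem~\ref{vanishingmmmaps} holds, and $\eta \le \varepsilon$ so that the bad tuples number $O(\varepsilon|G_0|^{2D_0+1})$.

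The main obstacle I expect is the bookkeeping of the quasirandomness in the second step. Condition \textbf{(iii)} is stated for $6d$-tuples with general $a_i$ summing with signs to zero. I have to check that the degenerate tuples with repeated $a$ are counted with the right density, i.e. that the exceptional set of size $\eta|G_0|^{6d-1}$ still leaves only $O(\varepsilon)$-proportion of bad $(a,x_{[2D_0]},y_{[2D_0]})$. If this fails, I would fall back on applying Lemma~\ref{quasirandomrest}-style averaging directly to the quadruple condition \textbf{(ii)} for each pair $(x_i,x_j)$ separately.
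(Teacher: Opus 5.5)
Your argument is correct and is essentially the paper's: variety-respected quadruples make $\tau_{a,x_i,y_i}-\tau_{a,x_j,y_j}$ vanish on the quasirandom pieces, and Theorem~\ref{vanishingmmmaps} then removes those pieces, with the needed quasirandomness coming from Claim~\ref{mlbogregularization}; your degeneracy worry does not arise if you use \textbf{(iii)} with $2d=2$, since there the tuples with $a_1=a_2$ are exactly the admissible ones. The paper is only more economical: it uses just the two pieces $Q_{a,x_i}\cap Q_{a,x_j}$ and $Q_{a,y_i}\cap Q_{a,y_j}$ and one application of Theorem~\ref{vanishingmmmaps} with $\alpha_1=(\alpha^{\on{qr}}_{x_i+a},\alpha^{\on{qr}}_{x_i},\alpha^{\on{qr}}_{x_j+a},\alpha^{\on{qr}}_{x_j})$ and $\alpha_2$ the analogous $y$-maps, so your cross quadruples and second application are unnecessary.
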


    \begin{proof}
        Provided $\eta \leq \varepsilon p^{-\blc r}$, we have that, for all pairs $(i,j)$, $i \not = j$, in $[2D_0]$, the map $(\alpha^{\on{qr}}_{x_i+a},$ $\alpha^{\on{qr}}_{x_i},$ $ \alpha^{\on{qr}}_{x_j+a},$ $\alpha^{\on{qr}}_{x_j})$ is $\eta$-quasirandom with respect to $\tilde{D}_{a,x_i,y_i} \cap \tilde{D}_{a,x_j,y_j}$, and similarly with $y_i, y_j$ in place of $x_i, x_j$. Moreover, for all but at most $O(\varepsilon|G_0|^{2D_0+1})$ choices of $(a, x_{[2D_0]}, y_{[2D_0]})$, the  additive quadruples $(x_i + a, x_i, x_j + a, x_j)$ and $(y_i + a, y_i, y_j + a, y_j)$ for $i, j \in [2D_0]$ are all variety-respected. Hence, $\tau_{a, x_i, y_i} = \phi_{x_i+a} - \phi_{x_i} = \phi_{x_j+a} - \phi_{x_j} = \tau_{a, x_j, y_j}$ on $\tilde{D}_{a,x_i,y_i} \cap \tilde{D}_{a,x_j,y_j} \cap ((Q_{a, x_i}\cap Q_{a, x_j}) \times G_{[k] \setminus I})$. Similarly, $\tau_{a, x_i, y_i} = \phi_{y_i+a} - \phi_{y_i} = \phi_{y_j+a} - \phi_{y_j} = \tau_{a, x_j, y_j}$ on $\tilde{D}_{a,x_i,y_i} \cap \tilde{D}_{a,x_j,y_j} \cap ((Q_{a, y_i}\cap Q_{a, y_j}) \times G_{[k] \setminus I})$. By Theorem~\ref{vanishingmmmaps}, provided $\eta \leq p^{-\blc r}$, we get $\tau_{a, x_i, y_i} = \tau_{a, x_j, y_j}$ on $\tilde{D}_{a,x_i,y_i} \cap \tilde{D}_{a,x_j,y_j}$ for all $i, j \in [2D_0]$, as desired.
    \end{proof}
    
    \indent By averaging, there exist $x_{[2d]}, y_{[2d]}$ and a set $A \subseteq G_0$ such that the we have 
    
    \begin{itemize}
        \item for each $a \in A$, the maps $\tau_{a, x_i, y_i}$ are defined and equal on $\cap_{i \in [2d]} \tilde{D}_{a,x_i,y_i} $, thus giving a well-defined map $\psi_a : \cap_{i \in [2d]} \tilde{D}_{a,x_i,y_i} \to H$, 
        \item for each $d \in [2, D_0]$, all but at most $O(\varepsilon |G_0|^{2d-1})$ additive $(2d)$-tuples in $A$ are variety-respected in the system $(\psi_a)_{a \in A}$. 
    \end{itemize}

    To finish the proof, we need to modify the varieties in the desired shape. We take $\eta \geq \varepsilon p^{-O(r)}$ so that required bounds hold. For each $i \in [s_x]$, apply Theorem~\ref{arankprank} to find a $(\mathcal{P}(I) \setminus \{I\})$-supported multilinear variety $V''_x\subseteq G_I$ of codimension at most $\log^{O(1)}(2\eta^{-1})$ such that for each $i \in [r]$ there is $j = j(x,i) \in [m']$ $(\alpha^{\on{str}}_x)_i = (\theta'_j)_x + \gamma'_j$ when $y_I \in V''_x$. Note that $(\mathcal{P}(I) \setminus \{I\}) \subseteq \mathcal{D} \setminus \{I\}$.

    \indent Finally, we pass to subvarieties in the domains. Note that $R_{a,x}= \{\alpha^{\on{str}}_{x+a}\} \cap \{\alpha^{\on{str}}_x = 0\}$ contains the variety
    \begin{align*}V''_{a+x} \cap V''_x \cap &\bigg(\Big(\bigcap_{i \in [s_{x+a}]} \{\theta'_{j(x+a, i)} = 0\}_{x+a}\, \cap\, \bigcap_{i \in [s_x]} \{\theta'_{j(x, i)} = 0\}_x\Big) \times G_{[k] \setminus I}\bigg) \\ 
    \cap &\bigg(\Big(\bigcap_{i \in [s_{x+a}]} \{\gamma'_{j(x+a, i)} = 0\} \,\cap\, \bigcap_{i \in [s_x]} \{\theta'_{j(x, i)} = 0\}\Big)\times G_{[k] \setminus I}\bigg).\end{align*}
    Define the set of indices in $[m']$
     \begin{align*}A'_a = &\{j(a + x_i, i') : i \in [2d] , i' \in [s_{a + x_i}]\} \cup \{j(x_i, i') : i \in [2d] , i' \in [s_{x_i}]\}\, \cup\\
     &\{j(a + y_i, i') : i \in [2d] , i' \in [s_{a + y_i}]\} \cup \{j(y_i, i') : i \in [2d] , i' \in [s_{y_i}]\}.\end{align*}
    Thus, $\psi_a$ may be restricted to the intersection of 
    \begin{align*}&\cap_{i \in [2d]} \Big(C_{a+x_i} \cap C_{x_i} \cap C_{a+y_i} \cap C_{y_i}\Big) \\
    &\hspace{1cm}\cap \Big(\{z_{[k]} \in G_{[k]} : (\forall j \in B_{x_i + a} \cup B_{x_i}) \theta_j(x_i, z_{J_i}) = 0\} \Big)\\
    &\hspace{1cm}\cap  \Big(\{z_{[k]} \in G_{[k]} : (\forall j \in B_{y_i + a} \cup B_{y_i}) \theta_j(y_i, z_{J_i}) = 0\} \Big)\\
    &\hspace{1cm}\cap\Big(\cap_{j \in A'_a}\{\gamma'_j = 0\} \times G_{I^c}\Big) \cap \Big(\cap_{j \in A'_a}\{(\theta'_j)_{x_i} = 0\} \times G_{I^c}\Big)\cap \Big(\cap_{j \in B'_a}\{(\theta'_j)_{y_i} = 0\} \times G_{I^c}\Big)\end{align*}
    giving the new common variety,
    \[\Big(\cap_{i \in [2d]} \{z_{[k]} \in G_{[k]} : (\forall j \in B_{x_i + a} \cup B_{y_i + a}) \theta_i(a, z_{J_i}) = 0\} \Big) \cap \Big(\cap_{j \in A'_a}\{(\theta'_j)_{a} = 0\} \times G_{I^c}\Big)\]
    giving the new linearly-varying variety, and
    \[\cap_{i \in [2d]} \Big(V'_{a+x_i} \cap V'_{x_i} \cap V'_{a+y_i} \cap V'_{y_i} \cap V''_{a+x_i} \cap V''_{x_i} \cap V''_{a+y_i} \cap V''_{y_i}\Big),\]
    which is $\mathcal{D}\setminus \{I\}$-dependent, as required.
\end{proof}

We may now easily deduce Theorem~\ref{multbogstep}.

\begin{proof}[Proof of Theorem~\ref{multbogstep}]
    Apply Proposition~\ref{inductivemlbogstep} iteratively until the varieties $V_x$ become $\emptyset$-supported and therefore $G_{[k]}$. We begin with some $\varepsilon \geq \Omega(1)$ that results the procedure results in a system where at least $\frac{1}{2}|G_0|^3$ additive quadruples are variety respected. Thus, at the end of the procedure, we have $(\psi_x)_{x \in X}$, with domains $C_x \cap L_x$ defined by sets of indices $A_x, B_x$ of size $r' \leq (2r)^{O(1)}$. Take random sets $A_0, B_0 \subseteq [m]$ of size $4r'$ and let $X' \subseteq X$ be the set of all $x$ such that $A_x \subseteq A_0, B_x \subseteq B_0$. The probability that a respected additive quadruple $(x + a, x, y+a, y)$ remains in $X'$ is the probability that $A_{x+a} \cup A_x \cup A_{y+a} \cup A_y \subseteq A_0$ and $B_{x+a} \cup B_x \cup B_{y+a} \cup B_y \subseteq B_0$, which is at least $m^{-O(r')}$, from which the theorem follows.
\end{proof}

\section{Obtaining a multilinear map on a variety in $G_{[0,k]}$}

In this section, we carry out the final phase of the proof. The work here can be summarized as the following theorem, stating that the system of MM-$r$-maps obtained so far can be strengthened to give a multilinear on a multilinear variety inside $G_{[0,k]}$.

Recall that by a \textit{linear system of varieties of codimension $r$}, we mean a collection of forms $\alpha_i : G_{\{0\} \cup I_i} \to \mathbb{F}_p$ where $I_i \subseteq [k]$, $i \in [r]$, and varieties $V_x = \{y_{[k]} \in G_{[k]} : (\forall i \in [r])\,\, \alpha_i(x, y_{I_i}) = 0\}$.

\begin{theorem}\label{phase4mainres}
    Let $(\phi_x : W \cap V_x \to H)_{x \in X}$ be a system of MM-$r$-maps, where $W \subseteq G_{[k]}$ is a multilinear variety and $(V_x)_{x \in G_0}$ is linear system of varieties of codimension $r$, such that $c|G_0|^{3}$ additive quadruples are variety-respected. Then there exist a variety $U \subseteq G_{[0, k]}$ of codimension $(r + \log c^{-1})^{O(1)}$ and a map $\Phi : U \to H$, affine in direction $G_0$ and multilinear in directions $G_{[k]}$, such that for each $x \in U^{(\{0\})}$, there exists at least $\exp(-(r + \log c^{-1})^{O(1)})|G_0|^{15}$ 16-tuples $y_{[16]}$ in $X$ such that $\sum_{i \in [16]} (-1)^i y_i = x$ and
    \[\Phi_x = \sum_{i \in [16]} (-1)^i \phi_{y_i}\]
    holds on the variety $U^{[k]} \cap U_x \cap (\cap_{i \in [16]} U_{y_i})$.
\end{theorem}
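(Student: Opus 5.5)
We follow the Phase 4 outline (Steps 4.1--4.4). The plan runs abstract BSG, then robust Bogolyubov--Ruzsa, then algebraic dependent random choice for the error maps, then robust Bogolyubov--Ruzsa again, and finally a gluing into a single map on a variety in $G_{[0,k]}$.

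\textbf{Step 4.1 (abstract BSG).} For $i \in [36]$ we let $\mathcal{Q}_i$ be the set of additive quadruples $(\upd{x}_1, x_2, x_3, \upd{x}_4)$ in $X$ such that $\phi_{x_1} - \phi_{x_2} - \phi_{x_3} + \phi_{x_4} = 0$ on $W \cap W_i \cap V_{x_1} \cap \dots \cap V_{x_4}$. Here $W_1 = W$ and $W_{i+j} \subseteq W_i \cap W_j \cap Z_{ij}$, where $Z_{ij}$ is supplied by Theorem~\ref{lin-sys-vanishing} applied to the system $(V_b \cap V_{b'})$. Largeness and symmetry are immediate.

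Weak transitivity is where the weak sheaf properties are used. Suppose $(\upd a_1,a_2,b,\upd b')\in\mathcal{Q}_i$ and $(\upd b,b',a_3,\upd a_4)\in\mathcal{Q}_j$ for $c'|G_0|$ pairs $(b,b')$. Adding the two relations shows that $\phi_{a_1}-\phi_{a_2}-\phi_{a_3}+\phi_{a_4}$ vanishes on $W\cap W_i\cap W_j\cap V_{a_1}\cap\dots\cap V_{a_4}\cap V_b\cap V_{b'}$ for many $(b,b')$. The weak locality theorem (Theorem~\ref{lin-sys-vanishing}, in its $\ell$-tuple form) then removes $V_b\cap V_{b'}$ at the cost of intersecting with $Z_{ij}$. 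Crucially, this variety depends only on the linear system and not on the maps.

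Theorem~\ref{absg} then gives $X'$ and, for each $2d$-tuple in $X'$, many $6d$-tuples connecting it via $\mathcal{Q}_{36}$. As in Proposition~\ref{phase1step1prop}, taking a maximal disjoint family produces error maps $\psi_1,\dots,\psi_m$ with $m\le (2/c)^{O(1)}$. To make the $\psi_j$ live on a single variety, we apply the weak gluing theorem (Theorem~\ref{linsystextns}) to the intermediate indices: the representatives all agree with the $2d$-tuple combination off a linear system. We then intersect everything with a common $\tilde W$.

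\textbf{Steps 4.2--4.3.} Next we apply Theorem~\ref{rbrlemma} to $X'$, obtaining a subspace $U_0$ of codimension $\log^{O(1)}(2c^{-1})$. For $x\in U_0$ we set $\Phi_x=\phi_{y_1}+\phi_{y_2}-\phi_{y_3}-\phi_{y_4}$ for a representation of $x$. Every additive quadruple in $U_0$ corresponds to a 16-tuple in $X'$, so it is variety-respected up to some $\psi_j$.

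To remove the $\psi_j$ we use Proposition~\ref{nonvanishing-or-variety-dichotomy} on $\tilde W$, applied iteratively with the $\Omega(m)$ losses. This yields a variety $Z\subseteq\tilde W$ and points $x^{(1)}_{[k]},\dots,x^{(\ell)}_{[k]}$, with $\ell = O(\log m)$, such that each $\psi_j$ either vanishes on $Z$ or is nonzero at some $x^{(t)}$. We then run algebraic dependent random choice as in Proposition~\ref{phase1step2prop}. We fix random projections $\pi_t$ and values $\mu_t$, and keep only those $x$ for which $\pi_t\Phi_x(x^{(t)}_{[k]})=\mu_t$. The $x^{(t)}$ must lie in the domain of every $\Phi_x$; we arrange this by first pigeonholing on the finitely many values of the linear forms at $x^{(t)}$, restricting to a coset. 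Any surviving quadruple whose error map is $\psi_j$ of the nonvanishing type is then contradicted at $x^{(t)}$. The remaining quadruples are exactly variety-respected on $Z$. The density loss is $p^{-O(\ell s)}$, which is quasipolynomial.

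\textbf{Step 4.4 and gluing.} A second application of Theorem~\ref{rbrlemma} yields a subspace $U'$ on which all additive quadruples are variety-respected on $Z\cap V_x\cap\dots$; this is where the 16-tuples in the statement come from. We define $U=\{(x,y_{[k]}) : x\in U',\ y_{[k]}\in Z\cap V_x\}$ and $\Phi(x,y_{[k]})=\Phi_x(y_{[k]})$. Multilinearity in the $G_{[k]}$ directions is inherited. For affinity in $G_0$, respect of all quadruples gives $\Phi_{x+a}-\Phi_x=\Phi_{y+a}-\Phi_y$ on the common domain, which is linear in each coordinate of $G_0$ because $V_x$ depends linearly on $x$. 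Lemma~\ref{trivialextensionsverydense} fills in exceptional points.

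\textbf{Main obstacle.} The hardest part is Step 4.1/4.3: keeping every error map and all varieties on one common variety independent of $x$, while ensuring the chosen test points lie in every domain. We would first try to force the test points into $\bigcap_x V_x$ by choosing them in the common variety produced by Theorem~\ref{lin-sys-vanishing}. Failing that, we would pass to the coset of $U_0$ on which the relevant $\alpha_i(x,\cdot)$ values at the test points are constant.
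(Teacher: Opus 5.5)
Your Step 4.1 is essentially the paper's Proposition~\ref{ph4step1}, but Steps 4.2--4.4 and the final gluing have a genuine gap.

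You set $\Phi_x=\phi_{y_1}+\phi_{y_2}-\phi_{y_3}-\phi_{y_4}$ for a single chosen representation. The domain of $\Phi_x$ is then $\tilde W\cap V_{y_1}\cap V_{y_2}\cap V_{y_3}\cap V_{y_4}$. This is only a (typically much smaller) subvariety of $\tilde W\cap V_x$, and it depends on the representation rather than linearly on $x$. Everything downstream relies on linear domains, so three steps break.

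\emph{Dependent random choice.} Nothing puts the test points $x^{(t)}_{[k]}$ in these domains, and pigeonholing into a coset does not help. Let $S=\{y : \alpha_i(y,x^{(t)}_{I_i})=0\ \forall i,t\}$. On a coset $v+S$ with $v\notin S$, the vector of values is a fixed nonzero vector, so for every $y$ in that coset some $x^{(t)}_{[k]}$ lies outside $V_y$. Only $S$ itself works, and the dense set $X'$ of representing elements need not meet $S$ at all. Your fallback of placing test points in $\bigcap_x V_x$ is also unavailable: the points are dictated by Proposition~\ref{nonvanishing-or-variety-dichotomy}, and $\bigcap_{x\in G_0}V_x$ has unbounded codimension.

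\emph{Final gluing.} $\Phi_x$ is not defined on $Z\cap V_x$, so $\{(x,y_{[k]}) : y_{[k]}\in\on{dom}\Phi_x\}$ is not a multilinear variety in $G_{[0,k]}$. Lemma~\ref{trivialextensionsverydense} cannot fill this in, since the missing part is a large proportion of $V_x$, not a $p^{-\blc(r+1)}$ fraction.

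\emph{Affinity in $G_0$.} The check needs the four domains to intersect in exactly $U_{a_1}\cap\dots\cap U_{a_4}$, which again requires linear domains.

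The missing idea is to use weak gluing (Theorem~\ref{linsystextns}) inside each robust Bogolyubov--Ruzsa step, as in Proposition~\ref{rob-bog-ruzsa-step2}. Fix $a\in U$ and pigeonhole the error index $j$. A bipartite-graph Cauchy--Schwarz argument on the triples in $T_a$ then gives many pairs $x_{[3]},x'_{[3]}$ whose combinations agree on $W\cap V_a\cap\bigcap_i V_{x_i}\cap\bigcap_i V_{x'_i}\cap\bigcap_i V_{y_i}$ for many $y_{[3]}$, with the $\psi_j$ cancelling. Remove the $V_{y_i}$ with Theorem~\ref{lin-sys-vanishing}. Theorem~\ref{linsystextns} then yields one map $\theta_a : W'\cap V'_a\to H$, with $(V'_a)$ a linear system independent of $a$, agreeing with many representations.

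Once the domains have the form $W'\cap V'_a$ and the indices range over a full subspace, the test-point problem is solved by restricting to the \emph{subspace} $\{a : x^{(i)}_{[k]}\in V'_a\ \forall i\}$, of codimension at most $\ell r$ (Proposition~\ref{ph4step3}). The final $U=(S\times G_{[k]})\cap(G_0\times W')\cap V'$ is then a genuine variety on which $\Phi(a,y_{[k]})=\theta_a(y_{[k]})$ is defined and affine in $G_0$.
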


Each of the four steps has its own subsection.

\subsection{Abstract Balog-Szemer\'edi--Gowers step 2}

%  Already proved above!
%
%\begin{lemma}
%     Let $(V_x)_{x \in G_0}$ be a linear system of multilinear varieties of codimension $r$. Let $s,d, \varepsilon$ be given. Then there exists a variety $W \subseteq G_{[k]}$ of codimension at most ?? such that for any MM-$s$-map $\psi : U \to H$, $U \subseteq G_{[k]}$ such that
%     \[\psi|_{U \cap V_{x_1} \cap \dots \cap V_{x_d}} = 0\]
%     holds for at least $\varepsilon|G_0|^d$ choices of $x_1, \dots, x_d \in G_0$, we have $\psi = 0$ on $U \cap W$.
% \end{lemma}

% \begin{proof}
%     When maps are already defined, things are simpler:
%     \begin{itemize}
%         \item Regularize the forms $\alpha_i$, by marking the non-qr linear combinations.
%         \item Quasirandom linear combinations can be removed in the simultaneous extension using Corollary~\ref{vanishingmmmaps}.
%         \item Biased contribute to $W$.
%     \end{itemize}

%     Let $\alpha_i : G_{\{0\} \cup I_i} \to \mathbb{F}_p$, $i \in [r]$, be the multilinear forms defining the linear system of varieties. Let $\beta_i : G_{J_i} \to \mathbb{F}_p$ be multilinear forms for $j \in [s]$, defining the variety $U$.\\

%     \noindent\textbf{Regularization.} We mark the linear combinations $\lambda \cdot \alpha$ of large bias, as long as there are such.\\

%     \noindent\textbf{Removal of quasirandom components.}\\

%     \noindent\textbf{Passing the $G_0$-independent variety.}
% \end{proof}

We begin with the second Balog-Szemer\'edi--Gowers step which again serves to ensure that, after passing to a large subset of indexing elements, all additive $O(1)$-tuples are respected up to an error function in a reasonably short list.

\begin{proposition}\label{ph4step1}
    Let $d = O(1)$. Let $(\phi_x : W \cap V_x \to H)_{x \in X}$ be a system of MM-$r$-maps, where $W \subseteq G_{[k]}$ is a multilinear variety and $V_x$ depend linearly on $x$, such that $c|G_0|^{3}$ additive quadruples are variety-respected. Then there exist an integer $r' \leq (r  + \log(2c^{-1}))^{O(1)}$, a further variety $W' \subseteq G_{[k]}$ of codimension $r'$, a subset $X' \subseteq X$ of density $\exp(-r')$, a linear system of varieties $(V'_a)_{a \in G_0}$ of codimension $r'$ and multilinear maps $\psi_1, \dots, \psi_m : W' \to H$ such that for each additive $2d$-tuple $x_{[2d]}$ in $X'$ we have some $j \in [m]$ 
    \[\sum_{i \in [2d]} (-1)^i \phi_{x_i} - \psi_j\Big|_{W' \cap V'_{x_1} \cap \dots \cap V'_{x_{2d}}} = 0.\]
\end{proposition}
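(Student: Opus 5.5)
We follow the template of Proposition~\ref{phase1step1prop}, replacing zero-density respectedness by variety-respectedness on a sequence of shrinking varieties. Fix a chain of varieties $W = W_1 \supseteq W_2 \supseteq \dots \supseteq W_{36}$ and a chain of linear systems $V_x = V^{(1)}_x \supseteq V^{(2)}_x \supseteq \dots$, each of codimension $(r + \log(2c^{-1}))^{O(1)}$, to be constructed below. For $i \in [36]$ let $\mathcal{Q}_i$ be the set of additive quadruples $(\upd{x}_1, x_2, x_3, \upd{x}_4)$ in $X$ such that
\[\phi_{x_1} - \phi_{x_2} - \phi_{x_3} + \phi_{x_4} = 0 \text{ on } W_i \cap V^{(i)}_{x_1} \cap V^{(i)}_{x_2} \cap V^{(i)}_{x_3} \cap V^{(i)}_{x_4}.\]
Largeness (i) holds by the hypothesis, and symmetry (ii) is immediate. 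We then apply Theorem~\ref{absg}.

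The substance lies in weak transitivity (iii). Suppose $(\upd a_1, a_2, b, \upd b') \in \mathcal{Q}_i$ and $(\upd b, b', a_3, \upd a_4) \in \mathcal{Q}_j$ for at least $c'|G_0|$ pairs $(b,b')$. Adding the two identities, the map $\phi_{a_1} - \phi_{a_2} - \phi_{a_3} + \phi_{a_4}$, restricted to $W_{\max(i,j)} \cap \bigcap_\ell V^{(\max(i,j))}_{a_\ell}$, vanishes further on $V_b \cap V_{b'}$ for $c'|G_0|$ pairs $(b,b')$. Here $b' = b + a_1 - a_2$, so $V_b \cap V_{b'}$ is cut out by a linear system indexed by $b$ (shifted by a fixed element), of codimension $2r$. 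This is exactly the weak locality situation of Theorem~\ref{lin-sys-vanishing}, in the $\ell$-tuple form remarked after it. The variety $Z$ it produces depends only on the linear system and the parameters, not on the maps or on $a_1,\dots,a_4$. We therefore define $W_{i+j} = W_{\max(i,j)} \cap Z$, taking $Z$ for the density $c'$ and for the codimension of the current domain. Since the bound of Theorem~\ref{lin-sys-vanishing} is polynomial in the codimension, iterating this $36$ times keeps the codimension $(r + \log(2c^{-1}))^{O(1)}$. The translation by $a_1 - a_2$ is handled by treating the shift as an extra variable in $G_0$, or by pigeonholing.

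This is where the freedom to vary $W_i$ (the ``common variety'' of Step 4.1) is essential, and I expect it to be the main obstacle. We must be able to pass to a single variety that works uniformly for all quadruples, and the slightly awkward shape of the conclusion of Theorem~\ref{lin-sys-vanishing} (vanishing on $W \cap Z$ rather than on $W \cap Z \cap V_{a_\ell}$) needs checking. If that fails, I would instead use Theorem~\ref{linsystextns} and let the linear system also shrink to $\tilde V$, which is why $V'$ appears in the statement.

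With (iii) established, Theorem~\ref{absg} gives $X'$ of density $(c/2)^{O(1)}$. For each additive $2d$-tuple $x_{[2d]}$ in $X'$ it also gives many $6d$-tuples $u_{[6d]}$ linked by quadruples in $\mathcal{Q}_{36}$, so that $\sum (-1)^i\phi_{x_i} = \sum (-1)^i \phi_{u_i}$ on $W_{36}$ intersected with the relevant $V$-slices. To make the error maps globally defined on $W'$, we proceed as follows. Choose a maximal disjoint family $U_1,\dots,U_m$ among the sets $U_{x_{[2d]}}$ of such $6d$-tuples, so $m \le (2/c)^{O(1)}$. For each $U_j$ pick a representative $u_{[6d]}$ and use Theorem~\ref{linsystextns} (weak gluing) to obtain a map $\psi_j$ on $W' = W_{36} \cap Z'$. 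This $\psi_j$ agrees with $\sum(-1)^i\phi_{u_i}$ on $W' \cap \bigcap \tilde V_{u_i}$. The linear system $V'$ collects the $\tilde V$ produced in these steps.

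Finally, given any $x_{[2d]}$ with $U_{x_{[2d]}}$ meeting some $U_j$, we chain the three identities as in Proposition~\ref{phase1step1prop}. We then apply Theorem~\ref{lin-sys-vanishing} once more to remove the dependence on the intermediate $u$'s, since the identity holds for a dense set of them. This yields the claimed vanishing on $W' \cap V'_{x_1} \cap \dots \cap V'_{x_{2d}}$, after shrinking $W'$ by a further variety of codimension $(r+\log(2c^{-1}))^{O(1)}$.
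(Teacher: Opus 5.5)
\textbf{Verdict: there is a genuine gap.} Your weak-transitivity step is correct and matches the paper; the gap is in how you build the error maps $\psi_j$ and in the final chaining.

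\textbf{Weak transitivity.} The $W_i$ come from iterating Theorem~\ref{lin-sys-vanishing}, with the fixed quadruple's slices included in the domain variety, and its conclusion on $W\cap Z$ is exactly what is needed. The shift is also harmless: $b'-b=\pm(a_1-a_2)$ gives $V_{b'}\supseteq V_b\cap V_{a_1}\cap V_{a_2}$, so only the slice $V_b$ is new. No shrinking systems $V^{(i)}$ are required.

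\textbf{Where it breaks.} Write $V_x=\bigcap_i V_{x_i}$ for a tuple $x$. Theorem~\ref{linsystextns} does not extend a single representative $u_{[6d]}$. It glues a \emph{dense} family of maps on $W\cap V_a$, with $W$ independent of the index, that agree pairwise on $W\cap V_a\cap V_{a'}$.
\begin{itemize}
\item The only family available to you is $U_j=U_{x'}$. For $u,v\in U_{x'}$ you know $\sum_i(-1)^i\phi_{u_i}=\sum_i(-1)^i\phi_{v_i}$ only on $W_{36}\cap V_{x'}\cap V_u\cap V_v$.
\item The slices $V_{x'}$ of the single fixed tuple $x'$ cannot be removed by Theorem~\ref{lin-sys-vanishing}, because only one $x'$ is involved. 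So the glued map lives on $W_{36}\cap Z'\cap V_{x'}$ and is essentially $\sum_i(-1)^i\phi_{x'_i}$ itself. It is not a map on a tuple-independent $W'$, which is what the statement and the later Proposition~\ref{ph4step3} require.
\item Intersecting over all $m\le(2/c)^{O(1)}$ representatives would cost codimension polynomial in $c^{-1}$, breaking $r'\le(r+\log(2c^{-1}))^{O(1)}$.
\item The chaining fails for the same reason. A maximal \emph{disjoint} family only guarantees one common $v\in U_x\cap U_j$. That gives $\sum\phi_x=\psi_j$ only on $W'\cap V_x\cap V_v\cap V_{x'}\cap\tilde V_u$. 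Since none of $v,x',u$ ranges over a dense set, Theorem~\ref{lin-sys-vanishing} cannot strip these slices.
\end{itemize}
The ``one common element suffices'' trick of Proposition~\ref{phase1step1prop} relies on positive correlation of zero sets (Lemma~\ref{poscorvars}). It has no analogue for variety-respectedness.

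\textbf{The missing idea: run the aBSG argument twice.}
\begin{enumerate}
\item The first run gives $X'$ in which every additive $6d$-tuple has $(c/2)^{O(1)}|G_0|^{18d-1}$ representing $18d$-tuples in $X$.
\item After checking that $X'$ still carries many respected quadruples, a second run gives $X''\subseteq X'$. In $X''$ each additive $2d$-tuple $q$ has a dense family $\mathcal{Z}^{(q)}$ of representing $6d$-tuples lying in $X'$.
\item Two members $a,a'\in\mathcal{Z}^{(q)}$ then share many representing $18d$-tuples $b$, by a bipartite-graph and Cauchy--Schwarz argument. Theorem~\ref{lin-sys-vanishing} over these $b$'s gives $\sum\phi_a=\sum\phi_{a'}$ on $W''\cap V_a\cap V_{a'}$ with no $q$-slices. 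This is precisely the hypothesis of Theorem~\ref{linsystextns}.
\item Gluing then produces $\psi^{(q)}$ on a $q$-independent $\tilde W$, agreeing with $\sum\phi_b$ on a dense subfamily $\tilde{\mathcal{Z}}^{(q)}$.
\item Take a maximal family $q_1,\dots,q_m$ whose sets $\tilde{\mathcal{Z}}^{(q_j)}$ have pairwise \emph{small}, rather than empty, intersections. Then every $q$ meets some $\tilde{\mathcal{Z}}^{(q_j)}$ in a dense set of $b$'s.
\item One more application of Theorem~\ref{lin-sys-vanishing} removes the $\tilde V_b$ slices.
\end{enumerate}
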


We shall apply the proposition with $d= 32$, giving respectedness of all additive 64-tuples.

\begin{proof}
    We shall find suitable multilinear varieties $W_1, \dots, W_{36} \subseteq G_{[k]}$ and
    apply the abstract Balog-Szemer\'edi--Gowers theorem (Theorem~\ref{absg}), for the choice $A_{\on{aBSG}} = X$ to the sets $\mathcal{Q}_i$ consisting of additive quadruples $(\upd{a}_1, a_2, a_3, \upd{a}_4)$ in $X$ such that 
    \[\phi_{a_1} - \phi_{a_2} - \phi_{a_3} + \phi_{a_4} = 0\text{ on }W_i\cap V_{a_1} \cap V_{a_2} \cap V_{a_3} \cap V_{a_4}.\]
    Define $c' = (c/2)^{O(1)}$ as the sufficiently small parameter so that Theorem~\ref{absg} holds.

    \noindent\textbf{Defining varieties $W_1, \dots, W_{36}$.} We set $W_1 = W$, of codimension $s_1 = r$, and define other varieties $W_i$ of codimension $s_i$ inductively, using Theorem~\ref{lin-sys-vanishing}. The codimension bounds will form an increasing sequence. Suppose that $W_1, \dots, W_i$ have been defined thus far. Apply Theorem~\ref{lin-sys-vanishing} to the linear system of varieties $(V_x)_{x \in G_0}$, codimension parameter $4r + 2s_i$ and density parameter $c'$ to get a further multilinear variety $W_{i+1}$ of codimension $s_{i + 1} \leq (r + s_i + \log {c'}^{-1})^{O(1)} \leq (r + \log c^{-1})^{O(1)}$.

    \noindent\textbf{Checking conditions of the abstract Balog-Szemer\'edi--Gowers theorem.} Setting $W_1 = W$ ensures the largeness condition. Symmetry is obvious. It remains to check weak-transitivity. Suppose that for an additive quadruple $(\upd{x+a}, x, z+a,\upd{z})$ in $X$, there exist at least $c' |G_0|$ choices of $y$ such that $y, y+a \in X$ and $(\upd{x+a}, x, y + a, \upd{y}) \in \mathcal{Q}_i$ and $(\upd{y+a}, y, z+a, \upd{z}) \in \mathcal{Q}_j$. Thus,
    \[\phi_{x+a} - \phi_x + \phi_z - \phi_{z + a} = \Big(\phi_{x+a} - \phi_x + \phi_y - \phi_{y + a}\Big) + \Big(\phi_{y+a} - \phi_y + \phi_z - \phi_{z + a}\Big)\]
    vanishes on $W_i \cap W_j \cap V_{x+a} \cap V_x \cap V_{y+a} \cap V_y \cap V_{z+a} \cap V_z$. But $V_x$ is linear in $x$, so this variety is 
    \[\Big(W_i \cap W_j \cap V_{x+a} \cap V_x \cap V_{z+a} \cap V_z\Big) \cap V_y.\]
    
    This holds for $c'|G_0|$ choices of $y$, so by the application of Theorem~\ref{lin-sys-vanishing} in the definition of variety $W_{i + j}$, we have $\phi_{x+a} - \phi_x + \phi_z - \phi_{z + a} = 0$ on $W_i \cap W_j \cap V_{x+a} \cap V_x \cap V_{z+a} \cap V_z$, so the quadruple lies in $\mathcal{Q}_{i + j}$, as required.

    \noindent\textbf{Finding maps $\psi_1, \dots, \psi_m$.} Apply Theorem~\ref{absg} to find a subset $X' \subseteq X$ of size $(c/2)^{O(1)}|G_0|$ such that for each additive $6d$-tuple $q = a_{[6d]} \in {X'}^{12}$ there exists a  collection $\mathcal{Z}_{6d}^{(q)} \subseteq X^{18d}$ of additive $18d$-tuples in $X$ of size at least $c_1|G_0|^{18d-1}$, where $c_1 \geq (c/2)^{O(1)}$, satisfying the conditions in the conclusion of Theorem~\ref{absg}. 

    \begin{claim}
        When $b_{[18d]} \in \mathcal{Z}_{6d}^{(q)}$, then we have 
        \[\sum_{i \in [6d]} (-1)^i \phi_{a_i} = \sum_{i \in [18d]} (-1)^i \phi_{b_i}\text{ on }W_{36} \cap V_{a_1} \cap V_{a_2} \cap \dots \cap V_{a_{6d-1}} \cap V_{b_1} \cap \dots \cap V_{b_{18d-1}}.\]
    \end{claim}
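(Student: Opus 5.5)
The plan is to prove the claim by induction on $\ell \in [6d]$, following the recursive definition of $\mathcal{Z}_\ell^{(a_{[\ell]})}$ in Theorem~\ref{absg}. I will prove a slightly more general statement: for every $\ell$ and every $b_{[3\ell]} \in \mathcal{Z}_\ell^{(a_{[\ell]})}$,
\[\sum_{i \in [\ell]} (-1)^i \phi_{a_i} = \sum_{i \in [3\ell]} (-1)^i \phi_{b_i}\]
holds on $W_{36} \cap \bigcap_{i\in[\ell]} V_{a_i} \cap \bigcap_{i \in [3\ell]} V_{b_i}$, with the signs matching those of the additive relation in the definition.

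Two observations drive the argument. First, since $(V_x)_{x \in G_0}$ is a linear system, any point lying in $V_u$ and $V_v$ also lies in $V_{u \pm v}$. Hence whenever an intermediate element such as $a_\ell + b_{3\ell-1} - b_{3\ell}$ is a signed combination of the $a_i$ and $b_i$, its variety already contains $\bigcap V_{a_i} \cap \bigcap V_{b_i}$. Second, every quadruple in $\mathcal{Q}_{36}$ is respected on $W_{36}$ intersected with the four varieties of its entries. These four varieties in turn contain the full intersection $\bigcap V_{a_i}\cap\bigcap V_{b_i}$ by the first observation. Consequently, on the common domain, all relations coming from $\mathcal{Q}_{36}$-quadruples can be added and subtracted freely.

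The base case $\ell=1$ is immediate. Here $(\upd a_1, b_1, b_3, \upd b_2) \in \mathcal{Q}_{36}$ gives $\phi_{a_1}-\phi_{b_1}-\phi_{b_3}+\phi_{b_2}=0$ on $W_{36}\cap V_{a_1}\cap V_{b_1}\cap V_{b_2}\cap V_{b_3}$, which is the claim up to the sign convention.

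For the inductive step, set $u = a_\ell + b_{3\ell-1} - b_{3\ell}$ and $v = b_{3\ell-3} - b_{3\ell-2} + u$. The second bullet of the definition gives an identity expressing $\phi_{a_\ell}$ through $\phi_u, \phi_{b_{3\ell}}, \phi_{b_{3\ell-1}}$. The third bullet expresses $\phi_v$ through $\phi_{b_{3\ell-3}}, \phi_{b_{3\ell-2}}, \phi_u$. The inductive hypothesis, applied to $(b_1,\dots,b_{3\ell-4},v) \in \mathcal{Z}_{\ell-1}^{(a_{[\ell-1]})}$, expresses $\sum_{i<\ell}(-1)^i\phi_{a_i}$ through $\phi_{b_1},\dots,\phi_{b_{3\ell-4}},\phi_v$. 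Each of these identities holds on $W_{36}$ intersected with varieties $V_w$ whose indices $w$ are signed combinations of the $a_i$ and $b_i$, so by linearity all of them hold on the target intersection. Summing the three identities with appropriate signs eliminates $\phi_u$ and $\phi_v$ and yields the claim.

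I expect the main obstacle to be the sign bookkeeping. One must check that the parities in $\sum (-1)^i$ line up exactly with the orientation conventions $(\upd{\cdot},\cdot,\cdot,\upd{\cdot})$ of the two $\mathcal{Q}_{36}$ conditions, so that $\phi_u$ and $\phi_v$ cancel precisely. Once that is verified, the domain containment is routine by linearity of the system.
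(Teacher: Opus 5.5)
Your proposal is correct and follows the paper's argument. The paper also unwinds the recursive definition of $\mathcal{Z}_\ell^{(a_{[\ell]})}$: it uses the second and third bullets, together with linearity of the system $(V_x)$, to get $\phi_{\tilde y_{3\ell-3}} = \phi_{y_{3\ell-3}} - \phi_{y_{3\ell-2}} + \phi_{a_\ell} + \phi_{y_{3\ell-1}} - \phi_{y_{3\ell}}$ on the common domain, and then recurses via $(y_1,\dots,y_{3\ell-4},\tilde y_{3\ell-3}) \in \mathcal{Z}_{\ell-1}^{(a_{[\ell-1]})}$; the signs cancel exactly as you predict, and your version is more detailed than the paper's one-paragraph sketch. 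One small point: the claim's domain omits $V_{a_{6d}}$ and $V_{b_{18d}}$, which matches your general statement because both tuples are additive, so by the same linearity these two varieties contain the intersection of the others.
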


    \begin{proof}
        Unpacking the definitions of $\mathcal{Z}^{(a_{[\ell]})}_{\ell}$ for $\ell \in [6d]$, the first properties imply that when $y_{[3\ell]} \in \mathcal{Z}^{(a_{[\ell]})}_{\ell}$, we have, for $\tilde{y}_{3\ell - 3} = y_{3\ell-3} - y_{3\ell -2} + y_{3\ell -1} - y_{3\ell} + a_{\ell}$, 
        \[\phi_{\tilde{y}_{3\ell - 3}} = \phi_{y_{3\ell-3}} + \phi_{y_{3\ell - 1} - y_{3\ell} + a_{\ell }} - \phi_{y_{3\ell-2}} = \phi_{y_{3\ell-3}} - \phi_{y_{3\ell-2}} + \phi_{a_\ell} + \phi_{y_{3\ell - 1}} - \phi_{y_{3\ell}}\]
        on $W_{36} \cap V_{a_{\ell}} \cap V_{y_{3\ell}} \cap V_{y_{3\ell - 1}} \cap  V_{y_{3\ell - 2}}$. Note that the last property implies that $(y_1, \dots, y_{3\ell - 4}, \tilde{y}_{3\ell - 3}) \in \mathcal{Z}_{\ell - 1}^{(a_{[\ell -1]})}$, so we may proceed and obtain the claimed equality.
    \end{proof}

    Now, apply the same argument with $X'$ instead of $X$ as the starting set. Let us first show that there are still many variety-respected additive quadruples in $X'$. Theorem~\ref{absg} gives us the same conclusion above for additive quadruples in $X'$ rather than additive $6d$-tuples, which we need for the rest of the proof. By averaging, we get a map $\psi : U \to H$ defined on a multilinear variety of codimension at most $(r + \log c^{-1})^{O(1)}$ such that for $(c/2)^{O(1)}|G_0|^3$ additive quadruples $(x+a, x, y+a, y)$ in $X'$ we have
    \[\phi_{x+a} - \phi_x - \phi_{y+a} + \phi_y = \psi\text{ on }U \cap V_x \cap V_y \cap V_a.\]
    After averaging over $y$ and an application of the  Cauchy-Schwarz inequality, we get
    \[\phi_{x+a} - \phi_x - \phi_{z+a} + \phi_z = 0\text{ on }(U \cap V_y) \cap V_x \cap V_z \cap V_a\]
    for $(c/2)^{O(1)}|G_0|^3$ additive quadruples $(x+a, x, z+a, z)$ in $X'$.

    Hence, we may apply the same argument and get a subset $X'' \subseteq X'$ of size $(c/2)^{O(1)}|G_0|$ and a variety $W' \subseteq W$ of codimension $(r + \log c^{-1})^{O(1)}$ such that for each additive $2d$-tuple $q = a_{[2d]} \in {X''}^{2d}$ there exists a  collection $\mathcal{Z}_{2d}^{(q)} \subseteq {X'}^{6d}$ of additive $6d$-tuples in $X'$ of size at least $c_2|G_0|^{6d-1}$, where $c_2 \geq (c/2)^{O(1)}$, satisfying the conditions in the conclusion of Theorem~\ref{absg}. Analogous argument to the one above shows that, when $b_{[6d]} \in \mathcal{Z}_{2d}^{(q)}$, then we have 
        \[\sum_{i \in [2d]} (-1)^i \phi_{a_i} = \sum_{i \in [6d]} (-1)^i \phi_{b_i}\text{ on }W' \cap V_{a_1} \cap V_{a_2} \cap \dots \cap V_{a_{2d-1}} \cap V_{b_1} \cap \dots \cap V_{b_{6d-1}}.\]

    \begin{claim}
        Let $\eta > 0$. There exists a multilinear variety $\tilde{W} \subseteq W'$ and a linear system of varieties $(\tilde{V}_a)_{a \in G_0}$, $\tilde{V}_a \subseteq V_a$, of codimension $(r + \log(2c^{-1}\eta^{-1}))^{O(1)}$ such that the following holds. Let $Z$ be a collection of additive $6d$-tuples in $X'$ of size $\eta |G_0|^{6d-1}$. Then there exists a multilinear map $\psi : \tilde{W} \to H$ such that $\psi = \sum_{i \in [6d]} (-1)^i\phi_{a_i}$ on $\tilde{W} \cap \tilde{V}_{a_1} \cap \dots \cap \tilde{V}_{a_{6d}}$ for at least $(c\eta/2)^{O(1)}|G_0|^{6d-1}$ additive $6d$-tuples in $Z$.
    \end{claim}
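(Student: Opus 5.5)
The natural tool is the weak gluing result, Theorem~\ref{linsystextns}, applied not to $(V_a)_{a \in G_0}$ but to a lifted linear system indexed by additive $6d$-tuples. Concretely, take $\mathcal{G} = \{a_{[6d]} \in G_0^{6d} : \sum_i (-1)^i a_i = 0\}$, a subspace of $G_0^{6d}$. On $\mathcal{G}$ define the linear system of varieties
\[\mathcal{V}_{a_{[6d]}} = V_{a_1} \cap \dots \cap V_{a_{6d}},\]
whose defining forms $\alpha_i(a_j, y_{I_i})$ depend linearly on the index $a_{[6d]} \in \mathcal{G}$. Its codimension is at most $6dr$, as in the remark after Theorem~\ref{lin-sys-vanishing}. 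For $q = a_{[6d]} \in Z$, set $\phi_q = \sum_{i \in [6d]} (-1)^i \phi_{a_i}$, defined on $W' \cap \mathcal{V}_q$. We apply Theorem~\ref{linsystextns} to this system with codimension parameter $s$ equal to that of $W'$ and density parameter $\eta^{O(1)}$. This produces $Z^\ast$ and $(\tilde{\mathcal{V}}_q)_q$ depending only on $(V_a)$, $W'$ and $\eta$, not on $Z$. We then put $\tilde W = W' \cap Z^\ast$.

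The main work is verifying hypothesis~\eqref{agreeing-triangles-cond}: $\phi_q = \phi_{q'}$ on $W' \cap \mathcal{V}_q \cap \mathcal{V}_{q'}$ for $\eta^{O(1)}|\mathcal{G}|^2$ pairs $(q,q')$ in $Z$.

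1. Since $Z$ consists of $6d$-tuples in $X'$, we use the previous step of the proof, the argument yielding $W'$, whose conclusion holds for tuples in $X'$. For each such tuple $q$ there are at least $c_2|G_0|^{18d-1}$ tuples $b_{[18d]}$ with $\phi_q = \sum_{i \in [18d]}(-1)^i\phi_{b_i}$ on $W' \cap \mathcal{V}_q \cap \bigcap_i V_{b_i}$. If necessary we replace $W'$ by the variety produced at the $6d$ level.
2. Given $q,q' \in Z$, these representing sets both lie in a set of size $|G_0|^{18d-1}$. A Cauchy–Schwarz/averaging step shows that for $(c\eta/2)^{O(1)}$ of the pairs $(q,q')$ there are $(c/2)^{O(1)}|G_0|^{18d-1}$ common tuples $b$. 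For each common $b$ we get $\phi_q = \phi_{q'}$ on $W' \cap \mathcal{V}_q \cap \mathcal{V}_{q'} \cap \mathcal{V}_b$.
3. Applying Theorem~\ref{lin-sys-vanishing} to the linear system $(\mathcal{V}_b)$ with $W'\cap\mathcal{V}_q\cap\mathcal{V}_{q'}$ as the domain variety gives a variety $Z_0$, independent of $q,q'$, on which $\phi_q-\phi_{q'}$ vanishes. We shrink $W'$ to $W'\cap Z_0$ first and then apply Theorem~\ref{linsystextns}.

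The output of Theorem~\ref{linsystextns} is a map $\Psi$ on $\tilde W$ with $\Psi = \phi_q$ on $\tilde W \cap \tilde{\mathcal V}_q$ for $(c\eta/2)^{O(1)}|\mathcal G|$ tuples $q$. Only tuples actually in $Z$ are useful, and the density count from the theorem ranges over $G_0$ rather than over $Z$. To handle this, we note that the map returned by the proof of Theorem~\ref{linsystextns} agrees with the given $\phi_q$ on indices $q$ belonging to the original indexing set, so the good $q$ can be taken inside $Z$.

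It remains to turn $\tilde{\mathcal V}_q$ into the form $\tilde V_{a_1}\cap\dots\cap\tilde V_{a_{6d}}$ for a single linear system $(\tilde V_a)$. This is the step I expect to be the main obstacle. The forms defining $\tilde{\mathcal V}_q$ are linear in $q = a_{[6d]}$, so each can be written as $\sum_j \beta_j(a_j, y)$. We take $\tilde V_a$ to be the zero set of all $\beta_j(a,\cdot)$ over all such forms and all $j$, intersected with $V_a$. This satisfies $\tilde V_{a_1}\cap\dots\cap \tilde V_{a_{6d}} \subseteq \tilde{\mathcal V}_q$ with only an $O(d)$-fold loss in codimension, and restricting the agreement to this smaller variety is harmless. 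All codimensions remain $(r+\log(2c^{-1}\eta^{-1}))^{O(1)}$.
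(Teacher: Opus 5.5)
Your proposal is correct and follows the paper's route. The paper's steps are: take the $18d$-tuple representations coming from the choice of $X'$; use a Cauchy--Schwarz count (phrased there as a bipartite graph between $Z$ and additive $18d$-tuples) to get $(c\eta)^{O(1)}|G_0|^{12d-2}$ pairs $(q,q')$ in $Z$ with many common representatives $b$; apply Theorem~\ref{lin-sys-vanishing} to remove $V_{b_1}\cap\dots\cap V_{b_{18d}}$ on a subvariety of $W'$ independent of $q,q'$; and finish with Theorem~\ref{linsystextns}. You also make explicit two details the paper leaves implicit: the lifting to the linear system indexed by additive $6d$-tuples, and the conversion of $\tilde{\mathcal V}_q$ into $\tilde V_{a_1}\cap\dots\cap\tilde V_{a_{6d}}$ by splitting each form that is linear in $a_{[6d]}$ into its coordinate pieces.
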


    \begin{proof}
        By the property of the set $X'$, for any $a_{[6d]} \in Z$ we have at least $(c \eta)^{O(1)}|G_{0}|^{18d-1}$ additive $18d$-tuples $b_{[18d]}$ in $X$ such that 
        \[ \sum_{i \in [6d]} (-1)^i\phi_{a_i} =  \sum_{i \in [18d]} (-1)^i\phi_{b_i}\text{ on }W' \cap (\cap_{i \in [6d]} V_{a_i}) \cap (\cap_{i \in [18d]} V_{b_i}).\]
        Define a bipartite graph whose vertex classes are $Z$ and the set of additive $18d$-tuples in $X$ and we put an edge between $a_{[6d]}$ and $b_{[18d]}$ if the above equality holds. It has density $(c \eta)^{O(1)}$, so there are $(c \eta)^{O(1)}|G_{0}|^{12d-2}$ pairs of additive $6d$-tuples $(a_{[6d]}, a'_{[6d]})$ in $Z$ that
        \[ \sum_{i \in [6d]} (-1)^i\phi_{a_i} =  \sum_{i \in [6d]} (-1)^i\phi_{a'_i}\text{ on }W' \cap (\cap_{i \in [6d]} V_{a_i})\cap (\cap_{i \in [6d]} V_{a'_i}) \cap (\cap_{i \in [18d]} V_{b_i})\]
        for  $(c \eta)^{O(1)}|G_{0}|^{18d-1}$ additive $18d$-tuples $b_{[18d]}$. Apply Theorem~\ref{lin-sys-vanishing} to pass to a subvariety $W'' \subseteq W'$ of codimension $(r  + \log(2c^{-1}\eta^{-1}))^{O(1)}$, independent of $a_{[6d]}, a'_{[6d]}$, such that 
        \[ \sum_{i \in [6d]} (-1)^i\phi_{a_i} =  \sum_{i \in [6d]} (-1)^i\phi_{a'_i}\text{ on }W' \cap (\cap_{i \in [6d]} V_{a_i})\cap (\cap_{i \in [6d]} V_{a'_i}).\]
        Apply Theorem~\ref{linsystextns} to get the desired variety $\tilde{W} \subseteq W'$ and a linear system of varieties $(\tilde{V}_a)_{a \in G_0}$ of codimension $ (r  + \log(2c^{-1}\eta^{-1}))^{O(1)}$. The properties of that variety allow us to define the desired $\psi$.
    \end{proof}

    Apply the claim with $\eta = c_2$ to find a variety $\tilde{W}$ and a linear system of varieties $(\tilde{V}_a)_{a \in G_0}$ of codimension at most $\tilde{s} \leq (r  + \log(2c^{-1}))^{O(1)}$. For each additive $2d$-tuple $q = (a_1, \dots, a_{2d})$ in $X''$, the conclusion of the claim applies to the set of additive $6d$-tuples $\mathcal{Z}_{2d}^{(q)} \subseteq {X'}^{6d}$, giving a further subset $\tilde{\mathcal{Z}}^{(q)} \subseteq \mathcal{Z}_{2d}^{(q)}$ of size $|\tilde{\mathcal{Z}}^{(q)}| \geq c_3 |G_0|^{6d-1}$ for $c_3 \geq (c/2)^{O(1)}$ and a map $\psi^{(q)} : \tilde{W} \to H$ such that for each additive $6d$-tuple $b_{[6d]}$ in $\tilde{\mathcal{Z}}^{(q)}$ we have $\psi^{(q)} = \sum_{i \in [6d]} (-1)^i\phi_{b_i}$ on $\tilde{W} \cap \tilde{V}_{b_1} \cap \dots \cap \tilde{V}_{b_{6d}}$.

    Take a maximal set of additive $2d$-tuples $q_1, \dots, q_m$ such that $|\tilde{\mathcal{Z}}^{(q_i)} \cap \tilde{\mathcal{Z}}^{(q_j)}| \leq \frac{c_3^2}{2}|G_0|^{6d-1}$, whenever $i \not = j$. Clearly, $m \leq (c/2)^{-O(1)}$. Write $\psi_i = \psi^{q_i}$. Apply Theorem~\ref{lin-sys-vanishing} another time to the system $(\tilde{V}_x)_{x \in G_0}$, codimension parameter at most $5\tilde{s}$ and density parameter at least $c_3^2/2$  to find a multilinear variety $\tilde{W}' \subseteq \tilde{W}$ of codimension $(\tilde{s} + r + \log c_3^{-1})^{O(1)} \leq (r  + \log(2c^{-1}))^{O(1)}$ with properties in the conclusion of the theorem.

    \indent Let $a_{[2d]}$ be an additive $2d$-tuple in $X''$. Then, for some $j \in [m]$, we have $|\tilde{\mathcal{Z}}^{(q)} \cap \tilde{\mathcal{Z}}^{(q_j)}| \geq \frac{c_3^2}{2}|G_0|^{6d-1}$. For each additive $6d$-tuple $b_{[6d]} \in \tilde{\mathcal{Z}}^{(q)} \cap \tilde{\mathcal{Z}}^{(q_j)}$, we have
    \[\sum_{i \in [2d]} (-1)^i \phi_{a_i} = \sum_{i \in [6d]} (-1)^i \phi_{b_i} = \psi_j\]
    on $\tilde{W} \cap \tilde{V}_{a_1} \cap \dots \cap \tilde{V}_{a_{2d}} \cap \tilde{V}_{b_1} \cap \dots \cap \tilde{V}_{b_{6d}}$. By the property of $\tilde{W}'$ guaranteed by Theorem~\ref{linsystextns}, we have that $\sum_{i \in [2d]} (-1)^i \phi_{a_i} =  \psi_j$ on $\tilde{W}' \cap \tilde{V}_{a_1} \cap \dots \cap \tilde{V}_{a_{2d}}$, as claimed.
\end{proof}

\subsection{Robust Bogolyubov-Ruzsa step 2}

In the next step, we apply the robust Bogolyubov-Ruzsa theorem to ensure that we have maps $\phi_a$ defined for all elements of a subspace of $G_0$ and all additive $O(1)$-tuples variety-respected, up to an error function.

\begin{proposition}\label{rob-bog-ruzsa-step2}
    Let $d = O(1)$. Let $(\phi_x : W \cap V_x \to H)_{x \in X}$ be a system of MM-$r$-maps, where $W \subseteq G_{[k]}$ is a multilinear variety, $V_x$ depend linearly on $x$ and $|X| \geq c|G_0|$. Let $\psi_1, \dots, \psi_m : W \to H$ be multilinear maps. Suppose that for each additive $8d$-tuple $x_{[8d]}$ in $X$ we have some $j \in [m]$ such that
    \begin{equation}\sum_{i \in [8d]} (-1)^i \phi_{x_i} - \psi_j\Big|_{W \cap V_{x_1} \cap \dots  \cap V_{x_{8d}}} = 0.\label{functionalrespcon-br2}\end{equation}
    Then there exist a subspace $U \leq G_0$ of codimension $\log^{O(1)}c^{-1}$, a multilinear variety $W'$, a linear system $(V_x')_{x\in G_0}$ of codimension $r' \leq (r + \log (2c^{-1}m))^{O(1)}$ and a system of MM-$r'$-maps $(\theta_a : W' \cap V'_x \to H)_{a \in U}$ such that for each $a \in U$ we have $\theta_a = \phi_{x_1} + \phi_{x_2} - \phi_{x_3} - \phi_{x_1 + x_2 - x_3 - a}$ on $W' \cap V'_a \cap V'_{x_1} \cap  V'_{x_2} \cap  V'_{x_3}$ for at least $(c/m)^{O(1)}|G_0|^3$ choices of $(x_1, x_2, x_3)$  
    and for every additive $2d$-tuple $x_{[2d]}$ in $U$ we have some $j \in [m]$ with
    \[\sum_{i \in [4]} (-1)^i \theta_{x_i} - \psi_j\Big|_{W' \cap V'_{x_1} \cap \dots  \cap V'_{x_{2d}}} = 0.\]
\end{proposition}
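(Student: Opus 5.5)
The plan mirrors Proposition~\ref{phase1step3prop}, with $c$-respectedness replaced by variety-respectedness up to an error function. The new difficulty is that the maps live on varieties, so when we pass from one representation of $a$ to another we need a variety, independent of the representations, on which the different choices agree.

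\textbf{Step 1: the subspace $U$.} Apply the robust Bogolyubov--Ruzsa lemma (Theorem~\ref{rbrlemma}) to $X$. This gives a subspace $U \leq G_0$ of codimension $\log^{O(1)} c^{-1}$. For each $a \in U$, at least $c^{O(1)}|G_0|^3$ triples $(x_1,x_2,x_3)$ have $x_1,x_2,x_3,\,x_1+x_2-x_3-a \in X$. For each such representation $\rho=(x_1,x_2,x_3)$ set $x_4 = x_1+x_2-x_3-a$ and consider
\[\delta_\rho = \phi_{x_1}+\phi_{x_2}-\phi_{x_3}-\phi_{x_4},\]
defined on $W\cap V_{x_1}\cap\dots\cap V_{x_4}$.

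\textbf{Step 2: agreement between representations.} Take two representations $\rho,\rho'$ of the same $a$. Joining them gives an additive $8$-tuple in $X$, which is an additive $8d$-tuple after padding with trivial pairs $x,x$. By \eqref{functionalrespcon-br2}, $\delta_\rho-\delta_{\rho'}=\psi_j$ on the intersection of the relevant varieties, for some $j\in[m]$. Pigeonholing over $j$, for each $a$ a single $j_a$ serves $(c/m)^{O(1)}|G_0|^6$ pairs $(\rho,\rho')$.

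Now fix a representation $\rho_0$ that occurs in many such pairs. Let $\tau = \delta_{\rho_0} - \psi_{j_a}$; it is defined on $W\cap V_{\rho_0}$, where $V_{\rho_0}$ is the intersection of the four $V_{x_i}$. Then
\[\tau = \delta_{\rho'} \text{ on } W\cap V_{\rho_0}\cap V_{x'_1}\cap\dots\cap V_{x'_4}\]
for a dense set of $\rho'$. Because $V_x$ is linear in $x$, we have $V_{x'_4}\supseteq V_{x'_1}\cap V_{x'_2}\cap V_{x'_3}\cap V_a$. So this is exactly the situation of the weak gluing theorem (Theorem~\ref{linsystextns}), applied to the linear system $(V_x)$ with the three free variables $x'_1,x'_2,x'_3$. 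Concretely, use the product system $\tilde V$ on $G_0^3$ from the remark after Theorem~\ref{lin-sys-vanishing}, and take $W\cap V_a$ as the fixed variety. It gives a variety $Z$ and a refined linear system $\tilde V_x$, both independent of $a$, and a map $\theta_a : W\cap Z\cap \tilde V_a\to H$ agreeing with $\delta_{\rho}$ on $W\cap Z\cap\tilde V_a\cap\tilde V_{x_1}\cap\tilde V_{x_2}\cap \tilde V_{x_3}$ for $(c/m)^{O(1)}|G_0|^3$ representations $\rho$.

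Since $a$ itself varies, I would treat $a$ as a fourth coordinate of the linear system. The resulting $Z$ is then uniform over $a$, and the codimension stays $(r+\log(2c^{-1}m))^{O(1)}$. Set $W'=W\cap Z$ and $V'_x=\tilde V_x$. This also gives the stated representation property of $\theta_a$.

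\textbf{Step 3: respectedness of $2d$-tuples.} Take an additive $2d$-tuple $a_{[2d]}$ in $U$. Choose good representations $\rho_i$ of each $a_i$ independently at random; a positive proportion $(c/m)^{O(d)}$ of choices are simultaneously good. Concatenating them gives an additive $8d$-tuple in $X$. Then \eqref{functionalrespcon-br2} yields some $j$ with
\[\sum_i(-1)^i\theta_{a_i}-\psi_j=0 \text{ on } W'\cap\bigcap_i V'_{a_i}\cap \bigcap_{i,\ell} V'_{x^{(i)}_\ell}.\]
Pigeonhole a single $j$ serving a dense set of choices of $(\rho_i)$. Then apply the weak locality theorem (Theorem~\ref{lin-sys-vanishing}) to the product linear system in the $3\cdot 2d$ free variables, with base variety $W'\cap\bigcap_i V'_{a_i}$. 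This strips off the $V'_{x^{(i)}_\ell}$ at the cost of shrinking $W'$ once more by a variety that does not depend on $a_{[2d]}$. That variety must be fixed in advance: apply Theorem~\ref{lin-sys-vanishing} before looking at any tuple, with codimension parameter $O(dr')$ and density $(c/m)^{O(d)}$. Every tuple then uses the same $W'$.

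\textbf{Main obstacle.} The main obstacle is Step 2: producing one variety $W'$ and one linear system $V'$ that work simultaneously for all $a\in U$. Weak gluing is stated for a fixed family indexed by $G_0$, while here both $a$ and the error index $j_a$ vary. I would first handle this by running the gluing argument on the enlarged index group $G_0^4\ni(a,x_1,x_2,x_3)$, where the defining forms remain linear. Only after that would I restrict the slice in the $a$-coordinate, so that $Z$ and $\tilde V$ are uniform by construction.
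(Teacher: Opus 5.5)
Your Steps 1 and 3 agree with the paper's proof, but Step 2 has a genuine gap. Fixing a single representation $\rho_0$ only gives $\delta_{\rho'}=\tau=\delta_{\rho''}$ on $W\cap V_{\rho_0}\cap V_{\rho'}\cap V_{\rho''}$. It does not give this on $W\cap V_a\cap V_{\rho'}\cap V_{\rho''}$. So the agreement hypothesis \eqref{agreeing-triangles-cond} of Theorem~\ref{linsystextns}, with fixed variety $W\cap V_a$, is not established: MM-maps that agree on a smaller variety need not agree on the larger one. Suppose you instead take $W\cap V_{\rho_0}$ as the fixed variety. Then gluing is trivial ($\Psi=\tau$), but $\theta_a$ lives on a domain that depends on the chosen representation of $a$, not on $W'\cap V'_a$. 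In Step 3 the factors $V_{\rho_0(a_i)}$ could then never be removed. With $\rho_0$ fixed, Theorem~\ref{lin-sys-vanishing} has nothing to average over, since weak locality needs the vanishing for a dense set of indices.

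The missing idea is to use many middle representations instead of one. For fixed $a$, the paper forms the bipartite graph on representations whose edges are the pairs respected with the same $j_a$, and counts paths of length two. This gives a set $P_a$ of $c_1|G_0|^6$ pairs $(\rho,\rho')$, with $c_1\geq (c/m)^{O(1)}$, each having at least $c_1|G_0|^3$ common neighbours $y$. For every such $y$ the error function cancels, so $\delta_\rho=\delta_{\rho'}$ on $W\cap V_a\cap V_\rho\cap V_{\rho'}\cap V_y$. Because $y$ now ranges over a dense set, Theorem~\ref{lin-sys-vanishing} (codimension parameter $O(r)$, density $c_1$) yields $W'$ with $\delta_\rho=\delta_{\rho'}$ on $W'\cap V_a\cap V_\rho\cap V_{\rho'}$ for all $(\rho,\rho')\in P_a$. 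Only then does Theorem~\ref{linsystextns} apply, with fixed variety $W'\cap V_a$, to produce $\theta_a$.

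The obstacle you single out, uniformity in $a$, costs nothing. In Theorems~\ref{lin-sys-vanishing} and~\ref{linsystextns}, the outputs $Z$ and $(\tilde V_x)$ depend only on the linear system and the numerical parameters $s,c$, and are fixed before the variety and the maps are given. So you can feed in $W\cap V_a$ or $W'\cap V_a$ for every $a\in U$ at once. Your device of gluing over $(a,x_1,x_2,x_3)\in G_0^4$ is unnecessary, and in fact wrong, since maps attached to different $a$ are not supposed to agree.
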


\begin{rem*} The error functions $\psi$ are unchanged in the proof, the only difference is that for additive $2d$-tuples and maps $(\theta_a)_{a \in U}$, we use the error functions for the case of additive $8d$-tuples and maps $(\phi_x)_{x \in X}$.\end{rem*}

\indent Eventually, the proposition will be applied with $d = 8$.

\begin{proof}
By the robust Bogolyubov-Ruzsa theorem, there exists a subspace $U \leq G_0$ of codimension at most $\log^{O(1)}c^{-1}$ such that for each $a \in U$, there are at least $(c/2)^{O(1)}|G_0|^3$ triples $(x_1, x_2, x_3)$ such that $x_1, x_2, x_3, x_1 + x_2 - x_3 - a \in X$. Denote the set of such triples by $T_a$.

Fix an element $a \in U$. Note that, when $(x_1, x_2, x_3), (y_1, y_2, y_3) \in T_a$, then 8 elements $x_1, x_2, x_3, x_1 + x_2 - x_3 - a, y_1, y_2, y_3, y_1 + y_2 - y_3 - a$ form an additive 8-tuple. Hence, there exists $j \in [m]$ such that~\eqref{functionalrespcon-br2} holds. In particular, by averaging, there exists some $j \in [m]$ such that this equality holds for at least $\frac{(c/2)^{O(1)}}{m} |G_0|^6$ pairs in $T_a$.

For the fixed element $a$, consider the bipartite graph whose vertex classes are copies of $G_0^3$, and triples $(x_1, x_2, x_3)$ and  $(y_1, y_2, y_3)$ are joined by an edge if the above equality holds with $\psi_j$. This is a graph of density $(c/m)^{O(1)}$, so we get $(c/m)^{O(1)}|G_0|^9$ of 9-tuples $(x_{[3]}, x'_{[3]}, y_{[3]}) \in T_a^3$ such that $x_{[3]}y_{[3]}$ and $x'_{[3]}y_{[3]}$ are both edges. In particular, we have a set $P_a \subseteq T_a \times T_a$ of pairs of triples $(x_{[3]}, x'_{[3]})$ of size $c_1|G_0|^6$, where $c_1 \geq (c/m)^{O(1)}$, such that there are at least $c_1|G_0|^3$ triples $y_{[3]} \in T_a$ with $x_{[3]}y_{[3]}$ and $x'_{[3]}y_{[3]}$ both being edges.

\begin{claim}
    There exist a multilinear variety $W'$ and a linear system $(V_x')_{x\in G_0}$ of codimension  $(r + \log (2c^{-1}m))^{O(1)}$, independent of $a$, such that for each $a \in U$, we have a multilinear map $\theta_a : W' \cap V'_a \to H$ such that
    \[\theta_a = \phi_{x_1} + \phi_{x_2} - \phi_{x_3} - \phi_{x_1 + x_2 - x_3 - a}\text{ on }W' \cap V_a \cap V'_{x_1} \cap V'_{x_2} \cap V'_{x_3}\]
    for  $(c/m)^{O(1)}|G_0|^3$ choices of $x_{[3]} \in T_a$.
\end{claim}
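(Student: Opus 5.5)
The plan is to obtain the common variety $W'$ and the linear system $(V'_x)$ by invoking the weak gluing theorem (Theorem~\ref{linsystextns}). The indexing for that theorem will not be $G_0$ itself but triples in $G_0^3$, viewed as a linear system of varieties indexed by the group $G_0^3$ (as in the remark after Theorem~\ref{lin-sys-vanishing}). For fixed $a \in U$ and $x_{[3]} \in T_a$, set
\[\delta^{(a)}_{x_{[3]}} = \phi_{x_1} + \phi_{x_2} - \phi_{x_3} - \phi_{x_1+x_2-x_3-a}.\]
This map is defined on $W \cap V_{x_1} \cap V_{x_2} \cap V_{x_3} \cap V_{x_1+x_2-x_3-a}$. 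Since $V_x$ depends linearly on $x$, this domain contains $W \cap V_a \cap V_{x_1} \cap V_{x_2} \cap V_{x_3}$. So for fixed $a$ it is a map on $(W \cap V_a) \cap \mathcal{V}_{x_{[3]}}$, where $\mathcal{V}_{x_{[3]}} = V_{x_1} \cap V_{x_2} \cap V_{x_3}$ is a linear system in the index $x_{[3]} \in G_0^3$ of codimension $3r$.

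First I will upgrade the edge information to pairwise agreement. For a pair $(x_{[3]}, x'_{[3]}) \in P_a$ we have $c_1|G_0|^3$ triples $y_{[3]}$ with both pairs edges. Subtracting the two instances of~\eqref{functionalrespcon-br2} cancels $\psi_j$ (the same $j$ was fixed for $a$) and the $\phi_{y}$ terms. This gives $\delta^{(a)}_{x_{[3]}} = \delta^{(a)}_{x'_{[3]}}$ on $W \cap V_a \cap \mathcal{V}_{x_{[3]}} \cap \mathcal{V}_{x'_{[3]}} \cap \mathcal{V}_{y_{[3]}}$ for $c_1|G_0|^3$ choices of $y_{[3]}$. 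Here I again use linearity to absorb the fourth elements into $V_a$ and the three listed slices.

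Next, apply Theorem~\ref{lin-sys-vanishing} to the system $(\mathcal{V}_{y_{[3]}})$ with codimension parameter about $8r$ and density $c_1$. This yields a variety $W_1$, independent of $a$ and of the pairs, such that $\delta^{(a)}_{x_{[3]}} = \delta^{(a)}_{x'_{[3]}}$ on $W \cap W_1 \cap V_a \cap \mathcal{V}_{x_{[3]}} \cap \mathcal{V}_{x'_{[3]}}$ for all pairs in $P_a$. The point is that $W_1$ is chosen once, before $a$ is fixed, because the theorem produces $Z$ depending only on the system, $s$ and $c$.

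Then apply Theorem~\ref{linsystextns} to the system $(\mathcal{V}_{x_{[3]}})$, with $W$ replaced by $W \cap W_1 \cap V_a$ and density $c_1$. The domain $W \cap W_1 \cap V_a$ has codimension bounded independently of $a$, since $V_a$ has codimension $r$. This gives $Z$ and a refined linear system $\tilde{\mathcal{V}}_{x_{[3]}}$, again independent of $a$, and a map $\Psi_a$ on $W \cap W_1 \cap V_a \cap Z$. The map $\Psi_a$ agrees with $\delta^{(a)}_{x_{[3]}}$ on the further intersection with $\tilde{\mathcal{V}}_{x_{[3]}}$ for $(c_1/2)^{O(1)}|G_0|^3$ triples.

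The main obstacle is that $\tilde{\mathcal{V}}_{x_{[3]}}$ is a linear system over $G_0^3$, whereas the statement wants one over $G_0$ of the form $V'_{x_1}\cap V'_{x_2}\cap V'_{x_3}$. To handle this, I will trace the proof of Proposition~\ref{linsystextns-induction}: the refined system is obtained by adding forms $\theta'_j(x_{[3]}, \cdot)$ linear in the index. Each such form splits as a sum of forms linear in each $x_i$ separately, namely $\sum_i \theta'_{j,i}(x_i,\cdot)$. I then replace the vanishing of the sum by the vanishing of each summand. This only shrinks the variety and keeps codimension polynomial. Defining $V'_x$ as $V_x$ intersected with all these one-variable forms, and $W' = W \cap W_1 \cap Z$, I obtain the requirement $\theta_a := \Psi_a|_{W' \cap V'_a}$.

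Finally, respectedness of $2d$-tuples in $U$ up to $\psi_j$ follows by combining the defining triples. For an additive $2d$-tuple $a_{[2d]}$ in $U$, averaging gives triples for each $a_i$ such that all $8d$ elements form an additive $8d$-tuple in $X$. By~\eqref{functionalrespcon-br2}, $\sum (-1)^i\theta_{a_i} - \psi_j$ vanishes on $W' \cap \bigcap_i V'_{a_i}$ intersected with the triple varieties, for many choices of triples. One last application of Theorem~\ref{lin-sys-vanishing}, with the resulting variety folded into $W'$ beforehand, removes the triple varieties.
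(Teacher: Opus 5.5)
Your proposal is correct and follows the paper's proof. You cancel $\psi_j$ to get pairwise agreement of the maps $\delta^{(a)}_{x_{[3]}}$ after intersecting with $V_{y_{[3]}}$, remove the $y$-varieties using Theorem~\ref{lin-sys-vanishing} (applied once, so the variety is independent of $a$), and then glue with Theorem~\ref{linsystextns} on $W\cap W_1\cap V_a$. Your explicit treatment of the indexing by $G_0^3$ fills in a step the paper leaves implicit: you split each form linear in $x_{[3]}$ into its three one-variable pieces, which gives $V'_{x_1}\cap V'_{x_2}\cap V'_{x_3}\subseteq \tilde{\mathcal V}_{x_{[3]}}$. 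That splitting holds for any linear system over $G_0^3$, so you need not trace the proof of Proposition~\ref{linsystextns-induction}.
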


\begin{proof}
    For a given $a \in U$, take any $(x_{[3]}, x'_{[3]}) \in P_a$. Note that
    \[\phi_{x_1} + \phi_{x_2} - \phi_{x_3} - \phi_{x_1 + x_2 - x_3 - a} = \phi_{y_1} + \phi_{y_2} - \phi_{y_3} - \phi_{y_1 + y_2 - y_3 - a}  + \psi_j\text{ on }W \cap V_a \cap \Big(\cap_{i \in [3]} V_{x_i}\Big)\cap \Big(\cap_{i \in [3]} V_{y_i}\Big)\]
    and similarly
    \[\phi_{x'_1} + \phi_{x'_2} - \phi_{x'_3} - \phi_{x'_1 + x'_2 - x'_3 - a} = \phi_{y_1} + \phi_{y_2} - \phi_{y_3} - \phi_{y_1 + y_2 - y_3 - a}  + \psi_j\text{ on }W \cap V_a \cap \Big(\cap_{i \in [3]} V_{x'_i}\Big)\cap \Big(\cap_{i \in [3]} V_{y_i}\Big).\]
    Hence
    \begin{align*}\phi_{x_1} + \phi_{x_2} - \phi_{x_3} - \phi_{x_1 + x_2 - x_3 - a} = &\phi_{x'_1} + \phi_{x'_2} - \phi_{x'_3} - \phi_{x'_1 + x'_2 - x'_3 - a} \\
    &\text{ on }W \cap V_a \cap \Big(\cap_{i \in [3]} V_{x_i}\Big) \cap \Big(\cap_{i \in [3]} V_{x'_i}\Big)\cap \Big(\cap_{i \in [3]} V_{y_i}\Big)\end{align*}
    for $c_1|G_0|^3$ choices of $y_{[3]}$. By Theorem~\ref{lin-sys-vanishing} for the system $(V_x)_{x \in G_0}$, the density parameter $c_1$ and the codimension parameter $8r$, we have a variety $W'$ of codimension $r' \leq (r + \log (2c^{-1}m))^{O(1)}$, independent of $a$, such that 
    \[\phi_{x_1} + \phi_{x_2} - \phi_{x_3} - \phi_{x_1 + x_2 - x_3 - a} = \phi_{x'_1} + \phi_{x'_2} - \phi_{x'_3} - \phi_{x'_1 + x'_2 - x'_3 - a} \text{ on }W' \cap V_a \cap \Big(\cap_{i \in [3]} V_{x_i}\Big) \cap \Big(\cap_{i \in [3]} V_{x'_i}\Big)\]
    holds for each pair $(x_{[3]}, x'_{[3]}) \in P_a$.

    Apply Theorem~\ref{linsystextns} to the linear system $(V_x)$, codimension parameter $r' + 3r$ and density parameter $c_1$, to get further multilinear variety $Z$ and a further linear system of subvarieties $(\tilde{V}_x)_{x \in G_0}$ of codimension $(r + \log (2c^{-1}m))^{O(1)}$, with properties described in the theorem. Apply it to common variety $W' \cap V_a$ and maps $\tilde{\phi}_{x_{[3]}} = \phi_{x_1} + \phi_{x_2} - \phi_{x_3} - \phi_{x_1 + x_2 - x_3 - a} : W' \cap V_a \cap \Big(\cap_{i \in [3]} V_{x_i}\Big) \to H$. Properties in the conclusion of Theorem~\ref{linsystextns} imply that there exists $\theta_a : Z \cap V_a \to H$ such that $\theta_a = \phi_{x_1} + \phi_{x_2} - \phi_{x_3} - \phi_{x_1 + x_2 - x_3 - a}$ on $Z \cap V_a \cap V'_{x_1} \cap  V'_{x_2} \cap  V'_{x_3}$ for at least $(c/m)^{O(1)}|G_0|^3$ choices of $(x_1, x_2, x_3) \in T_a$.
\end{proof}

Finally, given any additive $2d$-tuple $a_{[2d]}$ in $U$, we have $(c/m)^{O(1)}|G_0|^{6d}$ many $x_{[2d] \times [3]}$ in $G_0$ such that 
\[\sum_{i \in [2d]} (-1)^i \theta_{a_i} = \sum_{i \in [2d]} (-1)^i\Big(\phi_{x_{i, 1}} + \phi_{x_{i, 2}} - \phi_{x_{i, 3}} - \phi_{x_{i, 1} + x_{i, 2} - x_{i, 3} - a_i}\Big) = \psi_{j'}\]
on $Z \cap V'_{a_1} \cap V'_{a_2} \cap V'_{a_3} \cap \Big(\cap_{i \in [4], j \in [3]} V'_{x_{i, j}}\Big)$, for some $j' \in [m]$, as additive $8d$-tuples are respected in the system $(\phi_x)_{x \in X}$, up to error functions. Apply Theorem~\ref{lin-sys-vanishing} another time to complete the proof.
\end{proof}

\subsection{Getting variety-respected additive quadruples}

We now apply an algebraic dependent random choice argument, similar in spirit to the \textbf{Step 1.2} in Subsection~\ref{ensuringrespStep}. In order to carry out the argument, we need to find points where the error functions $\psi_1, \dots, \psi_m$ do not vanish. However, this is an important difficulty, as MM-$r$-maps can have zero sets covering almost the whole of the domain. To overcome this issue, we rely on the dichotomy in Proposition~\ref{nonvanishing-or-variety-dichotomy}.

\begin{proposition}\label{ph4step3}
    Let $d = O(1)$. Let $(V_x)_{x \in G_0}$ be a linear system of multilinear varieties of codimension $r$, let $W$ be a multilinear variety of codimension $r$ in $G_{[k]}$ and let $\phi_x : W \cap V_x \to H$ be a multilinear map for each $x \in G_0$. Suppose that $\psi_1, \dots, \psi_m : W \to H$ are multilinear maps such that for each additive $2d$-tuple in $G_0$, we have
    \[\sum_{i \in [2d]} (-1)^i \phi_{x_i} - \psi_j\Big|_{W \cap V_{x_1} \cap \dots  \cap V_{x_{2d}}} = 0.\]
    for some $j$. Then there exist a set $X \subseteq G_0$ of size $|X| \geq p^{-(\log m + r)^{O(1)}}|G_0|$, such that all additive $2d$-tuples in $X$ are variety-respected.
\end{proposition}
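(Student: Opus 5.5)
We follow the algebraic dependent random choice argument of Proposition~\ref{phase1step2prop}, with Proposition~\ref{nonvanishing-or-variety-dichotomy} supplying the non-vanishing points that the global-map argument took from random products of subspaces. Apply Proposition~\ref{nonvanishing-or-variety-dichotomy} to $W$ to get a variety $Z \subseteq W$ of codimension $r^{O(1)}$, depending only on $W$. Since conclusion \textbf{(B)} only concerns a point inclusion $Z \subseteq Z(\psi_i)$, we run a halving iteration on the list of error maps. Let $J$ be the indices still unhandled. Applying the dichotomy to $(\psi_j)_{j \in J}$, either (B) holds and $\Omega(|J|)$ of them vanish on $Z$ (we declare these handled of the \emph{first type}), or (A) gives a point $u \in W$ with $\psi_j(u) \neq 0$ for $\Omega(|J|)$ indices $j$. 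In case (A) we pick a random linear projection $\pi : H \to \mathbb{F}_p^{s}$ with $s = O(1)$; each $j$ with $\psi_j(u)\neq 0$ has $\pi\circ\psi_j(u) \neq 0$ with probability $\ge 1/2$. Fixing a good $\pi$ handles $\Omega(|J|)$ indices of the \emph{second type}, witnessed by the pair $(u,\pi)$. After $\ell = O(\log m)$ rounds every $j$ is handled. Denote the witnesses by $(u^{(1)},\pi_1),\dots,(u^{(\ell)},\pi_\ell)$.

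To make the points $u^{(i)}$ usable for every $x$, we must ensure that they lie in $V_x$. Since we are free to pass to a subvariety, we replace $W$ by $W^\ast = Z \cap W$ and aim to show that the final system, restricted to $W^\ast$, is variety-respected. The problem is that $u^{(i)}$ need not lie in $V_x$. I would handle this by choosing the $u^{(i)}$ inside $W \cap V_0$-type sets via averaging: $V_x$ depends linearly on $x$, so the set $\{x : u \in V_x\}$ is a subspace $L_u$ of codimension at most $r$. Restricting the indexing set to $L = \bigcap_i L_{u^{(i)}}$, of codimension $\le \ell r$, every $u^{(i)}$ lies in all $V_x$ with $x$ in a coset of $L$. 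Averaging over cosets keeps density $p^{-\ell r}$, and additive tuples in a single coset of $L$ still have linear combinations vanishing appropriately. A cleaner fix is to translate to $L$ itself: differences of elements of a coset lie in $L$, and additive $2d$-tuples in a coset are shifts of those in $L$.

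Next we define $X = \{x \in L : \pi_i(\phi_x(u^{(i)})) = \mu_i \ \forall i\}$, choosing the $\mu_i \in \mathbb{F}_p^s$ by pigeonhole so that $|X| \ge p^{-\ell s}|L| \ge p^{-(\log m + r)^{O(1)}}|G_0|$. Take an additive $2d$-tuple in $X$ and the index $j$ with $\sum (-1)^i\phi_{x_i} = \psi_j$ on $W \cap \bigcap V_{x_i}$. If $j$ is of the second type with witness $(u^{(i)},\pi_i)$, then $u^{(i)}$ lies in this domain, and since $2d$ is even the signs cancel: $\pi_i\psi_j(u^{(i)}) = \sum(-1)^{i'}\mu_i = 0$, contradicting the choice of $\pi_i$. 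So $j$ is of the first type, $\psi_j = 0$ on $Z$, and the tuple is variety-respected once the maps are restricted to $Z \cap V_x$.

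The main obstacle is reconciling the conclusion's requirement that the maps be variety-respected on their given domains with the fact that we only obtain respectedness on $Z \cap W$. I would restate the output system as $(\phi_x|_{Z\cap V_x})$, which is what later steps need, since codimension only grows polynomially. A further subtle point is that the witness points must lie in $W$, which the dichotomy guarantees.
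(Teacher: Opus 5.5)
Your proposal is correct and follows essentially the same route as the paper: you iterate the dichotomy of Proposition~\ref{nonvanishing-or-variety-dichotomy} to obtain witness points and the variety $Z$, restrict to the subspace of indices $a$ with all witnesses in $V_a$, and fix the values of $\phi_a$ at the witnesses under a few linear functionals. The paper uses one global list $h_1,\dots,h_n$ where you use per-round projections $\pi_i$. Tuples whose error function is non-zero at a witness are then ruled out, and the remaining ones vanish on $Z$; like you, the paper implicitly restricts the maps to $Z\cap V_x$. Your coset digression is unnecessary, and for non-trivial cosets of $L$ the witnesses need not lie in $V_x$; working directly in the subspace $L$, as you ultimately do and as the paper does, suffices.
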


This proposition will be applied with $d= 8$.

\begin{proof}
    Apply Proposition~\ref{nonvanishing-or-variety-dichotomy} iteratively to the sequence of maps $\psi_1, \dots, \psi_m$. Each time, we get one of the two conclusions for at least $\Omega(1)$ proportion of maps, which we then remove, and apply the proposition again. We exhaust the list of maps after $O(\log m)$ steps. Thus, for some $\ell \leq O(\log m)$, we obtain points $x^{(1)}_{[k]}, \dots, x^{(\ell)}_{[k]} \in W$ and multilinear varieties $Z_1, \dots, Z_\ell \subseteq W$ of codimension $r^{O(1)}$ such that for each $\psi_j$ we either have some $x^{(i)}_{[k]}$ where it is non-zero, or $Z_i \subseteq \on{Z}(\psi_j)$. Write $Z = Z_1 \cap \dots \cap Z_\ell$ which is a multilinear variety of codimension at most $\ell r^{O(1)}$. Let $J$ be the indices $j \in [m]$ such that $\psi_j(x^{(i)}_{[k]})\not=0$ for some $i \in [\ell]$.

    Let $h_1, \dots, h_n \in H$ be elements such that for each $j \in J$, taking some $i \in [\ell]$ such that $\psi_j(x^{(i)}_{[k]})\not= 0$, we have  $\psi_j(x^{(i)}_{[k]}) \cdot h_{i'} \not= 0$, for some $i' \in [n]$. Clearly, we may find such elements with $n = O(\log m)$.

    Let $U$ be the subspace of all $a \in G_0$ such that $x^{(1)}_{[k]}, \dots, x^{(\ell)}_{[k]} \in V_a$. Take $\mu \in \mathbb{F}_p^{[\ell] \times [n]}$ uniformly at random and define $X = \{a \in U: (\forall i \in [\ell], j \in[n])\,\, \phi_a(x^{(i)}_{[k]}) \cdot h_j = \mu_{i,j}\}$.

    \begin{claim}
        For any choice of $\mu$, we have that whenever $a_1, \dots, a_{2d}$ is an additive $2d$-tuple in $X$, then
        \[\sum_{i \in [2d]} (-1)^i \phi_{a_i} = 0\text{ on }Z \cap V_{a_1} \cap \dots \cap V_{a_{2d}}.\]
    \end{claim}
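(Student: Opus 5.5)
Take an additive $2d$-tuple $a_1,\dots,a_{2d}$ in $X$ and write $\Delta = \sum_{i \in [2d]} (-1)^i \phi_{a_i}$, defined on $W \cap V_{a_1} \cap \dots \cap V_{a_{2d}}$. The hypothesis gives some $j \in [m]$ with $\Delta = \psi_j$ on this domain. The plan is a case split according to whether $j \in J$ or $j \notin J$. The case $j \notin J$ is immediate, and the case $j \in J$ will lead to a contradiction using the test points $x^{(i)}_{[k]}$ and the test vectors $h_{i'}$.

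\textbf{Case $j \notin J$.} Here $\psi_j$ is non-zero at none of the points $x^{(1)}_{[k]}, \dots, x^{(\ell)}_{[k]}$. By the dichotomy obtained from the iterated application of Proposition~\ref{nonvanishing-or-variety-dichotomy}, we then have $Z_i \subseteq Z(\psi_j)$ for some $i$, so $\psi_j$ vanishes on $Z \subseteq Z_i$. Hence $\Delta = \psi_j = 0$ on $Z \cap V_{a_1} \cap \dots \cap V_{a_{2d}}$, which is the claim.

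\textbf{Case $j \in J$.} Choose $i \in [\ell]$ with $\psi_j(x^{(i)}_{[k]}) \neq 0$, and $i' \in [n]$ with $\psi_j(x^{(i)}_{[k]}) \cdot h_{i'} \neq 0$. Since $X \subseteq U$, every $a_s$ lies in $U$, so $x^{(i)}_{[k]} \in V_{a_s}$ for all $s$. Also $x^{(i)}_{[k]} \in W$. Thus $x^{(i)}_{[k]}$ lies in the common domain, and the identity $\Delta = \psi_j$ may be evaluated there. By definition of $X$, $\phi_{a_s}(x^{(i)}_{[k]}) \cdot h_{i'} = \mu_{i,i'}$ for every $s$. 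Therefore
\[\Delta(x^{(i)}_{[k]}) \cdot h_{i'} = \sum_{s \in [2d]} (-1)^s \mu_{i,i'} = 0,\]
because the signs in an additive $2d$-tuple alternate and so sum to zero. This contradicts $\psi_j(x^{(i)}_{[k]}) \cdot h_{i'} \neq 0$, so this case cannot occur. This holds for every choice of $\mu$.

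The only real subtlety is in the second case: one has to check that the test points actually lie in every domain $V_{a_s}$. This is exactly why $X$ is restricted to the subspace $U$. Since $V_a$ depends linearly on $a$, the condition $x^{(i)}_{[k]} \in V_a$ is linear in $a$, so $U$ has codimension at most $\ell r$.

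\textbf{Size of $X$.} Averaging over $\mu$, which takes at most $p^{\ell n}$ values, some choice of $\mu$ gives $|X| \geq p^{-\ell n}|U| \geq p^{-\ell n - \ell r}|G_0|$. With $\ell, n = O(\log m)$ this is $p^{-(\log m + r)^{O(1)}}|G_0|$. Finally, the conclusion is variety-respectedness after intersecting with $Z$. Replacing $W$ by $W \cap Z$, which has codimension $r + \ell r^{O(1)}$, gives variety-respectedness in the modified system, which is what the proposition asserts.
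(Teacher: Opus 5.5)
Your proof is correct and follows the paper's argument exactly. You split on whether $j\in J$: when $j\notin J$ you use $Z\subseteq Z_i\subseteq Z(\psi_j)$, and when $j\in J$ you evaluate at the test point $x^{(i)}_{[k]}\in W\cap V_{a_1}\cap\dots\cap V_{a_{2d}}$ (which lies there because $X\subseteq U$), pair with $h_{i'}$, and get a contradiction since the alternating sum of the constants $\mu_{i,i'}$ vanishes. One small point of care: take $i$ to be the index for which the vectors $h_{i'}$ were chosen to detect $\psi_j(x^{(i)}_{[k]})\neq 0$, rather than an arbitrary $i$ with $\psi_j(x^{(i)}_{[k]})\neq 0$; the paper leaves this implicit too.
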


    \begin{proof}
        We have $\sum_{i \in [2d]} (-1)^i \phi_{a_i} = \psi_j$ for some $j \in [m]$ on $W \cap V_{a_1} \cap \dots  \cap V_{a_{2d}}$. If $j \notin J$, then $Z \subseteq \on{Z}(\psi_j)$, so we are done. Otherwise, we have some $\iota$ such that $\psi_j(x^{(\iota)}_{[k]}) \not=0$. But $x^{(\iota)}_{[k]} \in V_{a_1} \cap \dots \cap V_{a_{2d}}$, so we have
        \[\sum_{i \in [2d]} (-1)^{i} \phi_{a_i}(x^{(\iota)}_{[k]}) = \psi_j(x^{(\iota)}_{[k]}).\]
        Taking dot product with the appropriate $h_j$ gives
        \[0 = \sum_{i \in [2d]} (-1)^{i} \mu_{\iota j} = \sum_{i \in [2d]} (-1)^{i} \phi_{a_i}(x^{(\iota)}_{[k]}) \cdot h_j = \psi_j(x^{(\iota)}_{[k]}) \cdot h_j \not = 0,\]
        which is a contradiction.
    \end{proof}

    Now pick $\mu$ so that $X$ is large; we may ensure that $|X| \geq p^{-\ell n}|U| \geq p^{-\ell n - \ell r} |G_0| \geq p^{-(\log m + r)^{O(1)}}|G_0|$.
\end{proof}

\subsection{Final Bogolyubov-Ruzsa argument}

We apply the Bogolyubov-Ruzsa argument once again the obtain the desired completion of the system of MM-$r$-maps. We simply need the special case of Proposition~\ref{rob-bog-ruzsa-step2} when all $\psi_i$ are zero maps, which we record here.

\begin{proposition}\label{ph4step4}
    Let $(V_x)_{x \in G_0}$ be a linear system of multilinear varieties of codimension $r$, let $W$ be a multilinear variety of codimension $r$ in $G_{[k]}$ and let $\phi_x : W \cap V_x \to H$ be a multilinear map for each $x \in X$. Suppose that all additive 16-tuples in $X$ are variety-respected. Then there exist a subspace $U \leq G_0$ of codimension $\log^{O(1)}c^{-1}$, a multilinear variety $W'$, a linear system $(V_x')_{x\in G_0}$ of codimension $r' \leq (r + \log (2c^{-1}))^{O(1)}$, a system of MM-$r'$-maps $(\theta_a : W' \cap V'_x \to H)_{a \in U}$ such that for each $a \in U$ we have $\theta_a = \phi_{x_1} + \phi_{x_2} - \phi_{x_3} - \phi_{x_1 + x_2 - x_3 - a}$ on $W' \cap V'_a \cap V'_{x_1} \cap  V'_{x_2} \cap  V'_{x_3}$ for at least $(c/2)^{O(1)}|G_0|^3$ choices of $(x_1, x_2, x_3)$  
    and for every additive quadruple $a_1, \dots, a_4$ in $U$
    \[\sum_{i \in [4]} (-1)^i \theta_{a_i} \Big|_{W' \cap V'_{a_1} \cap \dots  \cap V'_{a_{4}}} = 0.\]
\end{proposition}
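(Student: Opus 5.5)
The plan is to deduce Proposition~\ref{ph4step4} directly from Proposition~\ref{rob-bog-ruzsa-step2}. We apply it with $d = 2$, a single error function $m = 1$ and $\psi_1 = 0 : W \to H$. Here $c$ denotes the density of $X$, so $|X| \geq c|G_0|$, as in the hypotheses of Proposition~\ref{rob-bog-ruzsa-step2}.

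The first step is to verify condition~\eqref{functionalrespcon-br2} for additive $8d = 16$-tuples. The assumption that every additive 16-tuple $x_{[16]}$ in $X$ is variety-respected says precisely that
\[\sum_{i \in [16]} (-1)^i \phi_{x_i} = 0 = \psi_1 \text{ on } W \cap V_{x_1} \cap \dots \cap V_{x_{16}}.\]
So the hypothesis holds with $j = 1$ for every such tuple.

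The second step is to read off the conclusion. Proposition~\ref{rob-bog-ruzsa-step2} provides the following:
\begin{itemize}
\item a subspace $U \leq G_0$ of codimension $\log^{O(1)} c^{-1}$;
\item a variety $W'$ and a linear system $(V'_x)_{x \in G_0}$ of codimension $(r + \log(2c^{-1}m))^{O(1)} = (r + \log(2c^{-1}))^{O(1)}$, since $m = 1$;
\item maps $\theta_a : W' \cap V'_a \to H$ for $a \in U$, each agreeing with $\phi_{x_1} + \phi_{x_2} - \phi_{x_3} - \phi_{x_1 + x_2 - x_3 - a}$ on the stated variety for $(c/m)^{O(1)}|G_0|^3 = (c/2)^{O(1)}|G_0|^3$ triples.
\end{itemize}
In addition, every additive $2d = 4$-tuple $a_{[4]}$ in $U$ satisfies $\sum_{i \in [4]} (-1)^i \theta_{a_i} - \psi_1 = 0$ on $W' \cap V'_{a_1} \cap \dots \cap V'_{a_4}$. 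Since $\psi_1 = 0$, this is exactly variety-respectedness of all additive quadruples in $U$, as required.

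The only point needing care is that the proof of Proposition~\ref{rob-bog-ruzsa-step2} does not rely on the error maps in any way that breaks when they vanish. Inspecting that proof, the error term only enters by pigeonholing over $j \in [m]$, which is trivial when $m = 1$. Its propagation through Theorems~\ref{lin-sys-vanishing} and~\ref{linsystextns} is unaffected: with $\psi_1 = 0$ the identities become vanishing statements on the relevant varieties. I therefore expect no genuine obstacle; the deduction amounts to bookkeeping of the parameters.
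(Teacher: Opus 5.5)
Your proposal is correct and matches the paper, which records Proposition~\ref{ph4step4} as the special case of Proposition~\ref{rob-bog-ruzsa-step2} in which the error functions are zero maps. Your choice $d = 2$, $m = 1$, $\psi_1 = 0$, with $c$ read as the density of $X$, gives exactly the required passage from variety-respected additive $16$-tuples to variety-respected additive quadruples, and the stated parameter bounds follow from those in Proposition~\ref{rob-bog-ruzsa-step2}.
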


\vspace{\baselineskip}

Combining all steps in this section, we deduce Theorem~\ref{phase4mainres}.

\begin{proof}[Proof of Theorem~\ref{phase4mainres}]
    Apply Proposition~\ref{ph4step1} with $d = 32$ to get all additive 64-tuples variety-respected up to error functions, then Proposition~\ref{rob-bog-ruzsa-step2} with its parameter $d$ chosen to be 8, resulting in additive 16-tuples being variety-respected up to error functions in a system of maps indexed by a full subspace, then Proposition~\ref{ph4step3} with its parameter $d$ also being 8 to remove the error functions, and finally Proposition~\ref{ph4step4}. Thus, there exist a positive integer $s \leq (r + \log c^{-1})^{O(1)}$, a subspace $S \leq G_0$, a multilinear variety $W'$, a linear system of varieties $(V_x')_{x\in G_0}$ of codimension $s$, a system of MM-maps $(\theta_a : W' \cap V'_x \to H)_{a \in S}$, such that for each $a \in S$ we have at least $\exp(-s)|G_0|^{15}$ choices of $y_{[16]}$ in $X$ such that $\sum_{i \in [16]} (-1)^i y_i = a$ and $\theta_a = \sum_{i \in [16]} (-1)^i \phi_{y_i}$ on $W' \cap V'_a \cap (\cap_{i \in [16]}V'_{y_i})$. Recall that  a linear system of varieties $(V_x')_{x\in G_0}$ is in fact a collection of slices of a multilinear variety $V'$. Thus, we may define a multilinear variety $U = (S \times G_{[k]}) \cap (G_0 \times W') \cap V'$. Define map $\Phi : U \to H$ by $\Phi(a, x_{[k]}) = \theta_a(x_{[k]})$. Clearly, it is multilinear in $G_{[k]}$. For the direction $G_0$, take an additive quadruple in $U$ in direction $G_0$, namely points $(a_i, y_{[k]}) \in U$, $i \in [4]$, with $a_1 +a_2 = a_3 + a_4$. Then $y_{[k]} \in U_{a_1} \cap \dots \cap U_{a_4} \subseteq W' \cap V'_{a_1} \cap \dots \cap V'_{a_4}$ and the additive quadruple $(\upd{a_1}, \upd{a_2}, a_3, a_4)$ is variety-respected by $\theta$, so we obtain
    \[\Phi(a_1, y_{[k]}) + \Phi(a_2, y_{[k]}) - \Phi(a_3, y_{[k]}) - \Phi(a_4, y_{[k]}) = \theta_{a_1}(y_{[k]}) + \theta_{a_2}(y_{[k]}) - \theta_{a_3}(y_{[k]}) - \theta_{a_4}(y_{[k]}) = 0,\]
    showing that $\Phi$ is affine in direction $G_0$, completing the proof.
\end{proof}

\section{Putting everything together}

In this section we prove the main result of the paper, the quasipolynomial structure theorem for Freiman multi-homomorphisms, which was stated as Theorem~\ref{strFmult}. Before combining all steps together, we need a lemma concerning MM-$r$-maps with dense zero sets.

\begin{lemma}\label{mmrmapszerosetslemma}
    Let $V\subseteq G_{[k]}$ be a multilinear variety of codimension $r$ and let $\phi : V \to H$ be an MM-$r$-map such that $|Z(\phi)| \geq c|V|$. Then $Z(\phi)$ contains a multilinear variety of codimension at most $(r + \log c^{-1})^{O(1)}$.
\end{lemma}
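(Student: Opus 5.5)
We reduce to global multilinear maps by Theorem~\ref{basicextensionstheory}: $\phi$ coincides with a global multilinear map $\Phi : G_{[k]} \to H$ on a multilinear variety $V' \subseteq V$ of codimension $r_1 \leq (2r)^{O(1)}$. Once the zero set of $\phi$ is known to be dense inside $V'$, we are in the setting of Theorem~\ref{densetolowcodim}, applied to $\Phi$ together with the forms defining $V'$.

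\textbf{Step 1: density of $Z(\phi)$ inside $V'$.} We cannot simply intersect: $Z(\phi)$ has density only $c$ in $V$, while $V'$ may be much smaller than $V$. We first try the following. By Lemma~\ref{lowcodsize}, $|V'| \geq p^{-kr_1}|G_{[k]}|$. The extension in Theorem~\ref{basicextensionstheory} is produced by an explicit procedure: it peels off quasirandom forms and replaces biased ones by lower order varieties. Following the bookkeeping in the proof of Proposition~\ref{nonvanishing-or-variety-dichotomy}, each point of $V$ receives a formula of bounded length in terms of points of $V'$. Alternatively, we arrange the choice of $V'$ so that $V$ is covered by boundedly many translates or cosets of slices of $V'$. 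Either way, pigeonholing should give a variety $V''$ of codimension $(r+\log c^{-1})^{O(1)}$, on which $\phi$ agrees with a global multilinear map $\Phi$, with $|Z(\phi) \cap V''| \geq (c p^{-r_1})^{O(1)}|G_{[k]}|$.

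A cleaner fallback is to run the dichotomy of Proposition~\ref{nonvanishing-or-variety-dichotomy} directly on the single map $\phi$. Its extension $\phi^{\on{ext}}$ agrees with $\phi$ on a variety $C$ of codimension $r^{O(1)}$, and it is the zero map or not. If $\phi^{\on{ext}} = 0$, then $C \subseteq Z(\phi)$ and we are done. Otherwise we use the density of zeros to restrict further, as below.

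\textbf{Step 2: dense zeros of a global map on a variety.} Suppose $V'' = \{\beta = 0\}$ with $\beta$ mixed-linear of codimension $s$, $\Phi$ is global, and $|Z(\Phi) \cap V''| \geq \delta |G_{[k]}|$. We consider the map $(\Phi, \beta)$. Its components are multilinear on various coordinate sets; we pad each with dummy coordinates, or equivalently use the forms $\tilde\Phi(y_{[k]},h) = \Phi(y_{[k]})\cdot h$ as in the proof of Theorem~\ref{changecategorystep}. Writing indicator functions as Fourier averages gives
\[\exx_{y_{[k]}} \id_{\{\Phi = 0\}}(y_{[k]})\id_{\{\beta=0\}}(y_{[k]}) = \exx_{h,\lambda} \on{bias}(\tilde\Phi_h + \lambda\cdot\beta) \geq \delta.\]
Since each bias lies in $[0,1]$, some form in this family has large bias. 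We then apply Theorem~\ref{densetolowcodim}, in its second (map) formulation, to the joint map $(\Phi, \beta)$, treated as a multilinear map after inserting auxiliary coordinates. This yields a variety of codimension $O(\log \delta^{-1} + s)$ contained in $\{\Phi = 0\} \cap \{\beta = 0\} \subseteq Z(\phi)$. With $\delta \geq (c p^{-r})^{O(1)}$, the codimension bound is $(r+\log c^{-1})^{O(1)}$.

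\textbf{Main obstacle.} The hard part is Step 1: transferring the density $c$ of zeros from $V$ to the subvariety $V'$ on which $\phi$ becomes global. A high proportion of zeros in $V$ says nothing a priori about $V'$. I expect the key to be Lovett's positive correlation (Lemma~\ref{poscorvars}), applied after averaging over cosets. Concretely, for each fixing of the lower order forms cutting $V'$ out of $V$ at nonzero values, the restricted map is still multilinear on an affine variety. We then pigeonhole a layer in which the zeros of $\phi$ have density at least $c$, and use the extension theory to show that this layer can be translated back into a genuine variety.
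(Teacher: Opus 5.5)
Your outer frame (Theorem~\ref{basicextensionstheory}, then Theorem~\ref{densetolowcodim}) matches the paper. The proof has a genuine gap exactly where you flag it: Step~1 never transfers dense zeros from $V$ to the subvariety $V'=V\cap\{\alpha=0\}$ on which $\phi=\Phi$, and none of the routes you list does so.
\begin{itemize}
\item The formulas of Proposition~\ref{nonvanishing-or-variety-dichotomy} express values of an \emph{extension} in terms of values of the given map on its domain. They do not express $\phi$ on $V$ through points of $V'$. Even if they did, $\phi(x)=0$ would not force $\phi$ to vanish at the points of a formula for $x$.
\item The covering of $V$ by translates of slices of $V'$ is asserted, not shown.
\item The dichotomy fallback only relocates the problem to Case~2, where you again need dense zeros of $\phi^{\on{ext}}$ on $C$.
\item Lemma~\ref{poscorvars} needs both sets to be varieties, and $Z(\phi)$ is not known to be one.
\end{itemize}
Your last suggestion, pigeonholing a layer $\{\alpha=\lambda\}$, is the right first move. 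However, it is not finished by extension theory.

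The missing idea is a multilinear differencing (box) step. Write $\alpha$ for the $s$ forms cutting $V'$ out of $V$, each multilinear on a nonempty $I_j$. Choose $\lambda$ so that $S=Z(\phi)\cap\{\alpha=\lambda\}$ has size at least $cp^{-s}|V|\ge cp^{-s-kr}|G_{[k]}|=:\delta|G_{[k]}|$. Iterated Cauchy--Schwarz gives at least $\delta^{2^k}|G_{[k]}|^2$ pairs $(y_{[k]},z_{[k]})$ whose $2^k$ corners $(y_I,z_{[k]\setminus I})$ all lie in $S$. For such a pair, the point $(y_i-z_i)_{i\in[k]}$ has three properties:
\begin{itemize}
\item it lies in $V$, by expanding each defining form of $V$ over the corners;
\item $\phi$ vanishes there, by multilinear expansion inside $V$;
\item each $\alpha_j$ vanishes there, because $\sum_{I\subseteq I_j}(-1)^{|I_j\setminus I|}\lambda_j=0$.
\end{itemize}
So the nonzero layer value is differenced away and you land in $V'\cap Z(\phi)$. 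Since $(y,z)\mapsto y-z$ has fibres of size $|G_{[k]}|$, you get $|Z(\Phi)|\ge|Z(\phi)\cap V'|\ge\delta^{2^k}|G_{[k]}|$. Theorem~\ref{densetolowcodim} then gives a variety $W'\subseteq Z(\Phi)$, and $V'\cap W'\subseteq Z(\phi)$.

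This also makes your Step~2 unnecessary: intersecting $W'$ with $V'$ suffices, so you never need the joint map $(\Phi,\beta)$. That step was shaky in any case. Theorem~\ref{densetolowcodim} is not stated for mixed-linear maps. Also, biases of sums of forms with different supports need not lie in $[0,1]$; for example, $\mathbb{E}_{y_1,y_2\in\mathbb{F}_p}\,\omega(y_1y_2+y_1+y_2)=p^{-1}\omega(-1)$.
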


\begin{proof}
    By Theorem~\ref{basicextensionstheory}, there exists a global multilinear map $\Phi : G_{[k]} \to H$ and a multilinear variety $W \subseteq V$ of codimension $s \leq (2r)^{O(1)}$ such that $\phi = \Phi$ on $W$. Let $\alpha : G_{[k]} \to \mathbb{F}_p^s$ be the mixed-linear map defining $W$. By the pigeonhole principle, for some $\lambda \in \mathbb{F}_p^s$, we have $|Z(\phi) \cap \{\alpha = \lambda\}| \geq c|\{\alpha = \lambda\}| \geq c p^{-ks} |G_{[k]}|$. Iterating the Cauchy-Schwarz inequality, we may find at least $(c p^{-ks})^{O(1)}$ pairs $(y_{[k]}, z_{[k]}) \in G_{[k]} \times G_{[k]}$ such that $(y_I, z_{[k] \setminus I}) \in Z(\phi) \cap \{\alpha = \lambda\}$ for all subsets $I \subseteq [k]$. Since $\phi$ is defined on the multilinear variety $V$, we get that point $(y_i-z_i)_{i \in [k]}$ belongs to $V$ and $\phi((y_i-z_i)_{i \in [k]}) = 0$. Additionally, $\alpha((y_i-z_i)_{i \in [k]}) = 0$, so it belongs to $W$. Hence, $|Z(\phi) \cap W| \geq (c p^{-ks})^{O(1)}|G_{[k]}|$.

    \indent Since $\phi = \Phi$ on $W$, we have $|Z(\Phi)| \geq (c p^{-ks})^{O(1)}|G_{[k]}|$. As $\Phi$ is a global multilinear map, $Z(\Phi)$ is a multilinear variety so by Theorem~\ref{densetolowcodim}, it contains a multilinear variety $W'$ of codimension $(s + \log c^{-1})^{O(1)} \leq (r + \log c^{-1})^{O(1)}$. Hence, $W \cap W' \subseteq Z(\phi)$, as desired.
\end{proof}

We may now proceed to prove Theorem~\ref{strFmult}.

\begin{proof}[Theorem~\ref{strFmult}] We prove the theorem by induction on $k$. The base case follows from Theorem~\ref{invhomm}. Suppose now that the theorem holds for some $k \geq 1$ and let $\Phi: A \to H$ be a Freiman multi-homomorphism on a set $A \subseteq G_{[0,k]}$. By Proposition~\ref{mlmaps-pass-step-proposition} we obtain a quantity $c_0 \geq \eplog{c}$, a set $X_0 \subseteq G_0$ and a collection of global multilinear map $\phi^{(0)}_x : G_{[k]} \to H$ such that $|Z(\phi^{(0)}_{x_1} - \phi^{(0)}_{x_2} + \phi^{(0)}_{x_3} - \phi^{(0)}_{x_4})| \geq c_0|G_{[k]}|$ holds for at least $c_0|G_0|^3$ additive quadruples $(\upd{x}_1, x_2, \upd{x}_3, x_4)$ in $X_0$. Moreover, there exist a global multiaffine map $\psi^{(0)} : G_{[0, k]} \to H$ and elements $t_{[k]} \in G_{[k]}$ such that for each $x \in X_0$, there exists a set $Y^{(0)}_x \subseteq G_{[k]}$ of size $c_0|G_{[k]}|$ such that
    \begin{equation}\label{PET:eqn1}(\forall y_{[k]} \in Y^{(0)}_x)\,\,\phi^{(0)}_x(y_{[k]}) = \Phi(x, (y + t)_{[k]}) + \psi^{(0)}(x, y_{[k]}).\end{equation}

Apply Theorem~\ref{phase1mainres}. We obtain a quantity $r_1 \leq \log^{O(1)}(2c_0^{-1}) \leq \eplog{c}$, subspace $U_1 \leq G_0$ of codimension $r_1$, a system of global multilinear maps  $(\phi^{(1)}_x : G_{[k]} \to H)_{x \in U_1}$ such that every additive quadruple in $(\phi^{(1)}_x)_{x \in U_1}$ is $\exp(-r_1)$-respected and for each $x \in U_1$ we have  at least  $\exp(-r_1)|G_0|^3$ quadruples $(x_1 ,x_2 , x_3 , x_4) \in X_0^4$ such that $x_1 + x_2 - x_3 - x_4 = x$ and 
    \begin{equation}\label{PET:eqn2}|Z(\phi^{(1)}_x - \phi^{(0)}_{x_1} - \phi^{(0)}_{x_2} + \phi^{(0)}_{x_3} + \phi^{(0)}_{x_4})| \geq \exp(-r_1)|G_{[k]}|.\end{equation}

Let $d_{\on{bog}} = O(1)$ and $\varepsilon_{\on{bog}} = \Omega(1)$ be the quantities in Theorem~\ref{multbogstep}. Apply Theorem~\ref{changecategorystep} to get a quantity $r_2 \leq \log^{O(1)}(c^{-1})$, multilinear varieties $V^{(2)}_x$ for $x \in G_0$ of codimension at most $r_2$, such that $(1-\varepsilon_{\on{bog}})|G_0|^{2d_{\on{bog}}-1}$ additive $2d_{\on{bog}}$-tuples in $G_0$ are variety-respected in the system $(\phi^{(1)}_x : V^{(2)}_x \to H)_{x \in G_0}$.

Apply Theorem~\ref{multbogstep} to find a positive quantity $c_3 \geq \exp(-(2r_2)^{O(1)}) \geq \exp(-\log^{O(1)}(c^{-1}))$ and a integer $s_3 \leq (2r_2)^{O(1)} \leq \log^{O(1)}(c^{-1})$, another system $(\phi^{(3)}_x : W^{(3)}\cap V^{(3)}_x \to H)_{x \in X_3}$ of MM-$s_3$-maps such that $\exp(-s_3)|G_0|^3$ additive quadruples in $X_3$ are variety-respected, where $W^{(3)} \subseteq G_{[k]}$ is a multilinear variety, independent of $x$, and $V^{(3)}_x\subseteq G_{[k]}$ depending linearly on $x$. Moreover, for each $x \in X_3$, we have 
    \begin{equation}\label{PET:eqn3}\Big|Z\Big(\phi^{(3)}_x - \sum_{i \in [2d_{\on{bog}}]} (-1)^i \phi^{(1)}_{y_i}\Big)\Big|\geq c_3|G_{[k]}|\end{equation}
    for at least $\exp(-s_3)|G_0|^{2d_{\on{bog}}-1}$ choices of $y_{[2d_{\on{bog}}]}$ with $\sum_{i \in [2d_{\on{bog}}]} (-1)^i y_i = x$.

Apply Theorem~\ref{phase4mainres} to find a quantity $s_4 \leq (s_3 + \log c_3^{-1})^{O(1)} \leq \log^{O(1)}(2c^{-1})$, variety $U^{(4)} \subseteq G_{[0, k]}$ of codimension $s_4$ and a map $\Psi : U^{(4)} \to H$, affine in direction $G_0$ and multilinear in directions $G_{[k]}$, such that for each $x \in (U^{(4)})^{(\{0\})}$, there exists at least $\exp(-s_4)|G_0|^{15}$ 16-tuples $y_{[16]}$ in $X_3$ such that $\sum_{i \in [16]} (-1)^i y_i = x$ and
    \begin{equation}\label{PET:eqn4}\Psi_x = \sum_{i \in [16]} (-1)^i \phi^{(3)}_{y_i}\end{equation}
    holds on the variety $(U^{(4)})^{[k]} \cap U^{(4)}_x \cap (\cap_{i \in [16]} U^{(4)}_{y_i})$. By Theorem~\ref{basicextensionstheory}, at the cost of passing to a slightly smaller multilinear variety inside $U^{(4)}$, we may without loss of generality assume that $\Psi$ is a global multiaffine map.

Observe that if $\gamma_1 : S_1 \to H$ and $\gamma_2 : S_2 \to H$ are two MM-$t$-maps with $|Z(\gamma_1)|, |Z(\gamma_2)|\geq \varepsilon |G_{[k]}|$, then Lemmas~\ref{mmrmapszerosetslemma} and~\ref{poscorvars} imply that $|Z(\gamma_1 + \gamma_2)| \geq \exp(-(t + \log \varepsilon^{-1})^{O(1)})|G_{[k]}|$. This observation allows us to combine equalities~\eqref{PET:eqn2},~\eqref{PET:eqn3} and~\eqref{PET:eqn4} and thus obtain some $\ell = O(1)$, a set $\mathcal{A} \subseteq G_0 \times X_0^{2\ell}$ of size $\exp(-\log^{O(1)}(2c^{-1}))|G_0|^{2\ell}$ whose members $(a, x_1, \dots, x_{2\ell})$ satisfy $a = \sum_{i \in [2\ell]} (-1)^i x_i$ and
\[\Psi(a, y_{[k]}) = \sum_{i \in [2\ell]}  (-1)^i \phi^{(0)}_{x_i}(y_{[k]})\]
and $a_0 + a \in X_0$ hold for at least $\exp(-\log^{O(1)}(2c^{-1}))|G_{[k]}$ choices of $y_{[k]} \in G_{[k]}$. By averaging, there exist $x_1, \dots, x_{2\ell - 1}$ such that, writing $a_0 = \sum_{i \in [2\ell-1]} (-1)^ix_i$ and $\tilde{\phi} =  \sum_{i \in [2\ell-1]}  (-1)^i \phi^{(0)}_{x_i}$,
\[\Psi(a, y_{[k]}) = \phi^{(0)}_{a_0 + a}(y_{[k]}) + \tilde{\phi}(y_{[k]})\]
for $\exp(-\log^{O(1)}(2c^{-1}))|G_{[0,k]}|$ choices of $(a, y_{[k]}) \in G_{[0,k]}$. Let $Y'_{a}$ be the set of all $y_{[k]}$ such that the equality above holds for the given $a$. As $a_0 + a \in X_0$, we have $|Y^{(0)}_{a_0 + a}| \geq \exp(-\log^{O(1)}(2c^{-1}))|G_{[k]}|$. By averaging, there are at least $\exp(-\log^{O(1)}(2c^{-1}))|G_{[k]}|$ choices of $b_{[k]} \in G_{[k]}$ such that $|(b_{[k]} + Y^{(0)}_{a_0 + a}) \cap Y'_a| \geq \exp(-\log^{O(1)}(2c^{-1}))|G_{[k]}|$. By~\eqref{PET:eqn1}, we get
\begin{align*}\Psi(a, (y+b)_{[k]}) - \tilde{\phi}((y+b)_{[k]}) =& \phi^{(0)}_{a_0 + a}((y+b)_{[k]}) = \sum_{I \subseteq [k]} \phi^{(0)}_{a_0 + a}(y_I, b_{I^c})\\
= &\phi^{(0)}_{a_0 + a}(y_{[k]}) + \sum_{I \subsetneq [k]} \phi^{(0)}_{a_0 + a}(y_I, b_{I^c}) \\
=& \Phi(a, (y + t)_{[k]}) + \psi^{(0)}(a, y_{[k]}) + \sum_{I \subsetneq [k]} \phi^{(0)}_{a_0 + a}(y_I, b_{I^c}).\end{align*}

Fix $b_{[k]}$ such that the above equality holds for $\exp(-\log^{O(1)}(2c^{-1}))|G_{[0, k]}|$ choices of $(a,y_{[k]})$. Gowers-Cauchy-Schwarz argument in coordinates  $G_{[k]}$ implies that, for the multilinearizations in coordinates $G_{[k]}$ denoted by $\Psi^{\on{ml}}$, $\tilde{\phi}^{\on{ml}}$ and $\tilde{\phi}^{\on{ml}}$,
\[\Psi^{\on{ml}}(a, (y-z)_{[k]}) - \tilde{\phi}^{\on{ml}}((y-z)_{[k]}) - (\psi^{(0)})^{\on{ml}}(a, (y-z)_{[k]}) = \sum_{I \subseteq [k]} \Phi(a, (y + t)_{I}, (z + t)_{I^c})\]
$\exp(-\log^{O(1)}(2c^{-1}))|G_0||G_{[k]}|^2$ choices of $(a,y_{[k]}, z_{[k]})$. The theorem follows by averaging and inductive hypothesis.\end{proof}

\printbibliography

\end{document}